\documentclass[11pt]{article}
\usepackage{amsmath, exscale, epsfig,amssymb, 
amsthm,
 makeidx}

\usepackage{bm}
\usepackage{stmaryrd}
\usepackage{paralist}
\usepackage{titlesec}
\titlespacing*{\section}{0pt}{0.4\baselineskip}{0.3\baselineskip}
\titlespacing*{\subsection}{0pt}{0.4\baselineskip}{0.3\baselineskip}

\usepackage[pdftex]{hyperref} %%not compatible with showkeys
\hypersetup{
    unicode      = false,     % non-Latin characters in AcrobatÃÂ¢ÃÂÃÂs bookmarks
    pdftoolbar   = true,      % show AcrobatÃÂ¢ÃÂÃÂs toolbar?
    pdfmenubar   = true,      % show AcrobatÃÂ¢ÃÂÃÂs menu?
    pdffitwindow = true,      % page fit to window when opened
    pdfnewwindow = true,      % links in new window
    colorlinks   = true,      % false: boxed links; true: colored links
    linkcolor    = blue,      % color of internal links
    citecolor    = red,      % color of links to bibliography
    filecolor    = blue,      % color of file links
    urlcolor     = green       % color of external links
}

\small
\normalsize
\usepackage[utf8]{inputenc}
\usepackage[T1]{fontenc}
\usepackage[english]{babel}
\usepackage{enumerate,vmargin}

\usepackage{times}

\usepackage{amsfonts}
\usepackage{amsbsy}
\usepackage{amscd}

\usepackage{multicol}
\allowdisplaybreaks

\usepackage[all]{xy}

\usepackage{stmaryrd}
\usepackage{graphicx}
\usepackage{paralist}
\usepackage{amsfonts}
\usepackage{amssymb,mathrsfs}

\setmarginsrb{2.9cm}{2.6cm}{2.9cm}{1.7cm}{0cm}{0mm}{0cm}{9mm}

\def\build#1_#2^#3{\mathrel{\mathop{\kern 0pt#1}\limits_{#2}^{#3}}}
\def\noi{{\noindent}}

\def\llbracket{[\![}
\def\rrbracket{ ]\!]}

\def\cq{$\hfill \square$}
\def\un{{\bf 1}}

\newcommand{\bbT}{\mathbb{T}}

\newcommand{\bD}{\mathbb{D}}

\newcommand{\bE}{{\bf E}}

\newcommand{\bbM}{\mathbb{M}}
\newcommand{\bN}{\mathbf{N}}

\newcommand{\bbZ}{\mathbb{Z}}
\newcommand{\bbQ}{\mathbb{Q}}
\newcommand{\bP}{{\bf P}}
\newcommand{\bR}{\mathbb{R}}

\newcommand{\bT}{\mathbb{T}}
\newcommand{\bbU}{\mathbb{U}}

\newcommand{\cC}{\mathscr{C}}
\newcommand{\cI}{\mathcal{I}}

\newcommand{\cF}{\mathcal{F}}

\newcommand{\cH}{\mathcal{H}}
\newcommand{\cL}{\mathcal{L}}
\newcommand{\cM}{\mathcal{M}}

\newcommand{\cR}{\mathcal{R}}

\newcommand{\cT}{\mathcal{T}}

\def\cV{{\mathcal V}}
\def\varep{\varepsilon}

\def\be{\begin{equation}}

\def\ee{\end{equation}}
\def\ba{\begin{eqnarray*}}
\def\ea{\end{eqnarray*}}

\def\da{\downarrow}

\def\noi{\noindent}

\newcommand{\lgeo}{[\![}
\newcommand{\rgeo}{]\!]}
\def\cqfd{ \hfill $\blacksquare$ }

\newcommand{\eqo}{\! = \! }
\newcommand{\geqo}{\! \geq \! }
\newcommand{\leqo}{\! \leq \! }
\newcommand{\ino}{\! \in \! }

\newcommand{\bbR}{\mathbb{R}}
\newcommand{\bbN}{\mathbb{N}}
\newcommand{\leko}{\! < \! }
\newcommand{\geko}{\! > \! }

\newcommand{\epp}{\varepsilon}

\newtheoremstyle{thmstyl}
{3.5pt} % space above
{2.5pt} % space below
{\em} % Body font
{} % Indent amount
{\bfseries} % Theorem head font
{.} % Punctuation after theorem head
{.5em} % space after theorem head
{} % theorem head spec 
\theoremstyle{thmstyl}

\newtheorem{theorem}{Theorem}[section]

\newtheorem{lemma}[theorem]{Lemma}
\newtheorem{proposition}[theorem]{Proposition}
\newtheorem{corollary}[theorem]{Corollary}
\newtheoremstyle{dfstyl}
{3.5pt} % space above
{2.5pt} % space below
{} % Body font
{} % Indent amount
{\bfseries} % Theorem head font
{.} % Punctuation after theorem head
{.5em} % space after theorem head
{} % theorem head spec 
\theoremstyle{dfstyl}

\newtheorem{definition}[theorem]{Definition}

\newtheorem{remark}[theorem]{Remark}
\newtheorem{example}[theorem]{Example}

\renewcommand{\noi}{\noindent}

\renewcommand{\lgeo}{\llbracket}
\renewcommand{\rgeo}{\rrbracket}

\renewcommand{\un}{{\bf 1}}
\newcommand{\I}[1]{\boldsymbol{1}_{\{#1\}}}

\renewcommand{\da}{\downarrow}

\newcommand{\fTheta}{\boldsymbol \Theta}

\newcommand{\bbC}{\mathbb C}

\newcommand{\boo}{\mathtt o}
\newcommand{\booo}{\varrho}

\newcommand{\ccF}{\mathscr F}
\newcommand{\ccG}{\mathscr G}

\def\cq{$\hfill \square$}
\def\cqfd{$\hfill \blacksquare$}

\def\be{\mathbf{e}}

\def\bR{\mathbf{R}}
\def\bC{\mathbf{C}}
\def\bD{\mathbf{D}}

\def\bT{\mathbf{T}}

\def\cH{\mathcal{H}}

\def\ccL{\mathscr{L}}

\newcommand{\bdelta}{{\boldsymbol{\delta}}} 

\newcommand{\bcR}{\boldsymbol{\mathcal R}}
\newcommand{\bcT}{\boldsymbol{\mathcal T}}

\newcommand{\ftau}{\boldsymbol{\tau}}
\def\ino{ \! \in \! }
\def\leqo{\! \leq \! }
\def\geqo{\! \geq \! }
\def\eqo{\! = \! }
\def\leko{\! < \! }
\def\geko{\! > \! }
\def\bbZZ{\bbZ \! \cup \! \{\!  - \infty \}} 
\def\obbU{\overline{\bbU}}
\def\obbT{\overline{\bbT}}
\def\bbn{\boldsymbol{|}\!\! |}
\def\lll{| \! |}

\def\fbeta{\boldsymbol{\beta}}

\def\mnu{\mathtt{m}_\nu}
\def\Loo{L_{\mathtt o}}

\def\fftree{t}
\def\varep{\varepsilon}
\def\tta{\mathtt{a}}
\def\ttb{\mathtt{b}}
\def\ttc{\mathtt{c}}
\def\ttx{\mathtt{x}}
\def\ttu{\mathtt{u}}
\def\ttv{\mathtt{v}}

\def\ttz{\mathtt{z}}
\def\ttm{\mathtt{m}}
\def\cW{\mathcal{W}}
\def\cR{\mathcal{R}}
\def\ccC{\mathscr{C}}

\def\cC{\mathcal{C}}

\def\bz{\mathbf{z}}

\def\MMM{\mathbf{M}}
\def\MMT{\mathbf{MT}}
\newcommand{\fSigma}{\boldsymbol{\Sigma}}

\def\bCqq{\bC^{_0}_{^{q}}}
\def\bCpl{\bC^{_0}_{^{\!+}}}

\DeclareBoldMathCommand\blvert{\left\lVert}
\DeclareBoldMathCommand\brvert{\right\rVert}

\newcommand{\eqnsection}{
\renewcommand{\theequation}{\arabic{section}.\arabic{equation}}
    \makeatletter
    \csname  @addtoreset\endcsname{equation}{section}
    \makeatother}
\eqnsection

\begin{document}

\setlength{\abovedisplayskip}{5pt}
\setlength{\belowdisplayskip}{5pt}

\title{\textbf{Scaling limit of the range of tree-valued branching random walks in random environment}}
\date{}
\author{Thomas \textsc{Duquesne}
\thanks{Sorbonne Universit\'e, Campus Pierre et Marie Curie, 
LPSM, Case courrier 158, 4, place Jussieu, 
75252 Paris Cedex 05 
France. Email: thomas.duquesne@sorbonne-universite.fr} 
\and
Robin \textsc{Khanfir}
\thanks{Department of Mathematics and Statistics, McGill University, Montr{\'e}al, Qu{\'e}bec, Canada. Email: robin.khanfir@mcgill.ca} 
}

\maketitle

\begin{abstract}
We study a branching random walk (BRW) taking its values in a random tree $\bT$ (seen as a family tree) with an infinite line of ancestors that is a variant of a supercritical Galton--Watson (GW) tree with offspring distribution $\nu$. 
The transition probabilities of the BRW are those of a critical biased random walk on $\bT$: namely,  
the probability to move from $x$ to one of its $k_x$ children is $1/(\mathtt{m}_\nu+k_x)$ and the probability to move from $x$ to the direct parent of $x$ is $\mathtt{m}_\nu/(\mathtt{m}_\nu+k_x)$. Here $\ttm_\nu$ stands for the mean of $\nu$.  
The BRW is indexed by a critical GW tree conditioned to have $n$ {vertices} and whose offspring distribution is in the domain of attraction of an $\alpha$-stable law with $\alpha \ino (1, 2]$. We denote by $\cR_n$ the range of the BRW, i.e., ~the set of all sites in $\bT$ visited by the BRW. Under a moment assumption for $\nu$, we prove that if we view $\cR_n$ as a random subtree of $\bT$ equipped with its graph distance $d_{\mathtt{gr}}$ and with its occupation measure $\ttm^{_{(n)}}_{{\mathtt{occ}}}$ 
then there exists a scaling sequence $s_n \! \to \! \infty$ such that 
conditionally given the environment $\bT$, the measured metric space $(\cR_n, s_n^{-1}d_{\mathtt{gr}} , \frac{_1}{^n}\ttm^{_{(n)}}_{{\mathtt{occ}}} )$ weakly converges in the Gromov--Hausdorff--Prokhorov sense to a random measured 
compact real tree introduced by Curien, Le Gall \& Miermont in \cite{CuLGMi13} called the Brownian cactus with $\alpha$-stable branching mechanism. This work extends in random environment the result from D., K., Lin  \& Torri \cite{DuKhLiTo22} which deals with the case where $\bT$ is a regular tree.

\medskip

\noi
\textbf{Keywords} $\, $ Branching random walks $\cdot$ Galton--Watson tree $\cdot$ Random environment $\cdot$ Scaling limit $\cdot$ Superprocess $\cdot$ Brownian snake $\cdot$ Brownian cactus
$\cdot$ Real tree

\medskip

\noi
\textbf{Mathematics Subject Classification} $\, $ 60J80  $\cdot$ 60G50 $\cdot$ 60G52 $\cdot$ 60F17

\end{abstract}

\section{Introduction}
\label{introsec}

Branching random walks (BRW) represent a simple model to study a branching behavior coupled with some spatial displacements. As such, they drew a lot of interest and are naturally linked with various areas of research. Let us mention at least superprocesses, traveling wave solutions of semi-linear partial differential equations (FKPP), and some statistical mechanics models such as Mandelbrot cascades, the contact process, the Gaussian free field, or the voter model. We refer to Shi~\cite{Shi15} for an overview of this topic and its connections. We also cite the study of extreme particles of real-valued BRWs by Biggins~\cite{Big77} and Bramson~\cite{Bra78,Bra78bis}, the works of Gouëzel, Hueter, Lalley, Sellke~\cite{LalSel97, LalGou13,LalHue00, Lal06} about BRWs with values in hyperbolic spaces, and the study of BRWs on trees of 
Benjamini \& Müller~\cite{BenMue12} and Liggett~\cite{Lig96}.

In contrast to those works where the indexing tree is infinite, D., K., Lin \& Torri~\cite{DuKhLiTo22} consider a BRW that is indexed by a critical Galton--Watson (GW) tree conditioned to have $n$ vertices. In \cite{DuKhLiTo22}, the BRW moves like a nearest-neighbor null-recurrent random walk (RW) on a deterministic rooted regular tree: it jumps to the parent of the current position with probability $\frac{1}{2}$, or it chooses uniformly one of the sites above otherwise. With this setting, D., K., Lin \& Torri~\cite{DuKhLiTo22} describe the asymptotic behavior of the range of the BRW (i.e.,the set of all visited sites) as $n\! \to\! \infty$ by showing that its scaling limit is a variant of the Brownian cactus introduced by Curien, Le Gall \& Miermont~\cite{CuLGMi13}. Here, we obtain a similar result when the environment is instead a random tree with infinite depth.

We now give an informal description of the environment and the transitions of the BRWs that we consider in this paper. We fix a probability distribution $\nu=(\nu(k))_{k\in\bbN}$ on $\bbN$ that we assume to be supercritical with a finite moment of order $2$:
\begin{equation}
\label{defmusiintro}
\mathtt{m}_\nu:=\sum_{k\in\bbN}k\nu(k)\in(1,\infty)\quad\textrm{and}\quad\sigma_\nu^2:=\frac{1}{\mathtt{m}_\nu(\mathtt{m}_\nu-1)}\sum_{k\in\bbN}k(k-1)\nu(k)\in(0,\infty).
\end{equation}
We denote by $\bT$ the environment and by $\boo\in\bT$ the starting point of the BRW. One can understand $(\bT, \boo)$ as a GW tree with offspring distribution $\nu$ that is modified so that the distinguished point $\boo$ has an infinite line of ancestors: informally,  
the numbers of children $k_x(\bT)$ of the vertices $x\in\bT$ are independent; outside of the ancestral line of $\boo$, their common law is $\nu$; on the ancestral line of $\boo$, their law is the biased probability 
distribution $(k\nu (k)/ \ttm_\nu)_{k\in \bbN}$.  
We say that $(\bT,\boo)$ is an \emph{infinite GW tree with offspring distribution $\nu$} and we refer to Definition~\ref{GWptdef} for a more precise construction. 
For all $x\in\bT$, let $\overleftarrow{x}$ stand for the parent of $x$ (which is always well-defined since $\bT$ has an infinite line of ancestors). 

  The BRW on $(\bT,\boo)$ we are interested in shares its spatial displacements with the so-called critical biased RW. For all $\lambda\in\bbR_+$ and conditionally given $(\bT,\boo)$, the $\lambda$-biased RW on $(\bT,\boo)$ is the Markov chain  whose transition probabilities are expressed by
\begin{equation}
\label{biaRWintro}
p_{\bT,\lambda}(x,y)=\I{\overleftarrow{y}=x}\frac{1}{\lambda+k_{x}(\bT)}+\I{\overleftarrow{x}=y}\frac{\lambda}{\lambda+k_x(\bT)}
\end{equation}
for all $x,y\in\bT$. 
This model has been introduced by Lyons~\cite{Lyons90} who also showed that the behavior of biased RWs is subject to a phase transition concerning the parameter $\lambda$: we refer to Section~\ref{biaRWsec} for a more detailed account. We shall only consider the case $\lambda=\mathtt{m}_\nu$ which is critical in the sense that a 
$\ttm_\nu$-biased RW on an ordinary GW tree with offspring distribution $\nu$ is null-recurrent. 

The present paper is related to earlier works about scaling limits for the range of tree-valued critical or near-critical biased RW. First, D.~\cite{Du05} studies {near-critical biased} RWs on regular trees and show that their rescaled ranges converge to a CRT coded by Brownian motions with drift. 
Then, Peres \& Zeitouni~\cite{PerZei08} prove that the process of the distances to the root of a critical biased RW in a supercritical GW tree without leaves is diffusive. Similarly, Dembo \& Sun~\cite{DeSu12} treat the cases of critical biased RW on multi-type GW trees. Furthermore, Aïdékon \& de Raphélis~\cite{AidRap17} strengthen Peres \& Zeitouni's result and show that the rescaled range of the same RW converges to a variant of the Brownian CRT. We also mention Chen \& Miermont~\cite{ChMi17} who show, using previous results due to Bougerol \& Jeulin~\cite{BoJe99}, that the scaling limits of {Brownian} bridges and loops in hyperbolic spaces are the Brownian CRT. Independently, Stewart proves in his Ph.D.~thesis~\cite{MR3697600} that the rescaled simple RW bridges on a $\mathtt{b}$-regular tree (with $\mathtt{b}\geq 3$) converge to the Brownian CRT.
\medskip

Now, let us discuss the assumptions we make on the genealogy of the BRWs, which is always supposed to be independent of the environment $(\bT,\boo)$. We fix another probability measure $\mu\eqo (\mu(k))_{k\in\bbN}$ on $\bbN$. We denote by $\tau$ a (rooted and ordered) GW tree with offspring distribution $\mu$ that satisfies the following with $\alpha \ino (1, 2]$:
\begin{equation} 
\label{hypostaintro} 
\begin{cases}
(\mathrm{H}_1):&  \;  \mu (0) + \mu (1) <  1 \quad \textrm{and} \quad \sum_{k\in \bbN} k\mu(k)= 1 , \\
 (\mathrm{H}_2):& \; \textrm{$\mu$ is aperiodic, {(namely, $\mu$ is not supported by a proper subgroup of $\bbZ$),}} \\
 (\mathrm{H}_3):& \; \textrm{$\mu$ is in the domain of attraction of an $\alpha$-stable law. }
\end{cases}
\end{equation}
The assumption $(\mathrm{H}_1)$ implies that the number of vertices $\#\tau$ of $\tau$ is almost surely finite. The assumption $(\mathrm{H}_2)$ is convenient to work with because it implies that $\bP(\#\tau \eqo n)\geko 0$ for all integers $n$ sufficiently large. We recall that $(\mathrm{H}_3)$ translates into the following assertion: if $(V_n)_{n\in \bbN}$ is a RW on $\bbZ$ starting at $V_0\eqo 0$ and whose jump law is given by $\bP(V_1\eqo k)\eqo \mu (k+1)$ for all $k\ino \{ -1\}\! \cup \! \bbN$, then there exists a $\frac{\alpha-1}{\alpha}$-regularly varying sequence $(a_n)_{n\in \bbN}$ such that 
\begin{equation}
\label{cvlawRW_bis}
\tfrac{a_n}{n}V_n \xrightarrow[n\to \infty]{\textrm{(law)}}X_1\; , \quad\textrm{where $\log \bE[\exp(-\lambda X_1)]= \lambda^\alpha$ for all $\lambda\ino \bbR_+$}\; .
\end{equation}
We refer to Section~\ref{stabletree} for more details on these assumptions. 

We next define the BRW we consider as follows. 
For all sufficiently large $n$, we denote by $\tau_n$ a tree with the same law as a GW tree $\tau$ with offspring distribution $\mu$ under $\bP (\, \cdot \, \big| \,  \# \tau \eqo  n \big)$. We denote by $\varnothing$ the root of $\tau_n$ and to each vertex $u\ino \tau_n$, we associate a spatial position $Y^{_{(n)}}_{^{\! u}}\ino \bT$ whose conditional joint law given $(\bT, \boo)$ and $\tau_n$ is 
defined by 

\begin{equation}
\label{lawBRWintro} \bP \big(\,  \forall u\in \tau_n , \; Y^{(n)}_u \eqo  x_u\, \big|\,  (\bT, \boo), \tau_n \, \big)= \un_{\{ Y^{(n)}_\varnothing = \boo \}  } \!\!\! \prod_{u\in \tau_n \backslash \{ \varnothing \}}\!\!\!\!  p_{\bT, \ttm_\nu} ( x_{\overleftarrow{u}}, x_u) 
\end{equation}
for all $x_u\ino \bT$ and $u\ino \tau_n$ (here $\overleftarrow{u}$ stands for the direct parent of $u$ in $\tau_n$). 

It is convenient to see $\tau_n$ as a family tree whose ancestor corresponds to the root $\varnothing$ 
and where siblings are ordered by birth-rank. The depth-first exploration of $\tau_n$ is 
the sequence of vertices $(u_j)_{0\leq j < \# \tau}$ that is defined recursively as follows: 
$u_0$ is the root $\varnothing$ and for all $j\ino \{ 0, \ldots, \#\tau \! -\! 2\}$, if $v$ is the most recent ancestor of $u_j$ having at least one unexplored child then $u_{j+1}$ is the unexplored child of $v$ with least birth-rank (see Section~\ref{rootreesec} for a more precise construction).
\medskip

Our main result concerns a quenched limit of the following rescaled random pointed measured metric spaces  
\begin{equation}
\label{spacdefintro} \ftau_n := \big( \tau_n , \tfrac{1}{a_n} d_{\mathtt{gr}} , \varnothing , \tfrac{1}{n}\ttm_n \big)  \quad \textrm{and} \quad  \bcR_n := \big( \mathcal R_n ,  \tfrac{\sigma_\nu}{\sqrt{a_n}} d_{\mathtt{gr}} , \boo,  \tfrac{1}{n}  \ttm_{\mathtt{occ}}^{(n)}\big)
\end{equation}
where $d_{\mathtt{gr}}$ stands for the graph distance in $\tau_n$ and in $\bT$, where we recall from (\ref{defmusiintro}) the definition of $\sigma_\nu$ and where we have set: 
$$  \mathcal R_n = \big\{ Y_v \, ; \, v\ino \tau_n \big\} , \quad \ttm_n \eqo \sum_{v \in \tau_n } \delta_v ,   
\quad \textrm{and} \quad \ttm_{\mathtt{occ}}^{(n)}= \sum_{v\in \tau_n } \delta_{Y^{(n)}_v} . $$
Let us first recall the scaling limit of $\ftau_n$;  $\ftau_n$ is encoded by its \emph{height process} $(H_{s}^{_{(n)}})_{s\in[0,n]}$, i.e., the continuous interpolation of the discrete-time process $H^{_{(n)}}_j\eqo d_{\mathtt{gr}} (\varnothing, u_j)$ which is the height in $\tau_n$ of the $j$-th vertex of 
$\tau_n$ in the depth-first exploration. 
Then, Theorem 3.1 in D.~\cite{Du03} asserts the following: 
\textit{we assume that (\ref{lawBRWintro}) holds and let $(a_n)$ be as in (\ref{cvlawRW_bis}); then there is a nonnegative continuous process $H\eqo (H_{s})_{s\in [0, 1]}$, whose law only depends on $\alpha$ and such that the convergence}
\begin{equation}
\label{heightcvintro}
\big(\tfrac{1}{a_n}H_{ns}^{(n)}\big)_{s\in[0,1]}\xrightarrow[n\to\infty]{} (H_s)_{s\in [0, 1]}
\end{equation}
\textit{holds weakly for the topology of uniform convergence}. Under the stronger assumption that the variance of $\mu$ is finite, this result is due to Aldous~\cite[Theorem 23]{Al93}. The process $H$ is the normalized excursion of the $\alpha$-stable height process, which is a local-time function of an $\alpha$-stable spectrally positive L\'evy process. In the case $\alpha \! = \! 2$, $H$ is more precisely the normalized Brownian excursion. We refer to Section \ref{stabletree} for precise definitions.

As in the discrete setting, $H$ actually encodes a random pointed compact measured metric space that is defined as follows: we define a pseudometric $d_H$ on $[0,1]$ from $H$ by setting
\begin{equation}
\forall 0\leq s_1\leq s_2\leq 1,\quad d_H (s_1, s_2) \! = \! d_H(s_2,s_1) \! = \! H_{s_1}+ H_{s_2} -2\min_{s\in[s_1,s_2]} H_s\, , 
\end{equation}
and we introduce the relation $\sim_H $ on $[0, 1]$ by setting $s_1 \sim_H s_2$ {if and only if} $d_H (s_1, s_2)\! = \! 0$; 
clearly, $\sim_H$ is an equivalence relation and the normalized $\alpha$-stable L\'evy tree is taken as the quotient space $ T_H \! = \! [0, 1 ] / \! \sim_H $, equipped with the distance induced by $d_H$ that we keep denoting $d_H$. We denote by $p_H \! : \! [0, 1] \rightarrow T_H$ the canonical projection. Note that $p_H$ is continuous; therefore $T_H$ is compact and connected{.} Moreover, $T_H$ is a real tree, namely, a metric space such that all pairs of points are joined by a unique simple arc that turns out to be a geodesic (see Section \ref{GHPsec} for more details). We set {$\mathtt r_H\! := \! p_H (0)$} that is viewed as the root of $T_H$ and we equip $T_H$ with the measure $\ttm_H$ that is the image of the Lebesgue measure on $[0, 1]$ via $p_H$, namely, $\int_{T_H} \! f \, d\ttm_H\! = \! \int_0^1 \! f(p_H(s)) \, ds$, for all 
continuous $f\! : \! T_H \! \rightarrow \! \bbR$. The convergence (\ref{heightcvintro}) 
then implies the following one:
\begin{equation}
\label{Strobl}
\ftau_n =\big( \tau_n, \tfrac{1}{a_n} d_{\mathtt{gr}} , \varnothing, \tfrac{1}{n} \mathtt{m}_n  \big) 
\underset{n\rightarrow \infty}{- \!\!\! - \!\!\! - \!\!\! \longrightarrow } \; (T_H, d_H, \mathtt r_H, \ttm_H) \, .
\end{equation}
Here the convergence holds weakly on the space $\bbM$ of isometry classes of pointed measured compact metric spaces equipped with the Gromov--Hausdorff--Prokhorov distance 
$\bdelta_{\mathtt{GHP}}$ that makes it a Polish space, as proved in Abraham, Delmas \& Hoscheit in \cite{AbDeHo13}, Theorem 2.5 (see Section \ref{GHPsec} for precise definitions).

Then the limit of  $\bcR_n$ is constructed as 
follows: as proved in {D.~\& Le Gall}~\cite{DuLG05} (Lemma 6.4 p.~600) 
conditionally given $H$, there exists a Hölder-continuous centered Gaussian process $\sigma \ino T_H \! \longmapsto  \! W_\sigma \ino \bbR$ whose covariance is characterized by 
$\bE \big[ \big| W_{\sigma_1} \! - W_{\sigma_2} \big|^2 \big| \, H \big] \!  = \! 
d_H (\sigma_1, \sigma_2)$, for all $\sigma_1, \sigma_2 \ino T_H$. Then, we set 
\begin{equation*}
\forall \sigma_1, \sigma_2\ino T_H, \quad d_{H, W} ( \sigma_1, \sigma_2) = W_{\sigma_1}  +  W_{\sigma_2} -2 \!\!\!\! \!\! \!   \min_{ \quad \sigma \in \lgeo \sigma_1, \sigma_2 \rgeo} \!\! \!\! \!\! \! W_\sigma ,
\end{equation*}
where $\lgeo \sigma_1, \sigma_2 \rgeo $ is the unique geodesic that joins $\sigma_1$ to 
$\sigma_2$ in $T_H$. 
As proved in D., K., Lin \& Torri \cite{DuKhLiTo22},  $d_{H, W}$ is a pseudometric on $T_H$;  
we then define the equivalence relation $\sim_{H, W} $ on $T_H$ by setting $\sigma_1 \sim_{H, W}  \sigma_2$ {if and only if}
$d_{H, W} ( \sigma_1, \sigma_2) \! = \! 0$ and we denote by $T_{H, W}\! = \! T_H / \! \sim_{H, W}$ the quotient metric space and we keep denoting by 
$d_{H, W}$ the resulting metric; we denote by $\pi_{H, W}\! : \! T_H \! \rightarrow \! T_{H, W}$ the canonical projection that is continuous. Thus $T_{H, W}$ is compact and connected, and 
$(T_{H, W}, d_{H, W})$ is a real tree (see Section \ref{trebrRWsnasec} for more details). 
It turns out that this kind of spaces has been introduced in Curien, Le Gall \& Miermont~\cite{CuLGMi13} (see also Le Gall~\cite{LG15} for a different purpose); 
they coined the name Brownian cactus, so we call $(T_{H, W}, d_{H,W})$ the normalized Brownian cactus with $\alpha$-stable branching mechanism.  
We next set $\mathtt r_{H, W}\! = \! \pi_{H, W} (\mathtt r_H)$ that is viewed as the root of $T_{H, W}$ 
and we equip $T_{H, W}$ with the measure $\ttm_{H, W}$ that is the image 
of $\ttm_H$ via $\pi_{H, W}$: namely, $\int_{T_{H, W}} \! f \, d\ttm_{H, W}\! = \! \int_{T_H} \! f(\pi_{H, W}(\sigma)) \, \ttm_H (d\sigma) $, for all continuous $f\! : \! T_{H, W} \! \rightarrow \! \bbR$. Our main result is the following limit theorem.

\begin{theorem}
\label{QumetlimbrRW}
We keep the above notation: recall that $(\bT,\boo)$ is an infinite GW tree whose offspring distribution $\nu$ is supercritical, that we denote by $\tau_n$ is a GW tree with offspring distribution $\mu$ conditioned to have $n$ vertices for all sufficiently large $n$, and that $(\bT,\boo)$ and $\tau_n$ are independent. We assume that $\mu$ satisfies (\ref{hypostaintro}) and that $\nu$ satisfies (\ref{defmusiintro}).  We furthermore assume that there exists $\ttb \ino \big(\tfrac{2\alpha}{\alpha -1} , \infty \big)$ such that $\sum_{k\in \bbN} k^{1+ 2\ttb} \nu (k) \leko \infty$. 
Recall from (\ref{spacdefintro}) the definition of the spaces $\ftau_n$ and $\bcR_n$. 
We denote by $(T_H, d_H, \mathtt r_h, \ttm_H)$ the normalized $\alpha$-stable L\'evy tree and by 
$(T_{H, W} , d_{H, W}, \mathtt r_{H, W}, \ttm_{H, W})$ the normalized Brownian cactus with $\alpha$-stable branching mechanism as defined above. Recall that $\bbM$ is the space 
of isometry classes of pointed measured compact metric spaces equipped with the Gromov--Hausdorff--Prokhorov distance. 
Then, \emph{conditionally given the environment} $(\bT, \boo)$, the following joint convergence 
\begin{equation}
\label{metrcv}
\big( \ftau_n, \bcR_n \big) \underset{n\rightarrow \infty}{- \!\!\! - \!\!\! - \!\!\! \longrightarrow }  \Big(\big(T_H,d_H,\mathtt r_H,\ttm_H\big)\, ;\, \big(T_{H,W},d_{H,W},\mathtt r_{H,W},\ttm_{H,W}\big)\Big)
\end{equation}
holds weaky on $\bbM^2$ equipped with the product topology. 
\end{theorem}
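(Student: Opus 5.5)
The strategy is to reduce the environment-dependent branching random walk to a real-valued branching random walk plus a tree-valued correction, using the structure of the critical biased walk on $\bT$. The key object is a harmonic-type height function on $\bT$. For the $\ttm_\nu$-biased walk on $(\bT,\boo)$ one has a ``horofunction'' $h(x)$, the signed generation of $x$ relative to $\boo$; the walk is critical in the sense that a suitable martingale-corrected version of $h$ along the walk is a martingale. Following Peres--Zeitouni and A\"id\'ekon--de Raph\'elis, I would introduce the Lyons-type potential $\Phi(x)$: normalized conductance-weighted distance built from the martingale limits $W_y=\lim \ttm_\nu^{-k}\#\{\text{descendants of } y \text{ at distance } k\}$. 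This makes $\Phi(Y_u)$ a martingale along each branch $u$ of $\tau_n$. Its increments have conditional variance of order $1$, with quenched ergodic averages converging to $\sigma_\nu^{-2}$. That is the origin of the normalization $\sigma_\nu/\sqrt{a_n}$.

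\textbf{Step 1: quenched snake convergence.} I would show that, conditionally given $(\bT,\boo)$, the pair $\big(\tfrac1{a_n}H^{(n)}_{ns},\ \tfrac{\sigma_\nu}{\sqrt{a_n}}\,h(Y^{(n)}_{u_{\lfloor ns\rfloor}})\big)_{s\in[0,1]}$ converges to $(H_s, W_{p_H(s)})_{s\in[0,1]}$. The method is:
\begin{itemize}
\item[(a)] prove a quenched invariance principle with uniform control for the walk along a single ancestral line of length $\approx a_n$, via the environment-seen-from-the-particle chain, which is ergodic for the critical bias;
\item[(b)] obtain finite-dimensional convergence on subtrees spanned by finitely many uniform vertices, using independence of the increments along disjoint branches given $\bT$;
\item[(c)] prove tightness via a Kolmogorov-type criterion for tree-indexed processes, as in the snake convergence of Janson--Marckert and D.--K.--Lin--Torri: one needs $\bE[|h(Y_u)-h(Y_v)|^{2\mathtt q}\mid\bT]\le C\, d_{\mathtt{gr}}(u,v)^{\mathtt q}$, uniformly over the possible starting points reached.
\end{itemize}

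\textbf{Step 2: from heights to the tree metric.} The range $\cR_n$ is a subtree of $\bT$. For $u,v\in\tau_n$, the graph distance $d_{\mathtt{gr}}(Y_u,Y_v)$ is determined by $h(Y_u)+h(Y_v)-2h(Y_u\wedge Y_v)$, where $Y_u\wedge Y_v$ is the most recent common ancestor in $\bT$. The claim to prove is that $h(Y_u\wedge Y_v)$ equals $\min_{w\in\lgeo u,v\rgeo} h(Y_w)$ up to an error $o(\sqrt{a_n})$ uniformly in $u,v$. Indeed, the path $(Y_w)_{w\in\lgeo u,v\rgeo}$ is a nearest-neighbour path in $\bT$, so it must pass through $Y_u\wedge Y_v$; this already gives one inequality. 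The converse needs the \emph{no-shortcut} estimate: the walk, after descending to a minimum, does not climb back into a different subtree and return to the original one without revisiting a lower ancestor. In other words, excursions above a vertex that land in a subtree other than the current one are rare. Since $h$ is a deterministic function of the position, this comparison is exact up to the excursion depth. With these two facts, the map $u\mapsto Y_u$ yields a correspondence between $\cR_n$ and $T_{H,W}$ of distortion $o(1)$, and the occupation measures match via the exploration. This gives GHP convergence jointly with $\ftau_n$, by the snake-to-cactus continuity argument of D.--K.--Lin--Torri.

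\textbf{Main obstacle.} I expect the hardest step to be the quenched moment bound in Step~1(c), uniform over all sites that can be visited. The environment has atypical vertices of large degree or small $W_y$ within distance $O(\sqrt{a_n})$, and there are about $n$ branches, where $n\approx a_n^{\alpha/(\alpha-1)}$. I would first show a quenched bound of the form
\[
\sup_{x:\,d_{\mathtt{gr}}(\boo,x)\le n}\ \bE_x\big[\,|h(Y_k)-h(x)|^{2\ttb}\,\big]\le C_{\bT}\,k^{\ttb}\, n^{\epp},
\]
via a Burkholder inequality on the martingale $\Phi(Y_k)$, and control the potential $\Phi-h$ by moment bounds on $W$. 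This is where $\sum_k k^{1+2\ttb}\nu(k)<\infty$ enters. The remaining task is a union bound over about $n$ vertex pairs, which requires $2\ttb$-moments with $\ttb>\frac{2\alpha}{\alpha-1}$ so as to absorb the polynomial factor $n^{O(1)}=a_n^{O(\alpha/(\alpha-1))}$.
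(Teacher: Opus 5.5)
Your architecture is the paper's: Peres--Zeitouni harmonic coordinates, a quenched snake limit for them, a comparison of $\sigma_\nu^{-2}S$ with relative heights, a no-shortcut lemma comparing the range metric with the snake pseudometric, then GHP.

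\textbf{The gap.} It sits in the step you flag as the main obstacle. The bound $\sup_{x:\,d_{\mathtt{gr}}(\boo,x)\le n}\bE_x[|h(Y_k)-h(x)|^{2\ttb}]\le C_{\bT}\,k^{\ttb}n^{\epp}$ is false. The route to it, Burkholder for $\Phi$ started at an arbitrary $x$, also fails.
\begin{itemize}
\item A ball of radius $R$ around $\boo$ contains exponentially many vertices. If $\nu(1)>0$, it therefore contains chains of $cR$ consecutive ancestors that each have a single child.
\item Started at the descendant end of such a chain, the walk moves ballistically toward the ancestors, since each step goes to the parent with probability $\ttm_\nu/(\ttm_\nu+1)>\frac12$. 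Hence $\bE_x[|h(Y_k)-h(x)|^{2\ttb}]\asymp k^{2\ttb}$ for $k\le cR$. No factor $k^{\ttb}n^{\epp}$ absorbs this once $R\asymp\sqrt{a_n}$.
\item Independently, the first step of the $\Phi$-martingale from $x$ already contributes $\frac{\ttm_\nu}{\ttm_\nu+k_x}U_x^{2\ttb}\approx k_x^{2\ttb-1}$. With only polynomial moments on $\nu$, the largest degree in such a ball is exponentially large in $R$.
\end{itemize}
So any usable estimate may only involve the vertices actually visited from $\boo$. Moreover, no Kolmogorov bound on $h$ itself is needed.

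\textbf{What the paper does instead.}
\begin{itemize}
\item Tightness is proved only for the $S$-snake, and only for the walk started at $\boo$. Burkholder's inequality plus Wiener's maximal ergodic inequality for the environment seen from the particle give $E_{\bT,\boo}[|S_{X_{m+k}}-S_{X_m}|^{2\ttb}]\le K_\ttb(\bT)\,k^{\ttb-1}(m+k)$ (Lemma~\ref{quoscillW}).
\item The extra factor $m+k$ is paid by the maximal height of the genealogy. It is absorbed through the H\"older regularity of the height process (Propositions~\ref{HolderH} and~\ref{HolderWW}); this is where $\ttb>1+\frac{\alpha}{\alpha-1}$ is used.
\item Relative heights are then reached by a sup-norm comparison at the visited vertices only: $\max_{m\le n}|\sigma_\nu^{-2}S_{Y_{u_m}}-\bbn Y_{u_m}\bbn|\le c\,n^{-\epp_1}\sqrt{a_n}$ with probability $1-O(n^{-\epp_0})$ (Proposition~\ref{qharmoclose}).
\item That comparison combines a spine comparison (Lemma~\ref{harmcoo4}) with a Green-function/many-to-one bound on the expected number of visits of the BRW to bad vertices (Lemma~\ref{badharm}). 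This is where $\ttb>\frac{2\alpha}{\alpha-1}$ enters.
\end{itemize}

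\textbf{A second, smaller point.} Your step (b) needs more than a quenched invariance principle from $\boo$. After a branch point, the conditionally independent walks start from the random, path-dependent position $X_{r_n}$. One therefore needs a CLT holding almost surely conditionally on $\ccF_{r_n}$ (Propositions~\ref{condiTCL} and~\ref{fourcondharmo}), used inductively via decompositions at the first branching point.
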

\medskip

The proof of Theorem~\ref{QumetlimbrRW} is done by proving an analog of (\ref{heightcvintro}) for the BRW. Namely, we introduce the discrete snake associated with the BRW $(Y^{(n)}_{u})_{u\in \tau_n}$: this process $(\widehat{W}^{(n)}_s  )_{s\in [0, n]}$ is the continuous interpolation of the discrete-time process 
$$  \widehat{W}^{(n)}_j = \; \textrm{generation in $\bT$ of $Y_{u_j}^{(n)}$ } $$
where $(u_j)_{0\leq j< n}$ is the depth-first exploration of $\tau_n$. Here, by convention, the generation of $\boo$ is taken equal to $0$, so the generation of $x$ in $\bT$ is a possibly negative integer. Then the following is proved in Theorem \ref{cvsnacondi}: conditionally given $(\bT, \boo)$ the following convergence
\begin{equation}
\label{BRWscaling_limit_these_envt_INTR}
\big(\tfrac{1}{a_n}H_{ns}^{(n)},\tfrac{\sigma_\nu}{\sqrt{a_n}}\widehat{W}_{ns}^{(n)}\big)_{s\in[0,1]} \underset{n\rightarrow \infty}{- \!\!\! - \!\!\! - \!\!\! \longrightarrow } (H,\widehat{W})
\end{equation}
holds weakly on the space $\bC([0,1],\bbR^2)$ of continuous functions equipped with the topology of uniform convergence. Here $H$ is the $\alpha$-stable height process as introduced above and $\widehat{W}$ is the (endpoint process of the) one-dimensional Brownian snake whose lifetime process is $H$: namely, it is continuous centered real Gaussian process $(\widehat{W}_s)_{s\in[0,1]}$ whose covariances are characterized by 
$\bE \big[ \big| \widehat{W}_{s_1} \! - \widehat{W}_{s_2} \big|^2 \big| \, H \big] \!  = \! 
d_H (s_1, s_2)$ for all $s_1,s_2 \ino [0,1]$ (see {D.~\& Le Gall}~\cite[Lemma 6.4 p.~600]{DuLG05}). We refer to Section \ref{Cosnake} for a precise definition.  

To prove (\ref{BRWscaling_limit_these_envt_INTR}) and more generally Theorem \ref{cvsnacondi}, 
we rely on the so-called harmonic coordinates used by Peres \& Zeitouni~\cite{PerZei08}. These are a family $(S_x)_{x\in\bT}$ of real random variables (r.v.s) depending on the environment $(\bT,\boo)$ which approximates the relative generation of $x\ino \bT$ on the one hand, and on the other hand such that $(S_{X_n})_{n\in\bbN}$ is a martingale when $(X_n)$ is a $\ttm_\nu$-biased RW on $\bT$. 
This tool allows us to adapt the method of Janson \& Marckert~\cite{JanMar05} and Marzouk~\cite{Mar20}: we obtain the tightness thanks to a maximal inequality for martingales, together with a control of the variations of the height process of the genealogy, and we check the convergence of the finite-dimensional marginals via a conditional central limit theorem.

Theorem~\ref{QumetlimbrRW} echoes Theorem~1.2 in D., K., Lin \& Torri~\cite{DuKhLiTo22}. As in this article, our general idea 
consists in deriving the metric convergence in Theorem~\ref{QumetlimbrRW} from the snake convergence (\ref{BRWscaling_limit_these_envt_INTR}) (more precisely from Theorem \ref{cvsnacondi}) 
thanks to general arguments that are in part discussed in 
\cite{DuKhLiTo22}. Namely, we show that $\bcR_n$ is close in some sense to the discrete snake-tree coded by 
$\widehat{W}^{(n)}$: see Section \ref{trebrRWsnasec} and in particular Lemma \ref{graphvssna} for precise statements.

\subsection*{Organization of the paper}

In Section~\ref{backgrdsec} we present the main objects and tools that are involved in this article. Among others, this includes the formalism of trees with an infinite line of ancestors, some properties about biased RWs on trees, and definitions and limit theorems for snakes and the real trees they code. In Section~\ref{tauinftysec}, we prove Theorem \ref{cvsnakes}, which a limit theorem for the snake associted with a BRW indexed by a forest of unconditioned GW trees. It is the main technical step of the artcile.  Section~\ref{QumetlimbrRWpfsec} is devoted to the proof of Theorem~\ref{cvsnacondi}, which then completes the demonstration of Theorem~\ref{QumetlimbrRW}

\section{Tree-valued BRWs and snakes: definitions, previous results}
\label{backgrdsec}

In this section, we present the main objects and tools which are used  in this article. Among others, this includes the formalism of trees with an infinite line of ancestors, some properties about biased RWs on trees, and definitions and limit theorems for snakes and the real trees they encode. 
\subsection{Rooted ordered trees and forests, coding functions, discrete snakes.}
\label{rootreesec}
In this section, we recall the framework of (\emph{rooted ordered trees} (within Ulam's formalism) and their encoding functions:  \emph{Lukasiewicz paths}, \emph{height} and \emph{contour} processes. We then briefly recall the definition of Galton--Watson trees and forest (GW trees and forest) within this framework and finally we recall the definition of \emph{discrete snakes} which  encode real-valued branching walks.

First, let us briefly recall Ulam's formalism on rooted ordered trees. We first introduce the set of finite words written with positive integers: 
$$  \bbU\! = \! \bigcup_{n\in \bbN} (\bbN^*)^n\; .$$
Here, $\bbN\! = \! \{ 0, 1, 2, \ldots\}$, $\bbN^*\! = \! \bbN \backslash \{ 0\}$ and 
$(\bbN^*)^0$ stands for $\{ \varnothing \}$. The set $\bbU$ is totally ordered by the \textit{lexicographical order} $\leq_{\mathtt{lex}}$ (the strict order is denoted by $<_\mathtt{lex}$). 
It is convenient to think of $\bbU$ as a family tree whose $\varnothing$ is the ancestor. 
Let $u\! = \! (j_1, \ldots , j_n)\ino \bbU\backslash \{ \varnothing\}$. 
We use the notation $|u|\! = \! n$ for the \textit{height} of $u$ (with the convention $|\varnothing |\! = \! 0$) and we 
set $\overleftarrow{u}\! = \! (j_1, \ldots, j_{n-1})$ that is interpreted as the (direct) \textit{parent of $u$}. More generally, for all $p\ino \bbN^*$, we set $u_{|p} \eqo  (j_1, \ldots , j_{n\wedge p})$ and $u_{|0}\eqo \varnothing$. Namely, $u_{|p}$ is the \emph{ancestor of $u$ at the generation $p$} and we observe that $u_{|p}\eqo u$ if $p\geqo |u|$ and that 
$u_{|p}\eqo \overleftarrow{u}$ if $p\eqo |u|\! -\! 1$. 
Let $v\! = \! (k_1, \ldots, k_m)$, we denote by $u\ast v$ the concatenated word $(j_1, \ldots, j_n, k_1, \ldots, k_m)$. 
For all $u, v\ino \bbU$, we then denote by $u \wedge v$ the \emph{most recent common ancestor} of $u$ and $v$, which is defined by $u\wedge v\eqo u_{|p} \eqo v_{|p}$ where $p\eqo \max \{ q\ino \bbN:   u_{|p} \eqo v_{|p}\}$. 

We can think of $\bbU$ as a graph whose (undirected) edges are $\{ u, \overleftarrow{u} \}$, $u\ino \bbU\backslash \{ \varnothing\}$. Then, for all $u,v\ino \bbU$, we denote by 
$\lgeo u, v\rgeo  \! \subset \! \bbU$ the shortest path in the graph $\bbU$ joining $u$ to $v$. We
 also use the following notation 
$\, \rgeo u , v\rgeo \!   = \! \lgeo u , v\rgeo  \backslash \{ u\}$, $\lgeo u,v\lgeo  \,  = \! \lgeo u, v\rgeo  \backslash \{ v\}$ and $\, \rgeo u,v\lgeo  \,  = \! \lgeo u,v \rgeo  \backslash \{u,v \}$. 
Note that for integers $p \leqo q$, we also use the notation $\lgeo p, q\rgeo $ to denote the set of integers $[p,q] \cap \bbZ$ (along with the other obvious pieces of notation $\lgeo p, q \lgeo\, $,$\, \rgeo p, q \rgeo$, and $\, \rgeo p, q \lgeo\, $). We do not think that there is any conflict in the notation here, as the context always removes any ambiguity.

\begin{definition}
\label{Ulamtree}  \textbf{(Rooted ordered trees, subtrees, forests)}
$(\textbf{a})\, $ A \textit{rooted ordered tree} is a subset $ \fftree\! \subset \! \bbU$ satisfying the following conditions: 
\begin{compactenum}
\item[] \textbf{(1):} $\varnothing \ino \fftree$. \textbf{(2):} If $u\ino  \fftree\backslash \{ \varnothing\}$ 
then $\overleftarrow{u}\ino  \fftree$. \textbf{(3):} For all $u\ino  \fftree$, there exists $k_u ( \fftree)\ino \bbN\cup \! \{ \infty\}$ such that: $u \! \ast \!  (j)\ino  \fftree$  if and only if  $1\! \leq \! j \! \leq \! k_u ( \fftree)$. 
\end{compactenum}
Here, $k_u ( \fftree)$ is interpreted as the \textit{number of children of $u$} and if 
$1\! \leq \! j \! \leq \! k_u ( \fftree)$, then $u\ast (j) $ is the \emph{$j$-th child of $u$.}
We denote by $\bbT$ the set of rooted ordered trees.

\smallskip

\noindent
$(\textbf{b})\, $
Let $ \fftree\ino \bbT$ and $u\ino t$. Then 
$\theta_u  \fftree\!  = \! \{ v\ino \bbU: u\ast v\ino  \fftree \}$ is the \emph{subtree of $ \fftree$ stemming from $u$}. 
Note that $\theta_u t\ino \bbT$. 

\smallskip

\noindent
$(\textbf{c})\, $ A \emph{fores}t is a sequence of trees 
$(t^{_{(k)}}_{^{\,}} )_{k\in \bbN^*}$. It is sometimes convenient to view it as a single tree $t\ino \bbT$ with $k_\varnothing(t)\eqo\infty$ such that 
$t^{_{(k)}}_{^{\,}}$ is the subtree stemming from the $k$-th child of $t$: $t^{_{(k)}}_{^{\,}}\eqo \theta_{(k)}t$. \cq 
\end{definition}

\begin{definition}
\label{Lukadef} (\textbf{Lukasiewicz path, height process}) 
$(\textbf{a})\, $ Let $\fftree \ino \bbT$ be \emph{finite} and let $(u_l)_{0\leq l<\# t}$ be the vertices of $\fftree$ listed in 
increasing lexicographical order. By convenience we set $u_{\# t} \eqo \varnothing$ 
and we define $V(\fftree)\eqo (V_l(\fftree))_{0\leq l\leq  \# t}$ and $H(\fftree)\eqo (H_l(\fftree))_{0\leq l \leq  \# t}$ by setting $V_{0}(t)\eqo 0$, $H_{\# t} (t)\eqo 0$  and for all $0\leqo l \leko \# t$,
\begin{equation}
\label{coding}
V_{l+1} (\fftree) \! = \! V_{l}(\fftree)  + k_{u_{l}} (\fftree)\! -\! 1 \quad \textrm{and} \quad H_l (\fftree) \! =\!  |u_{l}|. 
\end{equation}
$V(\fftree)$ is the \emph{Lukasiewicz path} of $\fftree$ and $H(\fftree)$ is the \emph{height process} of $\fftree$. We extend $H(t)$ continuously on $[0, \#t]$ as follows: for all integer $0\leqo l\leko \#t $ and all $s\ino [0,1]$, we set 
\begin{equation}
\label{ctrvshght}
H_{l+s}(t)  = \left\{  \begin{array}{ll}
H_l (t)\! - \! \big(H_l (t) \! -\! |u_l \! \wedge \! u_{l+1}| \big) (2s\! \wedge \! 1)  + (2s\! -\! 1 )_+   &  \textrm{if $|u_{l+1}| \! \leq \! |u_l |$ } \\
H_l (t) + s   & \textrm{if  $|u_{l+1}| \geko | u_l | $.}
\end{array} \right.
\end{equation}
(see Remark \ref{bijecfor} $(\textbf{c})$ for a motivation).  

\smallskip

\noi
$(\textbf{b})\, $ Let $(t^{_{(k)}}_{^{\, }})_{k\in \bbN^*}$ be a forest of finite trees and $t\ino \bbT$ its associated tree as in Definition \ref{Ulamtree} $(\textbf{c})$. The Lukasiewicz path and the height process of the forest are denoted resp.~by $V(t)$ and $H(t)$ and they are obtained by concatenating the $V(t^{_{(k)}}_{^{\, }}) $ and $H(t^{_{(k)}}_{^{\, }}) $ in the following way. We set 
$\sigma_0\eqo 0$ and for all $k\ino \bbN^*$, we set $\sigma_k\! = \#  t^{_{(1)}}_{^{\, }}+ \ldots+  \# t^{_{(k)}}_{^{\, }}$. Moreover, for all $l \ino \lgeo \sigma_{k-1}, \sigma_k\lgeo \,$ and for all $ s\ino [ \sigma_{k-1}, \sigma_k] $, we set 
\begin{equation}
\label{concate}
V_l (t)\eqo V_{l-\sigma_{k-1}} (t^{_{(k)}}_{^{\, }}) \! -\! (k\! -\! 1) \quad \textrm{and} \quad  H_s (t)\eqo H_{s-\sigma_{k-1}} (t^{_{(k)}}_{^{\, }}). \qquad \square
\end{equation}
\end{definition}
\begin{remark}
\label{bijecfor} 
$(\textbf{a})\, $ Let $t$ be the tree associated with the forest of finite trees $(t^{_{(k)}}_{^{\, }})_{k\in \bbN^*}$ as in Definition \ref{Ulamtree} $(\textbf{c})$. Let $(u_l)_{l\in \bbN\cup \{ -1\}}$ be the sequence of vertices of $\fftree$ in increasing lexicographical order: $ u_{-1} \eqo \varnothing \! <_{\mathtt{lex}} \! u_0 \!   <_{\mathtt{lex}}\!  \ldots \! <_{\mathtt{lex}} \! u_l \! <_{\mathtt{lex}} u_{l+1} \! <_{\mathtt{lex}}\!  \ldots $. Let $V(t)$ and $H(t)$ be as in Definition~\ref{Lukadef} $(\textbf{b})$. 
Then $V_{l+1} (\fftree) \! - \! V_{l}(\fftree)  \eqo k_{u_{l}} (\fftree)\! -\! 1$ and $H_l (\fftree) \! =\!  |u_{l}|\! -\! 1$ for all $l\ino \bbN$. 
Then observe that $s\ino [\sigma_k , \sigma_{k+1} )$ if and only if $\inf_{\lgeo 0, \lfloor s \rfloor\rgeo } V(t)\eqo -k$.

\smallskip

\noi
$(\textbf{b})\, $ Observe that if $t$ is a finite tree or the tree associated to a forest of finite trees, then we can recover $\fftree$ from $H(\fftree)$ or from $V(t)$. Moreover, $H(\fftree) $ is an adapted functional of $V(\fftree)$, as shown by the following equation that holds for finite trees of forest of finite trees. 
\begin{equation}
\label{codheight}
\forall l \ino \bbN^*, \quad  H_l (\fftree)= \# \big\{ m\ino \{ 0, \ldots , l\! -\! 1\}: V_m (\fftree) \! = \! \inf_{m\leq j\leq l} V_j(\fftree)  \big\}. 
\end{equation}
See Le Gall \& Le Jan~\cite{LGLJ98} for more details.

\smallskip 

\noi
$(\textbf{c})\, $ Let $t\ino \bbT$ be a finite tree and let $(u_l)_{0\leq l<\# t}$ be the vertices of $\fftree$ listed in 
increasing lexicographical order. Let $l, l'$ be integers such that $0\leqo l\leqo l'\leko \#t$. Then the height of the most recent common ancestor of $u_l$ and $u_{l'}$ may differ from $\inf_{j\in \lgeo l, l' \rgeo} H_j(t)$ by $1$. The 
specific continuous extension of $H(t)$ implies however that $|u_l\wedge u_{l'}\big|  \eqo \min_{s\in [l, l']} H_s (t)$. 

Similarly, if $t$ is the tree corresponding to a forest of finite trees and if $(u_l)_{l\in \bbN\cup \{ -1\}}$ stands for its sequence of vertices listed in lexicographical order, then we also check that $|u_l\wedge u_{l'}| -1 \eqo \min_{s\in [l, l']} H_s (t)$, for all integers $l'\geqo l\geq 0$.  \cq 
 \end{remark}

\begin{definition}
\label{contourdef} (\textbf{Contour process}) $(\textbf{a})\, $ Let $\fftree\ino \bbT$, possibly infinite.  
Let $(v_k)_{0\leq k \leq 2(\# t-1)}$ (with an obvious convention if $\# t \eqo \infty$) 
the sequence of vertices of $\fftree$ be defined recursively as follows.

\smallskip

\begin{compactenum}
\item[] \emph{We set $v_{0}\eqo \varnothing$ and $R_k \eqo\{ w\ino  \fftree\backslash \{v_0,  \ldots, v_k\} : \overleftarrow{w}\eqo v_k \}$; if $R_k$ is empty, then we set 
$v_{k+1} \eqo \overleftarrow{v}_{\! \! k}$; otherwise we take $v_{k+1}$ as the $<_{\mathtt{lex}}$-least vertex of $R_k$.}
\end{compactenum}

\smallskip
\noi
We call $(v_k)$ the exploration of $t$ in the \emph{contour order}.  

\smallskip 

\noi
$(\textbf{b})\, $ Let $t\ino \bbT$ be finite. For all $k\ino \lgeo 0, 2(\# t \! -\! 1)\rgeo$ we set $C_k (t)\eqo |v_k|$ and we conveniently set $C_{2\# t \! -\! 1} (t)\eqo  C_{2\# t} (t)\eqo 0$. We next extends $C(t)$ continuously on $[0, 2\#t]$ by setting for all $k\ino \lgeo 0, 2\# t \! -\! 1\rgeo$ and all $s\ino [0, 1]$: $ C_{k+s}(\fftree) \eqo C_k (\fftree)+ s (C_{k+1} (\fftree) \! -\! C_k (\fftree))$. 
The resulting continuous function $s\ino [0, 2\#t ] \! \mapsto  \! C_s(\fftree)$ is called the \emph{contour process} of $\fftree$.

\smallskip 

\noi
$(\textbf{c})\, $ Let $t\ino \bbT$ be the tree associated with the forest of finite trees 
$(t^{_{(k)}}_{^{\,}})_{k\in \bbN^*}$ (Definition \ref{Ulamtree} $(\textbf{c})$). The contour process of the forest is denoted by $C(t)$. It is obtained by concatenating the $C(t^{_{(k)}}_{^{\,}}) $ in the following way. Recall notation $\sigma_k$ from Definition \ref{Lukadef} $(\textbf{b})$. Then for all $k\ino \bbN^*$, all $ s\ino [ 2\sigma_{k-1}, 2\sigma_k] $, we set $C_{s} (t)\eqo 
C_{s-2\sigma_{k-1}} (t^{_{(k)}}_{^{\,}})$.  \cq 
\end{definition}

\begin{remark}
\label{contord} Let $t$ be the tree associated with the forest of finite trees $(t^{_{(k)}}_{^{\,}})_{k\in \bbN^*}$ as in Definition \ref{Ulamtree} $(\textbf{c})$. Observe that the exploration of $t$ in the contour order visits all the vertices of $t$. In what follows we denote by $(v_k)_{l\in \bbN\cup \{ -1\}}$ the sequence of vertices of $\fftree$ in the contour order starting at $v_{-1} \eqo \varnothing$ (then $v_0\eqo (1)$, etc).

\medskip

\noi
\textbf{(a)} We first note that $C_k(t) \eqo (|v_k| \! -\! 1)_+$ for all $k\ino \bbN$. 
 
\smallskip

\noi
\textbf{(b)} As already mentioned, $t\eqo \{v_k; k\ino \bbN\cup \{ -1\} \}$. More precisely, during the exploration of $\fftree$ in the contour order, each edge is crossed exactly twice (upwards first and then downwards). Namely, for all $u\ino \fftree\backslash \{ \varnothing\}$, there is a unique pair of integers $k,k^\prime$ such that  
$k\leko k^\prime$ and $(\overleftarrow{u},u)\eqo (v_k, v_{k+1})\eqo (v_{k^\prime+1} , v_{k^\prime})$. 
It is also easy to check recursively 
for all $k\ino \bbN$ that $\{ v_j; -1\leqo j\leqo k\} \cap \{ v_j ; j\geqo k\} \eqo \lgeo \varnothing , v_k\rgeo$.  

\smallskip

\noi
\textbf{(c)} The contour process $C(\fftree)$ completely encodes $\fftree$ and it is closely related to the height process $H(\fftree)$. Indeed, let $(u_l)_{l\in \bbN\cup \{ -1\}}$ stands for the sequence of vertices of $\fftree$ listed in increasing lexicographical order as in Remark \ref{bijecfor}. Then, we observe that 
\begin{equation}
\label{Klcontour}
\forall l\ino \bbN, \quad K_l:= 2l \! -\! H_{l} (\fftree) \eqo \inf \{ k\ino \bbN: v_k\eqo u_l\}
\end{equation}
which necessarily increases with $l$. Thus for all $l\ino \bbN$, 
$C_{K_l} (\fftree) \eqo H_{l} (\fftree)$ and furthermore, for all $s\ino [K_l, K_{l+1} ]$, we get
\begin{equation}
\label{ctrvshght_prime}
C_s (\fftree) = \left\{  \begin{array}{ll}
(H_{l}(\fftree) \! -\! s +K_l)_+ &  \textrm{if $s\ino [K_l, K_{l+1} \! -\! 1)$ } \\
(H_{l+1}(\fftree) \! -\! K_{l+1} +s )_+ & \textrm{if $s\ino [K_{l+1} \! -\! 1, K_{l+1}] $.}
 \end{array} \right.
\end{equation}
( See D.~\& Le Gall \cite[Section 2.4]{DuLG02} for more details).

\smallskip

\noi
$\textbf{(d)} \; $ We can derive $H(t)$ from $C(t)$ by the time-change $\phi_t \! : \! \bbR_+ \! \to \! \bbR_+$ that is defined as follows: 
we set $\phi_t(0)\eqo 0$ and for all $l\ino \bbN$ and all $s\ino [0, 1]$, 
\begin{equation}
\label{phitdef}
\phi_t(l+s) \eqo K_l + (K_{l+1} \! -\! 1\! -\!  K_l) 
(2s\! \wedge \! 1) + (2s\! -\! 1)_+. 
\end{equation} 
Then, $\phi_t$ is an increasing bijection from $\bbR_+$ onto itself and for all $s\ino \bbR_+$
\begin{equation}
\label{timechange}
H_s (t)= C_{\phi_t(s)} (t) \; .
\end{equation}     
We easily check that for all $l\ino \bbN^*$, 
\begin{equation}
\label{controphit}
\max_{s\in [0, l]} \big| \tfrac{1}{2} \phi_t(s) \! -\! s\big| \leq 1+ 2 \sup_{s\in [0, l] } H_s (t) \; .
\end{equation}

When $t\ino \bbT$ is finite, then $H(t)$ is also obtained from $C(t)$ by a continuous increasing time-change 
$\phi_t$ from $ [0, \#t] $ onto $ [0, 2\#t] $ by (\ref{phitdef}): then (\ref{timechange}) holds true for all $s\ino [0, \#t]$ and (\ref{controphit}) holds true for all integers $l \leqo \#t$.

\smallskip
\noi
\textbf{(e)} As in Remark~\ref{bijecfor} $(\textbf{c})$, we observe that
$(|v_k\wedge v_{k'}|  \! -\! 1)_+ \eqo \min_{s\in [k, k']} C_s (t)$ for all integers $k'\geqo k$.
If $t$ is a finite tree, then  $|v_k\wedge v_{k'}|  \eqo \min_{s\in [k, k']} C_s (t)$ is obtained instead, for all integers 
$0\leqo k\leqo k'\leqo 2(\# t\! -\! 1)$. \cq 
\end{remark}

\noi
\textbf{GW trees and forests.} We next recall below a convenient recursive definition of Galton--Watson trees and forests (\emph{GW trees} and \emph{GW forests}, for short). 
To that end, we equip $\bbT$ with the sigma-field $\ccF(\bbT)$ generated by the sets $\{ t\ino \bbT\! : \! u\ino t\}$, $u\ino \bbU$.
Unless otherwise specified, all the r.v.s that are mentioned in this paper are defined on the same probability space $(\Omega, \ccF, \bP)$. 
\begin{definition}
\label{GWfordef} (\textbf{GW trees and forests}) Let $\mu\eqo (\mu(k))_{k\in \bbN}$ 
be a probability measure on $\bbN$. 

\smallskip

\noi
$(\textbf{a})\, $ A \textit{GW tree with offspring distribution $\mu$} (a \textit{GW($\mu$)-tree}, for short) is a random tree $\tau$  that satisfies the following. 
\begin{compactenum}
\item[] \textbf{(1):} $k_\varnothing (\tau)$ has law $\mu$. \textbf{(2):} For all $k\ino \bbN^*$ such that $\mu(k)\! >\! 0$, the subtrees $\theta_{(1)} \tau, \ldots , \theta_{(k)} \tau$ under $\bP (\, \cdot \, | k_\varnothing (\tau)\! = \! k)$ are independent with the same law as $\tau$ under $\bP$. 
\end{compactenum}

\smallskip

\noi
$(\textbf{b})\, $ Let $(\tau^{_{(k)}}_{^{\!}})_{k\in \bbN^*}$ be i.i.d.~GW($\mu$)-trees. We generically denote by $\tau_\infty$ the tree associated with the forest $(\tau^{_{(k)}}_{^{\!}})_{k\in \bbN^*}$. We call $\tau_\infty$ a GW($\mu$)-forest.

\smallskip

\noi
$(\textbf{c})\, $ We say that $\tau$ is a \emph{grafted GW($\mu$)-tree} if $k_\varnothing (\tau)\eqo 1$ and if $\theta_{(1)}\tau$ is a GW($\mu$)-tree. \cq 
\end{definition}

\begin{remark}
\label{GWrem} $\textbf{(a)}\, $ We shall always consider GW($\mu$)-trees with a non-trivial offspring distribution. Namely, $\mu (0)\! + \! \mu (1) \leko 1$. Let us recall that a GW($\mu$)-tree is a.s.~finite if and only if 
$\mu$ is critical or subcritical, namely if and only if $\sum_{k\geq 1} k\mu (k)\! \leq \! 1$.

\smallskip

\noi
$\textbf{(b)}\, $ Let us suppose that $\mu$ is non-trivial  and let $\tau_\infty$ be a GW($\mu$)-forest. 
Then its Lukasiewicz path $(V_l(\tau_\infty))_{l\in \bbN}$ RWis a RW  starting at $0$ whose jump law is $\widetilde{\mu} (k)\! :=\!  \mu (k+1)$, $k\ino \bbN\cup \{ -1\}$ (according to Spitzer's terminology, it is a \emph{left-continuous RW}). See Le Gall \& Le Jan \cite{LGLJ98} for more details. \cq 
\end{remark}

\noi
\textbf{Real valued branching walks and discrete snakes.} Here we consider a population represented by a rooted ordered tree 
that moves in $\bbR$ (a \emph{$\bbR$-valued branching walk}): each individual has a spatial position. We discuss 
two ways to encode the spatial positions of the individuals of the population: namely, via the \emph{contour snake} and 
via the \emph{height snake} that are related to the exploration of the family tree respectively in 
contour order and in lexicographical order. Here,  
 $\bC^0(\bbR_+, \bbR)$ stands for the space of the continuous functions from $\bbR_+$ to $\bbR$ equipped with the topology of the uniform convergence on all compact intervals.
\begin{definition}
\label{brwlkdef} (\textbf{Discrete snakes}) $(\textbf{a}) \, $
Let $\fftree\ino \bbT$ be finite. Let $(v_k)_{0\leq k\leq 2(\# t-1)}$ be the vertices of $t$ in contour order. It is convenient to set $v_{2\# t-1}\eqo  v_{2\# t}\eqo \varnothing$. 
Let $\mathtt M\! : \! u \ino t \mapsto\!  \mathtt M_u \ino \bbR$ be a $\fftree$-indexed and $\bbR$-valued walk.   
 For all $k \ino \lgeo 0,  2\# t \rgeo$ and all $p\ino \bbN$, we set 
 $W_{\! k} (t; p)\eqo \mathtt M_{(v_k)_{|p}}$, where we recall that $(v_k)_{|p}$ is the ancestor of $v_k$ at generation $p$ (and that $(v_k)_{|p}\eqo v_k$ if $p\geqo |v_k|$). 
We next set 
$$\forall r\ino \bbR_+ , \quad W_{\! k} (t ; r)\, =    W_{\! k} (t; \lfloor r \rfloor ) + (r \! -\! \lfloor r \rfloor ) \big(W_{\! k} (t; \lfloor r \rfloor \! +\! 1)  \! -\! W_{\! k} (t; \lfloor r \rfloor )  \big).$$ 
We note that $W_{\! k} (t;  \cdot\, ) \ino \bC^0(\bbR_+, \bbR)$ for all $k \ino \lgeo 0, 2\# t \rgeo$. We recall the contour process $C_{ \cdot} (\fftree)$ from Definition \ref{contourdef}. 
Then the \emph{contour snake} associated with the $\bbR$-valued branching walk $(\mathtt M_u)_{u\in \fftree}$ is the $\bC^0(\bbR_+, \bbR)$-valued continuous process $(W_{\! s} (t, \cdot) )_{s\in [0, 2\# t] }$ given by 
$$ \forall s \ino  [0, 2\# t), \quad W_{\! s}  (t\, ; \cdot)   = \left\{ \begin{array}{ll}
  W_{\! \lfloor s \rfloor +1}  \big(t\, ;   \cdot \wedge C_{\! s} (\fftree) \big) & \textrm{if $\; |v_{\lfloor s \rfloor +1} | \eqo 1+ |v_{\lfloor s \rfloor }|$,}\\
 W_{\! \lfloor s \rfloor}  \big( t\, ;   \cdot \wedge C_{\! s} (\fftree) \big)  & \textrm{if $\; |v_{\lfloor s \rfloor} | \eqo 1+ |v_{\lfloor s \rfloor+1 }|$.}
\end{array} \right. $$
We also set $\widehat{W}_{\! s} (t) \eqo  W_{\! s} (t; C_{\! s} (\fftree))$, $s\ino [0, 2\# t] $, which is 
the \emph{endpoint process of the contour snake}.

\smallskip

\noi 
$(\textbf{b}) \, $ Let $\phi_t \! : \! [0,\#t] \! \to \! [0, 2\#t]$ be the continuous increasing time-change such that $H_s (t)\eqo C_{\phi_t(s)} (t)$ as in Remark \ref{contord} $(\textbf{d})$. Then we define the \emph{height snake} as the $\bC^0(\bbR_+, \bbR)$-valued continuous process $(W^*_{\! s} (t, \cdot) )_{s\in [0,\# t] }$ given by 
$$ \forall s\ino [0\, , \#t], \quad W^*_s (t\, ; \cdot)= W_{\phi_t(s)} (t\, ; \cdot) \; .$$
If $(u_l)_{0\leq l< \#t}$ stands for the vertices of $t$ listed in lexicographical order, then $W^*_l (t; p) \eqo \mathtt M_{(u_l)_{|p}}$, for all $l\ino \lgeo 0, \# t \! -\! 1\rgeo$ and all $p\ino \bbN$. 
We also set $\widehat{W}^*_{\! s} (t) \eqo  W^*_{\! s} (t;  H_{\! s} (\fftree))\eqo \widehat{W}_{\phi_t (s)} (t) $, $s\ino [0, \# t] $, which is 
the \emph{endpoint process of the height snake}.

\smallskip

\noi 
$(\textbf{c}) \, $ Let $(t^{_{(k)}}_{^{\!}})_{k\in \bbN^*}$ be a forest of finite trees 
and let $t\ino \bbT$ be 
its associated tree as in Definition \ref{Ulamtree} $(\textbf{c})$. For all $k\ino \bbN^*$, let $R^{_{(k)}}_{^{\!}} \eqo 
(M^{_{(k)}}_{u})_{ u\in t^{{(k)}}}$ be a $\bbR$-valued $t^{_{(k)}}_{^{\!}}$-indexed branching walk. 
The contour snake  of the branching walks $( M^{_{(k)}}_{^{\! }} )_{k\in \bbN^*}$ is denoted $(W_s (t; \cdot))_{s\in \bbR_+} $ 
and is obtained by concatenating and interpolating the $W_\cdot (t^{_{(k)}}_{^{\!}}; \cdot ) $ in the following way. Recall notation $\sigma_k$ from Definition \ref{Lukadef} $(\textbf{b})$. Then for all $r, s\ino \bbR_+$ 
\begin{equation}
\label{concatebis}
W_{\! s}  (t; r)   = \left\{ \begin{array}{ll}
 W_{s-2\sigma_{k-1}} (t^{_{(k)}}_{^{\!}}; r)   & \textrm{if $\; s\ino [ 2\sigma_{k-1}, 2(\sigma_k\! -\!1)]$ ,}\\
 M^{_{(k)}}_{^\varnothing}  + \frac{_{_1}}{^{^2}} (s+ 2\! -\! 2\sigma_{k}) \big(  M^{_{(k+1)}}_{^\varnothing} \!\!  -\!  M^{_{(k)}}_{^\varnothing} \big)  & \textrm{if $s\ino [ 2(\sigma_k\! -\!1), 2\sigma_k]$.}
\end{array} \right. 
\end{equation}
Namely, if $s\ino [ 2(\sigma_k\! -\!1), 2\sigma_k]$, then $W_s (t;\cdot)$ is a constant function and its value is the affine interpolation between  $M^{_{(k)}}_{^\varnothing}$ and $M^{_{(k+1)}}_{^\varnothing}$. 
We also set $\widehat{W}_{\! s} (t) \eqo  W_{\! s} (t; C_{\! s} (\fftree))$, $s\ino \bbR_+ $, which is 
the \emph{endpoint process of the contour snake}.

\smallskip

\noi 
$(\textbf{d}) \, $ We keep the notation of $(\textbf{c})$ and for all $s\ino \bbR_+$, we set 
$W^*_s (t\, ; \cdot)\eqo W_{\phi_t (s)} (t\, ;\cdot)$, where 
$\phi_t $ is the continuous increasing bijection from $\bbR_+$ onto itself such that 
$H_s (t)\eqo C_{\phi_t(s)} (t)$ as in Remark \ref{contord} $(\textbf{d})$. 
The resulting $\bC^0 (\bbR_+, \bbR)$-valued continuous 
process $(W^*_s (t\, ; \cdot))_{s\in \bbR_+}$ is the \emph{height snake} associated with the branching walks $( M^{_{(k)}}_{^{\!}} )_{k\in \bbN^*}$. 
We also set $\widehat{W}^*_{\! s} (t) \eqo  W^*_{\! s} (t; H_{\! s} (\fftree))\eqo \widehat{W}_{\phi_t (s)} (t) $, $s\ino \bbR_+ $, which is the \emph{endpoint process of the height snake}. \cq 
\end{definition}

\subsection{Trees with an infinite line of ancestors.}
\label{inflinetreesec}
We extend Ulam's formalism to define ordered trees that may have 
an infinite line of ancestors. To that end, it is convenient to introduce \textit{bilateral words} as follows. 

\begin{definition}
\label{bilaworddef}(\textbf{Bilateral words, relative height, shifts})
$(\textbf{a})$ A \emph{bilateral word} is a sequence 
$u\! =\!  (a_k)_{k_1 <k\leq k_2}$ where $k_1\ino \bbZZ$, $k_2 \ino \bbZ$ and $a_k \ino \bbN^*$, $k_1 \leko k \leqo k_2$.   
If $k_1\! \geq \! k_2$, then $u\! = \! \varnothing$. We denote by $\overline{\bbU}$ the set of bilateral words. 

\smallskip

\noi
$(\textbf{b})$ Let $u\eqo (a_{k})_{k_1 < k\leq k_2 } \ino \obbU$, as above. Then 
\begin{equation}  
\label{extrem1def}
\bbn u\bbn_-\! =\!  k_1 , \quad \textrm{and} \quad \bbn u \bbn\! = \! k_2 .
\end{equation}
Here $\bbn u \bbn$ is called the \textit{relative height} 
of $u$ and $\bbn u \bbn_-$  its \emph{depth}. 

\smallskip

\noi
$(\textbf{c})$  Let $u\eqo (a_{k})_{k_1 < k\leq k_2 } \ino \obbU$ as above. 
Let $l \ino \bbZ$. We denote by $\varphi_l (u)$ the \emph{$l$-shifted word $u$}. Namely, 
\begin{equation}  
\label{shiftdef}
\varphi_{l} (u) = (a_{k+l})_{k_1 -l < k\leq k_2 -l }, 
\end{equation}
Therefore, $\bbn \varphi_{l} (u) \bbn_- \! = \! \bbn  u\bbn _- \! -\! l$,  $\bbn  \varphi_{l} (u) \bbn \! = \! \bbn  u \bbn \! -\! l$.  We adopt the convention $\varphi_l (\varnothing)\! = \! \varnothing$. Observe that the $\varphi_l\! : \! \obbU \! \to \! \obbU$ are bijections such that $\varphi_l \! \circ \! \varphi_{l^\prime} \! = \! \varphi_{l+ l^\prime}$, $l,l'\ino \bbZ$. For all $k \ino \bbZ$, we then set 
$$ \overline{\bbU}_k = \varphi_{-k} (\bbU) \; .$$
Note that $\overline{\bbU}_k \backslash \{ \varnothing \}$ is the set of bilateral words $u$ such that 
$\bbn u \bbn_- \eqo k$.  We shall also denote by $\overline{\bbU}_{-\infty}$ the set of bilateral words with an infinite depth.  \cq
\end{definition}

 Note that $\bbU \! \subset \! \overline{\bbU}$ and if $u\ino \bbU$, then $\bbn u \bbn \eqo |u| $ (the relative height in this case is the height of the word) and $\bbn u\bbn_-\eqo 0$. Note for all $k\ino \bbZ$ that $\overline{\bbU}_k$ is countable. However $\overline{\bbU}_{-\infty}$ is not countable. 

Let $u\eqo (a_{k})_{k_1 < k\leq k_2 } \ino \obbU$ be distinct from $\varnothing $. We introduce the following notation.  
\begin{equation}  
\label{extremdef}
\mathtt{end} (u)= a_{k_2}, \quad 
\overleftarrow{u}\! = \! (a_k)_{k_1 < k\leq k_2-1} \quad u_{ |\!|_{(l_1, l_2]}} \! =\!  (a_k)_{k_1 \vee l_1 <k \leq k_2 \wedge l_2},  
\end{equation}
for all $l_1\ino \bbZZ $, $l_2 \ino \bbZ$ such that $l_1\leqo l_2$. 
We still call $\overleftarrow{u}$ the (direct) \textit{parent} of $u$. Note that if $\bbn u \bbn_-+1\eqo \bbn u \bbn$, then $\overleftarrow{u}\eqo \varnothing$.  
Here, we insist on the following convention: viewed as a bilateral word, $\varnothing$ 
has neither a relative height nor a depth. However, we set $\varnothing_{ |\!|_{(l_1, l_2]}}=\varnothing$ by convention.

For all $u \ino \obbU$, we next denote by $L_u$ the \emph{ancestral line of $u$} and by $\preceq$ the \emph{genealogical order}, which is a partial order on $\obbU$. Namely 
\begin{equation}
\label{ancestral}
L_u = \big\{ u_{\lll_{(-\infty, p] }} \, ; \, p \ino\bbZ\big\} \quad \textrm{and} \quad \forall u,v\ino \obbU,  
\quad  \Big( u\preceq v \Big) \Longleftrightarrow \Big( u\ino L_v \Big)  \; .
\end{equation}

\begin{definition}
\label{deftreeee}(\textbf{Bilateral trees}) 
$(\textbf{a})$ A non-empty subset $T \! \subset \! \overline{\bbU}$ is a \emph{bilateral tree} if it satisfies the following conditions. 
 
 \begin{compactenum}

\smallskip

 \item[$(\textbf{a1})$] There is $k\ino\bbZZ$ such that $T \backslash \{ \varnothing\} \! \subset \! \overline{\bbU}_k$.
 
\smallskip

\item[$(\textbf{a2})$] For all $u\ino T\backslash \{ \varnothing \}$, $\overleftarrow{u} \ino T$.   

\smallskip

 \item[$(\textbf{a3})$]  For all $u\ino T$, 
 $k_u (T)\!\!  : = \! \# \{ v\ino T\! : \! \overleftarrow{v}\eqo u\} \leko  \infty$ 
 and $\{ 1, \ldots , k_u (T)\}  \! =\! \{ \mathtt{end} (v)\, ; \, v\ino T\!   : \!    \overleftarrow{v}\! = \! u\}$ if $k_u (T) \! \geq \! 1$. 

\smallskip

 \item[$(\textbf{a4})$] For all $u,v\ino T $, there is $l\ino \bbZ$ such that $u_{\mathbf{|\! |}_{(-\infty, l]}} \! \! = \! v_{\mathbf{|\! |} _{(-\infty,  l]}}$. 
 
\end{compactenum}  
 
\smallskip 

\noi
$(\textbf{b})$  We denote by $\overline{\bbT}$ the set of all bilateral trees. For all $T\ino\overline{\bbT}$ distinct from $\{ \varnothing \}$, we define the \emph{depth $\bbn T \bbn_- $ of $T$} as the unique $k\ino\bbZZ$ such that $T\backslash\{\varnothing\} \! \subset \! \overline{\bbU}_k$.

\smallskip

\noi
$(\textbf{c})$ A \emph{pointed bilateral tree} is a bilateral tree $T\ino \obbT$ and a distinguished vertex $o\ino T$ that is interpreted as an origin when $T$ is viewed as a space (or a distinguished individual when $T$ is viewed as a family tree: in this case it is rather denoted by $\varrho$). Recall from (\ref{ancestral}) the notation $L_o$ for the ancestral line of $o$. For all integers $p \ino \bbZ$, we shall use the following notation.
\begin{equation}
\label{spinedfbis} 
 o (p) = o_{\lll_{(-\infty , -p]}}, 
\quad  \textrm{and} \quad 
\partial L_o  \! = \! \big\{ u\ino T \backslash \Loo : \overleftarrow{u}\ino \Loo  \big\}. 
\end{equation}
Here $\partial L_o$ is called the \emph{spine} of $(T, o)$. We denote by $\obbT^\bullet$ the set of pointed bilateral trees. \cq 
\end{definition}

\begin{rems}
\label{bilatreerem}
\noi
\textbf{(a)} We view $T\ino \obbT$ as a graph: its set of vertices is $T$ and its edges  (simple and non oriented) are the unordered pair $\{ v, \overleftarrow{v} \}$, $v\ino T \backslash \{ \varnothing\}$. 

\smallskip

\noi
\textbf{(b)} Let $T \ino \overline{\bbT}$ be distinct from $\{ \varnothing \}$. If $\bbn T\bbn_- \! \neq \! -\infty$, then $1+\bbn T\bbn_-\! = \! \min_{u\in T \backslash \{ \varnothing \}} \bbn u \bbn$ and Definition \ref{deftreeee} $(b)$ implies that $\varnothing \ino T$ that is viewed as the root (or the ancestor) of $T$. However, if 
$\bbn T\bbn_-\! = \! -\infty$, then $\varnothing \! \notin \! T$ and $T$ has no proper ancestor but an infinite line of ancestors.   \cq 
\end{rems}

As for rooted ordered trees, we introduce a formal definition of 
\emph{subtree, most recent common ancestor and lexicographical order} as follows.

\smallskip

\noi
$\bullet$ \textit{Subtrees.} Let us first specify what we mean by concatenation in the context of bilateral words. For all
$u\! =\!  (a_k)_{k_1 <k\leq k_2} \ino \overline{\bbU}$ and all $v\! = \! (b_k)_{1\leq k\leq k_3} \ino \bbU$. 
\begin{equation} 
\label{concdef}
u \ast v\!  = \! (c_l)_{k_1< k\leq k_2+k_3} \quad \textrm{where} \quad c_k 
= \left\{ 
\begin{array}{ll}
a_k &  \textrm{If $k_1 \! < \! k \! \leq \! k_2$,} \\
b_{k-k_2} &  \textrm{If $k_2 \! < \! k \! \leq \! k_2+ k_3$.}
\end{array} \right.
\end{equation}
The bilateral word $u\ast v$ is the concatenation of a bilateral word on the left with 
a null depth word $v$ on the right. Note that $\bbn u\ast v \bbn_-\! = \! \bbn u \bbn_-$, that  
$\bbn u\ast v \bbn \! = \! \bbn u \bbn + \bbn v\bbn $ and that $u\ast \varnothing\! = \! u$. 
Then for all $T\ino \overline{\bbT}$ such that $u\in T$, the \emph{subtree $\theta_u T$ stemming from $u$} is defined by 
\begin{equation}
\label{curshitr} 
\theta_u T\! = \! \{ v\ino \bbU: u\ast v \ino T \} , 
\end{equation}
Note that either $\theta_u T\! = \! \{\varnothing\}$ or $\bbn \theta_u T \bbn_-\! = \! 0$, namely: $\theta_u T \ino \bbT$.

\smallskip

\noi
$\bullet$ \textit{ Most recent common ancestor.}
Let $T\ino \overline{\bbT}$. We define the \textit{most recent common ancestor} (or more simply the common ancestor) 
of $u, v\ino T$ by 
\begin{equation}  
\label{comancdef}
u \! \wedge \! v \!  := \! u_{\lll_{(-\infty, b(u,v)]}} \!\! =  v_{\lll_{(-\infty, b(u,v)]}} \;\,  \textrm{where} \;  \, 
b(u,v)\! :=\! \max \big\{ l  \leqo \bbn u\bbn,\bbn v\bbn : \, u_{\lll_{(-\infty, l]}} \!\! = \!  v_{\lll_{(-\infty, l]}} \big\}, 
\end{equation}
which is well-defined thanks to Definition \ref{deftreeee} $(\textbf{a4})$.

\smallskip

\noi
$\bullet$ \textit{Lexicographical order.} Let $T\ino \overline{\bbT}$. The vertices of $T$ are totally ordered by the \textit{lexicographical order} $\leq_{T}$ that is formally defined as follows: if $\varnothing \ino T$, then $\varnothing$ is the $\leq_{T}$-least element of $T$ and  
for all $u,v\ino T\backslash \{ \varnothing\}$, 
\begin{equation}
\label{lextredef}
u \leq_{T} v  \quad \textrm{iff} \quad \I{b(u,v)<\bbn u\bbn}\mathtt{end} (u_{|_{] -\infty, b(u,v) +1]}}) \leq  \I{b(u,v)<\bbn v\bbn}\mathtt{end} (v_{|_{] -\infty, b(u,v) +1]}})\; .
\end{equation}
Note that there are $u',v'\ino\bbU$ such that $u\! =\! (u\wedge v)\ast u'$ and $v\! = \! (u\wedge v)\ast v'$. Then, it holds that $u\leq_T v$ if and only if $u' \leq_{\mathtt{lex}} v'$.
We denote by $<_T $ the strict order associated with $\leq_{T}$.

\medskip

\noi
\textbf{Pointed labelled trees.} We view BRWs and more generally branching Markov chains as labelled pointed bilateral trees. Let $(E, d_E)$ be a Polish metric space that is the 
space of labels, i.e., ,~the state space where Markovian spatial motion takes place. A $E$-labelled tree is given by a pointed tree $(T, o)\ino \obbT^\bullet$ and labels $z_v\ino E$, $v\ino T$ that are interpreted as follows: $T$ is a family tree of a population (that possibly goes back to an infinite line of ancestors) and 
$z_v\ino E$ is the spatial position of individual $v$. 

We introduce the following notation 
for the \emph{space of pointed $E$-labelled trees}.
$$ \overline{\bbT}^\bullet  (E) \! = \! \{ \partial \} \! \cup \! \Big\{  \Theta \! =\!  \big(T, o \, ; \mathbf z \! =\!  (z_v)_{v\in T} \big)\, ;  \, (T, o)\ino \overline{\bbT}^\bullet,  \, z_v\ino E, v\ino T  \Big\}. $$
Here $\partial$ stands for a cemetery point.

\medskip

\noi
\textbf{Truncation and local convergence.} 
We shall consider, the ergodicity of certain transformations on $\obbT^\bullet$ and $\obbT^\bullet \! (E)$ on the one hand, and $\obbT^\bullet\!  (E)$-valued r.v.s and various statements involving their conditional laws on the other hand. For this reason, we equip $\obbT^\bullet\!  (E)$ with the Polish topology inherited from $E$ and from the local convergence on $\obbT^\bullet\! $. More precisely, we introduce the following truncation procedure.

\begin{definition}
\label{truncdef} 
Let $ \Theta \! =\!  \big(T, \boo \, ; \mathbf z \! =\!  (z_v)_{v\in T} \big) \! \ino \, \overline{\bbT}^\bullet \!   (E)$ and 
let $p\ino \bbZ$ and $q\ino \bbZ  \cup\{ \infty \}$ such that $p\! < \!  q$. If $T\! =\! \{\varnothing\}$ then we set $[\Theta]_p^q\! =\! \Theta$. When $T$ is distinct from $\{\varnothing\}$, we use the convention that if $\boo\! =\! \varnothing$ then $\bbn \boo\bbn\! =\! \bbn T\bbn_-$. Then, if $q\leqo  \bbn T\bbn_{_{ -}}$ or if $p\geko \bbn \boo \bbn$, then we adopt the convention that $[\Theta ]^q_p\! = \!  \partial$. Otherwise, we define $[\Theta ]_p^q$ as the $E$-labelled tree $\big( T^\prime, o^\prime ; \mathbf z^\prime \eqo   (z'_{\! v'})_{v'\in T'} \big)$ given by 
$$ o^\prime \! =\!  o_{\lll_{( p, q]}} , \quad T^\prime \! = \! \big\{  v_{\lll_{( p,q]}}\,  ;\;  v\ino T :  v_{\lll_{( -\infty ,  p]}}\! =
\! \boo_{\lll_{( - \infty, p]}} \big\} \quad  \textrm{and} \quad  z^\prime_{v^\prime}\! = \! z_v, $$
where $v$ is the vertex of $T$ such that $\bbn v\bbn\leqo q$, $v' \! = \!  v_{\lll_{( p,q]}}$ and $v_{\lll_{( - \infty, p]}}\! =
\! o_{\lll_{( - \infty, p]}}$. 
We adopt the convention that $[\partial]_p^q\eqo \partial$ and we call $[\Theta ]_p^q$ the \emph{$(p,q)$-truncation of $\Theta$}.
  
If there is no label, we use the same notation $[(T, o)]_{p}^q$ for the $(p,q)$-truncation of the pointed tree $(T, o) \ino \overline{\bbT}^\bullet$. \cq 
\end{definition}

Let us define local convergence on $\overline{\bbT}^\bullet$ and $\overline{\bbT}^\bullet(E)$. 
To that end, we first introduce for all $\Theta  \! = \! (T,o; \mathbf z)$, $\Theta ^\prime \!= \! (T^\prime, o^\prime; \bz^\prime)$ in 
$\overline{\bbT}^\bullet \! (E)$,
$$   
\Delta_E (\Theta , \Theta ^\prime)\! = \! \un_{\{ (T, o)\neq (T^\prime, o^\prime)\}} + 
\un_{\{ (T, o)= (T^\prime, o^\prime) \}} \max_{v\in T} \big(1\! \wedge \! d_E (z_v, z^\prime_v) \big) $$
with the conventions $\Delta_E (\Theta, \partial)\! = \! 1$ and $\Delta_E (\partial, \partial)\! = \! 0$. 
We easily check that $\Delta_E$ is a (non-separable) metric on $\overline{\bbT}^\bullet \! (E)$. Then, \emph{local convergence on $\obbT^\bullet\! (E)$} is defined as follows. 
\begin{definition}
\label{loccvdef} For all $n\ino \bbN$, let $\Theta _n\! = \! (T^{(n)} , o^{(n)} ; \bz^{(n)})\ino \obbT^\bullet \! (E)$. 

\smallskip

\noi
$(\textbf{a})$ $(\Theta_n)_{n\in \bbN}$ \emph{locally converges} to $\Theta  \! = \! (T, o; \bz) \ino  \obbT^\bullet\!  (E)$ if for all $\epp \ino (0, 1)$ and for all $q\ino \bbN$, there is $n_{q, \epp} \ino \bbN$ such that for all integers $n\! \geq \! n_{q, \epp}$, $\Delta_E \big( [\Theta^{(n)}]_{-q}^q,  [\Theta]_{-q}^q \big) \! < \! \epp$. 

\smallskip

\noi
$(\textbf{b})$ 
The $\obbT^\bullet\!\! $-valued sequence $\big( (T^{(n)} , o^{(n)}) \big)_{n\in \bbN}$  \emph{locally converges} to 
$(T, o)$ if for all $\epp \ino (0, 1)$ and for all $q\ino \bbN$, there is $n_{q, \epp} \ino \bbN$ such that for all integers $n\! \geq \! n_{q, \epp}$, $[(T^{(n)} , o^{(n)}) ]_{-q}^q\eqo  [(T,o)]_{-q}^q$. \cq 
\end{definition}

\begin{remark}
\label{poltoploc}
Local convergence corresponds for instance to the following metric.  
\begin{equation}
\label{disdelloc}
\forall \Theta, \Theta^\prime \ino  \obbT^\bullet\! (E) , \quad \bdelta^E_{\mathtt{loc}} \big(\Theta, \Theta^\prime \big)= \sum_{q\in \bbN} 2^{-q-1} \Delta_E \big(   [\Theta]_{-q}^q, [\Theta^{\prime}]_{-q}^q \big), 
\end{equation}
with $\bdelta^E_{\mathtt{loc}} (\Theta, \partial)\! = \! 1$ and $\bdelta^E_{\mathtt{loc}}(\partial, \partial)\! = \! 0$. We easily check that  
the topology of $(\obbT^\bullet\! (E), \bdelta^E_{\mathtt{loc}})$ is Polish. Note that $\Theta \mapsto [\Theta]_p^q$ is (by definition) continuous 
and that $\lim_{p,q\to \infty}   \bdelta^E_{\mathtt{loc}} (\Theta, [\Theta]_{-p}^q)\eqo 0$. 

Similarly, local convergence on $ \obbT^\bullet\! $ corresponds to a Polish topology. generated by the distance 
$\bdelta_{\mathtt{loc}} \big((T,o),  (T^\prime,o^\prime)\ \big)= \sum_{q\in \bbN} 2^{-q-1}\un_{\{ [(T,o)]_{-q}^q \neq 
 [(T', o')]_{-q}^q  \}}$, for all $(T, o), (T', o')  \ino \obbT^\bullet\! $, with the same conventions $\bdelta_{\mathtt{loc}} ((T, o), \partial)\! = \! 1$ and $\bdelta_{\mathtt{loc}}(\partial, \partial)\! = \! 0$.
 Let us mention that in most parts of the paper, the space $E$ of the labels is the space of pointed bilateral trees 
 $\obbT^\bullet\! $ itself.   \cq 
\end{remark}

\noi
\textbf{Centering function, infinite GW trees.} We next introduce a fundamental function on 
$\overline{\bbT}^\bullet$ that allow us to define dynamics: the centering function. Right after, we introduce infinite GW trees that are invariant under such transforms and whose law has nice related ergodic properties.

\begin{definition}
\label{succdef} Let $(T, o)\ino \overline{\bbT}^\bullet$. When $\varnothing\ino T$ but $T\! \neq\!\{\varnothing\}$, we use the convention $\bbn \varnothing \bbn\! =\! \bbn T\bbn_-$. If $T\!=\!\{\varnothing\}$ then we set $\mathtt{cent}(T,o)\! =\! (T,o)$, and otherwise, we set
\begin{equation}
\label{centerTdef}
 \mathtt{cent}(T, o)\eqo  \big( \varphi_{\bbn o \bbn }(T), \varphi_{\bbn o \bbn } (o) \big) 
\end{equation}
The function $\mathtt{cent}\! : \! \overline{\bbT}^\bullet \!\to  \! \overline{\bbT}^\bullet$ is called the \emph{centering} function. \cq 
\end{definition}
\begin{lemma}
\label{consucc} The centering application $\mathtt{cent} (\cdot)$ is 
locally continuous on $ \obbT^\bullet\!$.
\end{lemma}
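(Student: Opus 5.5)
The plan is to show that each truncation of $\mathtt{cent}(T,o)$ is determined by a truncation of $(T,o)$ on a slightly wider window, and that this wider truncation freezes along a locally converging sequence.

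Fix a sequence $(T_n,o_n)$ converging locally to $(T,o)$ in $\obbT^\bullet$, and fix $q\ino \bbN$. By Definition \ref{loccvdef} $(\textbf{b})$, it suffices to find $n_q$ such that $[\mathtt{cent}(T_n,o_n)]_{-q}^q\eqo [\mathtt{cent}(T,o)]_{-q}^q$ for all $n\geqo n_q$.

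\textbf{Trivial cases.} If $T\eqo \{\varnothing\}$, then every truncation of $(T,o)$ equals $(T,o)$ itself. Local convergence then forces $T_n\eqo\{\varnothing\}$ for all large $n$, and the claim is immediate. Similarly, the case $o\eqo\varnothing$ (with the convention $\bbn\varnothing\bbn\eqo\bbn T\bbn_-$) is detected by a finite truncation. So assume $T\neq\{\varnothing\}$.

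\textbf{Height of the origin.} First I show that $\bbn o_n\bbn\eqo\bbn o\bbn$ for all $n$ large. By Definition \ref{truncdef}, the truncation $[(T,o)]_{p}^{p'}$ with $p\leko \bbn o\bbn\leqo p'$ records the pointed vertex $o_{\lll_{(p,p']}}$. The length of this word is $\bbn o\bbn - (p\vee \bbn o\bbn_-)$, so together with the depth information it determines $\bbn o \bbn$. One needs a little care because the truncation is relative to absolute levels. The cleanest route is to observe that $[(T,o)]_p^{p'}\eqo\partial$ exactly when $p\geko\bbn o\bbn$ (or $p'\leqo \bbn T\bbn_-$). Hence, for a window $[-Q,Q]$ with $Q\geko|\bbn o\bbn|$, equality of the $(-Q,Q)$-truncations of $(T_n,o_n)$ and $(T,o)$ implies that $o_n$ and $o$ have the same height $h:=\bbn o\bbn$. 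This is the step I expect to be the main obstacle: making this identification rigorous from Definition \ref{truncdef}, in particular when $\bbn T\bbn_-$ is finite and the truncated trees may contain $\varnothing$.

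\textbf{Commuting the shift with truncation.} Next I use the identity
$$[\mathtt{cent}(T,o)]_{-q}^{q}\eqo \varphi_h\big([(T,o)]_{h-q}^{h+q}\big),$$
where $\varphi_h$ acts vertexwise on the truncated pointed tree. This follows directly from the definitions (\ref{shiftdef}) and Definition \ref{truncdef}. Shifting by $h$ maps levels $(h-q,h+q]$ onto $(-q,q]$. The condition $v_{\lll_{(-\infty,h-q]}}\eqo o_{\lll_{(-\infty,h-q]}}$ is transported to $\varphi_h(v)_{\lll_{(-\infty,-q]}}\eqo\varphi_h(o)_{\lll_{(-\infty,-q]}}$. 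The conventions for $\partial$ transform consistently as well. The same identity holds for $(T_n,o_n)$ with the same $h$ once $n$ is large.

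\textbf{Conclusion.} Finally, set $Q:=q+|h|+1$. The truncation $[(T,o)]_{h-q}^{h+q}$ is itself a truncation of $[(T,o)]_{-Q}^{Q}$, since truncations compose: $[[\Theta]_{-Q}^Q]_{p}^{p'}\eqo[\Theta]_p^{p'}$ for $-Q\leqo p\leko p'\leqo Q$. Taking $n$ large so that $[(T_n,o_n)]_{-Q}^{Q}\eqo[(T,o)]_{-Q}^Q$ gives equal heights by the previous step and equal $(h-q,h+q)$-truncations, hence $[\mathtt{cent}(T_n,o_n)]_{-q}^q\eqo[\mathtt{cent}(T,o)]_{-q}^q$. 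This proves local continuity.
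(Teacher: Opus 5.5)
Your argument is correct. It is the natural direct proof of a fact the paper does not prove itself but imports from \cite[Lemma 2.7]{DuKhLiTo22}: stabilization of $\bbn o_n\bbn$, then the identity $[\mathtt{cent}(T,o)]_{-q}^{q}=\varphi_h\big([(T,o)]_{h-q}^{h+q}\big)$, then composition of truncations. The step you flag as the main obstacle is in fact immediate: truncated bilateral words keep their absolute indices, so $\bbn o_{\lll_{(-Q,Q]}}\bbn=\bbn o\bbn$ as soon as $Q>|\bbn o\bbn|$ (and when $o=\varnothing$, the depth of the truncated tree records $\bbn T\bbn_-$).
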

\noi
\textbf{Proof.} See D., K., Lin \& Torri \cite[Lemma. 2.7]{DuKhLiTo22}. 
\cqfd

\begin{definition}
\label{GWptdef} (\textbf{Infinite GW trees})  Let $\mathbf r \eqo (r(j,k))_{k\geq j\geq 1}$ be a probability measure 
on $\{ (j,k)\ino (\bbN^*)^2\! : \! j\! \leq \! k\}$. 
Let $\mu$ be a probability measure on $\bbN$ whose mean $\ttm_\mu \! :=\! \sum_{k\in \bbN} k\mu (k)$ is finite and positive. 
 Let $(\bT \! , \boo)\! :\!  \Omega \! \rightarrow \!   \overline{\bbT}^\bullet\! $ be a 
Borel-measurable random pointed tree such that 
$$ \textrm{$\bP$-a.s.} \quad  \bbn \boo \bbn \! = \! 0 \quad \textrm{and} \quad \bbn \bT \bbn_-\!\! = \! -\infty \; .$$ 
Recall from (\ref{ancestral}) that the notation $L_\boo\eqo \{ \boo (p) ; p\ino \bbN\}$ for the 
the ancestral line of $\boo$ and recall from (\ref{spinedfbis}) the definition of the spine $\partial L_\boo$ of $(\bT, \boo)$.  
Recall from (\ref{extremdef}) the notation $\mathtt{end} (\cdot)$.  

\smallskip

\noi
$(\textbf{a})$ We say that $(\bT, \boo)$ is an \emph{infinite GW tree with offspring distribution $\mu$ and dispatching measure $\mathbf r$} if: 
\begin{compactenum}

\smallskip

\item[$-$]  the 
r.v.s $S\! :=\! \big( \mathtt{end} (\boo (p)), k_{\boo (p +1)} (\bT)\big)_{\! p\in \bbN}$ are i.i.d.~with law $\mathbf r$, 

\smallskip

\item[$-$] conditionally given $S$, 
the subtrees $(\theta_{u} \bT \, ; \, u\ino \partial L_\boo )$ are independent GW($\mu$)-trees (as in Definition \ref{GWfordef} $(\mathbf a)$) 

\smallskip

\end{compactenum}

\noi
More specifically:

\smallskip

\noi
$(\textbf{b})$ $(\bT, \boo)$ is an \emph{infinite GW tree with offspring distribution $\mu$} if
$r(j,k)\! = \! \frac{\mu (k)}{\ttm_\mu}$, for all $k\! \geq \! j\! \geq \! 1$.  \cq 
\end{definition}

\noi
\textbf{Many-to-one.} Infinite (pointed) GW trees are related to GW trees via the \textit{many-to-one} principle that asserts 
the following. 
\emph{Let $\bT'$ be a GW($\mu$)-tree 
and let $(\bT, \boo)$ be an infinite GW($\mu$)-tree. 
Then for all Borel-measurable functions $F \! : \! \bbN \times \overline{\bbT}^\bullet \! \rightarrow \! \bbR_+$,   }
\begin{equation}
\label{sizebiased}
\bE \Big[\sum_{v\in \bT'} F \big( |v| ;  \mathtt{cent}(\bT', v)  \big) \Big]= \sum_{p\geq 0} \ttm_\mu^p\, \bE\big[ F (p\, ;  [(\bT, \boo)]^\infty_{-p})\big] \; , 
\end{equation}
where $\ttm_\mu \eqo \sum_{k\in \bbN} k\mu (k) \leko \infty$ and where 
we recall from Definition \ref{truncdef} that $ [(\bT, \boo)]^\infty_{-p}$ is the tree $(\bT, \boo)$ 
truncated above $\boo (p)$, a vertex which is situated at relative height $-p$.

\subsection{Biased RWs on trees}
\label{biaRWsec}
In this section, we recall the definition of \emph{biased RWs on trees} as introduced by R.~Lyons \cite{Lyons90}. This model depends on one parameter $\lambda$ ranging in $(0, \infty)$ and we focus on the critical parameter when the environment is a GW tree $\bT$. Instead of viewing the RW as successive positions $X_n$ in $\bT$, $n\in \bbN$, we view it as the $\overline{\bbT}^\bullet$-valued sequence $\overline{X}_n \eqo (\bT, X_n)$. We then recall the definition of the law of an invariant GW tree, that is a law on $\overline{\bbT}^\bullet$ invariant (up to centering) 
for the biased RW $(\overline{X}_n )_{n\in \bbN}$. We next recall ergodic properties of biased RW 
on GW trees which have been originally investigated in Zeitouni \& Peres~\cite{PerZei08}.

\medskip

\noi
\textbf{$\boldsymbol{\lambda}$-biased RWs on trees.} We fix $\lambda \ino (0, \infty)$ and $T\ino \obbT$, a bilateral tree not reduced to $\{\varnothing\}$. The transition probabilities of a $\lambda$-biased RW on $T$ are defined as follows: $p_{T, \lambda} (x,y)\eqo 0$ if $x$ and $y$ are not adjacent and otherwise for all $x\ino T \backslash \{ \varnothing\} $, 
\begin{equation}
\label{deflambRW}
 p_{T, \lambda} (\overleftarrow{x}, x)= \I{\overleftarrow{x}\neq \varnothing}\frac{1}{\lambda + k_{\overleftarrow{x}} (T)}+\I{\overleftarrow{x}=\varnothing}\frac{1}{k_\varnothing(T)} \quad \textrm{and} \quad p_{T, \lambda} ( x,\overleftarrow{x})= \frac{\lambda}{\lambda + k_x (T)}
\end{equation}
(we recall that $\varnothing \! \notin \! T$ if $\bbn T \bbn_- \eqo -\infty$).

A $\lambda$-RW on $T$ is the RW associated with the conductances $C_{\! \{ x, \overleftarrow{x}\}}\eqo \lambda^{-\bbn x \bbn}$. As proved in R.~Lyons \cite{Lyons90} when $T \ino \bbT$, there is a quantity $\mathtt{br} (T)\ino [0, \infty]$ called the \emph{branching number} such that the $\lambda$-biased RW is recurrent if $\lambda \geko\mathtt{br} (T)$, and transient if $\lambda\leko \mathtt{br} (T)$ (see also Lyons \& Peres \cite[Theorem 3.5]{LyoPerbook} for more details).

In this article, \emph{we want to view $\lambda$-biased RWs on a tree $T\ino  \overline{\bbT}$ as 
a $ \overline{\bbT}^\bullet$-valued sequences of r.v.~$\overline{X}_n \eqo (T, X_n)$, $n\ino \bbN$.}
The only reason why we adopt this point of view is technical: we want to apply ergodic theory, to work with regular versions of conditional laws of the RW when $(T, o)$ is random, etc.  
The transition kernels of the Markov chain $(\overline{X}_n)_{n\in \bbN}$ have to be modified accordingly as explained in the following definition. 
To fit in our framework of labelled bilateral trees, we generically denote elements of $\obbT^\bullet\! $ by $\overline{x}\eqo (T, x)$, $\overline{x}^\prime\eqo (T^\prime, x^\prime)$, etc and $\obbT^\bullet\!$-valued sequences by $\overline{x}_n \eqo (T_n, x_n)$, $n\ino \bbN$. 

\begin{definition}
\label{biaRWtreedef} Let $(T, o) \ino \overline{\bbT}^\bullet$. Let $\lambda \ino (0, \infty)$ and let $p_{T, \lambda} (\cdot, \cdot)$ be the transition probabilities of $\lambda$-RWs on $T$ as defined in (\ref{deflambRW}).  
The transition kernel on $\obbT^\bullet\! $ 
of the $\lambda$-biased RW is then defined for all $\overline{x}\!:= \! (T,x) \ino \obbT^\bullet\! $ by 
\begin{equation}
\label{defcritRW} 
\overline{\mathtt p}_{\lambda} ( \overline{x}, d\overline{x}^\prime  ) = \sum_{y\in T} p_{T, \lambda } (x, y) \delta_{(T, y)} ( d\overline{x}^\prime) \, .
\end{equation}
We then denote by $P_{T,o}$ the law of the $\lambda$-biased RW
on $(\obbT^\bullet\! )^\bbN$ (that is 
the space of the $\obbT^\bullet \! $-valued sequences equipped with the product topology, which makes it Polish) starting at $(T,o)$. 
Note that $P_{T,o}$ depends on $\lambda$ although it does not appear in the notation. \cq 
\end{definition}
\begin{remark}
\label{toregcondilaw}
It is easy to check that 
\begin{equation}
\label{contbiasRW}
 (T,o) \ino  \obbT^\bullet \! \longmapsto P_{T,o} \quad \textrm{is continuous}
\end{equation}
with respect to local convergence on $\obbT^\bullet \! $ and to the weak convergence on the space of probability measures on $(\obbT^\bullet\! )^\bbN$. If $(\bT, \boo)$ is a  $\obbT^\bullet$-valued r.v., then $P_{\bT,\boo}$ stands for the (regular version of the) conditional law conditionally given $(\bT, \boo)$ of the $\lambda$-biased RW on $(\bT, \boo)$ denoted by $\overline{X}_n\eqo (\bT, X_n)$, $n\ino \bbN$, starting at $\overline{X}_0\eqo (\bT, \boo)$. 
These technical points being cleared, when such a formal point of view is not essential to our arguments, 
we rather speak of $(X_n)_{n\in \bbN}$ as to the $\lambda$-biased RW on $\bT$ starting at $\boo$. \cq  
\end{remark}

Let  $\overline{X}_n\eqo (T, X_n)$, $n\ino \bbN$, be a $\lambda$-biased RW whose initial value is $\overline{X}_0 \eqo (T, x)$. We use the following notation for the hitting times of $\overline{X}$: 
\begin{equation}
\label{ttitime}
\forall y \in T, \quad \mathtt H_y\! = \! \inf \{ n\ino \bbN: X_n\! = \! y \} \quad \textrm{and} \quad     \mathtt H^+_y\! = \! \inf \{ n\ino \bbN^*: X_n\! = \! y \}, 
\end{equation}
with the convention that $\inf \emptyset \eqo \infty$. 
The next lemma recalls  basic results
on hitting times and the Green function of $\lambda$-biased RWs. 

\begin{lemma}
\label{dlczidec} Let $\lambda\ino(1,\infty)$ and let $\overline{x}\eqo (T,x) \ino \obbT^\bullet\! $, whose depth $\bbn T\bbn_-$ is possibly infinite. Let $\overline{X}_n\eqo (T, X_n)$, $n\ino \bbN$, be a $\lambda$-biased RW with initial state $(T,x)$. We assume that 
\begin{equation}
\label{eschyp} 
 \forall x\ino T \backslash \{ \varnothing \}, \quad P_{(T,x)} \big(  \mathtt{H}_{\overleftarrow{x}} \leko \infty \big) \eqo 1 \; .  
\end{equation}
Let $\boo \ino T \backslash \{ \varnothing \}$. Recall from (\ref{ancestral}) that $L_\boo$ stands for 
the ancestral line of $\boo$ and let us denote by $ \boo(p)$ the vertex of $L_\boo$ at distance $p$ from $\boo$, which makes sense if and only if $0\leq p \leqo \bbn o\bbn -\bbn T\bbn_{ -}$. Namely 
\begin{equation}
\label{zmvkb}
 \boo(p)\! = \! \boo_{\lll_{\mathbf{(}  -\infty\, ,\,   \bbn \boo \bbn -p \mathbf{]} }}, \quad   0\leqo p \leqo \bbn \boo\bbn\!  -\! \bbn T\bbn_{ -} 
\end{equation}
We also introduce the successive times of visit of $\overline{X}$ in $L_\boo$ that are recursively defined for all $k\ino \bbN$ by $\mathbf{r}^\boo_{k}\! = \! \inf \big\{ n \! >\! \mathbf{r}^\boo_{k-1} : X_n \ino L_\boo \big\}$, 
with the convention that $\mathbf r^\boo_{-1}\eqo -1$ and $\inf \emptyset \eqo \infty$.  
Then, the following holds true. 
\begin{compactenum}

\smallskip

\item[$(i)$] For all $x\ino T$, $P_{(T, x)}$-a.s.~for all $k\ino \bbN$, $\mathbf{r}^\boo_k \! < \! \infty$ and  
there exists a birth-and-death Markov chain $(Z_k)_{k\in \bbN}$ that takes its values in $\bbN \cap [0, \bbn \boo\bbn\!  -\! \bbn T\bbn_{ -} ]$ and such that $\boo (Z_k)\! = \! X_{\mathbf{r}_k}$, $k\ino \bbN$. 
If $T$ has a finite depth, then it is reflected at $\bbn \boo\bbn\!  -\! \bbn T\bbn_{ -}$; otherwise, its transition probabilities are given for all $1\leqo p\leko \bbn \boo\bbn\!  -\! \bbn T\bbn_{ -}$ by 
\begin{equation}
\label{fgfhfj}
\rho_{p, p+1} \! = \! \frac{\lambda }{\lambda  +  k_{\boo (p) } (T)  } , \; \,  \rho_{p, p-1}\! = \! \frac{1}{ \lambda + k_{\boo (p) } (T)}, \; \,  \rho_{p, p}\! =\! 
\frac{k_{\boo (p) } (T)\! -\! 1 }{ \lambda +  k_{\boo (p) } (T)} \; ,
\end{equation}
and $\rho_{0, 0}\!= \! 1\! -\! \rho_{0, 1} \eqo\frac{k_{\boo} (T)}{ \lambda  + k_{\boo  } (T)}$.  
Moreover, if $T$ has an infinite depth, then $Z$ is transient which implies that $\bbn \boo  \wedge \! X_n \bbn \! \rightarrow \! -\infty$ and that $(X_n)_{n\in \bbN}$ is transient. 

\smallskip

\item[$(ii)$] Let $x,y, z\ino T$ such that $z \preceq y$. Then 
\end{compactenum}
\begin{equation}
\label{lbdev}P_{(T \!\!, \,  x)} \big(\mathtt H_{y} \! <\! \mathtt H_{z}  \big)\! =  \! 
\frac{(\lambda^{\bbn x\wedge y\bbn }\! -\! \lambda^{\bbn z\bbn })_+}{\lambda^{\bbn  y\bbn }\!  -\! 
\lambda^{\bbn z \bbn }} \; .
\end{equation}
\begin{compactenum}
\item[$(iii)$]  We assume now that $T$ has an infinite depth. For all $x,y\ino T$, we first get 
\end{compactenum}
\begin{equation}
\label{finhit}
P_{(T \!\!, \,  x)} \big(\mathtt H_{y} \! <\! \infty  \big) \! = \! \lambda^{- (\bbn y \bbn -\bbn x\wedge y\bbn) }  \quad  \textrm{and} \quad P_{(T \!\!, \,  x)} \big(\mathtt H^+_{x} \! <\! \infty  \big) \! = \! \tfrac{1+k_x(T)}{\lambda + k_x (T)} \; .
\end{equation}
\begin{compactenum}
\item[] We denote 
the Green function by $G_T(x,y)\! = \! E_{(T,x)} \big[ \sum_{n\in \bbN}\un_{\{ X_n =  y\} } \big]$. Then 
\end{compactenum}

\begin{equation}
\label{greenoj}
G_T(x, y) = G_T(x\wedge y, y)= \tfrac{\lambda + k_y (T)}{\lambda  -1} \, \lambda^{- (\bbn y \bbn -\bbn x\wedge y\bbn) }.
\end{equation}
\begin{compactenum}
\item[] If $x\preceq y$, we also get $E_{(T,x)} \big[ \sum_{0\leq n < \mathtt H_{\overleftarrow{x}}} \un_{\{ X_n =  y\} } \big]\eqo (\lambda \! + \! k_y (T)) \lambda^{- (\bbn y \bbn- \bbn x \bbn +1)}$. 
\end{compactenum}
\end{lemma}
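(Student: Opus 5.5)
The plan is to reduce everything to electrical-network computations for the conductances $C_{\{x,\overleftarrow{x}\}}=\lambda^{-\bbn x\bbn}$, together with a birth-and-death analysis along the ancestral line $L_\boo$.

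\emph{Point $(ii)$ and the first part of $(iii)$.} Fix $z\preceq y$. The function $h(w)=(\lambda^{\bbn w\wedge y\bbn}-\lambda^{\bbn z\bbn})_+/(\lambda^{\bbn y\bbn}-\lambda^{\bbn z\bbn})$ is harmonic for $p_{T,\lambda}$ off $\{y,z\}$, and it equals $1$ at $y$ and $0$ at $z$. To check harmonicity at $w$, note that $\bbn w\wedge y\bbn$ stays constant when moving to a child outside $\lgeo z,y\rgeo$. For $w$ on $\lgeo z,y\rgeo$, the map $w\mapsto\lambda^{\bbn w\bbn}$ is harmonic along the path for the weights $\lambda/(\lambda+k)$ and $1/(\lambda+k)$. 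The subtree hanging off $w$ (other children) contributes $h(w)$, so the identity $\lambda\cdot\lambda^{\bbn w\bbn-1}+\lambda^{\bbn w\bbn+1}=(\lambda+1)\lambda^{\bbn w\bbn}$ does the job. Below $z$, $h$ vanishes on the whole region cut off by $z$.

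Next I use (\ref{eschyp}): from any vertex the walk reaches the parent a.s., so it exits any finite subtree a.s. Together with transience along $L_\boo$ (or reflection, in the finite-depth case), this gives $\mathtt H_y\wedge\mathtt H_z<\infty$ a.s. when $x$ lies in the region where $h$ is relevant. Optional stopping for the bounded martingale $h(X_{n\wedge \mathtt H_y\wedge \mathtt H_z})$ then yields (\ref{lbdev}). Letting $z$ go down $L_y$ to $-\infty$ (infinite depth, $\lambda>1$) gives $P(\mathtt H_y<\infty)=\lambda^{\bbn x\wedge y\bbn-\bbn y\bbn}$.

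\emph{Point $(i)$.} The times $\mathbf r^\boo_k$ are finite because, by (\ref{eschyp}), each excursion into a subtree hanging off $L_\boo$ returns to its base. Between two visits of $L_\boo$, the walk either moves along $L_\boo$ or makes a returning excursion into a non-spine child, which counts as a stay. The strong Markov property then gives (\ref{fgfhfj}).

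For $\lambda>1$, the chain $Z$ has a drift towards $+\infty$ (depth), since $\rho_{p,p+1}/\rho_{p,p-1}=\lambda$. The standard Lyapunov function $\lambda^{-p}$ is a supermartingale, which gives transience. Transience of $Z$ implies $Z_k\to\infty$, hence $\bbn\boo\wedge X_n\bbn\to-\infty$, and $X$ is transient.

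\emph{Return probability and Green function.} First step analysis gives
\[
P_x(\mathtt H^+_x<\infty)=\frac{\lambda}{\lambda+k_x}\,P_{\overleftarrow{x}}(\mathtt H_x<\infty)+\frac{k_x}{\lambda+k_x},
\]
where each child returns a.s. by (\ref{eschyp}), and $P_{\overleftarrow x}(\mathtt H_x<\infty)=\lambda^{-1}$ by the previous step. This gives $(1+k_x)/(\lambda+k_x)$.

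Then
\[
G_T(y,y)=\bigl(1-P(\mathtt H^+_y<\infty)\bigr)^{-1}=\frac{\lambda+k_y}{\lambda-1},
\]
and $G_T(x,y)=P_x(\mathtt H_y<\infty)\,G_T(y,y)$. Since any path from $x$ to $y$ passes through $x\wedge y$, we get $P_x(\mathtt H_y<\infty)=P_{x\wedge y}(\mathtt H_y<\infty)$, which yields (\ref{greenoj}).

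\emph{Last identity.} For $x\preceq y$, the expected number of visits to $y$ before $\mathtt H_{\overleftarrow x}$ equals $P_x(\mathtt H_y<\mathtt H_{\overleftarrow x})/P_y(\mathtt H_{\overleftarrow x}<\mathtt H^+_y)$. The numerator comes from (\ref{lbdev}) with $z=\overleftarrow x$, giving $(\lambda^{\bbn x\bbn}-\lambda^{\bbn x\bbn-1})/(\lambda^{\bbn y\bbn}-\lambda^{\bbn x\bbn-1})$. The denominator is
\[
\frac{\lambda}{\lambda+k_y}\,P_{\overleftarrow y}(\mathtt H_{\overleftarrow x}<\mathtt H_y),
\]
again computed from (\ref{lbdev}). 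Simplifying gives $(\lambda+k_y)\lambda^{-(\bbn y\bbn-\bbn x\bbn+1)}$.

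The main obstacle is justifying the optional stopping in $(ii)$ when the depth is infinite, namely showing that $\mathtt H_y\wedge\mathtt H_z<\infty$ a.s. This is where (\ref{eschyp}) and the reduction to $Z$ from $(i)$ have to be combined carefully.
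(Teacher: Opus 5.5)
Your proof is correct and follows the paper's skeleton. For $(i)$ you use the embedded birth--death chain along $L_\boo$, for $(ii)$ a gambler's-ruin formula, for the first identity in (\ref{finhit}) the limit $\bbn z\bbn\to-\infty$, and for the return probability the same first-step equation. The differences are local:

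\begin{itemize}
\item For $(ii)$, the paper applies gambler's ruin to the chain $Z$ of $(i)$ built with $\boo=y$. You instead check harmonicity of $h$ on the whole tree and stop the martingale. Since $h$ is constant on each side subtree and its restriction to $L_y$ is the gambler's-ruin solution, this is the same mechanism written globally.
\item For (\ref{greenoj}), you read $G_T(y,y)$ off the return probability. The paper decomposes at $\mathtt H_{\overleftarrow y}$ (a geometric number of returns before the first step to $\overleftarrow y$, plus $G_T(\overleftarrow y,y)$) and solves for $G_T(\overleftarrow y,y)$ by taking $x=\overleftarrow y$. Yours is shorter once (\ref{finhit}) is known.
\item For the last identity, the paper writes the quantity as $G_T(x,y)-G_T(\overleftarrow x,y)$, which needs a finite Green function, i.e.\ infinite depth. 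Your hitting/escape ratio only uses (\ref{lbdev}) and (\ref{eschyp}), so it also holds in finite depth.
\end{itemize}

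The step you flag as the main obstacle is not one, and neither the depth nor transience enters it.
\begin{itemize}
\item If $x\wedge y\preceq z$, every path from $x$ to $y$ passes through $z$, so both sides of (\ref{lbdev}) vanish.
\item If $y\preceq x$, iterating (\ref{eschyp}) brings the walk to $y$ before $z$.
\item Otherwise $z\prec x\wedge y\prec y$. Before $\mathtt H_y\wedge\mathtt H_z$ the walk stays in $\{v\in T: z\prec v\wedge y\prec y\}$, whose only outer neighbours are $y$ and $z$.
\end{itemize}
In the last case, each excursion into a side subtree ends a.s.\ by (\ref{eschyp}) itself. These subtrees need not be finite, so ``exits any finite subtree'' is not the relevant fact. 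The successive positions on $L_y$ then form a birth--death chain on the finitely many vertices from $z$ to $y$. From each state it is absorbed within $\bbn y\bbn-\bbn z\bbn$ steps with probability bounded below, hence it is absorbed a.s. This is what underlies the paper's one-line appeal to gambler's ruin.
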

\noi
\textbf{Proof.} 
We easily see that Assumption (\ref{eschyp}) implies that $\mathbf{r}^\boo_k \! < \! \infty$ a.s.~for all $k \ino \bbN$. 
Markov property at $\mathbf{r}^\boo_k$ implies that $(Z_k)_{k\in \bbN}$ is a $\bbN$-valued birth-and-death Markov chain with transition probabilities as in (\ref{fgfhfj}). We use the previous result with $\boo\eqo y$: Gambler's ruin for birth and death chains (see e.g.~Durrett \cite[Theorem.~6.4.6 p.~249]{Durbook}) implies the first equality in (\ref{lbdev}). When $T$ has an infinite depth, it also implies that $\bbn \boo  \wedge \! X_n \bbn \! \rightarrow \! -\infty$ 
(the RW is transient) and the first equality in (\ref{finhit}). To prove the second equality in (\ref{finhit}), 
we first deduce from (\ref{eschyp}) that $P_{(T \!\!, \,  x)} (\mathtt H^+_{x} \! <\! \infty )\eqo \frac{k_x(T)}{\lambda+ k_x(T)}+\frac{\lambda}{\lambda+ k_x(T)}P_{(T \!\!, \,  \overleftarrow{x})} (\mathtt H_{x} \! <\! \infty ) $ and we use the first equality in (\ref{finhit}).

 We now compute the Green function when $T$ has an infinite depth: by the strong Markov property at the first time of visit of $x\wedge y$, we get $G_T(x, y) \eqo G_T(x\wedge y, y)$. So we only need to consider the cases where $x\! \preceq \! y$. By the Markov property, we observe that 
 $G_T(x,y)\eqo P_{T,x} (\mathtt H_y\leko \infty) G_T(y,y)$ and that  $E_{T,y} [\sum_{n\in \bbN} \un_{\{ X_n=y\} } | (X_0, \ldots , X_{\mathtt H_{\overleftarrow{y}}} )] \eqo 1+ Z+ G_T(\overleftarrow{y}, y)$ where $P_{T, y} (Z\eqo j)\eqo p(1\! -\! p)^j $, $j\ino \bbN$, with $p\eqo p_{T, \lambda} ( y, \overleftarrow{y})$. Thus, $G_T(x,y)\eqo  P_{T,x} (\mathtt H_y\leko \infty) \big( p^{-1}+ G_T(\overleftarrow{y}, y))$. We find the value of $G_T(\overleftarrow{y}, y)$ by taking $x\eqo \overleftarrow{y}$ and (\ref{greenoj}) follows. 
 
We then prove the last point of $(iii)$ by counting how many times the RW visits $y$ before and after 
$\mathtt H_{\overleftarrow{x}}$ to get  $G_T(x, y)\! -\!  G_T(\overleftarrow{x}, y) \eqo 
E_{(T,x)} \big[ \sum_{0\leq n < \mathtt H_{\overleftarrow{x}}} \un_{\{ X_n =  y\} } \big]$, by Markov, which 
implies the desired result. \cqfd

\medskip

\noi
\textbf{Critical biased RWs on GW trees.}We restrict our attention to \emph{critical} biased RWs on supercritical GW trees and variants of such trees. More precisely, let $\bT'$ be GW($\nu$)-tree whose offspring distribution $\nu$ is supercritical: namely $\mathtt m_\nu \! := \! \sum_{k\in \bbN} k \nu (k) \in (1, \infty)$. 
In Lyons~\cite[Proposition 6.4]{Lyons90}) it is proved that 
$\bP (\, \cdot \, | \, \# \bT'\eqo \infty)$-a.s.~$\mathtt{br} (\bT')\eqo \ttm_\nu$ and  and that $\ttm_\nu$- biased RWs on $\bT'$ are null-recurrent. We shall refer to this case as to the \emph{critical case}.  
This lemma shows in particular that the estimates of Lemma \ref{dlczidec} apply to critical biased RW on GW trees. 
Recurrence here does not contradict Lemma \ref{dlczidec} $(i)$ where the tree is supposed to be infinitely deep. 
\begin{lemma}
\label{critGWimpl} Let $\nu$ be a supercritical offspring distribution whose mean is denoted by $\ttm_\nu$. Let $(\bT, \boo)$ be a GW($\nu$)-tree  (Definition \ref{GWfordef}) or an infinite GW($\nu$)-tree (Definition \ref{GWptdef}). 
Let $\overline{X}_n \eqo (\bT, X_n)$ be a $\ttm_\nu$-biased RW on $\bT$. 
Then, $\bP$-a.s.~for all $x\ino \bT\backslash\{\varnothing\}$, $P_{(\bT,x)} \big(  \mathtt{H}_{\overleftarrow{x}} \leko \infty \big) \eqo 1$. 
\end{lemma}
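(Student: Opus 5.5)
The plan is to reduce the statement to the recurrence of the critical biased walk on subtrees hanging from $x$, using the electrical-network characterization together with Lyons' result that $\ttm_\nu$-biased RWs on GW($\nu$)-trees are a.s.\ recurrent. Fix $x\ino \bT\backslash\{\varnothing\}$. Before $\mathtt H_{\overleftarrow{x}}$ the walk started at $x$ stays in $\{x\}\cup\{x\ast v: v\in \theta_x\bT\}$. Its law up to that time coincides with the law of a $\ttm_\nu$-biased RW on the rooted tree $T_x$ obtained by adding to $\theta_x\bT$ an extra root $\varnothing'$ whose unique child is $x$, stopped at $\varnothing'$. The event $\{\mathtt H_{\overleftarrow{x}}<\infty\}$ is the event that this walk returns to the root. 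By the conductance description ($C_{\{y,\overleftarrow y\}}=\lambda^{-\bbn y\bbn}$), return to the root is almost sure if and only if the effective conductance from $x$ to infinity in $\theta_x\bT$ with these conductances is zero, i.e.\ the $\ttm_\nu$-biased walk on $\theta_x\bT$ is recurrent.

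Next I identify the law of $\theta_x\bT$. In the GW($\nu$) case, for each fixed $u\ino\bbU$, conditionally on $u\ino\bT$, the subtree $\theta_u\bT$ is a GW($\nu$)-tree. In the infinite GW case (Definition \ref{GWptdef}), each $x$ is either of the form $u\ast v$ with $u\in\partial L_\boo$, in which case $\theta_x\bT$ is again a GW($\nu$)-tree, or $x=\boo(p)$ lies on the ancestral line. Since $\bT$ is countable (a countable union of the countable sets involving the $\overline{\bbU}_k$ indexing), it is enough to prove the claim almost surely for each fixed candidate vertex and take a countable union of null sets.

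For $x$ with $\theta_x\bT$ a GW($\nu$)-tree, I would invoke Lyons~\cite[Proposition 6.4]{Lyons90}. If $\theta_x\bT$ is finite the walk trivially returns. On nonextinction, $\mathtt{br}=\ttm_\nu$ and the $\ttm_\nu$-biased walk is recurrent. Hence $P_{(\bT,x)}(\mathtt H_{\overleftarrow x}<\infty)=1$ almost surely.

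The main obstacle is $x=\boo(p)$ on the spine, because $\theta_{\boo(p)}\bT$ is not a GW tree: it contains the spine segment down to $\boo$. I would decompose $\theta_{\boo(p)}\bT$ into the finite spine path from $\boo(p)$ to $\boo$ together with finitely many GW($\nu$)-trees attached at children off the path (the $\theta_u\bT$, $u\in\partial L_\boo$, with $u$ below $\boo(p)$), plus the subtree $\theta_\boo\bT$, which is itself a GW($\nu$)-tree. In series/parallel terms, the effective conductance to infinity of a finite union of pieces glued along a finite set of vertices vanishes as soon as each piece has zero conductance to infinity. Each attached tree does, by the previous paragraph applied with a rescaled constant $\lambda^{-k}$, which does not affect vanishing. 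Therefore the walk on $\theta_{\boo(p)}\bT$ is recurrent and returns to $\overleftarrow{\boo(p)}$ almost surely. Equivalently, I would argue directly: by the previous step, every excursion into an attached GW subtree returns almost surely, so the walk restricted to the finite spine segment is an irreducible chain on a finite set. Hence it a.s.\ exits through $\overleftarrow{\boo(p)}$.
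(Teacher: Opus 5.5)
Your proposal is correct and follows essentially the paper's route: the GW case via Lyons' recurrence of the $\ttm_\nu$-biased walk, off-spine vertices reduced to that case through the independent GW subtrees hanging from $\partial L_\boo$, and spine vertices handled by viewing $\theta_{\boo(p)}\bT$ as a finite segment of $L_\boo$ with finitely many GW trees attached. Your ``direct'' argument (excursions into the attached trees return, so the walk keeps coming back to the finite segment and eventually steps to $\overleftarrow{\boo(p)}$) is exactly the paper's, and the conductance-gluing version is an equivalent electrical phrasing. One small correction: vertices of an infinite GW tree have infinite depth, so they lie in the uncountable set $\overline{\bbU}_{-\infty}$ and not in the $\overline{\bbU}_k$, and the countable indexing you need is relative to $\boo$, namely $x=\boo(p)\ast v$ with $(p,v)\in\bbN\times\bbU$.
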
 
\noi
\textbf{Proof.} By e.g.~Lyons \& Peres \cite[Theorem.~3.5]{LyoPerbook}), as already mentioned 
if $\bT$ is a GW($\nu$)-tree, then the $\mathtt m_\nu$-biased RW is recurrent which implies 
the lemma in this case. 
 
 Suppose that $(\bT, \boo)$ is an infinite GW($\nu$)-tree: conditionally given $\partial L_\boo$, the subtrees $\theta_y \bT$, $y\ino \partial L_\boo$, are i.i.d.~GW($\nu$)-trees and $\bP$-a.s.~for all $x\notin L_\boo$, $P_{\bT, x} (\mathtt H_{\overleftarrow{x}} \leko \infty) \eqo 1$ by the previous case. If $x \ino  L_\boo$ and $\mathtt H_{\overleftarrow{x}} \eqo \infty$, then the previous cases imply that the $\mathtt m_\nu$-biased RW has to visit infinitely many often 
each vertex of the subtree above $x$ because it is the union of a finite number of GW($\nu$)-trees and of a finite line. 
However, each time the RW returns to $x$, it has an independent possibility to jump to $\overleftarrow{x}$ with positive probability $p_{T, \ttm_\nu} (x, \overleftarrow{x})$. Thus 
$\bP$-a.s.~$P_{\bT, x}(\mathtt H_{\overleftarrow{x}} \eqo \infty)\eqo 0$, 
which completes the proof of the lemma.  \cqfd

\medskip

Let $(X_n )_{n\in \bbN}$ be a $\ttm_\nu$-biased RW on a GW($\nu$)-tree $\bT'$. Then 
the centered trees $\mathtt{cent} (\bT', X_n)$ weakly locally converge in $\overline{\bbT}^\bullet$ to a law which is recalled in the following definition, and which is in some sense invariant and reversible for $\ttm_\nu$-biased RWs as explained right after. 
\begin{definition}
\label{definvRW}\textbf{(Invariant GW trees)} Let $\nu$ be a supercritical offspring distribution whose mean is denoted by 
$\mathtt m_\nu$. 
Let $(\bT, \boo)$ be $\obbT^\bullet$-valued r.v.~It is distributed as an  
\emph{invariant GW($\nu$)-tree} if for all bounded Borel measurable function $F$ on $\obbT^\bullet$,
\begin{equation}
\label{InvGWnu}
\bE \big[ F \big(\bT, \boo \big)\big]= \bE \Big[ \tfrac{\mathtt m_\nu + k_{\boo^\prime} (\bT^\prime) }{2\mathtt m_\nu  }F\big(\bT^\prime, \boo^\prime \big) \Big], 
\end{equation}
where $(\bT^\prime, \boo^\prime)$ is distributed as an infinite  GW($\nu$)-tree as in Definition \ref{GWptdef}. \cq 
\end{definition}

  Let $(\bT, \boo)$ be an invariant GW($\nu$)-tree and let $\overline{X}_n \eqo (\bT, X_n)$, $n\ino \bbN$, be a $\ttm_\nu$-biased RW on $\bT$ starting at $\overline{X}_0\eqo (\bT, \boo)$. 
As proved in Zeitouni \& Peres \cite[Lemma 2]{PerZei08}, the law of invariant GW($\nu$)-trees is invariant and reversible in the following sense: 
 \begin{equation}
 \label{invbias}
 (\overline{X}_0, \overline{X}_1) \overset{\textrm{(law)}}{=} \big( \mathtt{cent}(\overline{X}_1) , \varphi_{\bbn X_1\bbn} (\overline{X}_0) \big)
 \end{equation}
where we recall $\mathtt{cent} (\cdot)\! :\!  \obbT^\bullet \! \to  \obbT^\bullet \! $ from Definition \ref{succdef}.  
This allows us to extend this Markov chain into a \emph{stationnary process} $(\overline{X}_n)_{n\in \bbZ}$ such that for all $n_0\ino \bbZ$:
\begin{equation}
\label{sdkjvc}
\big(\varphi_{\bbn X_{n_0}\bbn } (\overline{X}_{n_0+ n}) \big)_{n\in \bbZ} \overset{\textrm{(law)}}{=}  \big(\varphi_{\bbn X_{n_0}\bbn} (\overline{X}_{n_0 -n}) \big)_{n\in \bbZ} \overset{\textrm{(law)}}{=}  (\overline{X}_n)_{n\in \bbZ} . 
\end{equation}
Moreover, this process is ergodic as stated in the following proposition. 
\begin{proposition}
\label{rwergo}  Let $\nu$ be a supercritical offspring distribution whose mean is denoted by $\ttm_\nu$ and let $\overline{\mathtt p}_{\ttm_\nu}$ be as in Definition \ref{biaRWtreedef}.  Let $(\bT, \boo)$ be an invariant GW($\nu$)-tree as in Definition \ref{definvRW}. Let $\overline{X}_n\eqo (\bT, X_n)$, $n\ino \bbN$, 
be a $\mathtt m_{\nu}$-biased RW on $(\bT, \boo)$ starting at $\overline{X}_0\eqo (\bT, \boo)$. 
Denote by $\Pi_\nu$ the law of $(\overline{X}_n)_{n\in \bbN}$ on 
$(\obbT^\bullet\! )^{\bbN}$. 
Then, $ \vartheta \! : \! (\overline{x}_n)_{n\in \bbN} \! \mapsto \! 
(\varphi_{\bbn x_1\bbn} (\overline{x}_{n+1}))_{n\in \bbN} $ is $\Pi_\nu$-ergodic, where $\overline{x}_n\eqo (T_n, x_n)$, $n\ino \bbN$, stands for the canonical sequence on $(\obbT^\bullet\! )^{\bbN}$. 
 \end{proposition}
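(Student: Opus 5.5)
The plan is to reduce the ergodicity of $\vartheta$ to the ergodicity of a Markov chain seen from the particle, using the structure of the infinite GW tree. First, by (\ref{invbias}) and (\ref{sdkjvc}), $\Pi_\nu$ is $\vartheta$-invariant. Indeed, $\vartheta$ recenters the environment at the walker's new position, and $(\mathtt{cent}(\overline{X}_n))_{n\in\bbN}$ is a stationary Markov chain on $\obbT^\bullet$ with kernel $Q(\overline{x},\cdot)=\sum_y p_{T,\ttm_\nu}(x,y)\delta_{\mathtt{cent}(T,y)}$ and invariant law $\mathcal{L}(\bT,\boo)$. The shift $\vartheta$ on $\Pi_\nu$ is exactly the path-space shift of this stationary chain, up to the bijection $(\overline{x}_n)\mapsto(\mathtt{cent}(\overline{x}_n))_n$. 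That this map is a bijection holds because $\overline{x}_n$ can be recovered from the centered sequence together with the increments $\bbn x_{n+1}\bbn-\bbn x_n\bbn\in\{-1,0,1\}$ coded in the centered trees. By the standard result that a stationary Markov chain is ergodic (for the shift) if and only if its invariant law is extremal, it then suffices to show that every $Q$-invariant bounded measurable function $f$, i.e.\ $Qf=f$ a.s., is a.s.\ constant. Equivalently, every $Q$-invariant event $A\subset\obbT^\bullet$ (invariant in the sense that $\mathbf 1_A(\mathtt{cent}(\overline{X}_1))=\mathbf 1_A(\overline{X}_0)$ a.s.) has probability $0$ or $1$.

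Second, I use invariance to show that $A$ does not depend on the position of the root along the tree. Since the biased walk on $(\bT,\boo)$ is irreducible on the connected tree and every neighbour is reached with positive probability, $\mathbf 1_A(\mathtt{cent}(\bT,y))=\mathbf 1_A(\bT,\boo)$ for every $y\in\bT$, a.s. Here I transfer the a.s.\ statement at step one to all vertices via stationarity and countability of $\bT$. In particular, $\mathbf 1_A(\bT,\boo)=\mathbf 1_A(\mathtt{cent}(\bT,\boo(p)))$ for every $p$.

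Third, a $0$--$1$ law concludes. By (\ref{InvGWnu}), the law of $(\bT,\boo)$ is absolutely continuous with respect to the infinite GW($\nu$)-tree, with density depending only on $k_\boo$, so it suffices to argue under the infinite GW tree. Re-rooting at $\boo(p)$, the event $A$ is measurable with respect to the tree seen from $\boo(p)$. I approximate $A$ in $L^1$ by an event $A_q$ depending only on $[(\bT,\boo)]_{-q}^q$, using Remark~\ref{poltoploc}. The corresponding re-rooted event $A_q\circ\mathtt{cent}(\cdot,\boo(p))$ depends only on the tree near $\boo(p)$. For $p>2q$, a suitable choice of root (say $\boo(p)$'s leftmost-type descendant at distance $p$, or using $\boo(2p)$ directly) makes this essentially independent of the data near $\boo$. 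The relevant structure is that the pairs $(\mathtt{end}(\boo(p)),k_{\boo(p+1)})$ are i.i.d.\ and the subtrees along the spine are independent. Since the law of the shifted configuration is the same by stationarity of the spine, I get $\bP(A)=\bP(A\cap A)\approx\bP(A_q)^2$, hence $\bP(A)\in\{0,1\}$. This is a Kolmogorov/Hewitt--Savage-type argument.

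The main obstacle is the third step: making the independence rigorous. The subtree near $\boo(p)$ contains $\boo$ itself, so it is not literally independent of the data near $\boo$. My first attempt would be to compare with the tail $\sigma$-field of the spine sequence $\big((\mathtt{end}(\boo(p)),k_{\boo(p+1)}),(\theta_u\bT)_{u\in\partial L_\boo,\overleftarrow u=\boo(p+1)}\big)_{p}$. This sequence is i.i.d., and invariance under $\mathtt{cent}(\cdot,\boo(p))$ can be rewritten as invariance under the shift of this i.i.d.\ sequence, after also accounting for the unexplored part of the tree above $\boo$, which is handled by the first-coordinate randomness. The ergodicity of the Bernoulli shift then gives the $0$--$1$ law.
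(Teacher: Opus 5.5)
Your first two steps are sound. The map $(\overline{x}_n)_n\mapsto(\mathtt{cent}(\overline{x}_n))_n$ conjugates $\vartheta$ (a.s.) to the shift of the stationary centred chain, so it suffices to show that its invariant sets are trivial. Since the Green function $G_{\bT}(\boo,y)$ is positive for every $y$, such a set $A$ satisfies $\mathbf 1_A(\mathtt{cent}(\bT,y))=\mathbf 1_A(\bT,\boo)$ for all $y\in\bT$ a.s., also under the equivalent infinite GW law.

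The gap is in the third step, at ``the law of the shifted configuration is the same by stationarity of the spine''. This is false. Under the infinite GW law, $\theta_\boo\bT$ is a GW($\nu$)-tree. By contrast, $\theta_{\boo(p)}\bT$ contains the path from $\boo(p)$ down to $\boo$, whose offspring numbers are size-biased; by (\ref{sizebiased}) its law is the GW($\nu$) law reweighted by $\ttm_\nu^{-p}Z_p$, where $Z_p$ is the generation-$p$ size. Hence $\mathtt{cent}(\bT,\boo(p))$, and likewise $\mathtt{cent}(\bT,\boo(2p))$, has density $D_p:=\ttm_\nu^{-p}\#\{v\in\theta_\boo\bT:|v|=p\}$ with respect to the law of $(\bT,\boo)$. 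A deterministically chosen descendant, such as the leftmost one, does not have the right law either. So you cannot replace $\bP(\mathtt{cent}(\bT,y)\in A\triangle A_q)$ by $\bP(A\triangle A_q)$, nor $\bP(\mathtt{cent}(\bT,y)\in A_q)$ by $\bP(A_q)$, and $\bP(A)\approx\bP(A_q)^2$ does not follow. Your Bernoulli-shift fallback has the same defect. Write $\xi_p$ for the spine data at $\boo(p+1)$. Re-rooting one step down maps $(\theta_\boo\bT,\xi_0,\xi_1,\dots)$ to $(\theta_{\boo(1)}\bT,\xi_1,\xi_2,\dots)$, and the new first coordinate is size-biased. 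So this is not a measure-preserving shift, and $\bigcap_p\sigma(\theta_{\boo(p)}\bT,\xi_p,\xi_{p+1},\dots)$ is not the tail field of an i.i.d. sequence; Kolmogorov's 0--1 law does not apply.

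The obstacle you single out is in fact absent. For $p\ge 2q$, the truncations $[(\bT,\boo)]_{-q}^q$ and $[\mathtt{cent}(\bT,\boo(p))]_{-q}^q$ depend on the disjoint independent blocks $(\theta_\boo\bT,\xi_0,\dots,\xi_{q-1})$ and $(\xi_{p-q},\dots,\xi_{p+q-1})$, so they are exactly independent. A correct version of your argument takes $y=\boo(2q)$ and carries the density. Re-rooting invariance gives $\bE[D_{2q}\mathbf 1_A(\bT,\boo)]=\bP(\mathtt{cent}(\bT,\boo(2q))\in A)=\bP(A)$, and one then gets $|\bP(A)-\bP(A)^2|\le 2\big(\bP(A\triangle A_q)+\bE[D_{2q}\mathbf 1_{A\triangle A_q}]\big)$. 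This vanishes as $q\to\infty$ if $(D_p)_p$ is uniformly integrable, i.e.\ under the Kesten--Stigum $L\log L$ condition (implied by the moment assumptions used elsewhere in the paper). It is not available for a general supercritical $\nu$ as in the statement. Without moments you would have to re-root at a uniformly chosen generation-$p$ descendant of $\boo(p)$. That choice preserves the law by (\ref{sizebiased}), but it destroys exact independence, and you would then have to prove the decorrelation by hand. The paper sidesteps all of this by citing Dembo and Sun.
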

\noi
\textbf{Proof.} See Dembo \& Sun~\cite[Theorem 1.4 and Section 2.1]{DeSu12}. In their notation, here, $A$ is the single-entry matrix $\ttm_\nu$, which is of course irreductible, and $\rho=\ttm_\nu$. \cqfd 

\medskip

We shall apply Proposition \ref{rwergo} in the following specific way. 
\begin{corollary}
\label{rwergobis} We keep the assumptions and the notation of Proposition \ref{rwergo}. Let $F \! :\! \obbT^\bullet\! \to \bbR$ be 
a measurable function 
such that $\bE [|F(\bT, \boo)|] \leko \infty$ and such that $F(T, \boo) \eqo F(\mathtt{cent} (T, o))$ for all $(T, o)\ino   \obbT^\bullet\!$. Then 
\begin{equation}
\label{ergoexpli}
\textrm{$\bP$-a.s.~} \quad \lim_{n\to \infty} \frac{1}{n} \sum_{0\leq k<n} F \big( \bT, X_k \big) = \bE \big[ F(\bT, \boo)\big] \; . 
\end{equation} 
Moreover, if  $F$ is nonnegative and if there is $\ttb\ino (1, \infty)$ such that $\bE [F(\bT, \boo)^\ttb] \leko \infty$, then 
\begin{equation}
\label{wienerconseq}
\textrm{$\bP$-a.s.~} \quad  \sup_{n\in \bbN^*} \frac{1}{n} \sum_{0\leq k<n} E_{\bT, \boo} \big[ F \big( \bT, X_k \big) \big]  <\infty \; .
\end{equation} 
\end{corollary}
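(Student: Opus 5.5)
The first claim (\ref{ergoexpli}) is Birkhoff's ergodic theorem applied to the shift $\vartheta$ of Proposition \ref{rwergo}. Set $G\big((\overline{x}_n)_{n\in \bbN}\big) := F(\overline{x}_0)$ on $(\obbT^\bullet)^{\bbN}$. Since $F$ is invariant under centering and the shifts $\varphi_l$ only recentre the pointed trees, we should have $G\circ \vartheta^k \big((\overline{x}_n)\big) = F\big(\mathtt{cent}(\overline{x}_k)\big) = F(\overline{x}_k)$. I would check this carefully by induction on $k$: the composition of the shifts $\varphi_{\bbn x_1\bbn}$ applied successively is again a shift, and $F$ is insensitive to it. Under $\Pi_\nu$ the canonical sequence is $(\bT, X_k)_{k}$, so $G\circ\vartheta^k = F(\bT,X_k)$ holds $\Pi_\nu$-a.s. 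Invariance of $\Pi_\nu$ under $\vartheta$ follows from the stationarity (\ref{sdkjvc}), and ergodicity is Proposition \ref{rwergo}. Because $\bE[|G|] = \bE[|F(\bT,\boo)|] < \infty$, Birkhoff's theorem gives $\frac1n \sum_{k<n} F(\bT,X_k) \to \bE[F(\bT,\boo)]$ $\Pi_\nu$-a.s., which is (\ref{ergoexpli}). Here "$\bP$-a.s." is read as almost surely for the annealed law, i.e.\ with respect to both environment and walk.

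For (\ref{wienerconseq}) I would use the maximal ergodic theorem in its $L^\ttb$ form (Wiener's dominated ergodic theorem). Set $M := \sup_{n\ge 1} \frac1n \sum_{k<n} F(\bT,X_k)$, a function of the whole trajectory. Since $F\ge 0$ and $\vartheta$ is measure-preserving, the Hardy--Littlewood/Wiener maximal inequality gives
\[
\bE[M^\ttb] \le \big(\tfrac{\ttb}{\ttb-1}\big)^\ttb \bE[F(\bT,\boo)^\ttb] < \infty ,
\]
so $\bE[M] < \infty$. Now condition on the environment. For every $n$, $\frac1n\sum_{k<n} E_{\bT,\boo}[F(\bT,X_k)] \le E_{\bT,\boo}[M]$, because $M$ dominates each Cesàro average pointwise. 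Hence
\[
\sup_{n} \frac1n \sum_{k<n} E_{\bT,\boo}\big[F(\bT,X_k)\big] \le E_{\bT,\boo}[M].
\]
The right-hand side is finite $\bP$-a.s., since its $\bP$-expectation is $\bE[M] < \infty$. This uses the regular conditional law $P_{\bT,\boo}$ from Remark \ref{toregcondilaw}.

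The step I expect to need the most care is the identification $G\circ\vartheta^k = F(\bT, X_k)$, i.e.\ checking that iterating $\vartheta$ produces exactly the recentred tree at $X_k$, so that the centering invariance of $F$ applies. Everything else is a standard application of the ergodic and maximal theorems.
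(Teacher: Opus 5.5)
Your proposal is correct and follows essentially the same route as the paper. It applies Birkhoff's theorem to the $\Pi_\nu$-ergodic shift $\vartheta$ with $G=F(\overline{x}_0)$, which equals the paper's $F(\mathtt{cent}(\overline{x}_0))$ by centering invariance, then uses Wiener's $L^{\ttb}$ maximal inequality and conditions on $(\bT,\boo)$ to obtain (\ref{wienerconseq}); your explicit check that $\vartheta^{k}$ produces $\varphi_{\bbn x_k\bbn}(\overline{x}_k)=\mathtt{cent}(\overline{x}_k)$ just fills in the step the paper states as an observation.
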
  
\textbf{Proof.} Denote by $\overline{x}_\cdot \eqo (\overline{x}_n)_{n\in \bbN}$ the canonical $\obbT^\bullet\! $-sequence. We write $G (\overline{x}_\cdot) \eqo F( \mathtt{cent} (\overline{x}_0 ))$. Observe that  
$F \big( \bT, X_k \big)\eqo (G\circ \vartheta^{\circ k})( (\overline{X}_n)_{n\in \bbN})$. It first implies that 
$(F \big( \bT, X_k \big))_{k\in \bbN}$ under $\bP$ has the same law as $ (G\circ \vartheta^{\circ k})_{k\in \bbN}$ under 
$\Pi_\nu$. By Birkhoff ergodic theorem (see e.g.~Krengel \cite[Theorem 2.3 p.~9]{Kr85}) $\Pi_\nu$-a.s.~$\lim_{n\to \infty} 
\frac{1}{n} \sum_{0\leq k<n} G\circ \vartheta^{\circ k}\eqo \int \! G\,  d\Pi_\nu\eqo  \bE \big[ F(\bT, \boo)\big] $, hence (\ref{ergoexpli}). 

We next suppose $F$ (and thus $G$) nonnegative and such that 
$\bE [F(\bT, \boo)^\ttb] \eqo \int G^\ttb d\Pi_\nu \leko \infty$, and we proceed to the proof of (\ref{wienerconseq}). To that end, 
we use Wiener $L^\ttb$-bound for the maximal function $S^*\!\!  :=\! \sup_{n\in \bbN^*} \frac{1}{n} \sum_{0\leq k<n} G\circ \vartheta^{\circ k}$: this theorem asserts that $\int (S^*)^\ttb d\Pi_\nu\leq \big(\tfrac{\ttb}{\ttb-1} \big)^\ttb \int G^\ttb d\Pi_\nu$ 
(see Krengel \cite[Theorem 6.3 p.~52]{Kr85}). Thus,  
 $\bE [S^*(\overline{X}_\cdot)^\ttb ] \leko \infty$, which implies in particular that $S^*(\overline{X}_\cdot)$ is $\bP$-integrable. Then, for any $\sigma$-field $\mathcal G \! \subset \! \ccF$, we $\bP$-a.s.~get 
$\bE \big[ S^*(\overline{X}_\cdot) | \mathcal G] \leko \infty$ since $S^*$ is nonnegative. 
Next, observe that for all $n\ino \bbN^*$, $\bP$-a.s.~$\frac{1}{n} \sum_{0\leq k<n}  F ( \bT, X_k ) \leqo S^*(\overline{X}_\cdot)$, 
Therefore, we $\bP$-a.s.~get 
$ \sup_{n\in \bbN^*}\frac{1}{n} \sum_{0\leq k<n} \bE [  F ( \bT, X_k )| \mathcal G ] \leqo \bE \big[ S^*(\overline{X}_\cdot) | \mathcal G] \leko \infty$, which entails  (\ref{wienerconseq}) when $\mathcal G$ is the $\sigma$-field generated by $(\bT, \boo)$ since, in this case, $\bE [F ( \bT, X_k )| \mathcal G ] \eqo E_{\bT, \boo} \big[ F \big( \bT, X_k \big) \big] $ by definition of $P_{\bT, \boo}$ as explained in Remark \ref{toregcondilaw}. \cqfd

\subsection{Tree-valued BRWs}
\label{trerbrRWsec}
In this section, we introduce BRWs with values in trees and more specifically RWs that are \emph{indexed by GW trees} whose offspring distribution is critical 
(or a variant of such trees) and that \emph{take their values in an invariant GW tree} whose offspring distribution 
is supercritical.  
Formally, these BRWs are viewed as trees that are labeled by pointed trees, i.e., ,~$\overline{\bbT}^\bullet (\overline{\bbT}^\bullet)$-valued r.v. After some background on such branching Markov chains, we extend the \emph{centering} function to 
$\overline{\bbT}^\bullet (\overline{\bbT}^\bullet)$.

\medskip

\noi
\textbf{Branching Markov chains.} We first set some notation for branching Markov chains indexed by bilateral trees and that take their values in a complete and separable metric space $(E, d_E)$. 
 We denote by $(\mathtt q(y,dy^\prime))_{ y\in E}$ a Borel measurable transition kernel on $E$: for all $y\ino E$, $\mathtt q(y,dy^\prime)$ is a Borel probability measure on $E$ and for all Borel sets $B$, the function $y\ino E \mapsto \mathtt q(y, B)$ is measurable. 
We equip $\obbT^\bullet \! (E)$ with the Polish topology of local convergence and the associated Borel $\sigma$-field. 
For all pointed tree $(t, u)\ino \overline{\bbT}^\bullet\! \! $, we briefly recall the definition of the law $Q^{_y}_{^{\mathtt q, (t,u)}}$ on 
$\obbT^\bullet \! (E)$ of the \emph{$t$-indexed $\mathtt q$-branching Markov chain} such that the spatial position of $u$ is $y$. 

To that end, we introduce the following notation. We equip $t$ with its \emph{graph distance} $ d_{\mathtt{gr}}$; for all $v, w \ino t$, we denote by $\lgeo v, w\rgeo$ the geodesic path joining $v$ to $w$; if $w\! \neq \! v$, then we denote by 
$\overleftarrow{w}^v$ the unique $v^\prime\ino \lgeo v, w \rgeo $ such that $d_{\mathtt{gr}} (v^\prime , w)\! = \! 1$. Namely 
\begin{equation}
\label{geotree}
\lgeo v, w\rgeo\! = \!\!  \big\{  v' \! \ino t : d_{\mathtt{gr}} (v,v^\prime) \! + \! d_{\mathtt{gr}} (v^\prime , w)\! = \! d_{\mathtt{gr}} (v,w) \big\},  \quad \overleftarrow{w}^v\! \ino \lgeo v, w\rgeo \; \, \textrm{and} \;\,  d_{\mathtt{gr}} \big( w, \overleftarrow{w}^v \big)\eqo 1 . 
\end{equation}

\noi
We easily see that $Q^{_y}_{^{\mathtt q, (t,u)}}$ is characterized by the following properties. 

\smallskip

\begin{compactenum}

\item[$(i)$] Recall that $\partial$ is a cemetery point in $\overline{\bbT}^\bullet$. We 
first agree on the convention $Q^{_y}_{^{\mathtt q,\partial}} \eqo \delta_{\partial}$. 
\item[$(ii)$] If $(t,u) \ino \overline{\bbT}^\bullet$ is finite, then $Q^{_y}_{^{\mathtt q, (t,u)}}$ is the law of the $\obbT^\bullet \! (E)$-valued r.v.~$\fTheta\! :=\! \big(t,u; (Y_v)_{v\in t}\big)$, where the joint law of the $Y_v$, $v\ino t$, on $E^t$ is
\begin{equation}
\label{brMCfinite}
 \delta_y(dy_u) \!\! \!  \prod_{v\in t\backslash \{ u\}} \!\!\! \mathtt q( y_{\overleftarrow{v}^u}, dy_v). 
\end{equation}
\item[$(iii)$] Let $(t,u) \ino \overline{\bbT}^\bullet$ be finite. If $\fTheta$ has law 
$Q^{_y}_{^{\mathtt q, (t,u)}}$, then for all $p,q \ino \bbZ$ such that $p\!\leq  \! \bbn u \bbn \leqo  q$, 
$[\fTheta]_p^q$ has law $Q^{{y}}_{^{\mathtt q, [(t,u)]_p^q}}$. 
\end{compactenum}

\smallskip

\noi
\emph{Indeed}, when $(t, u)  \ino \overline{\bbT}^\bullet$ is infinite, $Q^{_y}_{^{\mathtt q, (t,u)}}$ is defined thanks to $(iii)$ and to 
a simple projective argument. Moreover, in general 
for all $p, q\ino \bbZ$, such that $p\leko \bbn u  \bbn \leko  q$, 
\begin{equation}
\label{dfljbvs}
\Big( \fTheta \overset{\textrm{(law)}}{=} Q^{_y}_{^{\mathtt q, (t,u)}} \Big) \Longrightarrow \Big( [\fTheta]_p^q \overset{\textrm{(law)}}{=} 
Q^{y}_{^{\mathtt q , [(t,u)]_p^q}} \Big)\; .
\end{equation}
It is easy to check for all Borel subsets $A$ of 
$ \obbT^\bullet \! (E) $ that
\begin{equation}
\label{measQ}
\big( y, (t,u) \big) \! \in \! E \! \times \! \obbT^\bullet \! \longmapsto  Q^{_y}_{^{\mathtt q, (t,u)}} (A) \; \, \textrm{is Borel measurable.} 
\end{equation}
Moreover, let us equip the space of Borel probability measures on $E$ and $ \obbT^\bullet \! (E)$ with the (Polish) topology of weak convergence. If we assume that $y\ino E \mapsto \mathtt q(y, \cdot)$ is weakly continuous, then it is also easy to check that 
\begin{equation}
\label{C0brinit}
\big( y, (t,u) \big) \in  E \times \obbT^\bullet \!\longmapsto  Q^{_y}_{^{\mathtt q, (t,u)}}\; \, \textrm{is weakly continuous.}  
\end{equation}
In (\ref{measQ}) and (\ref{C0brinit}), $ E \! \times \!  \obbT^\bullet  $ is equipped with the product topology.

\medskip

\noi
\textbf{Tree-valued branching walks.} Here we fix a \emph{genealogical tree} $(t, \varrho)\ino \obbT^\bullet$ and a pointed tree 
$(T, o)\ino \obbT^\bullet$ that is viewed as an \emph{environment}: here, each individual $v \ino t$ has a position $y_v \ino T$. To fit into our framework of labelled pointed trees, we view the $t$-indexed and $T$-valued branching walk as the $\obbT^\bullet $-labelled pointed tree 
$$ \Theta\eqo \big( t,\varrho\, ; \, \overline{y}_v\! := \! (T, y_v)  , v\ino t  \big) \in \obbT^\bullet (\obbT^\bullet).$$ 
Note that $\Theta$ is not a generic element of $\obbT^\bullet (\obbT^\bullet)$ because all the $ \overline{y}_v$ have the same base-tree $T$. We extend the definition (\ref{centerTdef}) of the centering of pointed trees to $\obbT^\bullet (\obbT^\bullet)$ by setting
\begin{equation}
\label{centerbrTdef}
\mathtt{cent} (\Theta)\! = \! \Big( \varphi_{\bbn \varrho \bbn }(t), \varphi_{\bbn \varrho \bbn } (\varrho ) ; \varphi_{\bbn y_\varrho \bbn} (\overline{y}_{\varphi_{-\bbn \varrho \bbn} (v)}) , v\ino  \varphi_{\bbn \varrho \bbn}(t) \Big),
\end{equation}
using the same conventions. We observe here that spatial marks (that are pointed trees) are shifted too. By convention we set $\mathtt{cent} (\partial) \eqo \partial$. 

\begin{lemma}
\label{consuccbr} The function $\mathtt{cent} (\cdot)$ is
locally continuous on $\obbT^\bullet\!(\obbT^\bullet )$.  
\end{lemma}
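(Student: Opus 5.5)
The plan is to reduce the statement to Lemma \ref{consucc}, which asserts local continuity of $\mathtt{cent}$ on $\obbT^\bullet$, together with the way $\mathtt{cent}$ interacts with truncations. Let $\Theta_n \eqo (t_n, \varrho_n; \overline{y}^{(n)}_v, v\ino t_n)$ locally converge to $\Theta \eqo (t, \varrho; \overline{y}_v, v\ino t)$ in $\obbT^\bullet(\obbT^\bullet)$; the label space here is $E \eqo \obbT^\bullet$ with the metric $\bdelta_{\mathtt{loc}}$. We have to show that $\mathtt{cent}(\Theta_n) \to \mathtt{cent}(\Theta)$ locally, that is, for every $q\ino \bbN$ and $\epp\ino(0,1)$, $\Delta_E([\mathtt{cent}(\Theta_n)]_{-q}^q, [\mathtt{cent}(\Theta)]_{-q}^q) \leko \epp$ for all large $n$. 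The cases where $\Theta \eqo \partial$, or where $t\eqo\{\varnothing\}$, are disposed of directly from the conventions.

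\emph{Step 1 (genealogical part).} By Definition \ref{loccvdef}, for each $q$ and all large $n$ we have $[(t_n,\varrho_n)]_{-q}^q \eqo [(t,\varrho)]_{-q}^q$. The truncation is taken around the relative heights of $\varrho_n$ and $\varrho$, and $\bbn \varrho_n\bbn$ is eventually equal to $\bbn \varrho \bbn$. Applying Lemma \ref{consucc}, the centered pointed trees $(\varphi_{\bbn\varrho_n\bbn}(t_n), \varphi_{\bbn\varrho_n\bbn}(\varrho_n))$ converge locally to $(\varphi_{\bbn\varrho\bbn}(t), \varphi_{\bbn\varrho\bbn}(\varrho))$. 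Hence, for all large $n$, the truncated centered genealogical trees coincide exactly. This also gives a bijection between the vertices of these truncations: each is $\varphi_{\bbn\varrho\bbn}(v)$ for a $v$ lying in the common finite truncation of $t_n$ and $t$, and it is the same $v$ for $n$ and for the limit.

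\emph{Step 2 (labels).} For the finitely many vertices $v$ from Step 1, local convergence of $\Theta_n$ yields $\bdelta_{\mathtt{loc}}(\overline{y}^{(n)}_v, \overline{y}_v) \to 0$. The centered labels are $\varphi_{\bbn y^{(n)}_{\varrho_n}\bbn}(\overline{y}^{(n)}_v)$, so this step needs two facts. First, $\bbn y^{(n)}_{\varrho_n}\bbn \eqo \bbn y_\varrho \bbn$ for all large $n$. This holds because $\overline{y}^{(n)}_{\varrho_n}\to \overline{y}_\varrho$ locally and the relative height of the distinguished point is read off any truncation containing it. Second, for a fixed $l\ino\bbZ$, the shift $(T,x)\mapsto (\varphi_l(T),\varphi_l(x))$ is locally continuous on $\obbT^\bullet$. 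The reason is that $[(\varphi_l T, \varphi_l x)]_{-q}^{q}$ is the image under $\varphi_l$ of $[(T,x)]_{-q+l}^{q+l}$, and that truncation is determined by $[(T,x)]_{-q'}^{q'}$ with $q'\eqo q+|l|$. Combining the two facts, the centered labels converge, so the max over finitely many vertices in $\Delta_E$ tends to $0$.

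The main obstacle is bookkeeping rather than analysis. We must check carefully that the truncation windows match after shifting, including the conventions $\bbn\varnothing\bbn \eqo \bbn T\bbn_-$ and the $\partial$ cases. We must also check that the vertex identification $v\mapsto \varphi_{-\bbn\varrho\bbn}(v)$ used in (\ref{centerbrTdef}) stays inside the region where $t_n$ and $t$ agree. For the latter, we enlarge the truncation window by $|\bbn\varrho\bbn|$ and use that this quantity is eventually constant.
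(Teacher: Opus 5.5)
Your proposal is correct and follows the paper's route. The paper only says the lemma is easily derived from Lemma \ref{consucc} and leaves the details to the reader, and your two steps supply those details: the eventual constancy of $\bbn\varrho_n\bbn$ and $\bbn y^{(n)}_{\varrho_n}\bbn$, the local continuity of the fixed shifts $\varphi_l$, and the enlargement of the window to $q+|\bbn\varrho\bbn|$ to transport labels. One wording fix: the windows $[\cdot]_{-q}^{q}$ in Definition \ref{loccvdef} are taken in absolute relative height around $0$, not around $\bbn\varrho\bbn$; your Step~2 already uses this correctly.
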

\noi
\textbf{Proof.} This is easily derived from Lemma \ref{consucc}: we leave the details to the reader. \cqfd 

\begin{definition}
\label{biasedbfRWdef} (\textbf{Biased BRWs}) We next fix $\lambda \ino (0, \infty)$, and 
$(t, \varrho), (T, o)\ino \overline{\bbT}^\bullet$. 
We recall from (\ref{deflambRW}) the definition of the transition probabilities $p_{T, \lambda} (\cdot, \cdot) $ of the 
$\lambda$-biased RW on $T$ and we recall from (\ref{defcritRW}) in Definition \ref{biaRWtreedef} the related transition 
kernel $\overline{\mathtt p}_{\lambda} (\overline{y}, d\overline{y}')$ on $\overline{\bbT}^\bullet$. A \emph{$t$-indexed, 
$T$-valued $\lambda$-biased BRW with initial position $o$} is a  $\overline{\bbT}^\bullet(\overline{\bbT}^\bullet)$-valued 
r.v.~$\fTheta\eqo \big(t, \varrho\, ; \overline{Y}_{\!\! v} \eqo (T, Y_{\! v}), v\ino t\big)$ whose law is 
$Q_{^{\overline{\mathtt p}_{^{\! \lambda} }\! , (t, \varrho)}}^{_{(T,o)}}$, which implies that 
$Y_{\! \varrho}\eqo o$. 

\smallskip

\noi
$\bullet$ When $t$ is finite and $\varrho \eqo \varnothing$ then for all $y_v \ino T$, $v\ino t$, 
\begin{equation}
\label{}
\bP \big( \forall v\ino t , \; Y_{\! v} \eqo y_v \big)\eqo \un_{\{ y_\varrho = o\}} \prod_{v\in t\backslash \{ \varnothing\} } p_{T, \lambda} (y_{\overleftarrow{v}} , y_v) \; .
\end{equation}

\noi
$\bullet$ When there is no ambiguity on the parameter $\lambda$, we simply write 
$Q_{^{t, \varrho}}^{_{T,o}}$ instead of $Q_{^{\overline{\mathtt p}_{^{\! \lambda} }\! , (t, \varrho)}}^{_{(T,o)}}$.  \cq
\end{definition}
\begin{remark}
\label{brrems}
We easily check that $\overline{y} \ino\overline{\bbT}^\bullet \mapsto \overline{\mathtt p}_{\lambda} (\overline{y}, d\overline{y}')$ is weakly continuous. Thus by (\ref{C0brinit}), 
$$\big( (t,\varrho), (T, o) \big) \ino \overline{\bbT}^\bullet \! \times\!  \overline{\bbT}^\bullet \longmapsto 
Q_{^{\overline{\mathtt p}_{^{\! \lambda} }\! , (t, \varrho)}}^{_{(T,o)}}$$
is also weakly continuous on the space of Borel probability measures on $\overline{\bbT}^\bullet(\overline{\bbT}^\bullet)$.  \cq 
\end{remark}

\subsection{Stable continuous height processes.}
\label{stabletree}

In this section, we recall (mostly from \cite{DuLG02} and \cite{DuLG05}) various results concerning stable height processes and stable spectrally positive Lévy processes that are used throughout the paper.  

\medskip

\noi
\textbf{${\boldsymbol \alpha}$-stable spectrally positive Lévy processes, related height processes.}
We fix $\alpha \in (1, 2]$. We denote by $X\! = \! (X_s )_{s\in \bbR_+}$ an \emph{$\alpha$-stable and spectrally positive L\'evy process}. Its law is characterized by its Laplace exponent
\begin{equation}
\label{Xlaw}
\forall s, \lambda \ino \bbR_+, \quad \bE \big[ e^{-\lambda X_s}  \big] = e^{ s\lambda^\alpha } \; . 
\end{equation} 
Note that \emph{$X_s$ is integrable} and that $\bE [X_s] \eqo  0$, which easily implies that \emph{$X$ oscillates when $s\to \infty$}. 
Moreover, $\bP$-a.s.$\;$the path $X$ has \emph{infinite variation sample paths} (for more details, see Bertoin~\cite{Be} Chapters VII and VIII). 

In the more general context of spectrally positive L\'evy processes, it has been proved in Le Gall \& Le Jan \cite{LGLJ98} and in D.~\& Le Gall~\cite[Chapter 1]{DuLG02} that there exists a continuous process $H\eqo  (H_s)_{s\in \bbR_+} $ such that for any $s\ino \bbR_+$, the following limit holds in $\bP$-probability:  
\begin{equation}
\label{Hlimit}
H_s:=\lim_{\varepsilon\to 0} \frac{1}{\epp}\int_0^s {\bf 1}_{\{ X_r <  \varep+ \inf_{[r, s]} X \}}\,dr  \; .
\end{equation}
The process $H\eqo  (H_s)_{s\in \bbR_+} $ is called the \emph{$\alpha$-stable height process}. It is an adapted functional of 
 $X$ and we shall sometimes use the notation $H_s \eqo H_s(X)$ to indicate that. The $\alpha$-stable height process 
 provides a way to explore the genealogy of an $\alpha$-stable CSBP. We refer to Le Gall \& Le Jan~\cite{LGLJ98} for a careful explanation of (\ref{Hlimit}) in the discrete setting. Compare with (\ref{codheight}) in Remark \ref{bijecfor} in the discrete setting.

\smallskip 

In Section~\ref{pfharmosec}, to prove the quenched convergence of the finite-dimensional marginals 
of snakes processes coding the range of BRWs, we shall need the following lemma. Its proof, which is technical, is postponed in Appendix.

\begin{lemma}
\label{branchmass}
Let $\alpha \ino (1, 2]$ and let $H$ be the height process associated with the $\alpha$-stable spectrally positive Lévy process $X$ as above. Let $s_1,s_2$ be two real numbers such that $0\leko s_1\leko s_2$. Then $\bP$-almost surely, it holds that  $H_{s_1} \wedge H_{s_2} \geko \min_{[s_1, s_2]} H$.
\end{lemma}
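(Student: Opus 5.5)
We would split the event $\{H_{s_1}\wedge H_{s_2}=\min_{[s_1,s_2]}H\}$ into $A_1=\{H_{s_1}=\min_{[s_1,s_2]}H\}$ and $A_2=\{H_{s_2}=\min_{[s_1,s_2]}H\}$, and show that each has probability zero. The only properties of $X$ we need are the following. It has infinite variation sample paths, so $0$ is regular for both half-lines $(0,\infty)$ and $(-\infty,0)$. The height process is expressed through $X$ by (\ref{Hlimit}), which is the continuous analogue of (\ref{codheight}).

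\emph{Case $A_1$ (forward in time).} We apply the simple Markov property at the fixed time $s_1$. The process $X'_r=X_{s_1+r}-X_{s_1}$ is again an $\alpha$-stable spectrally positive L\'evy process independent of $\mathcal F_{s_1}$. For $r>s_1$, write $I_{s_1,r}=\inf_{[s_1,r]}X$. We would use the standard identity from D.~\& Le Gall~\cite[Chapter 1]{DuLG02}:
$$\min_{[s_1,r]}H \;=\; H_{s_1}-\ell_{s_1}\big(X_{s_1}-I_{s_1,r}\big),$$
where $\ell_{s_1}(\cdot)$ denotes the local time at level $\cdot$ of the height-measuring functional computed from the pre-$s_1$ path. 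Informally, the ancestors of $s_1$ below a given level are ``used up'' as soon as $X$ goes below the corresponding past infima. The only consequence we need is that $\min_{[s_1,r]}H<H_{s_1}$ as soon as $I_{s_1,r}<X_{s_1}$. This is transparent from (\ref{Hlimit}): once $X$ drops below $X_{s_1}$, the times $u\le s_1$ that are close to $s_1$ and satisfy $X_u<\epp+\inf_{[u,r]}X$ are removed, so their contribution to the height at time $r$ vanishes. By regularity of $0$ for $(-\infty,0)$, a.s.\ $\inf_{[0,\delta]}X'<0$ for every $\delta>0$, so $\bP(A_1)=0$.

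\emph{Case $A_2$ (backward in time).} We would use the time-reversal representation of $H$ at the fixed time $s_2$ (again from \cite{DuLG02}). Set $\widehat X_r=X_{s_2}-X_{(s_2-r)-}$ for $r\in[0,s_2]$. This process has the same law as $X$ on $[0,s_2]$, and $H_{s_2}$ equals $\widehat L_{s_2}$, the local time at $0$ of the reflected process $\widehat S-\widehat X$ (with $\widehat S$ the running supremum), suitably normalized. In the same representation, $H_{s_2}-\min_{[s_2-r,s_2]}H$ is bounded below by a positive multiple of $\widehat L_r$, i.e.\ by the amount of local time accumulated during the first $r$ units of reversed time. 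Consequently
$$A_2\subset\{\widehat L_{s_2-s_1}=0\}=\{\widehat X\ \text{does not return to its running supremum on } (0,s_2-s_1]\}.$$
Regularity of $0$ for $(0,\infty)$ makes this event null. An alternative route to Case $A_2$ is to invoke the time-reversal invariance $(H_{t-r})_{r\le t}\overset{(law)}{=}(H_r)_{r\le t}$, which reduces it to Case $A_1$. The reversal is only available cleanly for excursions or on $[0,t]$ with a bridge-type correction, so the local-time argument above is the cleaner choice.

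The main obstacle is making rigorous the two monotonicity facts relating $\min H$ to the path of $X$ (respectively $\widehat X$): that a new infimum of $X$ after $s_1$ forces $H$ strictly below $H_{s_1}$, and that local time of $\widehat S-\widehat X$ accumulated near $0$ forces $\min H$ strictly below $H_{s_2}$. For the first we would argue directly from the $\epp$-approximation (\ref{Hlimit}) together with the continuity of $H$. For the second we would appeal to the description of $H_{s}$ as the total mass of the exploration measure $\rho_s$ in \cite[Section 1.2]{DuLG02}, in which $\min_{[s_2-r,s_2]}H$ is read off as the height of the part of $\rho_{s_2}$ that survives on the reversed interval. The remaining steps are short applications of the Markov property and of L\'evy regularity.
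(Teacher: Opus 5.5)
Your decomposition is the paper's: the events $A_1$ and $A_2$ are controlled by the two identities of D.~\& Le Gall \cite[Lemma~1.4.5]{DuLG02} recorded in (\ref{variation_H_loctime}).

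\begin{itemize}
\item Your $A_2$ estimate is in fact the equality $H_{s_2}-\min_{[s_1,s_2]}H=H^{(s_1)}_{s_2-s_1}$. Here $H^{(s_1)}_{s_2-s_1}$ is the local time at time $s_2-s_1$ of $\widehat S-\widehat X$, for the path reversed at $s_2$.
\item Your formula in $A_1$ is $H_{s_1}-\min_{[s_1,s_2]}H=\widehat L^{(s_1)}_{s_1\wedge \widehat T}$. Here $\widehat T$ is the first time $\widehat X^{(s_1)}$ exceeds $J=X_{s_1}-\inf_{[s_1,s_2]}X$.
\end{itemize}

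Citing that lemma is much cleaner than re-deriving these identities from (\ref{Hlimit}) or from the exploration process. Note also that $H_s$ is the supremum of the support of $\rho_s$, not its total mass, which is $X_s-I_s$.

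The step that is actually missing is in $A_1$. Knowing $J>0$ only gives $\widehat T>0$; you get $J>0$ from regularity of $0$ for $(-\infty,0)$, or simply from $\inf_{[0,r]}X<0$ a.s. To get $\widehat L^{(s_1)}_{s_1\wedge\widehat T}>0$ you also need $\widehat L^{(s_1)}_t>0$ for every $t>0$. In words, the height contributed by times just before $s_1$ must be strictly positive. This follows from the regularity of $0$ for the reflected process $\widehat S^{(s_1)}-\widehat X^{(s_1)}$, transported to the reversed pre-$s_1$ path by duality. It is the same fact you already use in $A_2$. It does not follow from continuity of $H$ or from the $\epp$-approximation, which can only give a non-strict inequality. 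With that fact added to $A_1$, both events are null exactly as in the paper.
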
 
\noi
\textbf{Proof.} See Section \ref{branchmasspfsec}.  \cqfd 
 
\medskip

\noi
\textbf{Excursions of the ${\boldsymbol \alpha}$-stable height process.} For all $s\ino \bbR_+$, we set $I_s\eqo \inf_{r\in [0, s]} X_r$, the \emph{infimum process of $X$}, which has continuous sample paths since $X$ is spectrally positive.   
Recall that $X$ has infinite variation sample paths. Basic results on fluctuation theory (see e.g.~Bertoin \cite{Be} Chapter VI.1 and VII.1) entail that $X\! -\! I$ is a strong Markov process in $\bbR_+$ and that $0$ is regular for 
$(0, \infty)$ and recurrent with respect to this Markov process. Moreover, $-I$ is a local time at $0$ for $X\! -\! I$ (see Theorem VII.1 \cite{Be}). We denote by $\bN$ the corresponding 
\emph{excursion measure} of $X\! -\! I$ above $0$ and denote by $(l_j, r_j)$, $j\ino  \cI$, the excursion intervals of $X\! -\! I$ above $0$ and by $X^j_\cdot \eqo X_{(l_j + \cdot )\wedge r_j}-I_{l_j}$, $j\ino \cI$, the corresponding excursions. 
Then, the point measure $\sum_{j\in \cI} \delta_{(-I_{l_j}, X^j)}$ is a \emph{Poisson point measure on $\bbR_+\times \bD(\bbR_+, \bbR)$ with intensity 
$dx \otimes \bN$}. Here $\bD(\bbR_+, \bbR)$ stands for the space of c{\`a}dl{\`a}g functions from $\bbR_+$ to $\bbR$ equipped with Skorokhod's topology. 
Now, observe that (\ref{Hlimit}) implies that the value of $H_s$ only depends on the excursion of $X\! -\! I$ straddling $s$ and that 
\begin{equation}
\label{zeroHX}  \big\{ s \ino \bbR_+: X_s \geko I_s \big\}\eqo \bigcup_{^{j\in \cI}} (l_j, r_j)= \big\{ s \ino \bbR_+: H_s \geko 0 \big\}\; .
\end{equation}
This allows defining $H$ under $\bN$ as $H(X)$ when $X$ is under the excursion measure $\bN$. 
More precisely the processes $H^j\! :=\! H_{(l_j+\cdot )\wedge r_j}\eqo H(X^j)$, $j\ino  \cI$, are 
the excursions of $H$ above $0$, and the point measure 
\begin{equation}
\label{Poissheight}
\sum_{j\in \cI} \delta_{(-I_{l_j}, H^j)}
\end{equation}
is distributed under $\bP$ as a Poisson point measure on $\bbR_+\times \bC^0(\bbR_+, \bbR)$ with intensity $dx  \otimes \bN (dH)$, (with a slight abuse of notation for using $\bN (dH)$ as the `distribution' of $H(X)$ under the excursion measure 
$\bN (dX)$ ). Recall here $\bC^0(\bbR_+, \bbR)$ is the space of continuous functions from $\bbR_+ $ to $\bbR$ equipped with the topology of uniform convergence on every compact interval. We refer to the comments in D.~\& Le Gall \cite[Section 3.2 ]{DuLG02} for more details. 

Under $\bN$, $X$ and $H$ have the same lifetime $\zeta$: namely, we have $\bN$-a.e.~$ \zeta \leko \infty$, $X_s\eqo X_0\eqo H_0\! =\! H_{s}\! =\! 0$ for all $s \ino [\zeta, \infty)$ and $H_s$ and $X_s \! >\! 0$ for all $s \! \in \! (0, \zeta)$. 
Basic results of fluctuation theory (see Bertoin \cite{Be}, Chapter VII) also entail that 
$\bN \big[ 1\! -\! e^{-\lambda \zeta}  \big] \eqo \lambda^{1/\alpha}$, $\lambda \ino \bbR_+$. 
Therefore $\bN(\zeta \geko  r) \! =\! c_\alpha 
r^{-\frac{_1}{^\alpha}}$ where $c_\alpha = 1/ \Gamma_{\! \mathtt e} \big(\frac{_{\alpha -1}}{^\alpha} \big)$ and where  
$\Gamma_{\! \mathtt e}$ stands for Euler's Gamma function.

\begin{remark}
\label{gundundef}
Denote by $(\mathbf g_1, \mathbf d_1)$ the first excursion interval of $H$ (or $X\! -\! I$) above $0$  that is longer than $1$.  
Namely, for all $s\ino (0, \infty)$, introduce $r(s)\eqo \inf \{ r\ino (s, \infty): H_r \eqo 0\}$. Then 
$\mathbf g_1 \eqo \inf \{ s\ino \bbR_+: r(s) \! -\! s \geko 1 \}$ and $\mathbf d_1\eqo r( \mathbf g_1)$. Then, standard results on Poisson point processes imply that 
\begin{equation}
\label{zeta>1}
\big( X_{(\mathbf g_1 + \cdot) \wedge \mathbf d_1}, H_{(\mathbf g_1 + \cdot) \wedge \mathbf d_1}  \big)  \quad \textrm{under $\bP$} \quad \overset{\textrm{(law)}}{=} (X, H) \quad \textrm{under $\bN (\, \cdot \, | \, \zeta \geko 1)$.} \qquad \square
\end{equation}     
\end{remark}

Let us next define the law of the \emph{normalized $\alpha$-stable height process} $\bN (\, \cdot \, | \, \zeta \eqo 1)$.  
To that end, recall that the $\alpha$-stable spectrally positive Lévy process 
$X$ enjoys the following \emph{scaling property}: for all $r\ino (0,\infty)$, under $\bP$ the rescaled process 
$(r^{-1/\alpha}X_{rs})_{s\in \bbR_+}$ has the same law as $X$. We easily derive from (\ref{Hlimit}) that $H$ also enjoys a scaling property: namely, under $\bP$, 
$( r^{-(\alpha -1)/\alpha}  H_{rs} \big)_{s\in \bbR_+} $ and $H$ have the same law. Therefore, 
\begin{equation}
\label{HscalingN}
\big( r^{-\frac{1}{\alpha}}X_{rs} ,  r^{-\frac{\alpha-1}{\alpha}}\! H_{rs} \big)_{s\in \bbR_+} \;  \textrm{under} \; \, 
r^{\frac{1}{\alpha}}   \bN \quad 
\overset{\textrm{(law)}}{=} \quad  (X, H ) \; \textrm{under} \,  \; \bN \; . 
\end{equation}
(with a slight abuse of the word `law'). This scaling property allows us to define a regular version of the excursion conditioned to last $r$. Namely, there exists a family of laws on $\bD(\bbR_+, \bbR)$ denoted by  
$\bN(\, \cdot \, | \, \zeta \! = \! r)$, $r \! \in \! (0, \infty)$, such that 
$r \mapsto \bN(\, \cdot \, | \, \zeta \! =\!  r)$ is weakly continuous on 
$\bD(\bbR_+, \bbR)$, 
such that $\bN(\, \cdot \, | \, \zeta \! = \! r)$-a.s.~$\zeta \! = \! r$ and such that $\bN \eqo  \int_0^\infty \bN(\, \cdot \, | \, \zeta \! =\!  r) \, \bN (\zeta \! \in \! d r)$. 
We call $\bN(\, \cdot \,| \, \zeta \! =\!  1 )$ the \textit{normalized law of the 
$\alpha$-stable spectrally positive Lévy process above its infimum} (or of \emph{the $\alpha$-stable height process}). By (\ref{HscalingN}), we see that 
\begin{equation}
\label{echtscal}
\bN \big[ F(X, H)\big]= \alpha^{-1} c_\alpha \int_0^\infty \!\!\! \! dr  \, r^{-1-\frac{1}{\alpha}}\, 
\bN \Big[ F\Big( \big( r^{\frac{1}{\alpha}}\! X_{s/r} , r^{\frac{\alpha-1}{\alpha}}\! H_{s/r} \big)_{\! s\in \bbR_+}\Big) \big| \zeta \eqo 1\Big] \; . 
\end{equation}

\begin{remark}
\label{anscontzeta} Let us fix $s\ino (0, 1)$. Then the laws of $(H_{\cdot \wedge s} ,X_{\cdot \wedge s})$ under $\bN( \, \cdot \, | \zeta \geko 1)$ and $\bN( \, \cdot \, | \zeta \eqo 1)$ are absolutely continuous. 
More precisely, there exists a continuous function $( s ,x) \ino (0, 1)\! \times \bbR_+ \mapsto D_s(x)$ such that for all measurable nonnegative functions $F$,
\begin{equation}
\label{abscont}
\bN \Big[ F\big(  H_{\cdot \wedge s} ,X_{\cdot \wedge s} \big)  \big| \zeta \eqo 1\Big]= 
\bN \Big[ F\big(  H_{\cdot \wedge s} ,X_{\cdot \wedge s} \big) D_s (X_s)  \big| \zeta \geko 1\Big]. 
\end{equation}
See Kortchemski \cite{Kor13} for more details. A discrete version of this identity is used in the proof of our limit theorem. \cq
\end{remark}

\smallskip

\noi
\textbf{Limit theorems.} We next recall mostly from D.~\& Le Gall \cite[Chapter 2]{DuLG02}, D.~\cite{Du03} and Kortchemski \cite{Kor13} the convergence of the rescaled contour and height processes of GW forests whose offspring distribution belongs to the domain of attraction of an $\alpha$-stable law. 
More precisely, we fix $\alpha \ino (1, 2]$. Then, all the genealogical trees that we consider are (variants of) GW trees whose offspring distribution $\mu$ is fixed and satisfies (\ref{hypostaintro}).    
We denote by $(V_n)_{n\in \bbN}$ a RW whose jump law is $\widetilde{\mu}(k)\eqo \mu (k+1)$, $k\ino \{ -1\}\! \cup \! \bbN$. 
By definition, Assumption (H3) in (\ref{hypostaintro}) implies that there exists a $(0, \infty)$-valued sequence $(b_n)_{n\in \bbN}$ tending to $\infty$ such that 
\begin{equation}
\label{cvlawRW}
\tfrac{1}{b_n}V_n \underset{n\to \infty}{\overset{\textrm{(law)}}{-\!\!\! -\!\!\! \longrightarrow}} X_1 \; .
\end{equation}
where $(X_s)_{s\in \bbR_+}$ stands for the $\alpha$-stable spectrally positive L\'evy process whose Laplace exponent is 
$\lambda^\alpha$. Here, the sequence $(b_n)_{n\in \bbN}$ is $\tfrac{1}{\alpha}$-regularly varying, i.e., there is 
a slowly varying function  $L\! :\!  (0, \infty)\!  \rightarrow\!  (0, \infty)$ at $\infty$ such that 
\begin{equation}
\label{bnBiGoTe}
b_n = n^{1/\alpha} L(n) \; . 
\end{equation}
By e.g.~Bingham, Goldies \& Teugels \cite[Theorem 8.3.1]{BiGoTe}, Assumption ($\textrm{H}_3$) in (\ref{hypostaintro}) implies that there exists 
a slowly varying function  $L^*\! :\!  (0, \infty)\!  \rightarrow\!  (0, \infty)$ at $\infty$ such that 
\begin{equation}
\label{mutail}
\mu ([n ,\infty))\!  \sim \! 
n^{-\alpha} L^*(n) \; \, \textrm{if}\;  \alpha \!  \in \! (1, 2) \quad \textrm{and} \quad \sum_{0\leq k\leq n} \!\!\!\! k^2\mu (k) \! - \! 1\, \sim \, 2L^*(n) \; \textrm{if} \; \alpha \eqo 2. 
\end{equation}
$\sum_{0\leq k\leq n} k^2\mu (k) -1$ is ultimately positive by  ($\textrm{H}_1$) in (\ref{hypostaintro}). 
By e.g.~Bingham, Goldies \& Teugels \cite[Theorem 8.1.6]{BiGoTe}, 
\begin{equation}
\label{Laplmu}
\psi_\mu (s) \underset{0}{\sim}  c_\alpha' s^{\alpha} L^*\! \big( \tfrac{1}{s} \big) \; \,  \textrm{where} \; \,  \psi_\mu (s)\eqo  g_\mu (1\!  -\! s) \! -\! (1\! - \! s) \; \textrm{and} \; g_\mu (s) \eqo   \sum_{k\in \bbN} s^k \mu(k), \, s\ino [0, 1]  .
\end{equation}

\vspace{-2mm}

\noi
where $c_\alpha'\eqo \frac{\alpha-1}{\Gamma_{\! \mathtt e} (2-\alpha)}$ if $\alpha\leko 2$ and $c_2'\eqo 1$. 
Then $L$ and $L^*$ are related as follows. 
\begin{equation}
\label{slowlinks}
n L(n)^\alpha \sim b_n^\alpha \sim n L^* (b_n) \sim n L^* \big(n^{1/\alpha} L(n) \big)  \; .
\end{equation}

Theorem \ref{VHCcvstable} below  gathers several limiting results 
for GW trees and their coding processes that  are used  in our proofs. 
To that end, let us first introduce the following notation. 
Let $\tau_\infty$ be a GW($\mu$)-forest as in Definition \ref{GWfordef} $(\textbf{b})$ which is associated with the sequence of i.i.d.~GW($\mu$)-trees $(\tau^{_{(k)}}_{^{\!}})_{k\in \bbN^*}$ as in Definition \ref{Ulamtree} $(\textbf{c})$. Recall that $V(\tau_\infty)$, $H(\tau_\infty)$ and $C(\tau_\infty)$ stands for resp.~its Lukasiewicz path, its height process and its contour process. 
For all integers $n$, we then set 
 \begin{equation}
\label{JGDdef}
\kappa_n= \inf \{ k\ino \bbN^*:   \# \tau^{_{(k)}}_{^{\!}}\geqo n \big\}, \quad \frak g_n \eqo \sum_{1\leq k<\kappa_n} \! \! \# \tau^{_{(k)}}_{^{\!}} 
\quad \textrm{and} \quad \frak d_n \eqo \frak  g_n + \# \tau^{_{(\kappa_n)}}_{^{\!}}
\end{equation}
with the convention that $\frak g_n \eqo 0$ if $\kappa_n \eqo 1$. 
Namely, $\frak g_n$ is the time when $H(\tau_\infty)$ begins its first excursion above $0$ lasting at least $n$ units of time. 
Note that $\tau^{_{(\kappa_n)}}_{^{\!}}$ 
has the same law as a single GW($\mu$)-tree $\tau$ under 
$\bP (\, \cdot \, | \# \tau \geqo n)$.

\begin{theorem}
\label{VHCcvstable} Let $\alpha\ino (1, 2]$ and let $\mu$ satisfy (\ref{hypostaintro}). 
Let $\tau_\infty$ be a GW($\mu$)-forest and let $\tau$ be a GW($\mu$)-tree (Definition \ref{GWfordef}). Then for all (sufficiently large) $n\ino \bbN^*$, we denote by $\tau_{\geq n}$ a tree distributed as $\tau$ under $\bP (\, \cdot\, | \, \# \tau \geqo n)$ and by $\tau_n$ a  tree distributed as $\tau$ under $\bP (\, \cdot\, | \, \# \tau \eqo n)$. 
Recall $\frak g_n$ and $\frak d_n$ from (\ref{JGDdef}) and recall $\mathbf g_1$ and $\mathbf d_1$ from Remark \ref{gundundef}. 
Let $(b_n)_{n\in \bbN^*}$ be as in (\ref{cvlawRW}). For all $n\ino \bbN^*$, we set 
\begin{equation}
\label{aennedef}
 a_n = n/b_n , \quad n\ino \bbN^* \; .
\end{equation} 
Then, the following holds true. 
\begin{compactenum}

\smallskip

\item[$(i)$] The joint convergence 
\end{compactenum}
\begin{equation}
\label{VHCGDcv_i}
\mathscr Q_n = \Big(  \big(\tfrac{1}{b_n} V_{\lfloor n\cdot \rfloor} (\tau_\infty) ,   \tfrac{1}{a_n} H_{ n\cdot } (\tau_\infty)  ,    \tfrac{1}{a_n} C_{2n\cdot } (\tau_\infty) ,  \tfrac{1}{n}\frak g_n   ,   \tfrac{1}{n}\frak d_n \Big) \underset{n\to \infty}{-\!\!\!-\!\!\!\longrightarrow} 
\big(  X, H, H , \mathbf g_1  , \mathbf d_1 \big)
\end{equation}
\begin{compactenum}
\item[]  holds weakly on $\bD (\bbR_+, \bbR) \! \times \! \bC^0 (\bbR_+, \bbR)^2 \! \times \! (0, \infty)^2$ equipped with the product topology.

\smallskip

\item[$(ii)$] For all integers $n\geqo k\geq 0$, there exists a function $D^{_{(n)}}_{^k}\! : \! \{ -1\} \! \cup\!  \bbN \! \to \bbR_+$ such that for all measurable nonnegative function $F$
\end{compactenum}
\begin{equation}
\label{absCdiscr}
\bE \big[  F( V_{\cdot \wedge \, k} (\tau_n)  ) \big]= \bE \big[  F( V_{\cdot \wedge\,  k} (\tau_{\geq n})  )D^{_{(n)}}_{^k} \big(V_k (\tau_{\geq n}) \big)\big].
\end{equation}
\begin{compactenum}
\item[$(iii)$] We recall from Remark \ref{anscontzeta} the definition of the continuous function $( s, x) \ino (0, 1)\! \times \bbR_+ \mapsto D_s(x)$. We fix $( s,x, y) \ino (0, 1)\! \times (0, \infty)^2 $ with $x\leqo y$. Let $s_n\ino \bbN^*$, $n\ino \bbN$, be such that $s_n /n \! \to \! s$. Then 
\end{compactenum}
\begin{equation}
\label{limlocgned}
\lim_{n\to \infty} \max_{ p \in \lgeo \lfloor b_n x\rfloor, \lfloor b_n y \rfloor \rgeo } \big| D^{_{(n)}}_{{s_n}} (p) \! -\! D_s (p/b_n)  \big|\eqo 0 \; .
\end{equation}
\begin{compactenum}
\item[] Moreover, the r.v.~$\big( D^{_{(n)}}_{^{s_n}} ( V_{s_n} (\tau_{\geq n}) ) \big)_{\! n\in \bbN^*}$ are uniformly integrable: 
\end{compactenum}
\begin{equation}
\label{Scheff}
\lim_{c\to \infty} \sup_{n\in \bbN^*} \bE \Big[ D^{_{(n)}}_{^{s_n}} ( V_{s_n} (\tau_{\geq n}) ) \un_{ \big\{ D^{_{(n)}}_{^{s_n}} ( V_{s_n} (\tau_{\geq n}) ) \geq c \big\}} \Big] = 0\; . 
\end{equation}
\begin{compactenum}
\item[$(iv)$] The joint convergence 
\end{compactenum}
\begin{equation}
\label{VHCGDcv}
\mathscr Q_n'= \Big(  \tfrac{1}{b_n} V_{\lfloor n\cdot \rfloor} (\tau_n) ,   \tfrac{1}{a_n} H_{ n\cdot } (\tau_n)  ,    \tfrac{1}{a_n} C_{2n\cdot } (\tau_n)  \Big) \underset{n\to \infty}{-\!\!\!-\!\!\!\longrightarrow} 
\big(  X', H', H'  \big)
\end{equation}
\begin{compactenum}
\item[]  holds weakly on $\bD ([0, 1], \bbR)  \times  \bC ([0, 1] , \bbR)^2$ equipped with the product topology. Here 
$(X', H')$ has the same law as $(X, H(X))$ under $\bN (dX \, | \zeta \eqo 1)$.  
\end{compactenum}
\end{theorem}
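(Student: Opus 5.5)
The statement gathers known results, so the plan is mainly to assemble them and check that the normalizations match. The paper's conventions are: $a_n = n/b_n$, the height process is interpolated as in (\ref{ctrvshght}), and trees of the forest are concatenated as in (\ref{concate}).

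For $(i)$, I start from the convergence of the Lukasiewicz path. Since $V(\tau_\infty)$ is a left-continuous random walk with jump law $\widetilde\mu$ (Remark \ref{GWrem}), (\ref{cvlawRW}) together with Skorokhod's invariance principle for i.i.d.\ increments gives $\frac{1}{b_n}V_{\lfloor n\cdot\rfloor}(\tau_\infty)\to X$ in $\bD(\bbR_+,\bbR)$.

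The joint convergence with $\frac{1}{a_n}H_{n\cdot}(\tau_\infty)$ is D.~\& Le Gall \cite[Theorem 2.3.2]{DuLG02}. Its proof uses (\ref{codheight}) and a time-reversal argument to turn the height into a count of weak records, which converges to the local-time formula (\ref{Hlimit}). The tightness condition there, a lower bound on the extinction probability of the GW process, follows from (\ref{Laplmu}) and (\ref{slowlinks}); note that no extra assumption is needed in the stable case, thanks to \cite{Du03}.

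Convergence of the contour process to the same $H$ follows from $H_s = C_{\phi(s)}$ and the bound (\ref{controphit}). After rescaling, $\sup_{s\le n t}|\frac12\phi(s)-s|/n \le (1+2\sup H)/n \to 0$ in probability, because $\sup H = O(a_n)$ and $a_n = o(n)$. Hence $\frac1{a_n}C_{2n\cdot}$ and $\frac1{a_n}H_{n\cdot}$ have the same limit.

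For $(\frak g_n/n,\frak d_n/n)$, the excursion endpoints of $H(\tau_\infty)$ are exactly the first hitting times of new minima of $V$ (Remark \ref{bijecfor}$(\textbf{a})$). So $\frak g_n/n,\frak d_n/n$ are functionals of the rescaled path: the first excursion of $V-\underline V$ of length at least $1$. This functional is a.s.\ continuous at $X$. The reason is that $\bN(\zeta=1)=0$ and that the excursion endpoints of the limit are points where $X$ reaches a strict new infimum, because $0$ is regular for $X - I$. I would conclude by the Skorokhod representation.

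For $(ii)$, use the cyclic lemma (Kemperman's formula) $\bP(\#\tau=n)=\frac1n\bP(V_n=-1)$. By the Markov property at time $k$, conditioning on $\{\#\tau\ge n\}$ versus $\{\#\tau=n\}$ amounts to reweighting by a function of $V_k$. Explicitly, $D^{(n)}_k(j)=\frac{\bP(\#\tau\ge n)}{\bP(\#\tau=n)}\cdot\frac{\phi_{n-k}(j)}{\bP(\text{first hit of }-j-1\ge n-k)}$, where $\phi_m(j)=\frac{j+1}{m}\bP(V_m=-j-1)$.

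For $(iii)$, plug in the local limit theorem for $V$ (available under (H2), Gnedenko), the asymptotics $\bP(\#\tau=n)\sim c\, n^{-1}b_n^{-1}$, and the tail asymptotics for hitting times. These yield (\ref{limlocgned}) uniformly for $p/b_n$ in compacts, and the limit is identified with Kortchemski's $D_s$ from Remark \ref{anscontzeta}.

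Uniform integrability (\ref{Scheff}) is the main obstacle. I would prove it via Scheffé's lemma: $\bE[D^{(n)}_{s_n}(V_{s_n}(\tau_{\ge n}))]=1=\bE[D_s(X_s)\mid\zeta>1]$, and (\ref{limlocgned}) gives convergence in law. It remains to control $V_{s_n}$ near $0$ and near $\infty$. Near $0$, the bound $\phi_m(j)\lesssim (j+1)/(m b_m)$ gives a factor that stays bounded. Near $\infty$, $D$ is bounded by the uniform bound of the local limit theorem.

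Finally, $(iv)$ follows from $(i)$–$(iii)$. Absolute continuity on $[0,s]$ for each $s<1$ transfers the convergence from $\tau_{\ge n}$ (whose limit is the first long excursion, of law $\bN(\cdot\mid\zeta>1)$ by (\ref{zeta>1})) to $\tau_n$. The remaining window $[s,1]$ is handled by time reversal invariance of $\tau_n$ (Kortchemski \cite{Kor13}), which gives tightness near $1$. Heights and contours are then added exactly as in $(i)$.
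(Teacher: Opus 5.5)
The genuine gap is in $(iv)$, for the Lukasiewicz path on the window $[s,1]$. Parts $(i)$--$(iii)$ are essentially the arguments behind the references the paper cites, and are fine up to details.

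The symmetry of $\tau_n$ you invoke is the mirror map $\tau_n\mapsto\widetilde{\tau}_n$. It reverses the \emph{contour} process, $C_{2(n-r)}(\tau_n)=C_{2(r-1)_+}(\widetilde{\tau}_n)$; this is exactly how the paper treats $[s,1]$ for the snakes in the proof of Theorem \ref{cvsnacondi}. It does not act on $V$ by time reversal, and $V(\tau_n)$ has no time-reversal invariance at all:
\begin{itemize}
\item Read backwards, a Lukasiewicz path has jumps $\leq 1$ instead of $\geq -1$; for instance the star with three leaves has $V=(0,2,1,0,-1)$.
\item In the limit, $(X'_{(1-t)-})_{t\in[0,1]}$ has only negative jumps when $\alpha<2$, so it is not distributed as $X'$.
\item The other reversal, $k\mapsto V_n(\tau_n)-V_{n-k}(\tau_n)$, keeps the jump structure but stays $\leq -1$, so it is a differently conditioned walk.
\end{itemize}
Nor can (\ref{absCdiscr}) be pushed to $k/n\to 1$. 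For $k=n$ the density is $\frac{\bP(\#\tau\geq n)}{\bP(\#\tau=n)}\mathbf 1_{\{j=-1\}}$, which is of order $n$ on an event of probability of order $1/n$.

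Hence your scheme yields $(V,H)$ on $[0,s]$ for each $s<1$, then $C$ on $[0,1]$ by mirror symmetry, then $H$ on $[0,1]$ by the time change (\ref{controphit}). Heights cannot be ``added as in $(i)$'' for $\tau_n$, since the Duquesne--Le Gall height convergence is not a continuous-mapping statement in $V$. What the scheme does not give is
\[
\lim_{s\uparrow 1}\limsup_{n\to\infty}\bP\Big(\max_{\lfloor ns\rfloor\leq k\leq n}V_k(\tau_n)>\varepsilon b_n\Big)=0,\qquad \varepsilon>0,
\]
which convergence of $b_n^{-1}V_{\lfloor n\cdot\rfloor}(\tau_n)$ on all of $[0,1]$ requires.

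The paper does not prove $(iv)$; it imports it from Duquesne \cite{Du03}. It then uses it in the proof of Theorem \ref{cvsnacondi} precisely for the tightness of $V(\tau_n)$ on $[0,1]$. To close the gap yourself, you have two options.
\begin{itemize}
\item Pass through the random walk bridge conditioned on $V_n=-1$, whose cyclic shift at its first minimum is distributed as $V(\tau_n)$. The bridge \emph{is} invariant under $k\mapsto V_n-V_{n-k}$ by exchangeability of increments. So absolute continuity on $[0,a]$ plus this reversal gives its convergence on $[0,1]$. The Vervaat map is a.s.\ continuous at the stable bridge, whose image has law $\bN(\cdot\mid\zeta=1)$ (Chaumont).
\item Prove the display directly, using the Markov property at $\lfloor ns\rfloor$, Kemperman's formula, and local bounds for the left tail of $V$.
\end{itemize}

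Two minor points. In $(ii)$ your formula divides by $n-k$, so $k=n$ must be treated separately as above. For (\ref{Scheff}), the Scheff\'e argument alone suffices. Convergence in law of $D^{(n)}_{s_n}(V_{s_n}(\tau_{\geq n}))$ follows from (\ref{limlocgned}) on compacts of $(0,\infty)$, together with the tightness of $b_n^{-1}V_{s_n}(\tau_{\geq n})$ in $(0,\infty)$ coming from $(i)$ and (\ref{zeta>1}). Your pointwise bounds near $0$ and $\infty$ are therefore unnecessary. The one near $\infty$ is also not justified as stated, since the local-limit error term gets multiplied by $j+1$.
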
 
\noi
\textbf{Proof.} In $(i)$, for a proof of the joint convergence of the Lukasiewicz path, the height process and the contour process: see D.~\& Le Gall~\cite[Thm 2.3.2 and Corollary 2.5.1]{DuLG02}. The joint convergence with $(\frak g_n /n, \frak d_n /n)$ follows from arguments in Le Gall~\cite[Thm 5.1]{LG10}. 
For $(ii)$ see Le Gall \& Miermont~\cite[Lemma 10]{LGMi11} or Kortchemski~\cite[Lemma 2]{Kor13}. 
For $(iii)$, we refer to the arguments in Kortchemski~\cite[Section 3.2]{Kor13}. For $(iv)$: see D.~\cite{Du03}.   
\cqfd

\medskip

We shall use the previous Theorem to derive limits of coding processes of $\tau_n$ from limits of coding processes of $\tau_{\geq n}$. More precisely, we shall use the following lemma, whose proof is given in Appendix. 
\begin{lemma}
\label{absClem} We keep the assumptions and notation of Theorem \ref{VHCcvstable}. Let $\mathscr G\! \subset \! \ccF$ be $\sigma$-field. Let $E$ and $E'$ be Polish spaces equipped with their respective Borel $\sigma$-field. The product spaces below 
are equipped with product topology. For all $n\ino \bbN^*$, let $r_n \ino \bbN$, let  $U_n$ be a $E$-valued r.v.~and let $Z_n \! : \bD (\bbR_+, \bbR) \! \times \! E \! \to E'$ be a measurable function. We assume for all $n\ino \bbN^*$ 
that $\tau_{\geq n} $ and $\tau_n$ are independent from $(\mathscr G,U_n)$, that  
$r_n/n \! \to \! r\ino (0, 1)$ and that conditionally given $\ccG$, the joint convergence 
\begin{equation}
\label{supenncnd} M_{\geq n}:= \Big(  \big( \tfrac{1}{b_n}V_{ (ns) \wedge r_n} (\tau_{\geq n})\big)_{\! s\in \bbR_+}\,  ,\,  Z_n \big(V_{ \cdot \wedge r_n} (\tau_{\geq n}), U_n \big) \Big) \underset{n\to \infty}{-\!\!\!-\!\!\!\longrightarrow}  \big( X^{_{(r)}}_{^{\!}}, Z \big)
\end{equation}
holds weakly on $\bD (\bbR_+, \bbR)\! \times \! E'$. Here we furthermore assume that 
$X^{_{(r)}}_{^{\!}}$ has the same law as $X_{\cdot \wedge r}$ under $\bN (\, \cdot \, \big| \, \zeta \geko 1)$ and that 
$( X^{_{(r)}}_{^{\!}}, Z)$ is independent from $\mathscr G$. Then, conditionally given $\mathscr G$, the joint convergence 
\begin{equation}
\label{eqenncnd}
M_{n}:= \Big( \big( \tfrac{1}{b_n}V_{ (ns) \wedge r_n} (\tau_{ n})\big)_{\! s\in \bbR_+}\,  ,\,Z_n \big(V_{ \cdot \wedge r_n} (\tau_{n}), U_n \big) \Big) \underset{n\to \infty}{-\!\!\!-\!\!\!\longrightarrow}  \big( Y^{_{(r)}}_{^{\!}}, Z' )
\end{equation}
holds weakly on $\bD (\bbR_+, \bbR)\! \times \! E'$ where the r.v.~$( Y^{_{(r)}}_{^{\!}} \!\!  , Z' )$ is independent from $\mathscr G$ 
and such that  
$\bE [ F(  Y^{_{(r)}}_{^{\!}}\!\!  , Z')] \eqo \bE [ F( X^{_{(r)}}_{^{\!}}\!\! , Z ) D_r(X^{_{(r)}}_{^{r}}) ]$ for all bounded measurable functions $F$ on 
$\bD (\bbR_+, \bbR)\! \times \! E'$. 
\end{lemma}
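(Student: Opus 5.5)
The plan is to reduce the statement to a convergence of expectations of bounded continuous functionals, and then combine the absolute continuity identity (\ref{absCdiscr}) with the local limit (\ref{limlocgned}) and the uniform integrability (\ref{Scheff}).

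Fix a bounded continuous function $F$ on $\bD(\bbR_+,\bbR)\times E'$. Conditional weak convergence given $\mathscr G$ means that $\bE[F(M_n)\,|\,\mathscr G]\to \bE[F(Y^{(r)},Z')]$ in probability. (Or, along subsequences, a.s. for a countable convergence-determining family of $F$, using that $E'$ and $\bD$ are Polish.) So it suffices to prove this for a fixed $F$.

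\textbf{Step 1 (change of measure).} Since $\tau_n$ and $\tau_{\geq n}$ are independent of $(\mathscr G,U_n)$, and $M_n$ depends on the tree only through $V_{\cdot\wedge r_n}$, applying (\ref{absCdiscr}) with $k=r_n$ conditionally on $(\mathscr G,U_n)$ gives a.s.
\[
\bE\big[F(M_n)\,\big|\,\mathscr G\big]=\bE\big[F(M_{\geq n})\,D^{(n)}_{r_n}\big(V_{r_n}(\tau_{\geq n})\big)\,\big|\,\mathscr G\big].
\]
Here I integrate out $U_n$ while keeping $\mathscr G$ fixed, via Fubini on the independent product.

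\textbf{Step 2 (truncation).} Fix $0<x<y$ and $c>0$. Split the density as
\[
D^{(n)}_{r_n}(p)=D^{(n)}_{r_n}(p)\un_{\{p\in[b_nx,b_ny]\}}+D^{(n)}_{r_n}(p)\un_{\{p\notin[b_nx,b_ny]\}}.
\]
On the good event, (\ref{limlocgned}) lets me replace $D^{(n)}_{r_n}(p)$ by $D_r(p/b_n)$ with a uniformly vanishing error. The quantity $p/b_n$ is the value at time $r_n/n$ of the rescaled first coordinate of $M_{\geq n}$. Since $r_n/n\to r$ and the limit process is a.s. continuous at the fixed time $r$, the map sending the first coordinate to its value at time $r$ is continuous at the limit point. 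So $G(m)=F(m)\,D_r(m^1_r)\,\phi(m^1_r)$, with $\phi$ a continuous cutoff approximating $\un_{[x,y]}$, is bounded and a.s. continuous at the limit. The conditional convergence (\ref{supenncnd}) then gives
\[
\bE[G(M_{\geq n})\,|\,\mathscr G]\to\bE\big[F(X^{(r)},Z)\,D_r(X^{(r)}_r)\,\phi(X^{(r)}_r)\big]
\]
in probability. The limit is deterministic because $(X^{(r)},Z)$ is independent of $\mathscr G$.

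\textbf{Step 3 (remainder and main obstacle).} The main obstacle is controlling the bad event, where $V_{r_n}/b_n$ is near $0$ or large. There I bound $|F|\le\|F\|_\infty$ and need
\[
\bE\big[D^{(n)}_{r_n}(V_{r_n}(\tau_{\geq n}))\,\un_{\{V_{r_n}/b_n\notin[x,y]\}}\,\big|\,\mathscr G\big]
\]
to be small. This quantity does not involve $\mathscr G$ by independence, so it is a plain expectation. It equals $1$ minus the good part, because (\ref{absCdiscr}) with $F\equiv1$ gives $\bE[D^{(n)}_{r_n}(V_{r_n}(\tau_{\geq n}))]=1$. By Step 2 with $F\equiv1$, together with (\ref{Scheff}) to justify the passage to the limit (or simply by the same truncation argument), the good part tends to $\bE[D_r(X^{(r)}_r)\phi(X^{(r)}_r)]$. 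Since $\bE[D_r(X^{(r)}_r)]=1$ by (\ref{abscont}), and $X^{(r)}_r>0$ a.s. (it is the excursion at a fixed interior time), this limit is close to $1$ when $x\downarrow0$ and $y\uparrow\infty$. Hence the bad part is small uniformly in large $n$.

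Finally, letting $n\to\infty$ and then $x\to0$, $y\to\infty$ yields $\bE[F(M_n)\,|\,\mathscr G]\to\bE[F(X^{(r)},Z)\,D_r(X^{(r)}_r)]$ in probability. This identifies the law of $(Y^{(r)},Z')$, which is independent of $\mathscr G$.
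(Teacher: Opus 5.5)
Your proof is correct, but it controls the unbounded density $D^{(n)}_{r_n}$ differently from the paper.

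The paper truncates the density at a level $c$. It pays $\|F\|_\infty\sup_n\bE[D^{(n)}_{r_n}(V_{r_n}(\tau_{\geq n}))\un_{\{D^{(n)}_{r_n}(V_{r_n}(\tau_{\geq n}))\geq c\}}]$ via the uniform integrability (\ref{Scheff}). It replaces $c\wedge D^{(n)}_{r_n}$ by $c\wedge D_r$ on $\{V_{r_n}(\tau_{\geq n})/b_n\in(x,y)\}$ via (\ref{limlocgned}), and bounds the complement crudely by $2c\|F\|_\infty\bP(V_{r_n}(\tau_{\geq n})/b_n\notin(x,y))$. It then sends $x\to0$, $y\to\infty$, and finally $c\to\infty$.

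You never truncate the density. On $[x,y]$ you use that $D_r$ is continuous, hence bounded. You control the complement through two exact normalizations, $\bE[D^{(n)}_{r_n}(V_{r_n}(\tau_{\geq n}))]=1$ (from (\ref{absCdiscr}) with $F\equiv1$) and $\bE[D_r(X^{(r)}_r)]=1$ (from (\ref{abscont})), together with $X^{(r)}_r>0$ a.s. This Scheff\'e-type argument makes (\ref{Scheff}) unnecessary, so your route uses fewer inputs. In fact (\ref{Scheff}) itself follows from (\ref{limlocgned}) and these two normalizations, since nonnegative variables converging in law with converging means are uniformly integrable. The paper's route needs only integrability of $D_r(X^{(r)}_r)$ rather than its exact mean, but it takes (\ref{Scheff}) as an input.

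Three details need tightening.
\begin{itemize}
\item[(1)] In this paper, ``conditionally given $\ccG$'' is the quenched notion: the regular conditional laws converge weakly for every $\omega$ in a $\ccG$-measurable event of probability one. This is what Theorem~\ref{cvsnacondi} needs; convergence in probability of $\bE[F(M_n)\,|\,\ccG]$ is weaker. Your argument yields the quenched statement once Step~1 is written with a regular version $q_n(\omega,du)$ of the conditional law of $U_n$ given $\ccG$. Then the conditional law of $M_n$ is the $D^{(n)}_{r_n}$-reweighting of that of $M_{\geq n}$ for every $\omega$, with no $F$-dependent null sets, and Steps~2 and~3 run for each fixed $\omega$.
\item[(2)] The density is evaluated at the first coordinate at time $r_n/n$, not at time $r$, and these differ when $r_n>nr$. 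Evaluate instead at a fixed time $t>r$, where for large $n$ the prelimit paths and $X^{(r)}$ are all stopped. Alternatively, use that $(v,s)\mapsto v(s)$ is continuous at $(v,r)$ whenever $v$ is continuous at $r$.
\item[(3)] Use the continuous cutoff $\phi\leq\un_{[x,y]}$ consistently, both in the split and in the limit, or pick $x,y$ with $\bP(X^{(r)}_r\in\{x,y\})=0$. This ensures the portmanteau theorem applies to the good part.
\end{itemize}
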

\noi
\textbf{Proof.} See Section \ref{absClempfsec}. \cqfd

\medskip

  We complete this section by recalling several standard estimates on the distribution of $\# \tau$ when $\tau$ is a GW($\mu$)-tree. Let $(V_n)_{n\in \bbN}$ be the RW as in (\ref{cvlawRW}). By standard results on GW trees and RWs,  
\begin{equation}
\label{Tmoinsp}
 \# \tau   \overset{\textrm{(law)}}{=} T_{-1} \quad \textrm{where} \quad  T_{-p} \! :=\!  \inf \big\{ n\ino \bbN: V_n \eqo -p\big\}, \quad p\ino \bbN.
\end{equation}
Note that $T_{-p}$ is a.s.~finite by ($\textrm{H}_1$) in (\ref{hypostaintro}). By the strong Markov property, $(T_{-p})_{p\in \bbN}$ is a RW. 
\begin{lemma}
\label{numbtau} 
We assume that $\mu$ satisfies (\ref{hypostaintro}). Recall from (\ref{bnBiGoTe}) the definition of the slowly varying function $L\! : \! (0, \infty) \! \to \! (0, \infty)$.  
For all $s\ino [0, 1]$, we set $\varphi_\mu (s) \eqo \bE [s^{\# \tau} ]$. Then, there is a constant 
$c_\alpha\ino (0, \infty)$ that only depends on $\alpha$, such that 
\begin{equation}
\label{numtauber}
n^{\frac{1}{\alpha}}L(n) \bP \big(\# \tau \geqo n  \big) \to  c_\alpha \quad \textrm{and} \quad 1\! -\! \varphi_\mu (e^{-\lambda}) \sim_{0^+} c_\alpha \Gamma_{\!\! \texttt e} \big( 1\! -\! \tfrac{1}{\alpha}\big) \lambda^{\frac{1}{\alpha}} L\big( \tfrac{1}{\lambda}\big)^{-1} 
\end{equation}
(here $\Gamma_{\!\! \texttt e} $ is Euler's gamma function). Furthermore, $n^{1+\frac{1}{\alpha}}L(n) \bP \big(\# \tau \eqo n  \big) \to  \alpha^{-1} c_\alpha $. 
\end{lemma}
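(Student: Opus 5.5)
The plan is to go through the Lukasiewicz random walk and hitting times. By (\ref{Tmoinsp}), $\#\tau$ has the law of $T_{-1}$, the first hitting time of $-1$ by the left-continuous random walk $V$ with jump law $\widetilde{\mu}$. Since $V$ only jumps down by $1$, Kemperman's (hitting time / cycle) formula applies: $\bP(T_{-1}\eqo n)\eqo \frac{1}{n}\bP(V_n\eqo -1)$. We will prove the local estimate first and then derive the tail and Laplace asymptotics from it.

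\emph{Local estimate.} By (\ref{cvlawRW}) and the aperiodicity assumption (H$_2$), Gnedenko's local limit theorem for lattice random walks in the domain of attraction of a stable law gives
\[
\sup_{k\in\bbZ}\big| b_n \bP(V_n\eqo k) - p_1(k/b_n)\big| \longrightarrow 0 ,
\]
where $p_1$ is the continuous density of $X_1$. Taking $k\eqo -1$ and using $k/b_n\to 0$ yields $b_n\bP(V_n\eqo -1)\to p_1(0)$, and $p_1(0)\in(0,\infty)$ for a spectrally positive $\alpha$-stable law with $\alpha\in(1,2]$. Combining this with Kemperman's formula and $b_n\eqo n^{1/\alpha}L(n)$ gives
\[
n^{1+\frac{1}{\alpha}}L(n)\,\bP(\#\tau\eqo n)\to p_1(0).
\]

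\emph{Tail.} Summing the local estimate, we get $\bP(\#\tau\geq n)\sim p_1(0)\sum_{k\geq n}k^{-1-1/\alpha}L(k)^{-1}$. By Karamata's theorem (the function $k^{-1-1/\alpha}L(k)^{-1}$ is regularly varying with index $-1-1/\alpha<-1$), this sum is $\sim \alpha\, p_1(0)\, n^{-1/\alpha}L(n)^{-1}$. Setting $c_\alpha:=\alpha p_1(0)$ gives the first limit in (\ref{numtauber}), and the local limit becomes $\alpha^{-1}c_\alpha$, as claimed. This constant depends only on $\alpha$ because the law of $X_1$ is fixed by its Laplace exponent $\lambda^\alpha$.

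\emph{Laplace transform.} Write $1-\varphi_\mu(e^{-\lambda})\eqo (1-e^{-\lambda})\sum_{n\geq 0}e^{-\lambda n}\bP(\#\tau> n)$. The tail $\bP(\#\tau>n)$ is regularly varying with index $-1/\alpha\in(-1,0)$, so the Abelian half of Karamata's Tauberian theorem gives
\[
\sum_{n} e^{-\lambda n}\bP(\#\tau>n)\sim \Gamma_{\!\texttt e}\big(1-\tfrac{1}{\alpha}\big)\, \lambda^{-1}\, c_\alpha\,\lambda^{1/\alpha}L(1/\lambda)^{-1},
\]
and multiplying by $1-e^{-\lambda}\sim\lambda$ yields the second statement. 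As a consistency check, one could also use the relation $\varphi_\mu \eqo s\, g_\mu(\varphi_\mu)$ together with (\ref{Laplmu}) and (\ref{slowlinks}) to identify the same constant.

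The main obstacle is justifying the local limit theorem at the required precision, i.e.\ that (H$_2$) aperiodicity in the sense given implies the lattice span of $\widetilde\mu$ is $1$. I would invoke Gnedenko's theorem, e.g.\ as cited in Kortchemski \cite{Kor13}, rather than reprove it. The remaining care is only in tracking the slowly varying factors through the Tauberian step.
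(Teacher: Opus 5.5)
Your proof is correct and follows the paper's own argument: Kemperman's identity, Gnedenko's local limit theorem giving $\bP(V_n=-1)\sim p_1(0)/b_n$, Karamata's theorem for the tail sum, and the Abelian theorem for the Laplace transform. Your explicit constant $c_\alpha=\alpha p_1(0)$ and the identity $\Gamma(2-\frac{1}{\alpha})=(1-\frac{1}{\alpha})\Gamma(1-\frac{1}{\alpha})$ fill in exactly what the paper leaves implicit.

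The aperiodicity point you flag as the main obstacle is immediate. Criticality together with (H$_1$) forces $\mu(0)>0$. Hence any coset $a+d\bbZ$ containing the support of $\mu$ contains $0$, so it is the subgroup $d\bbZ$. Therefore (H$_2$) gives span $1$ for $\widetilde\mu$.
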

\noi
\textbf{Proof.} The first limit in (\ref{numtauber}) implies the second one by Karamata's Abelian Theorem for Laplace transforms  
 see e.g.~Bingham, Goldies \& Teugels \cite[Theorem 1.7.6]{BiGoTe}. By Kemperman's identity (see e.g.~\cite[Theorem 8.9.15]{BiGoTe}), we first get 
 $\bP (T_{-1} \eqo n )\eqo \frac{1}{n} \bP (V_n \eqo -1)$. By Gnedenko's local limit theorem 
(see e.g.~\cite[Theorem 8.4.1]{BiGoTe}), we next get $ \bP (V_n \eqo -1) \sim \textrm{Cst}/ b_n$. Thus 
$\bP ( \# \tau \eqo n) \sim \textrm{Cst}/ (nb_n)  $. By Karamata's Theorem for tails (see e.g.~\cite[Proposition 1.5.10]{BiGoTe}), we get $\bP ( \# \tau \geqo  n) \sim \alpha \textrm{Cst}/ b_n  $, which entails the lemma. \cqfd

\subsection{Real trees, convergences of metric spaces and pseudometrics.}
\label{GHPsec} 
In this section, we first recall the definition of a specific class of metric spaces called real trees. They appear as limits of rescaled discrete trees. Here the limit holds in the sense of   
Gromov--Hausdorff--Prokhorov (GHP for short). GHP-convergence actually concerns isometry classes of pointed measured compact metric spaces
and, although it provides a neat and intrinsic framework for formulating 
convergences of renormalized random graphs, it is more convenient to proceed with a certain type of encoding of these spaces which provides pseudometrics viewed as $\bbR_+^2$-indexed processes. 
We briefly recall from D., K., Lin \& Torri~\cite{DuKhLiTo22} 
the results on the space of pseudometrics that are used in our proofs.

\medskip

\noi
\textbf{Real trees.} Informally, real trees are obtained by glueing compact intervals without creating cycles. More precisely, they are defined as follows.  
\begin{definition}
\label{errtredef} (\textbf{Real trees}) 
Let $(T, d)$ be a metric space; it is a {\it real tree} {if and only if} the following holds true.

\smallskip

\noi
\textbf{(a)} For any $\sigma_1, \sigma_2 \! \in\!  T$, there is a unique isometry 
$f:[0,d(\sigma_1,\sigma_2)] \! \rightarrow \! T$ such
that $f(0)\!=\! \sigma_1$ and $f(d(\sigma_1,\sigma_2))\! =\! \sigma_2$. Then, we set 
$\lgeo \sigma_1,\sigma_2\rgeo \! :=\! f([0,d (\sigma_1,\sigma_2)])$. 

\smallskip

\noi
\textbf{(b)} For any continuous injective function 
$g: [0, 1] \! \rightarrow \! T$, such that $g(0)=\sigma_1$ and $g(1)=\sigma_2$,  $g([0,1]) \! = \! \lgeo \sigma_1,\sigma_2\rgeo$. \cq 
\end{definition}
Among connected metric spaces, real trees are characterized by the so-called \emph{four points inequality}. Namely,  if $(T, d)$ is a connected metric space, then $(T, d)$ is a real tree {if and only if} for any $\sigma_1,  \sigma_2,  \sigma_3,  \sigma_4  \in T$,
\begin{equation}
\label{4points}
d(\sigma_1, \sigma_2) + d(\sigma_3, \sigma_4) \leq \big(d(\sigma_1, \sigma_3) + d(\sigma_2, \sigma_4)\big) \vee  \big( d(\sigma_1, \sigma_4) + d(\sigma_2, \sigma_3)  \big) . 
\end{equation}
We refer to Evans \cite{Ev08} or to Dress, Moulton \& Terhalle~\cite{DrMoTe96} for a detailed account on this property. 
In this paper, we shall only consider \emph{compact pointed measured real trees}:  
namely, compact real trees $(T,d)$ equipped with a distinguished point  $r\ino T$ that is viewed as a  \textit{root}, and with a finite measure $\mu$ defined on the Borel $\sigma$-field of $T$.

\medskip

\noi
\textbf{Gromov--Hausdorff--Prokhorov metric.} In this paper, we prove that rescaled specific random graph trees equipped with a root and (variants of) the counting measure converge to random compact pointed measured real trees. The convergence of these metric spaces holds weakly in the Gromov--Hausdorff--Prokhorov sense as recalled here. 

Let $(E_1, d_1, r_1, \mu_1)$ and $(E_2, d_2, r_2, \mu_2)$ be two pointed measured compact metric spaces: here $\mu_1$ and $\mu_2$ are finite measures on the respective Borel $\sigma$-fields of $E_1$ and $E_2$, and $r_1\ino E_1$ and $r_2 \ino E_2$ are distinguished points. The pointed \textit{Gromov--Hausdorff--Prokhorov distance} (the \textit{GHP-distance} for short) between $E_1$ and $E_2$ is defined by
\begin{multline}
\label{defGHP}
\bdelta_{\mathtt{GHP}} (E_1, E_2)\! = \! \inf \!  \Big\{ d_{E}^{\textrm{Haus}} \! \big(\phi_1 (E_1), \phi_2 (E_2)\big)  \\ 
+ 
d_E (\phi_1 (r_1), \phi_2 (r_2))  + d_{E}^{\textrm{Prok}} \big(\mu_1 \! \circ \! \phi^{-1}_1\! \! , \mu_2 \! \circ \! \phi^{-1}_2\big)  \Big\}. 
\end{multline}
Here, the infimum is taken over all Polish spaces $(E, d_E)$ and all isometric embeddings $\phi_i: E_i
\hookrightarrow E$, $i\ino \{ 1, 2\}$;
$d_{E}^{\textrm{Haus}}$ stands for the Hausdorff distance on the space of compact subsets of $E$ (namely, 
$d_{E}^{\textrm{Haus}}(K_1, K_2)\! = \! \inf \{ \epp \ino (0, \infty): K_1 \! \subset \!  K_2^{(\epp)} \; \textrm{and} \;   
K_2 \! \subset \!  K_1^{(\epp)} \}$, where $A^{(\epp)}\! = \! \{ y\ino E: d_E(y, A) \! \leq \! \epp \}$ for all non-empty 
$A\! \subset \! E$); 
$d_{E}^{\textrm{Prok}}$ stands for the Prokhorov distance on the space of finite Borel measures on $E$ (namely, 
$d_{E}^{\textrm{Prok}}(\mu, \nu)\! = \! \inf \{ \epp \ino (0, \infty) \! : \! \mu (K)\! \leq \! \nu(K^{(\epp)}) + \epp  \; \textrm{and} \;   
\nu (K)\! \leq \! \mu(K^{(\epp)}) + \epp , \; \forall K \! \subset \! E \; \textrm{compact} \}$); 
for all 
$i\ino \{ 1, 2\}$, $\mu_i \! \circ \! \phi^{-1}_i$ stands for the pushforward measure of $\mu_i$ via $\phi_i$. 

\smallskip

We next recall from Theorem 2.5 in Abraham, Delmas \& Hoscheit \cite{AbDeHo13}
the following assertions: $\bdelta_{\mathtt{GHP}} $ is a pseudometric (i.e., , ~it is symmetric and it satisfies the triangle inequality) and $\bdelta_{\mathtt{GHP}} (E_1, E_2)\! = \! 0$ {if and only if} $E_1$ and $E_2$ are \textit{isometric}, namely {if and only if} there exists a bijective isometry $\phi \! : \! E_1 \! \rightarrow \! E_2$ such that $\phi(r_1)\! = \! r_2$ and such that $\mu_2 \! = \! \mu_1 \circ  \phi^{-1}$. 
Then denote by $\bbM$ \emph{the space of isometry classes of pointed measured compact metric spaces.} 
Theorem 2.5 in Abraham, Delmas \& Hoscheit \cite{AbDeHo13} asserts that 
\begin{equation}
\label{rappGHP} \textrm{$(\bbM, \bdelta_{\mathtt{GHP}})$ is a complete and separable metric space. }
\end{equation}

It is easy to prove that the four points inequality (\ref{4points}) is preserved under GHP-convergence. Therefore, \emph{the measure preserving pointed isometry classes of compact real trees form a $\bdelta_{\mathtt{GHP}}$-closed subspace of $\bbM$}. 

\medskip

\noi
\textbf{Limits of continuous pseudometrics.} We introduce here the space of pseudometrics as continuous functions on real intervals, which is an alternative point of view to the intrinsic approach of GHP convergence of metric spaces: this allows a more concrete manipulation of metric spaces and it makes possible to use tools of weak convergence of 
processes.

\begin{definition}
\label{psddef} Let $\zeta\ino (0, \infty)$. We denote by $\bC ([0, \zeta]^2, \bbR)$ the space of continuous 
functions from $[0, \zeta]^2$ to $\bbR$ that is a Banach space when equipped with 
uniform norm $\lVert \cdot \rVert$. 

\smallskip

\noi
$\textbf{(a)}$ We denote by $\MMM([0, \zeta])$ the subspace of $\bC ([0, \zeta]^2, \bbR)$ of \emph{continuous pseudometrics} on $[0, \zeta]$. Namely, the functions  
$d (\cdot, \cdot) \ino  \bC ([0, \zeta]^2, \bbR)$ such that for all $s_1, s_2, s_3\ino [0, \zeta]$, 
$$d(s_1, s_2) \! \geq \! 0, \; \,  d(s_1, s_1)\! = \! 0, \; \, d(s_1, s_2)\! = \! d(s_2, s_1) \; \, \mathrm{and} \;\,  d(s_1, s_3)\! \leq \! d (s_1, s_2) + d(s_2, s_3). $$
$\textbf{(b)}$ We also denote by $\MMT([0, \zeta])$ the space {of continuous real tree pseudometrics}. 
Namely, the functions $d (\cdot, \cdot) \ino \MMM([0, \zeta])$ such that for all $s_1, s_2, s_3, s_4 \ino [0, \zeta]$, 
\begin{equation}
\label{fourpoints}
 d(s_1, s_2) + d(s_3, s_4) \leq \max \big( d(s_1, s_3) + d(s_2, s_4) \, ; \,d(s_1, s_4) + d(s_2, s_3) \big) \; .
\end{equation}
We easily check that $\MMT ([0, \zeta])$ and $\MMM ([0, \zeta])$ are closed subsets of 
$(\bC ([0, \zeta]^2, \bbR), \lVert \cdot \rVert)$. \cq 
\end{definition}

Let $d \ino \MMM ([0, \zeta])$. It induces a connected compact pointed measured metric space as follows: 
for all $s_1  , s_2\ino [0, \zeta]$, set $s_1 \sim_d s_2$ {if and only if} $d(s_1, s_2)\! = \! 0$. Clearly, $\sim_d$ is an equivalence relation. We then define the quotient space:  
\begin{equation}
\label{quotient}
E_d = [0, \zeta] / \!\! \sim_d , \quad \mathtt{proj}_d: [0, \zeta] \! \rightarrow \! E_d,\; \textrm{the canonical projection} , \; r_d = \mathtt{proj}_d (0){.}
\end{equation}
We keep denoting $d$ the metric induced by $d$ on $E_d$. Since $d$ is continuous on $[0, \zeta]^2$, $\mathtt{proj}_d$ is continuous and $(E_d, d, r_d)$ is a pointed compact and connected metric space. 
We also equip $E_d$ with the pushforward measure $\mu_d$ of the Lebesgue measure on $[0, \zeta]$ via the canonical projection: namely, for all nonnegative measurable functions $f$ on $E_d$, 
\begin{equation}
\label{induleb}
\int_{E_d}\!\! \! f(x) \, \mu_d (dx)\! = \! \int_0^\zeta \!\!  f(\mathtt{proj}_d (s))\,  ds \; .
\end{equation}
Note that $\mu_d $ is a finite measure with total mass $\zeta$. We next recall the following result from D., K., Lin \& Torri \cite{DuKhLiTo22}. 
\begin{proposition} 
\label{groweak} Let $d, d^\prime\ino \MMM([0, \zeta])$ and let $E_d$ and $E_{d^\prime}$ be the related 
pointed measured compact metric spaces as defined by (\ref{quotient}) and (\ref{induleb}). Then
\begin{equation}
\label{ineqw}
\bdelta_{\mathtt{GHP}} (E_{d},E_{d^\prime}) \leq \tfrac{3}{2} \lVert d \! -\! d^\prime \rVert \; .
\end{equation}
\end{proposition}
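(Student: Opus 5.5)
We will build an explicit correspondence between $E_d$ and $E_{d'}$ coming from the common parametrization by $[0,\zeta]$, and embed both spaces isometrically into one metric space in which that correspondence has small distortion. This is the standard proof that $\bdelta_{\mathtt{GH}}\leq \frac12 \mathrm{dis}(\mathcal R)$, extended to include the root and the measure terms.

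Set $\epp = \lVert d - d'\rVert$ and consider the correspondence $\mathcal R = \{ (\mathtt{proj}_d(s), \mathtt{proj}_{d'}(s)) : s\in[0,\zeta]\}\subset E_d\times E_{d'}$. Its distortion is
$$\sup_{s_1,s_2}|d(s_1,s_2)-d'(s_1,s_2)|\leq \epp .$$
On the disjoint union $E = E_d \sqcup E_{d'}$ we keep $d$ on $E_d$ and $d'$ on $E_{d'}$. For $x\in E_d$ and $y\in E_{d'}$ we set
$$ D(x,y) = \inf_{s\in[0,\zeta]} \big( d(x, \mathtt{proj}_d(s)) + d'(\mathtt{proj}_{d'}(s), y)\big) + \tfrac{\epp}{2} .$$
We then check the triangle inequality for $D$. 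The mixed cases reduce to $|d(s_1,s_2)-d'(s_1,s_2)|\leq\epp$: for example,
$$d(x,x')\leq D(x,y)+D(y,x'),$$
because going through $s$ and $s'$ costs at most $d'(s,s')+\epp\geq d(s,s')$. We may add a small $\eta>0$ to $\epp$ to get a genuine metric if $\epp=0$, or else pass to a pseudometric quotient. The space $(E,D)$ is compact, hence Polish, and the inclusions are isometric embeddings $\phi_1,\phi_2$.

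We now bound the three terms in (\ref{defGHP}).

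For the Hausdorff term, every $\mathtt{proj}_d(s)$ lies at $D$-distance $\epp/2$ from $\mathtt{proj}_{d'}(s)$ (take the same $s$ in the infimum), and conversely. Hence $d^{\mathrm{Haus}}_E\leq \epp/2$.

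For the root term, the roots are $r_d=\mathtt{proj}_d(0)$ and $r_{d'}=\mathtt{proj}_{d'}(0)$, so $D(r_d,r_{d'})\leq \epp/2$.

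For the Prokhorov term, both measures are pushforwards of Lebesgue measure on $[0,\zeta]$, via $\phi_1\circ\mathtt{proj}_d$ and $\phi_2\circ\mathtt{proj}_{d'}$ respectively, and these two maps are pointwise within distance $\epp/2$. This gives a coupling, the law of $(\mathtt{proj}_d(U),\mathtt{proj}_{d'}(U))$ under Lebesgue, whose pairs are always within $\epp/2$. Hence for every compact $K$, $\mu_d(K)\leq \mu_{d'}(K^{(\epp/2)})$ and symmetrically, so $d^{\mathrm{Prok}}_E\leq \epp/2$.

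Summing the three terms gives $\bdelta_{\mathtt{GHP}}\leq \frac32\epp$, which is (\ref{ineqw}). The only step needing real care is the triangle inequality for the glued metric $D$, specifically the mixed case $x,x'\in E_d$ with the intermediate point $y\in E_{d'}$, where the additive $\epp/2$ must absorb the distortion. The remaining steps are routine.
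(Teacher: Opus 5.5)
Your proof is correct. You glue $E_d$ and $E_{d'}$ along the correspondence induced by the common parametrization by $[0,\zeta]$, with cross-distances shifted by $\tfrac12\lVert d-d'\rVert$, so each of the Hausdorff, root and Prokhorov terms is at most $\tfrac12\lVert d-d'\rVert$, which is exactly where the constant $\tfrac32$ comes from. The paper gives no argument of its own and only cites D., K., Lin \& Torri (Proposition 4.14); the constant $\tfrac32$ strongly suggests that reference uses this same standard correspondence-gluing construction.
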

\noi
\textbf{Proof.} See D., K., Lin \& Torri \cite[Proposition 4.14]{DuKhLiTo22}.\cqfd 
 
\begin{example}
\label{rltrcdng} (\textit{Real trees coded by continuous functions}) Let $\zeta \ino (0, \infty)$ and 
let $h\! : \! [0, \zeta] \! \rightarrow \! \bbR$ be continuous. For all $s_1, s_2\ino [0, \zeta]$, we set 
\begin{equation} 
\label{djhczkk}
m_h (s_1, s_2) = \!\! \!\!\!\!\!\! \!\!\!\! \!\!\!\!\inf_{\,  \qquad s\in [s_1 \wedge s_2, s_1 \vee s_2] }\! \!\!\!\!\!\!\!\!\!\!\!\! \!\!\!\! h (s) \qquad  \textrm{and} \quad d_h (s_1, s_2)= h(s_1)+ h(s_2) - 2 m_h (s_1, s_2) \; .
\end{equation}
We easily check that $d_h \ino \MMT ([0, \zeta])$ and to simplify we denote by $(T_h, d_h, r_h, \mu_h)$ 
the induced  metric space {$(E_{d_h}, d_h, r_{d_h}, \mu_{d_h})$} as defined in (\ref{quotient}) 
and (\ref{induleb}); 
we also denote by $p_h \! : \! [0, \zeta] \! \rightarrow \! T_h$ the canonical projection (instead of $\mathtt{proj}_{d_h}$); $(T_h, d_h, r_h, \mu_h)$ is a pointed measured compact real tree that is the \textit{tree coded by the function $h$}.  
Let $h^\prime \! : \! [0, \zeta] \! \rightarrow \! \bbR$ be continuous. Then observe that 
\begin{equation}
\label{contitreed}
 \forall s_1, s_2, \ino [0, \zeta], \quad \big| d_h (s_1, s_2)\! -\! d_{h^\prime} (s_1, s_2)\big| \leq  4 \sup_{s\in [0, \zeta] } |h(s)\! -\! h^\prime (s)| \; , 
\end{equation}
which shows the continuity in $ \MMT ([0, \zeta])$ of the application $h\mapsto d_h$ and therefore the continuity of $h\mapsto T_h$ with respect to GHP-convergence by Proposition \ref{groweak}.  \cq 
\end{example}

\subsection{Continuous snakes and related metrics.}
\label{Cosnake}

Here we recall the definition of Brownian snakes that have been introduced by J-F.~Le Gall in \cite{LG93bis}. They are continuous analogs of discrete snakes introduced in Definition \ref{brwlkdef}. One-dimensional Brownian snakes play a key role in our result: they encode a random metric space called the \emph{Brownian cactus} (see Curien, Le Gall \& Miermont  \cite{CuLGMi13}) which is the limit of the rescaled range of the tree-valued BRWs studied in this article.  
In this section, we recall the definition of the Brownian cactus with $\alpha$-stable branching mechanism, along with technical results on limits of snake-pseudometrics, some come from D., K., Lin \& Torri~\cite{DuKhLiTo22}, 
others are new.

\medskip
\noi
\textbf{General definition of snakes.} Let us first introduce some notation and conventions on spaces of continuous functions.
 If $(E, d_E)$ is a complete and separable metric space (a Polish space), then $\bC^0(\bbR_+, E)$ stands for the space of continuous functions from $\bbR_+$ to $E$ that is also Polish when equipped with the topology of uniform convergence on all compact intervals, which is metrized e.g.~by $\delta (f,g)$ $\eqo$ $\sum_{N\in \bbN}$  $2^{-N} \min (1, \sup_{s\in [0,N]} $ $d_E (f(s), g(s)) )$, $f, g\ino \bC^0(\bbR_+, E)$. 

The Euclidian space $\bbR^q$, $q\ino \bbN^*\! $, is equipped with its  
canonical basis denoted by $(\mathtt{e}_1, \ldots, \mathtt{e_q})$, with its canonical scalar product denoted by $\langle \cdot, \cdot \rangle $ and with its associated Euclidian norm denoted by $\lvert \,  \cdot \, \rvert$.
When it is convenient, \emph{we shorten the $\bC^0(\bbR_+, \bbR^q)$ notation to $\bC^{_0}_{^q}$ and to simplify notation, we set $\bC^{_0}_{^{\!+}}\eqo \bC^0 (\bbR_+, \bbR_+)$.}

\begin{definition} (\textbf{Continuous snakes)} 
\label{defC0sna}
Let $(E, d_E)$ be complete and separable. 
We equip the set
$ \bCpl  \times    \bC^0(\bbR_+, \bCqq)$ with the product topology, which is Polish. 
Then $(h(s), w_s(\cdot))_{s\in \bbR_+}\! \ino  \bCpl  \times    \bC^0(\bbR_+, \bCqq)$ is a \emph{snake} if the following conditions are satisfied.
\begin{compactenum}

\smallskip

\item[$(a)$] For all $s\ino \bbR_+$ and for all $r\ino [h(s), \infty)$, $w_s (r)\! = \! w_s (h(s))\! =: \! \widehat{w}_s $. 

\smallskip

\item[$(b)$] For all $s_1, s_2 \ino \bbR_+$, 
$w_{s_1} (r)\! = \! w_{s_2} (r)$ for all  $r\ino [0,m_h (s_1, s_2)]$ where we have set 
$m_h (s_1, s_2)\!   =\!  \inf_{s\in [s_1\wedge s_2, s_1\vee s_2]} h(s)$. 

\smallskip

\end{compactenum}
\noi
$\bullet$ We refer to $(b)$ as to the \textit{snake property}. The function $h$ is called the \textit{lifetime process} of the snake and the function $(\widehat{w}_s)_{s\in \bbR_+}$ the \textit{endpoint process} of the snake.

\noi
$\bullet$ We denote by $\fSigma_q (\bbR_+)$ the space of snakes of $ \bCpl \! \times \!   \bC^0(\bbR_+, \bCqq)$. For all 
$\zeta \ino (0, \infty)$, we denote by $\fSigma_q ([0,\zeta])$ the snake with duration $\zeta$: namely snakes $(h,w)\ino \fSigma_q (\bbR_+)$ such that $h(s)\eqo 0$ for all $s\ino [\zeta, \infty)$. 

\noi
$\bullet$ To simplify notation, we also set $\fSigma (\bbR_+)\! :=\! \fSigma_1 (\bbR_+)$ and 
$ \fSigma ([0,\zeta]) \! := \! \fSigma_1 ([0,\zeta])$.  \cq  
\end{definition}
\begin{remark}
\label{easy} 
\textbf{(a)} We easily check that $\fSigma_q(\bbR_+)$ is a closed subset of  $\bCpl \! \times \!   \bC^0(\bbR_+, \bCqq)$ .

\smallskip

\noi
\textbf{(b)} If $(h,w)\ino \fSigma_q(\bbR_+)$, then note that $\widehat{w}\ino \bCqq$ and observe that $(h,w)\mapsto \widehat{w}$ is continuous from the Polish spaces $\fSigma_q(\bbR_+)$ to $\bCqq$.

\smallskip

\noi
\textbf{(c)} Let $(h,w)\ino \fSigma_q(\bbR_+)$ and let $s, r\ino \bbR_+$. If $r\leqo m_h (0, s)$, the snake property implies that $w_s(r)\eqo w_0 (r)$ and if $r\geqo m_h (0, s)$, $w_s(r)\eqo \widehat{w}_{\alpha _{r,s}}$ 
where $\alpha_{r,s}\eqo \sup \{ s^\prime\ino [0, s]: h(s^\prime)\leqo r\wedge h(s)\}$ by the snake property again. 
Thus, there is a function $G$ from $\bCpl\! \times \! (\bCqq)^2$ to $\fSigma_q(\bbR_+)$ such that $G(h,w_0, \widehat{w})\eqo (h, w)$ for all $(h,w)\ino \fSigma_q(\bbR_+)$. Note that actually $G$ can be chosen to be Borel-measurable (this fact is not needed in what follows).

\smallskip

\noi
\textbf{(d)} Let $(M_u)_{u\in \fftree}$ be a $\bbR$-valued branching walk and let $W$ stand for its associated discrete contour snake as in Definition~\ref{brwlkdef}. Then observe that $(C(\fftree), W_\cdot (t; \cdot) )\ino \fSigma (\bbR_+)$. Therefore, the contour process $C(\fftree) $ of $\fftree$ is the lifetime process of $W_\cdot (t; \cdot)$.

\smallskip

\noi
\textbf{(e)} Let $(h,w)\ino \fSigma_q(\bbR_+)$ and $N\ino\bbN^*$. We define $R_N(h,w)\eqo (h^\prime,w^\prime)$ as follows: $h^\prime(s)\eqo h(s)$ and $w_{s}^\prime\eqo w_s$ for all $s\ino[0,N]$, $h^\prime(s)\eqo h(2N\!-\!s)$ and $w_{s}^\prime\eqo w_{2N\!-\!s}$ for all $s\ino[N,2N]$, and $h^\prime(s)\eqo (2N\!+\!1\!-\!s)_+ h(0)$ and $w_s^\prime\eqo w_0(\cdot\wedge h^\prime(s))$ for all $s\ino [2N, \infty)$. Thus, $R_N(h,w)\ino\fSigma_q([0,2N\!+\!1])$, $\widehat{w}^\prime_s\eqo \widehat{w}_s$ if $s\ino[0,N]$, $\widehat{w}^\prime_s\eqo \widehat{w}_{2N\!-\!s}$ if $s\ino[N,2N]$ and $\widehat{w}^\prime_s\eqo w_0((2N\!+\!1\!-\!s)_+\, h(0))$ if $s\geko 2N$.

\smallskip

\noi
\textbf{(f)} Let $(h,w)\ino \fSigma_q(\bbR_+)$. We set $\underline{h}(s) \eqo h(s)\! -\! m_h(0,s)$ and $\underline{w}_s(r)\eqo w_s(r\! +\! m_h(0,s)) \! -\! w_0(m_h(0,s))$, $s, r\ino \bbR_+$, and we observe that $(\underline{h},\underline{w})\ino \fSigma_q(\bbR_+)$ and that $\widehat{\underline{w}}_s \eqo \widehat{w}_s \! -\! w_0(m_h(0,s))$, $s\ino\bbR_+$. 
 \cq 
\end{remark}

\noi
\textbf{Finite-dimensional marginal laws of random snakes.} Here we consider Brownian snakes. To define their finite-dimensional marginals, it is convenient to introduce the following. 
For all $w_0, w_1\ino \bCqq$ and all $m\ino \bbR_+$, we define $w_2\eqo w_0 \! \oplus_m \! w_1\ino \bCqq$ by setting $w_2(s)\eqo w_0(s)$ if $s\ino [0, m]$ and $w_2(s)\eqo w_0(m)+ w_1(s\! -\! m)\!-\! w_1(0)$ if $s\ino [m, \infty)$. We easily check that 
\begin{equation}
\label{contoplus_prel}
(w_0, w_1, m, r)\ino (\bCqq)^2\! \times \! \bbR_+^2\longmapsto  (w_0 \! \oplus_m\!  w_1) (\cdot \wedge r) \ino \bCqq \quad \textrm{is continuous.}
\end{equation}
 Let $h\ino \bCpl$, let $p\ino \bbN^*$, let $s_0\eqo 0\leqo s_1\leko \ldots \leko s_p$ be real numbers 
and let $w_0, \ldots, w_p\ino \bCqq$. We define $F_p (h;  (w_j)_{ 0\leq j\leq p}\, ;  (s_j)_{1\leq j\leq p})\eqo (w^\prime_j)_{0\leq j\leq p} \ino (\bCqq)^{p+1}$ by setting 
\begin{equation}
\label{defFpmarg}
w^\prime_0\eqo w_0(\cdot\wedge h(0)) \quad \textrm{and} \quad w^\prime_{j+1} \eqo \big( w^\prime_j \oplus_{m_h(s_j, s_{j+1})} w_{j+1} \big) (\cdot \wedge h(s_{j+1})) , \quad 0\leq j\leko p\; ,
\end{equation}
where we recall that $m_h (s_j, s_{j+1})\!  =\!  \inf_{s\in [s_j, s_{j+1}]} h(s)$. By (\ref{contoplus_prel}) 
and the continuity of the function $(h, (s_j)_{0\leq j\leq p})\! \mapsto \! (h(0), (h(s_{j+1}), m_h (s_j, s_{j+1}))_{0\leq j<p})$, we see that 
\begin{equation}
\label{contoplus}
F_p : \bCpl \! \times \! ( \bCqq)^{p+1}\!  \!\times \! \bbR_{^+}^{_{\da p}} \longmapsto (\bCqq)^{p+1}  \quad \textrm{is continuous,}
\end{equation}
where $\bbR_{^+}^{_{\da p}}\eqo \{ (s_j)_{1\leq j \leq p} \ino \bbR_+^p: s_1 \leko \ldots \leko s_p\}$. The functions $(F_p)_{p\in \bbN^*}$ 
provide a convenient way of expressing the finite-dimensional marginal laws 
of Brownian snakes. Recall that unless otherwise specified, all the r.v.s which are considered in this article 
are defined on the same probability space $(\Omega, \mathscr F, \bP)$. 
\begin{definition}
\label{Brosnadef} 
\textbf{(Brownian snakes)} 
A $q$-dimensional Brownian snake is a r.v. $(H, W)\! :\!  \Omega\!  \rightarrow \! \fSigma_q(\bbR_+)$ that is measurable with respect to $\mathscr F$ and to the Borel $\sigma$-field of $\fSigma_q(\bbR_+)$, such that for all $p\ino \bbN^*$, and all real numbers 
$0\leqo s_1\leko \ldots \leko s_p$, 
\begin{equation}
\label{margBrosna}
\big(W_0,W_{\! s_1}, \ldots , W_{\! s_p} \big) \; \overset{\textrm{(law)}}{=} \;  F_p \big( H ; (B^{(j)}) _{0\leq j\leq p}  ;  (s_j)_{1\leq j\leq p} \big)\; , 
\end{equation}
where $B^{(0)}\eqo W_0$, where $(B^{(j)})_{1\leq j\leq p}$ are independent standard $q$-dimensional Brownian motions that are moreover independent from $W_0$ and $H$. \cq 
\end{definition}
\begin{remark}
\label{Bsnarem}
 
\textbf{(a)} First, it is easy to check that the laws provided by the left member in (\ref{margBrosna}) actually form 
a consistent family of finite-dimensional laws (so that the previous definition makes sense). 
Next, since $\bC^0 (\bbR_+, \bCqq)$ is Polish, standard results ensure the existence of a regular version of the conditional law of $W$ given $(W_0, H)$, i.e., , ~a measurable transition kernel $P_{w_0, h} (dW)$ from $\bCqq \! \times \! \bCpl$ to $\bC^0 (\bbR_+, \bCqq)$ such that $\bE [G(W_0, H)\un_{\{ W\in A\}}]\eqo \bE [G(W_0, H) P_{W_0, H}(A)]$ for all Borel subsets $A$ of $\bC^0 (\bbR_+, \bCqq)$ and all nonnegative measurable functions $G$. 

More precisely, the continuity (\ref{contoplus}) of $F_p$ entails the following. Denote by $\Pi^{w_0,h}_{s_1, \ldots, s_p} $ the law of $F_p (h; (B^{(j)}) _{0\leq j\leq p}  ;  (s_j)_{1\leq j\leq p} )$ where $B^{(0)}\eqo w_0$ and the $(B^{(j)})_{1\leq j\leq p}$ are independent standard $q$-dimensional Brownian motions. 
Then, there exists a Borel subset $B$ of $\bCqq\! \times \! \bCpl$ such that $\bP ((W_0, H)\ino B)\eqo 1$ 
and such that for all $(w_0, h)\ino B$, for all $p\ino \bbN^*$ and for all real numbers $0\leqo$ $s_1$  $\leko$ $\ldots$ $\leko$ $s_p$, 
\begin{equation}
\label{regcondsna}
\textrm{$\Pi^{w_0,h}_{s_1, \ldots, s_p}$ is the law of $(W_0,W_{s_1}, \ldots, W_{s_p}) $ under $P_{w_0, h} (dW) $.} 
\end{equation}

\smallskip

\noi
\textbf{(b)} The recursive construction (\ref{defFpmarg}) entails that conditionally given $(W_0, H)$, $W$ is a (time inhomogeneous) Markov process. More precisely, keeping notation from \textbf{(a)}, for all $(w_0, h)\ino B$ we check that $W$ under $P_{\! w_0, h}$ is an inhomogeneous Markov process with transition probabilities $P_{\! w_0, h} (w_{s^\prime} \ino dw^\prime \, | \, w_s\eqo w) \! := \!  \Pi^{w, h (\cdot\,  +s)}_{s^\prime -s} (dw^\prime)$.  \cq
\end{remark}

Let us mention the following: given a continuous lifetime process $H$ and an initial path $W_0$, \emph{there is not necessarily a continuous snake} $(H, W)$. However, when $H$ stands for the $\alpha$-stable height process, there is a 
continuous version of the Brownian snake as shown in the following proposition, which is a direct consequence of a result in D.~\& Le Gall \cite{DuLG02}.  
\begin{proposition}
\label{Holdsnake} Let $\alpha\ino (1, 2]$. Let $(H_s)_{s\in \bbR_+}$ be the $\alpha$-stable height process as defined by (\ref{Hlimit}) (resp.~under the normalized excursion law $\bN (\, \cdot \, | \, \zeta \geko 1)$ or $\bN (\, \cdot \, | \, \zeta \eqo 1)$). Then there exists a continuous one-dimensional Brownian snake $W$ whose lifetime process is $H$ which is called the \emph{one-dimensional Brownian snake with $\alpha$-stable branching mechanism}. 
 \end{proposition}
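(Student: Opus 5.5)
The plan is to build $W$ by Kolmogorov's continuity criterion applied conditionally on $H$, using Hölder regularity of $H$, and then to pass to the excursion laws by absolute continuity and scaling.

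\textbf{Step 1 (regularity of $H$).} Recall from D.~\& Le Gall \cite[Theorem 1.4.4]{DuLG02} that for the $\alpha$-stable height process under $\bP$, $H$ is locally Hölder continuous of every exponent $\gamma\ino (0, \frac{\alpha-1}{\alpha})$.

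\textbf{Step 2 (conditional construction).} Fix $W_0$; the natural choice is the path $W_0\eqo 0$ stopped at $H_0\eqo 0$. Conditionally given $H$, the finite-dimensional laws (\ref{margBrosna}) are consistent by Remark \ref{Bsnarem} (a). Kolmogorov's extension therefore gives a process $(W_s)_{s\in\bbQ_+}$ with values in $\bC^{_0}_{^1}$.

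\textbf{Step 3 (increment bound and continuous version).} Control $\bE[\sup_{r}|W_s(r)\! -\! W_{s'}(r)|^{2p}\,|\,H]$. By (\ref{defFpmarg}), the paths $W_s$ and $W_{s'}$ coincide up to $m\eqo m_H(s,s')$ and then continue as independent Brownian pieces of lengths $H_s\! -\! m$ and $H_{s'}\! -\! m$. Using $H_s\! -\! m\leqo |H_s\! -\! H_{s'}|+ \ldots$, the bound is
\[
\bE\big[\textstyle\sup_{r}|W_s(r)\! -\! W_{s'}(r)|^{2p}\,\big|\,H\big]\leq C_p\big(d_H(s,s')^{p}+|H_s\! -\! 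H_{s'}|^{2p}\big).
\]
The term $|H_s\! -\! H_{s'}|^{2p}$ accounts for the truncation effect on paths stopped at different heights, because a Brownian path is $\frac12^-$-Hölder. Since $d_H(s,s')\leqo 2\,\omega_H(|s\! -\! s'|)$, the modulus of continuity of $H$, Step 1 gives $d_H(s,s')\leqo C_H|s\! -\! s'|^{\gamma}$. Hence the right-hand side is at most $C_H' |s\! -\! s'|^{p\gamma}$. Choosing $p$ with $p\gamma\geko 1$, Kolmogorov's criterion (applied on each $[0,N]$ and with values in the Polish space $\bC^{_0}_{^1}$ under the uniform norm) yields a Hölder-continuous version $s\mapsto W_s$, $\bP$-a.s.\ in $H$. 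The snake properties (a) and (b) of Definition \ref{defC0sna} hold for rational times by construction and extend by continuity, so $(H,W)\ino\fSigma(\bbR_+)$. Measurability in $H$ follows from Remark \ref{Bsnarem}.

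\textbf{Step 4 (excursion laws).} For $\bN(\cdot\,|\,\zeta\geko 1)$, apply the construction under $\bP$ and restrict to the excursion interval $(\mathbf g_1,\mathbf d_1)$ via (\ref{zeta>1}). The snake is shifted in time and re-rooted using Remark \ref{easy} (f); since $H$ vanishes at $\mathbf g_1$, this amounts to a simple time shift. For $\bN(\cdot\,|\,\zeta\eqo 1)$, the only input needed is Hölder continuity of $H$ on $[0,1]$ under the normalized law. On $[0,s]$ with $s\leko 1$ it follows from the absolute continuity (\ref{abscont}). On $[s,1]$ it follows from time reversal invariance of the normalized excursion, namely $(H_{1-t})\overset{\textrm{(law)}}{=}(H_t)$ (see \cite{DuLG02}). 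Covering $[0,1]$ by $[0,3/4]$ and $[1/4,1]$ then concludes.

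The main obstacle is the increment bound in Step 3, specifically handling the stopped-path term $|H_s\! -\! H_{s'}|$ uniformly in $r$. This is dealt with by the Hölder continuity of $H$ rather than of $d_H$ alone.
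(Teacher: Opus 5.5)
Your proof is correct, and its skeleton matches the paper's proof, which is a three-line citation. The paper invokes \cite[Proposition 4.4.1]{DuLG02} under $\bP$. It then uses ``standard arguments'', namely restriction to the excursion $(\mathbf g_1,\mathbf d_1)$ as in (\ref{zeta>1}), for $\bN(\cdot\,|\,\zeta>1)$. Finally it uses the absolute continuity (\ref{abscont}) for $\bN(\cdot\,|\,\zeta=1)$, with details deferred to \cite[Lemma 4.30]{DuKhLiTo22}.

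Your Steps 1--3 re-derive the cited result: a conditional Kolmogorov criterion driven by the Hölder continuity of $H$. Your treatment of $\bN(\cdot\,|\,\zeta>1)$ is the same as the paper's.

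The real difference is in the normalized case.

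\textbf{The paper's route.} It transfers the already constructed continuous snake through (\ref{abscont}). This is legitimate because, given $H$, the snake on $[0,s]$ has the same conditional kernel under both excursion laws. By itself, however, this only gives continuity on $[0,s]$ for each $s<1$. Continuity at time $1$, where $W$ must return to the trivial path, is left to the cited lemma.

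\textbf{Your route.} You transfer only the Hölder regularity of $H$. You cover $[s,1]$ by time-reversal invariance of the normalized excursion, which follows from time reversal under $\bN$ together with the scaling (\ref{HscalingN}). You then rerun the conditional construction directly under $\bN(\cdot\,|\,\zeta=1)$. This reduces everything to a single path property of $H$ and makes the endpoint issue explicit, which is arguably cleaner.

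\textbf{A small remark on Step 3.} The extra term $|H_s-H_{s'}|^{2p}$ and its ``$\frac12^-$-Hölder'' justification are unnecessary. Beyond the level $m=m_H(s,s')$, the quantity $\sup_r|W_s(r)-W_{s'}(r)|$ is at most the sum of the running maxima of two independent Brownian pieces, run for times $H_s-m$ and $H_{s'}-m$. This already yields the bound $C_p\,d_H(s,s')^p$. The extra term is harmless, since $|H_s-H_{s'}|\le d_H(s,s')$.
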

\noi
\textbf{Proof.} See \cite[Proposition 4.4.1]{DuLG02} when $H$ is under $\bP$. Standard arguments entail the result for $H$ under $\bN (\, \cdot \, | \, \zeta \geko 1)$, then we use (\ref{abscont}) to derive it under $\bN (\, \cdot \, | \, \zeta \eqo 1)$. See \cite[Lemma 4.30]{DuKhLiTo22} for a more explicit discussion of the same ideas.  \cqfd

\medskip

\noi
\textbf{Snake-pseudometrics and snake-trees.} Snakes encode the metric of the rescaled range of the BRWs that we consider here. 
\begin{definition}
\label{defsnkme} (\textbf{Snake-pseudometrics}) Let $\zeta \ino (0, \infty)$ and let $(h,w)\ino \fSigma ([0, \zeta])$. Recall from (\ref{djhczkk}) the definition of $m_h (\cdot, \cdot)$ and recall  that $\widehat{w}_s\eqo w_s( h(s))$, $s\ino \bbR_+$. 
For all $s_1, s_2\ino [0, \zeta]$, we set
\begin{eqnarray}
\label{snkmeff}
M_{h,w} (s_1, s_2) \!\! \! \! \!& =&\!\!  \! \!\!  \min \! \Big( \!  \min \! \big\{  w_{s_1} (r) ; r\geqo m_h (s_1, s_2) \big\}  ,  \min \! \big\{ w_{s_2} (r) ; r\geqo m_h (s_1, s_2)\big\} \! \Big)  \nonumber \\
 &   &\!\!\!\! \!\!\!\! \!\!\!\! \!\!\!\!  \textrm{and} \quad d_{h,w} (s_1, s_2)\! = \! \widehat{w}_{s_1}+  \widehat{w}_{s_2} -2M_{h,w} (s_1, s_2) \; .
\end{eqnarray}

We call $d_{h,w}$ the \textit{snake-pseudometric associated with $(h,w)$} (see Lemma~\ref{Hermagor_fix} below). \cq
\end{definition}
\begin{lemma}
\label{Hermagor_fix} Let $\zeta \ino (0, \infty)$, let $(h,w)\ino \fSigma ([0, \zeta])$ and let $d_{h,w}$ be as in Definition \ref{defsnkme}. Then, $d_{h,w}$ is a continuous pseudometric on $[0, \zeta]$ satisfying (\ref{fourpoints}): namely $d_{h,w} \ino \MMT([0, \zeta])$.
\end{lemma}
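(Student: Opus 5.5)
The plan is to reduce the snake-pseudometric to a tree pseudometric: the real tree coded by $h$ carries a label function, and $d_{h,w}$ turns out to be the label-induced pseudometric on that tree. Throughout, set $m \eqo m_h(s_1,s_2)$.

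\textbf{Step 1: geodesics of $T_h$ as label paths.} By the snake property $(b)$, $w_{s_1}(r)\eqo w_{s_2}(r)$ for all $r\leqo m$. So the two paths $r\mapsto w_{s_i}(r)$, $r\ino[m,h(s_i)]$, start at the common value $w_{s_1}(m)\eqo w_{s_2}(m)$. Concatenated, they describe the labels along the geodesic $\lgeo p_h(s_1),p_h(s_2)\rgeo$ in $T_h$. More precisely, by Remark~\ref{easy}$(c)$ applied at time $s_i$, $w_{s_i}(r)\eqo \widehat{w}_{s'}$, where $s'$ is the last time before $s_i$ at which $h$ equals $r$. Hence
\[
M_{h,w}(s_1,s_2)\eqo \min\big\{ \widehat{w}_{s} \, ; \, p_h(s) \ino \lgeo p_h(s_1),p_h(s_2)\rgeo\big\}.
\]
To see that the right-hand side is well defined, I first check that $\widehat{w}$ factors through $p_h$: if $d_h(s,s')\eqo 0$, then $h(s)\eqo h(s')\eqo m_h(s,s')$, and the snake property gives $\widehat{w}_s\eqo\widehat{w}_{s'}$.

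\textbf{Step 2: nonnegativity, symmetry and $d_{h,w}(s,s)\eqo 0$.} Nonnegativity holds because $M\leqo \widehat w_{s_i}$, since $r\eqo h(s_i)$ is admissible. Symmetry is clear from the definition, and $d_{h,w}(s,s)\eqo 0$ because $m\eqo h(s)$ in that case.

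\textbf{Step 3: four points inequality.} With $g\eqo\widehat w\circ p_h^{-1}$, the quantity $d_{h,w}(\sigma_1,\sigma_2)\eqo g(\sigma_1)+g(\sigma_2)-2\min_{\lgeo\sigma_1,\sigma_2\rgeo}g$ has the same form as the pseudometric $d_{H,W}$ from the introduction. I expect this step to be the main obstacle. I would prove (\ref{fourpoints}) for $\sigma_1,\dots,\sigma_4$ directly, which also yields the triangle inequality by taking $\sigma_4\eqo\sigma_3$.

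\emph{Reduction to a finite tree.} The four points span a subtree $\bigcup_{i,j}\lgeo\sigma_i,\sigma_j\rgeo$ with at most two branch points, say $b$ and $b'$, joined by a segment $I$.

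\emph{Case analysis on the minima.} Write $a_i$ for the minimum of $g$ on the leg from $\sigma_i$ to its branch point, and $c$ for the minimum of $g$ on $I$. Each geodesic minimum is then the minimum of the relevant $a_i$'s, plus $c$ when the path crosses $I$. Since the $g(\sigma_i)$ terms appear identically on both sides, (\ref{fourpoints}) becomes the inequality
\[
\min(a_1,a_2)+\min(a_3,a_4)\geq \min(a_1,a_3,c)+\min(a_2,a_4,c),
\]
together with its $\{3,4\}$ variant, taking the best of the two pairings; here $\sigma_1,\sigma_2$ are assumed to lie on the same side of $I$. I would check this by distinguishing which $a_i$ is smallest. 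Alternatively, I could cite the same statement for $d_{H,W}$ from D., K., Lin \& Torri~\cite{DuKhLiTo22}, whose argument is purely deterministic.

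\textbf{Step 4: continuity.} Continuity of $d_{h,w}$ follows from continuity of $w$ in $\bC^0(\bbR_+,\bbR)$, of $h$, and of $m_h$. The map $(f,a,b)\mapsto\min_{[a,b]}f$ is continuous, and the restriction $r\geqo m$ can be handled by $\min_{r\ino[m, h(s_i)]} w_{s_i}(r)$, which equals the constraint $r\geqo m$ by property $(a)$. Hence $d_{h,w}\ino\MMM([0,\zeta])$, and by Step 3 it lies in $\MMT([0,\zeta])$.
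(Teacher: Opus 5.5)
The paper gives no argument of its own here; it cites \cite[Lemma 4.22]{DuKhLiTo22}, which is your fallback. Your self-contained route is therefore a different one. Steps 1, 2 and 4 are sound, but Step 3 has a genuine gap.

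Write $d_{ij}$ for $d_{h,w}$ evaluated at the times of $\sigma_i,\sigma_j$. Set $S_1=\min(a_1,a_2)+\min(a_3,a_4)$, $S_2=\min(a_1,a_3,c)+\min(a_2,a_4,c)$ and $S_3=\min(a_1,a_4,c)+\min(a_2,a_3,c)$. The three pairing sums $d_{12}+d_{34}$, $d_{13}+d_{24}$, $d_{14}+d_{23}$ are then $\sum_j g(\sigma_j)-2S_k$.

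First, your displayed inequality $S_1\ge S_2$ is false. Take an H-shaped $T_h$ with $g\equiv 100$ on $I$, and let $g$ decrease along each leg to $a_i$ at $\sigma_i$, where $(a_1,a_2,a_3,a_4,c)=(0,5,1,10,100)$. Then $S_1=1$, $S_2=5$, $S_3=1$, that is, $d_{12}+d_{34}=14$, $d_{13}+d_{24}=6$ and $d_{14}+d_{23}=14$. For this ordering, (\ref{fourpoints}) only asks for $S_1\ge\min(S_2,S_3)$; perhaps that is what you meant by ``the best of the two pairings''.

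Second, and more seriously, assuming that $\sigma_1,\sigma_2$ lie on the same side of $I$ is not a legitimate normalisation. Inequality (\ref{fourpoints}) singles out the pairing on its left-hand side, and it must hold for every ordering of $s_1,\dots,s_4$. So you also need $S_2\ge\min(S_1,S_3)$ and $S_3\ge\min(S_1,S_2)$. For a genuine tree metric on $T_h$ these would be automatic, because the two crossing sums coincide and dominate the non-crossing one. That picture fails here: in the example $S_2\neq S_3$, and the largest pairing sum is attained by the non-crossing pairing together with one crossing pairing. As planned, your case analysis proves only one of the three inequalities that (\ref{fourpoints}) requires.

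The repair is short. What you need is the symmetric statement that the minimum of $S_1,S_2,S_3$ is attained at least twice; for this statement the symmetries of the H may be used freely.
\begin{itemize}
\item If $c$ is the smallest of $a_1,\dots,a_4,c$, then $S_2=S_3=2c\le S_1$.
\item Otherwise you may assume $a_1$ is the smallest. Then $S_k=a_1+x_k$ with $x_1=\min(a_3,a_4)$, $x_2=\min(a_2,a_4,c)$ and $x_3=\min(a_2,a_3,c)$. The quantity $\min(a_2,a_3,a_4,c)$ bounds all three $x_k$ from below and equals at least two of them.
\end{itemize}
Degenerate configurations are covered by the same formulas.

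Alternatively, you can skip the H-decomposition altogether. Put $m(\sigma,\sigma')=\min_{\lgeo\sigma,\sigma'\rgeo}g$. Then (\ref{fourpoints}) for all orderings says that the minimum of $m_{12}+m_{34}$, $m_{13}+m_{24}$, $m_{14}+m_{23}$ is attained twice. This follows from the three-point inequality $m(x,y)\ge\min(m(x,z),m(z,y))$, which holds because $\lgeo x,y\rgeo\subset\lgeo x,z\rgeo\cup\lgeo z,y\rgeo$. The deduction is the usual combinatorial argument that ultrametrics satisfy the four point condition; it only uses that in every triangle the minimum of $m$ is attained twice.

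One small fix in Step 1 as well. Remark~\ref{easy}$(c)$ only provides a time $s'\le s_i$ with $h(s')=r$ when $r\ge m_h(0,s_i)$, and $h(0)$ need not vanish. So for the earlier time $s_1<s_2$ and $r<m_h(0,s_1)$, use instead the first time after $s_1$ at which $h$ hits $r$. That time lies in $[s_1,s_2]$, and the snake property applies there.
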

\noi
\textbf{Proof.} See \cite[Lemma 4.22]{DuKhLiTo22}. \cqfd 

\begin{definition}
\label{notasnkm} (\textbf{Snake-trees, ${\boldsymbol \alpha}$-stable Brownian cactus}) Let $\zeta \ino (0, \infty)$, let $(h,w)\ino \fSigma ([0, \zeta])$ and $d_{h,w} \ino \MMT([0, \zeta])$ be as in Definition~\ref{defsnkme}.

\noi
$(\mathbf a)$ We denote by 
$(T_{h,w}, d_{h,w}, r_{h,w}, \mu_{h,w})$ the pointed measured compact metric space induced by the pseudometric $d_{h,w}$ as defined in (\ref{quotient}). We call it the \textit{snake-tree} associated with $(h,w)$. To simplify, we denote by $p_{h,w}: [0, \zeta] \! \rightarrow \! T_{h,w}$ the canonical projection.

\smallskip

\noi
$(\mathbf b)$ Let $\alpha\ino (1, 2]$ and let $(H, W)$ be a one-dimensional Brownian snake with $\alpha$-stable branching mechanism as defined in Proposition \ref{Holdsnake}. Then the random real tree 
$(T_{H,W}, d_{H,W}, r_{H,W}, \mu_{H,W})$ is called a \emph{Brownian cactus with $\alpha$-stable branching mechanism}.  \cq   
\end{definition}

 As already mentioned, the Brownian Cactus has been introduced in Curien, Le Gall \& Miermont \cite{CuLGMi13} to study planar maps: roughly speaking the Brownian cactus {corresponds} to the case of a quadratic branching mechanism $\alpha\! = \! 2$. In this case, they prove that {a.s.~}its upper-local density for typical points is $4$ (Proposition 5.1 {\cite{CuLGMi13}} p.~364) and that its Hausdorff dimension is $4$ (Corollary 5.3 {\cite{CuLGMi13}} p.~365). See also Le Gall \cite{LG15} where the level sets of the Brownian cactus are studied to derive results on the Brownian maps. 
In D., K., Lin \& Torri \cite[Lemma 4.34]{DuKhLiTo22}, it is proved that the Brownian cactus with $\alpha$-stable mechanism is a.s.~a continuum real tree (i.e., , ~$\mu_{H,W}$ is diffuse and supported by the set of leaves of $T_{H,W}$) and that it is a binary real tree with a (necessarily countable) dense set of branch points: namely, a.s.~for all 
$x \ino  T_{H,W}$, $T_{H,W}\backslash \{ x\}$ has one, two, or three connected components and the set of the $x$ where  $T_{H,W}\backslash \{ x\}$ has three connected components is dense in $T_{H,W}$.

\medskip

\noi
\textbf{Limits of snakes and snake-pseudometrics.} We first present a general result showing that convergence of snakes boils down to the convergence of the corresponding endpoint processes. This is a slight extension of the homeomorphism theorem of Marckert \& Mokkadem~\cite[Theorem 2.1]{MaMo03}. Then we recall from D., K., Lin \& Torri~\cite{DuKhLiTo22} a result showing that the convergence of snakes implies the convergence of snake-pseudometrics, which leads to a pratical criterion to prove weak convergence of random snake-trees.

If $(h^{_{\! (n)}}_{^{}}\!\! , w^{_{ \! (n)}}_{^{}})\ino \fSigma_q(\bbR_+)$, $n\ino \bbN$, is such that $(h^{_{\! (n)}}_{^{}}\!\! ,\widehat{w}^{_{\! (n)}}_{^{}})\! \rightarrow \! (h, f)$ in $\bCpl\! \times \! \bCqq$, then $f$ is not necessarily the endpoint process of a continuous snake and the $w^{_{\! (n)}}_{^{}}$ do not necessarily converge in the space $\bC^0 (\bbR_+, \bCqq)$. 
In this direction, we however 
prove the following result. 
\begin{proposition}
\label{endvssna} Let $(h^{_{\! (n)}}_{^{}}\!\! , w^{_{ \! (n)}}_{^{}} )_{ n\in \bbN}$ be a sequence of 
$q$-dimensional continuous snakes. We assume that there exists a $q$-dimensional continuous snake $(h,w)$ such that $\lim_{n\rightarrow \infty} h^{_{\! (n)}}_{^{}}\! \eqo h$ in $\bCpl$, such that $\lim_{n\rightarrow \infty} w_{^0}^{_{(n)}}\eqo w_0$ and such that $\lim_{n\rightarrow \infty} \widehat{w}^{_{(n)}}_{^{}}\! \eqo \widehat{w}$ in $\bCqq$. Then $\lim_{n\rightarrow \infty} w^{_{(n)}}_{^{}}\!\eqo w$ in $ \bC^0(\bbR_+, \bCqq)$. 
\end{proposition}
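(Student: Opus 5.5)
We will use Remark \ref{easy} $(\textbf{c})$: a snake is determined by its lifetime $h$, its initial path $w_0$ and its endpoint process $\widehat{w}$. For all $s, r\ino \bbR_+$, the snake property gives $w_s(r)\eqo w_0(r)$ if $r\leqo m_h(0,s)$, and otherwise $w_s(r)\eqo \widehat{w}_{\alpha_{r,s}}$ with $\alpha_{r,s}\eqo \sup\{s'\ino[0,s]: h(s')\leqo r\wedge h(s)\}$. The map $\alpha_{r,s}$ is not continuous in $h$, so we will not pass to the limit in this formula directly. Instead we will prove the required uniform convergence on compacts by a local estimate that only uses the snake property and uniform continuity.

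Fix $N\ino \bbN$ and $\epp\geko 0$. The goal is to show that, for all $n$ large,
$$\sup_{s\in[0,N]}\sup_{r\in \bbR_+} \big|w^{(n)}_s(r)-w_s(r)\big|\leqo \epp .$$
We first choose $\delta\geko 0$ such that $h$, $\widehat w$ and $w_0$ have moduli of continuity (on $[0,N+1]$, resp.~$[0,\sup_{[0,N]}h+1]$) smaller than $\epp$ at scale $\delta$. Since $h^{(n)}\to h$, $\widehat{w}^{(n)}\to\widehat w$ and $w^{(n)}_0\to w_0$ uniformly on compacts, the same moduli bounds hold with $2\epp$ for $n$ large. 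Fix $s\ino[0,N]$ and $r\leqo h(s)$; we split into two cases.

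\textbf{Case 1: $r\leqo m_h(0,s)-\delta$.} For $n$ large, $r\leqo m_{h^{(n)}}(0,s)$, so $w^{(n)}_s(r)\eqo w^{(n)}_0(r)\to w_0(r)\eqo w_s(r)$ uniformly.

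\textbf{Case 2: $r\geqo m_h(0,s)-\delta$.} Here the task is to produce a time $s'\ino[0,s]$ such that $h(s')$ is within $O(\delta)$ of $r$ and $m_h(s',s)\geqo r-O(\delta)$, with the same properties for $h^{(n)}$ up to a uniform error. Using the snake property for $w^{(n)}$ at $s'$ and $s$, we then write
$$w^{(n)}_s(r)\approx w^{(n)}_s(m_{h^{(n)}}(s',s))\eqo w^{(n)}_{s'}(m_{h^{(n)}}(s',s))\approx \widehat w^{(n)}_{s'} .$$
Each $\approx$ requires controlling the oscillation of the path $w^{(n)}_s$ over an $r$-interval of length $O(\delta)$. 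This is the main obstacle, since equicontinuity of the paths $w^{(n)}_s(\cdot)$ is not given a priori.

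We handle it as follows. For $r_1\leko r_2\leqo h^{(n)}(s)$, the snake property shows that $w^{(n)}_s(r_i)\eqo \widehat w^{(n)}_{\beta_i}$, where $\beta_i\eqo\sup\{u\leqo s: h^{(n)}(u)\leqo r_i\}$ and $h^{(n)}(\beta_i)\eqo r_i$. When $r_2-r_1$ is small, either $\beta_2-\beta_1$ is small, in which case uniform equicontinuity of $\widehat w^{(n)}$ applies, or it is large. In the latter case we compare with the limit, where $w_s$ is a continuous path, so the corresponding increment is small by compactness. We make this rigorous by contradiction: suppose there are $n_k$, $s_k$ and $r_k, r'_k$ with $|r_k-r'_k|\to0$ but $|w^{(n_k)}_{s_k}(r_k)-w^{(n_k)}_{s_k}(r'_k)|\geqo \eta$. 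Extract limits $s_k\to s$ and $\beta$'s converging to limits $\beta,\beta'$ with $h(\beta)\eqo h(\beta')\eqo r$ and $m_h(\beta,\beta')\geqo r$. The snake property of the limit then gives $\widehat w_\beta\eqo w_\beta(r)\eqo w_{\beta'}(r)\eqo \widehat w_{\beta'}$, and the convergence of $\widehat w^{(n)}$ yields a contradiction.

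The same compactness/contradiction argument proves the pointwise claim in Case 2 directly, namely $w^{(n_k)}_{s_k}(r_k)-w_{s_k}(r_k)\to0$, without estimating the approximations by hand. For $r\geqo h(s)$, both paths are constant, equal to their endpoint value, up to a small error from $|h^{(n)}(s)-h(s)|$, and this term is handled identically. Combining the two cases gives the uniform convergence on $[0,N]$ for every $N$, that is, convergence in $\bC^0(\bbR_+,\bCqq)$.
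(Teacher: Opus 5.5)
Your argument is correct, but it takes a different route from the paper. The paper does not argue directly. It quotes the homeomorphism theorem of Marckert \& Mokkadem for snakes of duration $[0,1]$ with $h(0)=0$, and extends it to $[0,2N+1]$ by time-scaling. It then uses the reflection $R_N$ of Remark~\ref{easy}~$(\textbf{e})$ to get uniform convergence on $[0,N]$ when $h^{(n)}(0)=0$. Finally it reduces the general case to this one through the recentred snakes $(\underline{h},\underline{w})$ of Remark~\ref{easy}~$(\textbf{f})$ and the identity $w_s(r)=w_0(r\wedge m_h(0,s))+\underline{w}_s((r-m_h(0,s))_+)$. Your sequential-compactness argument in effect reproves the Marckert--Mokkadem theorem from the representation of Remark~\ref{easy}~$(\textbf{c})$ and the snake property of the limit. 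It treats arbitrary $h(0)$, arbitrary $w_0$ and the unbounded time axis in one stroke, so neither the reflection nor the recentring is needed. What the paper gains is brevity, by outsourcing the core step to a published result.

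Your write-up can be streamlined: the $\delta$-split and the separate equicontinuity step are not needed, because the final contradiction argument closes the proof on its own. Cap $r_k$ by a uniform bound of the lifetimes on $[0,N]$; this changes neither $w^{(n_k)}_{s_k}(r_k)$ nor $w_{s_k}(r_k)$. Then extract $s_k\to s$, $r_k\to r$ and $\alpha^{(n_k)}_{r_k,s_k}\to\beta$. Passing to the limit in $h^{(n_k)}(\alpha_k)=m_{h^{(n_k)}}(\alpha_k,s_k)=r_k\wedge h^{(n_k)}(s_k)$ gives $h(\beta)=m_h(\beta,s)=r\wedge h(s)$, hence $w_s(r)=\widehat{w}_\beta=\lim_k \widehat{w}^{(n_k)}_{\alpha_k}$. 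Together with $w_{s_k}(r_k)\to w_s(r)$, this is the contradiction.

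The only subcase your sketch glosses over is $r_k<m_{h^{(n_k)}}(0,s_k)$. There the $\beta$-representation, including your claim $h^{(n)}(\beta_i)=r_i$, is unavailable. It is routine: $w^{(n_k)}_{s_k}(r_k)=w^{(n_k)}_0(r_k)\to w_0(r)=w_s(r)$, since $r\leq m_h(0,s)$ in the limit.
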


\noi
\textbf{Proof.} When $h^{_{\! (n)}}_{^{}}(0)\eqo 0$ and $(h^{_{\! (n)}}_{^{}}\!\! , w^{_{ \! (n)}}_{^{}})\ino\fSigma_q([0,1])$ for all $n\ino \bbN$, the result follows from a direct application of Marckert \& Mokkadem~\cite[Theorem 2.1]{MaMo03}. By time-scaling, this still holds when there is some $N\ino\bbN$ such that $h^{_{\! (n)}}_{^{}}(0)\eqo 0$ and $(h^{_{\! (n)}}_{^{}}\!\! , w^{_{ \! (n)}}_{^{}})\ino\fSigma_q([0,2N\!+\!1])$ for all $n\ino \bbN$. Now, we claim that the desired result holds when $h^{_{\! (n)}}_{^{}}(0)\eqo 0$ for all $n\ino\bbN$. Indeed, using Remark~\ref{easy} \textbf{(e)}, the previous particular case implies that $\lim_{n\to\infty}R_N(h^{_{\! (n)}}_{^{}}\!\! , w^{_{ \! (n)}}_{^{}} )\eqo R_N(h,w)$ in $\fSigma_q([0,2N\!+\!1])$ for any $N\ino\bbN$, so $w^{_{(n)}}_{^{}}\!\! \to \! w$ uniformly on $[0,N]$. Now, we prove the desired result without any additional assumptions thanks to Remark~\ref{easy} \textbf{(f)}. We easily see that $\lim_{n\rightarrow \infty} \underline{h}^{_{\! (n)}}_{^{}} \eqo \underline{h}$ in $\bCpl$ and $\lim_{n\rightarrow \infty} \widehat{\underline{w}}^{_{(n)}}_{^{}} \eqo \widehat{\underline{w}}$ in $\bCqq$, so $\lim_{n\rightarrow \infty} \underline{w}^{_{(n)}}_{^{}}\eqo \underline{w}$ in $ \bC^0(\bbR_+, \bCqq)$. Then, we observe that
\[\forall s,r\in\bbR_+,\quad w_s(r)=w_0\big(r\wedge m_h(0,s)\big)+\underline{w}_s\big((r-m_h(0,s))_+\big),\]
which completes the proof. \cqfd 

\medskip

The following lemma recalled from D., K., Lin \& Torri \cite{DuKhLiTo22}. It shows that the convergence of one-dimensional snakes implies the convergence of the associated pseudometrics. Recall that $\fSigma ([0,\zeta])$ stands for the set of one-dimensional snakes. For all $(h,w)\ino \fSigma ([0,\zeta]) $, recall from Example~\ref{rltrcdng} the definition of the real-tree pseudometric $d_h$ and recall from Definition~\ref{defsnkme} the snake-pseudometric $d_{h,w}$. Recall that $\lVert \cdot \rVert$ stands for the uniform norm on 
$\bC([0, \zeta]^2, \bbR)$. 
\begin{lemma} 
\label{Hermagor} Let $\zeta \ino \bbR_+$. We recall that $\fSigma ([0,\zeta])$ stands for the set of one-dimensional snakes. 
For all $(h,w)\ino \fSigma ([0,\zeta]) $, we recall from Example \ref{rltrcdng} the definition of the real-tree pseudometric $d_h$ and we recall from Definition \ref{defsnkme} the snake-pseudometric $d_{h,w}$. 
We denote by $\lVert \cdot \rVert$ the supremum norm on $\bC([0, \zeta]^2, \bbR)$. 
For all $n\ino \bbN$, let $(h^{_{\! (n)}}_{^{}}\!\! , w^{_{ \! (n)}}_{^{}} )\ino \fSigma ([0,\zeta])$. We suppose that $(h^{_{\! (n)}}_{^{}}\!\! , w^{_{ \! (n)}}_{^{}} ) \! \to \! (h, w)$ in $\bCpl \! \times \! \bC([0, \zeta ], \bC^{_0}_{^{1}})$. Then, 
$ \lim_{n\rightarrow \infty}  \lVert d_{h^{(n)}} \! -\!  d_{h} \rVert\! = \!  \lim_{n\rightarrow \infty}  \lVert d_{h^{(n)} , w^{(n)}} \! -\!  d_{h,w} \rVert \eqo 0 $. 
\end{lemma}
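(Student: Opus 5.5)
The first convergence, of $d_{h^{(n)}}$, follows at once from (\ref{contitreed}): $\lVert d_{h^{(n)}}-d_h\rVert\le 4\sup|h^{(n)}-h|\to 0$. All the work is in the snake-pseudometric. The plan is to control the three pieces of $d_{h,w}(s_1,s_2)=\widehat w_{s_1}+\widehat w_{s_2}-2M_{h,w}(s_1,s_2)$ uniformly in $(s_1,s_2)\in[0,\zeta]^2$.

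The endpoint terms are easy. Uniform convergence of $w^{(n)}$ to $w$ in $\bC([0,\zeta],\bC^{_0}_{^1})$, with the topology of uniform convergence on compacts of $\bbR_+$, gives uniform convergence on $[0,\zeta]\times[0,R]$ for every $R$. Take $R=1+\sup h$. Since $\sup_n\sup h^{(n)}<\infty$ and $\widehat w^{(n)}_s=w^{(n)}_s(h^{(n)}(s))$, we get $\sup_s|\widehat w^{(n)}_s-w_s(h^{(n)}(s))|\to0$. Joint uniform continuity of $(s,r)\mapsto w_s(r)$ on the compact set $[0,\zeta]\times[0,R]$, which holds because $s\mapsto w_s$ is continuous into $\bC^{_0}_{^1}$, then gives $\sup_s|\widehat w^{(n)}_s-\widehat w_s|\to0$.

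For the minimum term, note that by Definition \ref{defC0sna}(a), $w_s(r)=\widehat w_s$ for $r\ge h(s)$. Hence
\[\min\{w_s(r);r\ge m\}=\min\{w_s(r); r\in[m\wedge h(s),h(s)]\}=:\Phi(w_s,m,h(s)).\]
It therefore suffices to show that
\[\Phi^{(n)}(s_1,s_2):=\Phi\big(w^{(n)}_{s_1},m_{h^{(n)}}(s_1,s_2),h^{(n)}(s_1)\big)\]
converges uniformly to $\Phi\big(w_{s_1},m_h(s_1,s_2),h(s_1)\big)$, and symmetrically in $s_2$. We use the elementary bound $|\min_{[a,b]}f-\min_{[a',b']}g|\le\sup_{[0,R]}|f-g|+\omega_g(|a-a'|+|b-b'|)$, valid for intervals in $[0,R]$, where $\omega_g$ is a modulus of continuity of $g$. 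Apply it with $f=w^{(n)}_{s_1}$, $g=w_{s_1}$, and $[a,b],[a',b']$ the two intervals above. The equicontinuity of the family $\{w_s;s\in[0,\zeta]\}$ on $[0,R]$ (again from compactness) gives a common modulus $\omega$. Moreover $|m_{h^{(n)}}-m_h|\le\sup|h^{(n)}-h|$ and $|h^{(n)}(s_1)-h(s_1)|\le\sup|h^{(n)}-h|$, and the map $(m,t)\mapsto(m\wedge t,t)$ is $1$-Lipschitz.

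Combining, $\lVert d_{h^{(n)},w^{(n)}}-d_{h,w}\rVert\le 2\sup_s|\widehat w^{(n)}_s-\widehat w_s|+4\big(\sup_{s,r\le R}|w^{(n)}_s(r)-w_s(r)|+\omega(2\sup|h^{(n)}-h|)\big)\to0$. The only delicate point is the uniformity in $(s_1,s_2)$. It is handled by the compactness of $[0,\zeta]\times[0,R]$, which gives one modulus $\omega$ for all $w_s$ at once. No snake property is needed beyond Definition \ref{defC0sna}(a), which is what lets us truncate the minimum at height $h(s)$.
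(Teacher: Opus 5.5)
Your proof is correct and complete. The paper gives no argument of its own for this lemma; it simply refers to Lemma 4.20 of D., K., Lin \& Torri. Your direct argument is therefore a self-contained substitute, and it has two useful features.

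\begin{itemize}
\item It makes explicit that only property $(a)$ of Definition~\ref{defC0sna} is needed. That property lets you write $\min\{w_s(r);r\geq m\}$ as a minimum over the compact interval $[m,h(s)]$. The snake property $(b)$ plays no role.
\item It gives a quantitative bound. The bound is in terms of $\sup|h^{(n)}-h|$, the distance $\sup_{s\le\zeta,\,r\le R}|w^{(n)}_s(r)-w_s(r)|$, and one modulus of continuity of $(s,r)\mapsto w_s(r)$ on $[0,\zeta]\times[0,R]$.
\end{itemize}

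The interval-minimum estimate is valid. Projecting a minimizer of $g$ on $[a,b]$ onto $[a',b']$ moves it by at most $\max(|a-a'|,|b-b'|)$. The use of compactness to get a common modulus for all $w_s$ is also sound.

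One small cosmetic point: with $R=1+\sup h$, the bound $h^{(n)}\leq R$ holds only for $n$ large. Either restrict to such $n$, or take $R=\sup_n\sup h^{(n)}$, which is finite.

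Your final constant $4$ in front of the minimum term could be $2$. This is harmless, since you only need an upper bound.
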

\noi
\textbf{Proof.} See D., K., Lin \& Torri~\cite[Lemma 4.20]{DuKhLiTo22}.  \cqfd 

\subsection{Tree-valued branching walks and snake-pseudometrics}
\label{trebrRWsnasec}

We fix an environment $(T, o)\ino \overline{\bbT}^\bullet$.
We then fix a finite tree $t\ino \bbT$ and a branching walk
$\Theta\! : = \! (t, \varnothing; \overline{Y}_u \eqo (T, Y_u), u\ino t) $ such that: $(a)$ $Y_\varnothing\eqo o$; $(b)$ if $v,v^\prime\ino t$ are neighbors, so are $Y_v$ and $Y_{v^\prime}$ in $T$.
In this section, we consider the range $R\eqo \{ Y_u; u\ino t\}$ that is a graph subtree of $T$ and we explain that $R$ is close in some sense to the snake-tree associated with the contour snake of the relative heights of the branching walk, i.e., , ~the contour snake associated with the $\bbR$-branching walk $(\bbn Y_u \bbn)_{u\in t}$.

To that end, we denote by $(\widetilde{t}, d_{\widetilde{t}})$
the real tree obtained by joining adjacent vertices of $t$ 
by a 
segment isometric to $[0, 1]$. We assume that $t \! \subset \! \widetilde{t}$.  
Thus $d_{\mathtt{gr}} (u,v) \eqo d_{\widetilde{t}} (u,v) $ for all $u ,v\ino t$, where $ d_{\mathtt{gr}}$ stands for the graph distance in $t$. For all $\sigma, \sigma'\ino \widetilde{t} $, we denote by  $\lgeo \sigma, \sigma' \rgeo_{\widetilde{t}}$  the geodesic path joining $\sigma$ and $\sigma'$ in $\widetilde{t}$.

We then introduce the \emph{contour exploration of $\widetilde{t}$} as follows. We denote by $(v_k)_{0\leq k \leq 2(\# t-1)}$ the vertices of $t$ listed in the contour order.
We easily check that there exists a continuous function $\widetilde{v}\! : \! [0, 2(\# t\! -\! 1)] \! \rightarrow \! \widetilde{t}$ such that for all $0\leqo k \leko 2(\# t\! -\! 1)$ and for all $s\ino [0, 1]$, 
\begin{equation}
\label{explotilde}     
\widetilde{v} (k+s)\ino \lgeo v_k, v_{k+1} \rgeo_{\widetilde{t}} \quad  \textrm{is such that} \quad  
d_{\widetilde{t}}(v_k ,\widetilde{v} (k+s))\! = \! s. 
\end{equation}
It is convenient to set $\widetilde{v} (s) \eqo \varnothing $ for all $s\ino [  2(\# t\! -\! 1), 2\# t]$. 
The continuous path $(\widetilde{v}(s))_{s\in [0,2\#t]}$ is the contour exploration of $\widetilde{t} $. Namely for all $s,s'\ino [0, 2\# t]$, 
$$ d_{\widetilde{t} } \big( (\widetilde{v}(s), (\widetilde{v}(s') \big) = d_{C(t)} (s,s') $$
where $d_{C(t)}$ is the real tree distance associated with the contour function $(C_s(t))_{s\in [0, 2\#t ]}$ as introduced in Example \ref{rltrcdng}.  
\begin{remark}
\label{interpo1} It is easy to see that the tree $(T_{C(t)}, d_{C(t)} , \mathtt r_{C(t)}, \ttm_{C(t)})$ coded by $C_\cdot (t)$ as introduced in Example \ref{rltrcdng} is isometric to 
$(\widetilde{t} , d_{\widetilde{t} }, \varnothing, \widetilde{\ttm} )$ where $\widetilde{\ttm}$ is the occupation measure of the exploration process $\widetilde{v} $. As a consequence of D., K., Lin \& Torri~\cite[Lemma 5.2]{DuKhLiTo22}, we get that $d^{\widetilde{t}}_{\mathtt{Prok}} \big(  \ttm, \tfrac{_1}{^2}   \widetilde{\ttm} \big) \leqo 2$ where $\ttm \eqo \sum_{v\in t} \delta_v$, which implies that $ \bdelta_{\mathtt{GHP}} \big( (t, d_{\mathtt{gr}}, \varnothing, 
\ttm) , (\widetilde{t}, d_{\widetilde{t}}, \varnothing , \frac{_1}{^2}\widetilde{\ttm}\big)\big) \leqo 3$.\cq 
\end{remark} 
 
Similarly, we denote by $(\widetilde{T}, d_{\widetilde{T}})$
 the real tree obtained by joining adjacent vertices of $T$ 
 by a segment isometric to $[0, 1]$ and we extend  relative heights on $\widetilde{T}$ by setting 
 $$\forall x\ino T, \quad  \forall \gamma \in \lgeo  \overleftarrow{x} , x \rgeo_{\widetilde{T}}, \quad \bbn \gamma \bbn \eqo \bbn \overleftarrow{x} \bbn + d_{\widetilde{T}} (\overleftarrow{x}, \gamma) \; .$$
For all $\gamma, \gamma' \ino \widetilde{T}$, we also denote by $\gamma \wedge \gamma'$ the most recent common ancestor of $\gamma$ and $\gamma'$ that is defined as the unique point of $\lgeo \gamma, \gamma' \rgeo_{\widetilde{T}}$ with minimal relative height. 
We finally define a continuous function $\widetilde{Y}\! :\! \widetilde{t}\! \to \! \widetilde{T}$ that extends $Y$ as follows: 
for all $\sigma \ino \widetilde{t}$ there exist two neighboring vertices $v, v^\prime \ino t$ such that $\sigma \ino \lgeo v, v^\prime\rgeo_{\widetilde{t}}$ and we define 
\begin{equation}
\label{Yexten}
\textrm{$\widetilde{Y}_\sigma$ as the only point of $\lgeo Y_v, Y_{v^\prime} \rgeo_{\widetilde{T}}$ such that 
$d_{\widetilde{T}} (Y_v, \widetilde{Y}_\sigma)\! = \! d_{\widetilde{t}} (v, \sigma)$.  }
\end{equation}
We denote by $\widetilde{R}$ the range of $\widetilde{Y}$ and we denote by $\widetilde{\ttm}_{\mathtt{occ}}$ the occupation measure on $\widetilde{R}$ yielded by the contour exploration of $\widetilde{t}$. Namely,  
\begin{equation}
\label{moccudef}
 \widetilde{R}\eqo \big\{ \widetilde{Y}_{\sigma} \,  ; \, \sigma \ino \widetilde{t} \big\} \quad \textrm{and} \quad \int_{ \widetilde{R}} \! f(\gamma) \, \widetilde{\ttm}_{\mathtt{occ}}(d\gamma)\, =\,  \int_{0}^{2\#t} \!\!\!  \!\!f\big(  \widetilde{Y}_{\widetilde{v} (s) } \big)  \, ds . 
\end{equation} 
We also denote by $\ttm_{\mathtt{occ}}$ the occuptation measure of $Y$ on $R\eqo \{ Y_u\, ; \, u\ino t\}$: 
$ \ttm_{\mathtt{occ}}=\eqo \sum_{u\in t} \delta_{Y_u} $. We note that $d^{\widetilde{T}}_{\mathtt{Haus}} \big( \widetilde{R}, R\big) \leqo 1$.  Then we recall from D.~, K.~, Lin \& Torri \cite[Lemma 5.2]{DuKhLiTo22} that 
\begin{equation}
\label{interpospa}
d^{\widetilde{T}}_{\mathtt{Prok}} \big(  \ttm_{\mathtt{occ}}, \tfrac{_1}{^2}   \widetilde{\ttm}_{\mathtt{occ}} \big)  \leqo 2 \quad \textrm{and thus} \quad  \bdelta_{\mathtt{GHP}} \big( (R, d_{\mathtt{gr}}, o, \ttm_{\mathtt{occ}}) , (\widetilde{R}, d_{\widetilde{T}}, o,
 \frac{_{_1}}{^{^2}}\widetilde{\ttm}_{\mathtt{occ}}) \big) \leqo 3.
\end{equation}
We remind that $d^{\widetilde{T}}_{\mathtt{Haus}}$  and $d^{\widetilde{T}}_{\mathtt{Prok}}$ stand respectively 
for the Hausdorff distance on compact subsets of $\widetilde{T}$ and for the Prokhorov distance on the space of finite Borel measures on of $\widetilde{T}$.

We next introduce the \emph{range pseudometric} $d_{\widetilde{R}} (\cdot, \cdot ) \ino \MMM ([0, 2\#t])$ by setting for all $s,s'\ino [0, 2\#t]$  
\begin{equation}
\label{rangepsdd}
d_{\widetilde{R}} (s, s' )= d_{\widetilde{T}} \big( \widetilde{Y}_{\widetilde{v} (s) } , \widetilde{Y}_{\widetilde{v} (s') }  \big) = \bbn \widetilde{Y}_{\widetilde{v} (s) } \bbn +  \bbn \widetilde{Y}_{\widetilde{v} (s') } \bbn -2 \bbn  \widetilde{Y}_{\widetilde{v} (s)} \wedge \widetilde{Y}_{\widetilde{v} (s')} \bbn .  
\end{equation}
We observe that $d_{\widetilde{R}}$ is a real tree pseudometric and that 
\begin{equation}
\label{ncdngRR}
\big( \widetilde{R}, d_{\widetilde{T}}, o, \widetilde{\ttm}_{\mathtt{occ}} \big)\quad  \textrm{is isometric to} \quad \big( E_{d_{\widetilde{R}}}, d_{\widetilde{R}}, \mathtt r_{d_{\widetilde{R}}}, \ttm_{d_{\widetilde{R}}}  \big)
\end{equation} 
where $E_{d_{\widetilde{R}}}$ is the real tree yielded by $ d_{\widetilde{R}}$ as explained in (\ref{quotient}) and (\ref{induleb}). 
\begin{remark}
\label{intrplcnt} The previous constructions on the real trees $\widetilde{T}$ and $\widetilde{t}$ only depend on the branching walk $\Theta\eqo  (t, \varnothing; \overline{Y}_u \eqo (T, Y_u), u\ino t) $ locally in a continuous way. Thus $\Theta \ino \overline{\bbT}^\bullet (\overline{\bbT}^\bullet) \! \mapsto \! d_{\widetilde{R}} (\cdot, \cdot) \ino \MMM([0, 2\# t])$ is continuous. \cq 
\end{remark}

We now compare $d_{\widetilde{R}} $ with the snake-pseudometric associated with the relative heights of the branching walk. 
We next denote by $(W_s (t; \cdot ))_{s\in [0, 2\# t]}$ the contour snake associated with the $\bbR$-valued branching walk $( \bbn Y_u \bbn )_{u\in t} $ (see Definition~\ref{brwlkdef}). We denote by $(\widehat{W}_s (t) )_{s\in [0, 2\# t]}$ its associated endpoint process 
and we recall that the corresponding lifetime process is the contour process $(C_s (t))_{s\in [0, 2\#]}$. We then easily check for all $s\ino [0, 2\# t]$ and $r\ino [0, C_s(t)]$ that 
\begin{equation}
\label{histosna}W_s(t;r) = \bbn \widetilde{Y}_{\sigma} \bbn \quad \textrm{where $\sigma\ino \lgeo \varnothing , \widetilde{v} (s)\rgeo_{\widetilde{t}}$ is such that $d_{\widetilde{t}} (\varnothing, \sigma)\eqo r$. } 
\end{equation}
In particular, note that $\widehat{W}_s (t)\eqo \bbn \widetilde{Y}_{\widetilde{v} (s)} \bbn$. 
We next recall from (\ref{snkmeff}) in Definition 
\ref{defsnkme} that $d_{C(t), W(t; \cdot)}$ stands for the snake-pseudometric associated with $(C(t),W (t ;\cdot))$. 
As a consequence of (\ref{histosna}), we see for all $s, s' \ino [0, 2\# t]$ that 
\begin{equation}
\label{Rdissnad}
d_{C(t), W(t; \cdot)} (s,s') =  \bbn \widetilde{Y}_{\widetilde{v} (s) } \bbn +  \bbn \widetilde{Y}_{\widetilde{v} (s') } \bbn -2 \min 
\big\{\,    \bbn \widetilde{Y}_{\sigma } \bbn  \,  ;  \sigma \ino \lgeo  \widetilde{v} (s),   \widetilde{v} (s')\rgeo_{\widetilde{t}}\,  \big\}.  
\end{equation} 
We then observe that 
\begin{equation}
\label{cntrlzsipp}
0\leq \tfrac{1}{2} \big( d_{C(t), W(t; \cdot)} (s,s') -d_{\widetilde{R}} (s,s') \big) = 
\bbn  \widetilde{Y}_{\widetilde{v} (s)} \wedge \widetilde{Y}_{\widetilde{v} (s')}  \bbn - \min 
\big\{   \bbn \widetilde{Y}_{\sigma } \bbn  \,   ; \sigma \ino \lgeo  \widetilde{v} (s),   \widetilde{v} (s')\rgeo_{\widetilde{t}} \big\}.  
\end{equation} 
When $Y$ is a BRW whose spatial motion is a biased RW, (\ref{cntrlzsipp}) implies that $d_{C(t), W(t; \cdot)}$ and $ d_{\widetilde{R}}$ are close because for the difference in (\ref{cntrlzsipp}) to be large, two independent and large parts of the BRW have to coincide. More precisely, we conclude this section by proving the following lemma that shows (for fixed times) that the difference in (\ref{cntrlzsipp}) is exponentially small 
when $Y$ is a (sub)critical biased BRW.

\begin{lemma}
\label{graphvssna} Let $\lambda\ino(1,\infty)$ and let $(T, o)\ino \overline{\bbT}^\bullet$ be such that $T\neq\{\varnothing\}$. Let $t\ino \bbT$ be finite. Let $\fTheta\eqo (t, \varnothing; \overline{Y}_{\! u}\eqo (T, Y_u), u\ino t )$ be a $t$-indexed $T$-valued $\lambda$-biased BRW whose law is $Q^{T,o}_{t, \varnothing}$ (Definition \ref{biasedbfRWdef}). If Assumption~(\ref{eschyp}) in Lemma~\ref{dlczidec} is verified, then for all $\xi \ino (1, \infty)$,
\begin{equation}
\label{nfdffrc}
\bP \Big( \!\!\! \!\!\! \max_{\quad s,s'\in [0, 2\# t]} \! \big| d_{C(t), W(t\, ;\,  \cdot)} (s,s') \! -\! d_{\widetilde{R}} (s,s') \big| \, >\, 2\xi   \Big) \leq \lambda^2 (\# t )^3 \, \lambda^{-\xi}.
\end{equation}
\end{lemma}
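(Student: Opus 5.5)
The plan is to reduce the statement to a union bound over triples of vertices of $t$, which accounts for the factor $(\# t)^3$, and to bound each term with the hitting estimates of Lemma \ref{dlczidec}, which accounts for the factor $\lambda^{2}\lambda^{-\xi}$.

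\emph{Step 1: reduction to vertices.} By (\ref{cntrlzsipp}), the quantity to control is
$$D(s,s') \eqo \bbn \widetilde{Y}_{\widetilde{v}(s)}\wedge \widetilde{Y}_{\widetilde{v}(s')}\bbn - \min\{\bbn \widetilde{Y}_\sigma\bbn ;\, \sigma\ino \lgeo \widetilde{v}(s),\widetilde{v}(s')\rgeo_{\widetilde{t}}\},$$
since $d_{C(t),W(t;\cdot)}-d_{\widetilde{R}}=2D$. The map $\widetilde{Y}$ is $1$-Lipschitz and interpolates linearly along edges, so the minimum of heights along a geodesic of $\widetilde{t}$ is attained, up to an additive error of at most $1$, at a vertex of $t$. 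The same holds for the height of a branch point. Hence if $\max D \geko \xi$, there are vertices $u,v,a\ino t$ with $a\ino\lgeo u,v\rgeo$ such that, writing $z \eqo Y_u\wedge Y_v$, we have $\bbn z\bbn - \bbn Y_a\bbn \geqo \xi-1$. The two lost units are later absorbed into the factor $\lambda^2$. A union bound over the at most $(\# t)^3$ triples $(u,v,a)$ then leaves a single-triple estimate to prove:
$$\bP\big(\bbn Y_u\wedge Y_v\bbn - \bbn Y_a\bbn \geq \xi-1\big)\leqo \lambda^{2-\xi}.$$

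\emph{Step 2: Markov structure along the geodesic.} Set $w \eqo u\wedge v$ in $t$ and assume without loss of generality that $a\ino\lgeo w,u\rgeo$. Conditionally on $Y_w$, the labels along $\lgeo w,u\rgeo$ and along $\lgeo w,v\rgeo$ are two independent $\lambda$-biased RW paths started at $Y_w$, by (\ref{brMCfinite}). In particular, conditionally on $Y_a$, the segment from $a$ to $u$ is a $\lambda$-biased RW started at $Y_a$.

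On the event above, $z$ is an ancestor of $Y_u$ that is not an ancestor of $Y_a$, because $\bbn z\bbn \geko \bbn Y_a\bbn$. Therefore, writing $c \eqo z\wedge Y_a$, the walk from $Y_a$ to $Y_u$ must pass through $c$. It must then climb from $c$ to height at least $\bbn c\bbn + \xi-1$ inside a subtree of $c$ different from the one containing $Y_a$.

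\emph{Step 3: the hitting estimate (main obstacle).} The difficulty is that $z$ and $c$ are random and depend on the future of the walk, so we cannot directly plug a fixed target $y$ into (\ref{finhit}) or (\ref{lbdev}). My first attempt is to use the birth-and-death chain $Z$ of Lemma \ref{dlczidec} $(i)$ along the ancestral line of $Y_a$, under assumption (\ref{eschyp}). Each visit of the walk to an ancestor $c$ of $Y_a$ starts an excursion into the side subtrees of $c$. By the gambler's ruin formula (\ref{lbdev}), applied with $z \eqo c$ and any $y$ at relative height $\xi-1$ above $c$, such an excursion reaches height $\bbn c\bbn+\xi-1$ before returning to $c$ with probability of order $\lambda^{-(\xi-1)}$.

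It then remains to sum over the number of visits to each ancestor and over the ancestors themselves. The expected number of visits is given by the Green function (\ref{greenoj}), and the factor $\lambda+k_c$ appearing there is compensated by the jump probability $\frac{1}{\lambda+k_c}$ to each child. Summing over ancestors, the walk reaches the ancestor at depth $p$ below $Y_a$ with probability at most $1$, while the required climb grows to $\xi-1+p$, so the sum is geometric in $\lambda^{-p}$. This should yield a bound of the form $\frac{\lambda}{\lambda-1}\lambda^{-(\xi-1)}$, which I expect can be sharpened to $\lambda^{2-\xi}$.

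Getting these constants exactly, and making sure the bound is uniform whether the depth of $T$ is finite or infinite (the reflection at $\bbn o\bbn-\bbn T\bbn_-$), is the step I expect to be the most delicate.
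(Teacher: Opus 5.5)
Your Step~1 is fine and agrees with the paper's first reduction. The gap is in what comes after. The single-triple estimate you reduce to cannot be obtained by the mechanism of Step~3, and in the generality of the lemma it is false.

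Formulas (\ref{lbdev}) and (\ref{finhit}) control the probability of hitting a \emph{given} vertex. The event that an excursion from $c$ reaches a \emph{level} is a union over all vertices of that level, and the exponential decay is lost. Take $T$ to be the bilateral tree in which every vertex has exactly $\lambda$ children ($\lambda\geq 2$ an integer). Then the relative height of the walk is a simple symmetric random walk. So an excursion from $c$ into a side subtree climbs $h$ levels above $c$ with probability of order $1/h$, not $\lambda^{-h}$. Your sum over the ancestors of $Y_a$ then becomes a harmonic rather than a geometric series.

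Finite depth makes things worse. When $T$ has finite depth, the walk is recurrent and the Green function (\ref{greenoj}) is infinite. The event $\{\bbn Y_u\wedge Y_v\bbn-\bbn Y_a\bbn\geq\xi-1\}$ then need not be small at all. Let $T$ be a line of $h$ edges from its root $\varnothing$ to a vertex $x$, above which $T$ is the $\lambda$-regular tree, and let $o=\varnothing$. Then (\ref{eschyp}) holds, and the height of the walk is a null-recurrent birth-and-death chain. Hence a walk of length $\ell$ from $\varnothing$ ends in the subtree of $x$ with probability tending to $1$. Now let $t$ consist of two branches of length $\ell$ issued from its root (your $a$), with tips $u,v$. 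Then $\bP(\bbn Y_u\wedge Y_v\bbn-\bbn Y_a\bbn\geq h)\to1$ as $\ell\to\infty$. So no bound uniform in the lengths of the paths of $t$ can hold for your triples $(u,v,a)$.

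The missing ingredients are the two devices of the paper's proof.

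First, \emph{kill the walk}. Among the vertices of $\lgeo u',v'\rgeo$ minimizing $\bbn Y\bbn$, take the one $w'$ of greatest height in $t$, say $w'\in\lgeo u'\wedge v',u'\lgeo$. Let $w$ be its child towards $u'$. Then $\overleftarrow{Y_w}=Y_{w'}$, and the labels along $\lgeo w,u'\rgeo$ never visit $Y_{w'}$.

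Second, \emph{fix the target through the other branch}; this is exactly what removes the obstacle you identify. The vertex $Y_{u'}\wedge Y_{v'}$ lies on $\lgeo Y_w,Y_{u'}\rgeo$ and on $\lgeo Y_{w'},Y_{v'}\rgeo$. So there are $u\in\lgeo w,u'\rgeo$ and $v\in\lgeo w',v'\rgeo$, with $v$ not a descendant of $w$, such that $Y_u=Y_v=Y_{u'}\wedge Y_{v'}$. Conditionally on $(Y_w,Y_v)$, the segment from $w$ to $u$ is a $\lambda$-biased walk from $Y_w$, independent of $Y_v$. It must hit the now fixed vertex $Y_v$ before $\overleftarrow{Y_w}$. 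By (\ref{lbdev}) this has probability at most $\lambda^{\bbn Y_w\bbn-\bbn Y_v\bbn}$, whatever the length of the walk and the depth of $T$. A union bound over the triples $(u,v,w)$ then yields $(\#t)^3\lambda^{2-\xi}$.

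In infinite depth, the second device alone would rescue your triple estimate. Let $z$ be the ancestor of $Y_v$ at relative height $\bbn Y_a\bbn+\lceil\xi-1\rceil$, condition on $(Y_a,Y_v)$, and apply (\ref{finhit}) to the walk from $Y_a$ towards $u$. But this is not your Step~3, and it breaks down in finite depth.
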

\noi
\textbf{Proof.} By (\ref{cntrlzsipp}) and the definition on $\widetilde{T}$ of relative heights and of most recent common ancestors, we see that if there are $s_1, s_2 \ino [0, 2\# t]$ such that 
$d_{C(t), W(t; \cdot)} (s_1,s_2) \! -\! d_{\widetilde{R}} (s_1,s_2) \geko 2\xi $, then there are $u^\prime, v^\prime\ino t$ such that 
$\bbn Y_{u^\prime} \wedge Y_{v^\prime}\bbn  \! -\! \min_{w\in\lgeo u^\prime,v^\prime\rgeo } \bbn Y_w \bbn \geko \xi  \!-\! 1\geko 0$. Then, let us choose $w^\prime \ino \lgeo u^\prime,v^\prime\rgeo$ such that $\bbn Y_{w^\prime}\bbn\eqo \min_{w\in\lgeo u^\prime,v^\prime\rgeo } \bbn Y_w \bbn$ and with the greatest height possible. Since $\min (\bbn Y_{\! u'}\bbn ,  \bbn Y_{\! v'}\bbn)\geqo \bbn  Y_{\! u'} \wedge Y_{\! v'}\bbn \geko \bbn Y_{\! w'}\bbn$, we see that $w'\! \notin \! \{ u',v'\}$. Therefore, 
without loss of generality, we can assume that $w^\prime\ino \lgeo u^\prime\wedge v^\prime,u^\prime\lgeo\, $ and we can find  $w\! \in \, \rgeo u^\prime\wedge v^\prime,u^\prime\rgeo$ such that $\overleftarrow{w}\eqo w^\prime$. By definition of $w^\prime$ and $w$, we observe that $\overleftarrow{Y_w}=Y_{w^\prime}$, that $Y_{u^\prime}\wedge Y_{v^\prime}\ino \lgeo Y_{w^\prime}, Y_{v^\prime}\rgeo$, and that $Y_{u^\prime}\wedge Y_{v^\prime}\ino \lgeo Y_{w}, Y_{u^\prime}\rgeo$. Thus, there are $u,v\ino t$ such that $v\wedge w\! \neq\! w$, $w \!\preceq\! u$ and $Y_u\eqo Y_v\eqo Y_{u^\prime}\wedge Y_{v^\prime}$. Consequently, (\ref{nfdffrc}) is derived from the following bound that holds for all $u, v,w\ino t$ with $v\wedge w$ and $w\! \neq\! w\preceq u$, and all $p\ino \bbN^*$,
\begin{equation}
\label{grasna}
\bP\Big(\, Y_u=Y_v\, ,\, \bbn Y_v\bbn-\bbn Y_w\bbn\geq p\, ,\, \overleftarrow{Y_{w}}\notin\{Y_a:a\ino\lgeo w,u\rgeo\}\, \Big)\leq \lambda^{-p} . 
\end{equation}

It remains to prove (\ref{grasna}). To that end, recall from Definition~\ref{biaRWtreedef} that $P_{T \!\!, \,  x}$ stands for the law of the $\lambda$-biased RW $\overline{X}_n \eqo (T, X_n)$, $n\ino \bbN$ on $T$ such that $X_0=x$. By reasoning conditionally given $(Y_w,Y_v)$, the very definition of BRWs implies that 
\begin{multline*}
\bP\Big(\, Y_u=Y_v\, ,\, \bbn Y_v\bbn-\bbn Y_w\bbn\geq p\, ,\, \overleftarrow{Y_{w}}\notin\{Y_a:a\ino\lgeo w,u\rgeo\}\, \Big)\\
=\bE\Big[\I{\bbn Y_v\bbn-\bbn Y_w\bbn\geq p}P_{T,Y_w}\big(X_{|u|-|w|}=Y_v\, , \, \overleftarrow{Y_w}\notin \{X_j:j\leq |u|-|w|\}\, \big|\, Y_w,Y_v\big)\Big].
\end{multline*}
For all $x,y\ino T\backslash\{\varnothing\}$, we set $f(x,y)\eqo P_{T \!\!, \,  x} \big(\mathtt H_{y} \! <\! \mathtt H_{\overleftarrow{x}}  \big)$, where $\mathtt{H}_y$ is the hitting time as in (\ref{ttitime}). Then the previous identity yields that
\begin{equation}
\label{translev}
\bP\Big( Y_u=Y_v\, ,\, \bbn Y_v\bbn-\bbn Y_w\bbn\geq p\, ,\, \overleftarrow{Y_{w}}\notin\{Y_a:a\ino\lgeo w,u\rgeo\} \Big)\ \leq\ \bE\Big[ \I{\bbn Y_v\bbn-\bbn Y_w\bbn\geq p}f(Y_w,Y_v) \Big]
\end{equation}
Thanks to Assumption~(\ref{eschyp}), we can use (\ref{lbdev}) to get $f(x,y)\leq \lambda^{\bbn x\bbn -\bbn y\bbn}$ for all $x,y\ino T\backslash\{\varnothing\}$. Together with (\ref{translev}), this immediately entails (\ref{grasna}), which completes the proof. \cqfd

\section{Limit of the $\tau_\infty$-indexed BRW and its snakes}
\label{tauinftysec}

\subsection{Harmonic coordinates and related estimates.}
\label{harmcoosec}
One key-ingredient of the proof in Peres \& Zeitouni \cite{PerZei08} of the quenched CLT for a critical biased RW 
$(X_n)_{n\in \bbN}$ in an invariant 
GW tree is the \emph{harmonic coordinates}, which are $\bbR$-valued 
measurable functions $(S_x)_{x \in \bT}$ of the environment $\bT$ such that on one hand $(S_{X_n})_{n\in \bbN}$ is a martingale to which conditional CLT applies, and on the other hand such that $S_{X_n} \sim \mathtt{Cst}\bbn X_n  \bbn$. This idea is actually adapted to BRWs to prove the main result of the paper. 
In this section, we recall from Peres \& Zeitouni \cite{PerZei08} 
the construction of the harmonic coordinates of (variants of) the invariant GW($\nu$)-tree $\bT$ (within our framework of ordered trees) and we prove several estimates that are used in our proofs. 
The following lemma summarizes definitions, notation, and the main properties of harmonic coordinates. Its 
proof is adapted from Peres \& Zeitouni~\cite{PerZei08} and Dembo \& Sun~\cite{DeSu12}. 
\begin{lemma}   
\label{harmcoo} Let $\nu$ be a supercritical offspring distribution whose mean is denoted by $\ttm_\nu$. 
 We assume that there exists $\ttb' \ino (1, \infty)$ such that $\mnu (\ttb' )\! := \!  \sum_{k\in \bbN} k^{\ttb '} \nu (k) \leko \infty$. 
For all rooted ordered tree $t\ino \bbT$ as in Definition \ref{Ulamtree}, we next set 
\begin{equation}
\label{Udefin}
\mathtt U (t)= \liminf_{n\to \infty} \;  \mnu^{-n} \, \# \big\{ u \ino t : |u|\eqo n\big\} \in [0, \infty] \; .
\end{equation}
Here, $(\bT, \boo)$ is either a GW($\nu$)-tree as in Definition~\ref{GWfordef} $(\textbf a)$ (in this case: $\boo\eqo \varnothing$), or an infinite GW($\nu$)-tree as in Definition~\ref{GWptdef} $(\textbf b)$, or an invariant GW($\nu$)-tree as in 
Definition~\ref{definvRW}. 
For all $x\ino \bT$, we 
recall from (\ref{curshitr}) that the subtree $\theta_x \bT $ stemming from $x$ belongs to $\bbT$ and we set $U_x = \mathtt U (\theta_x \bT)$. Then the following holds true. 
\begin{compactenum}

\smallskip

\item[$(i)$]  $\bP$-a.s.~ for all $x\ino \bT$, 
\end{compactenum}
\begin{equation}
\label{welldefcoo}
U_x \! = \! \lim_{n\to \infty} \;  \mnu^{-n} \, \# \big\{ y \ino \bT :  x\preceq y,\,\bbn y\bbn \eqo  \bbn x \bbn \! +\!  n \, \big\} \ino (0, \infty) \quad \textrm{and} \quad U_x = \frac{1}{\mnu}\!\!\!\! \!\! \!\! \!\!   \sum_{\qquad y\in \bT : \overleftarrow{y} = x} \!\! \!\! \!\! \!\!  \!\! U_y .
\end{equation}
\begin{compactenum}

\smallskip

\item[$(ii)$] Suppose that $\ttb\ino (1, \infty)$ is such that $\sum_{k\in \bbN} k^{\ttb} \nu (k) \leko \infty$. 
Then, 
$\bE [ U_\boo^\ttb]\leko \infty$ if $\bT$ is a GW($\nu$)-tree or an infinite  
GW($\nu$)-tree. If $\bT$ is 
an invariant GW($\nu$)-tree, then $\bE [ U_\boo^{\ttb-1}] \leko \infty$ if $\ttb \ino (2, \infty)$.

\item[$(iii)$] The harmonic coordinates are the $\bbR$-valued r.v.~$(S_x)_{x\in \bT}$ defined by the following  
\end{compactenum}
\begin{equation}
\label{defSx}
S_\boo \eqo 0 \quad \textrm{and} \quad \forall x \ino \bT\backslash \{ \varnothing\} , \quad S_x \! -\! S_{\overleftarrow{x}}\eqo U_x  ,  
\end{equation}
\begin{compactenum}
\item[] on the event where (\ref{welldefcoo}) holds true and by $S_x \eqo 0$, $x\ino \bT$, on the $\bP$-negligible complementary event. 
Let $\overline{X}_n \eqo (\bT, X_n)$, $n\ino \bbN$, be a $\mathtt m_\nu$-biased RW such that $X_0 \eqo \boo$. We denote by $\ccF_{\! n}$ the $\sigma$-field generated by $(\overline{X}_k)_{0\leq k\leq n}$. Then $(S_{X_n})_{n\in \bbN}$ is a $(\ccF_{\! n})_{n\in \bbN}$-martingale:
\end{compactenum}
\begin{equation}
\label{mgcoo}
\textrm{$\bP$-a.s.~for all $n\in \bbN^*$, } \quad \bE  \big[ S_{X_{n+1} } \, \big| \, \ccF_{\! n} \big] = S_{X_{n}} 
\end{equation}

\item[$(iv)$] We keep the previous notation and for all $n\ino \bbN$, we set $\Delta_n\eqo  S_{X_{n+1} } \! -\!  S_{X_{n} }$ and 
$\Phi_\ttb (\overline{X}_n) \eqo  \bE  \big[ |\Delta_{n}|^\ttb \big|  \ccF_{\! n}\big]$.  Then, 
\begin{equation}
\label{sigdeltamg}
\Phi_\ttb (\overline{X}_n)  \leq \frac{\ttm_\nu^\ttb +\ttm_\nu}{\ttm_\nu \! +\!  k_{X_n} \! (\bT)} U^\ttb_{X_n}  \quad \textrm{and} \quad \Phi_\ttb (\overline{X}_n) \eqo \Phi_\ttb \big( \mathtt{cent} ( \overline{X}_n)\big) \; .
\end{equation}
If $\ttb \ino (1, \infty)$ is such that $\sum_{k\in \bbN} k^{\ttb} \nu (k) \leko \infty$ and if $\bT$ is an invariant GW($\nu$)-tree, then $\bE[\Phi_\ttb (\overline{X}_n) ]\eqo \bE [ \Phi_\ttb (\overline{X}_0) ] \leko \infty$.  
In the specific case where $\ttb\eqo 2$, then
\begin{equation}
\label{vutchi}
\bE \big[ \Phi_2  ( \overline{X}_n)\big]\eqo \frac{\ttm_\nu(2)\! -\! \ttm_\nu}{\ttm_\nu^2\! -\! \ttm_\nu} = : \sigma_\nu^2.  
\end{equation}
\end{lemma}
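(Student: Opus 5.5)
We follow Peres \& Zeitouni and Dembo \& Sun and take the four items in order.

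For $(i)$, the starting point is that for a GW($\nu$)-tree $t$ the normalized generation sizes $\mnu^{-n}\#\{u\in t:|u|=n\}$ form a nonnegative martingale. Since $\sum_k k^{\ttb'}\nu(k)<\infty$ with $\ttb'>1$ implies $\sum_k k\log k\,\nu(k)<\infty$, the Kesten--Stigum theorem gives an a.s.\ limit, which is positive on the survival event. Every $x\in\bT$ has $\theta_x\bT$ built from independent GW($\nu$)-trees: off the spine it is a GW($\nu$)-tree, and on $L_\boo$ it is a biased root with GW subtrees. As $\bT$ has countably many vertices, the limit exists simultaneously for all $x$. Decomposing generation $n+1$ of $\theta_x\bT$ according to the children $y$ of $x$ yields the identity $U_x=\mnu^{-1}\sum_{\overleftarrow{y}=x}U_y$ directly.

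Positivity is the delicate point, because a GW tree may die out. I would use the convention from Peres \& Zeitouni: either $\nu(0)=0$ is implicitly assumed, or $U_x$ is only claimed positive for vertices with infinite descent. I would state and check it for vertices of $L_\boo$ and of the surviving part, noting that $U_x=0$ exactly on extinction.

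For $(ii)$, $L^{\ttb}$ boundedness of the Kesten--Stigum martingale under a $\ttb$-th moment of $\nu$ is classical (Bingham--Doney), so $\bE[\mathtt U(\tau)^\ttb]<\infty$ for a GW($\nu$)-tree $\tau$. For the infinite GW tree, $U_\boo$ is the martingale limit of a GW tree whose root has the size-biased law. Its children's contributions give $U_\boo=\mnu^{-1}\sum_{j\le k_\boo}U^{(j)}$ with $k_\boo$ size-biased and the $U^{(j)}$ i.i.d.\ with finite $\ttb$-moment. Using the bound $\bE[(\sum_{j\le k}U^{(j)})^\ttb]\le k^{\ttb}\bE[(U^{(1)})^\ttb]$, we need $\sum_k k^{1+\ttb}\nu(k)<\infty$, which is not the stated assumption, so this route must be refined. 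Here I would use Rosenthal's inequality, $\bE[(\sum_{j\le k}U^{(j)})^\ttb]\lesssim k^\ttb+k$, together with the size-biased moment $\sum_k k^{1+\ttb}\nu(k)$. If that is still too costly, I would follow the route of Peres--Zeitouni's Lemma, using the identity $\bE[F(\bT,\boo)]=\bE[\mathtt U(\tau)F]$-type change of measure. Specifically, $U_\boo$ under the infinite GW law has the size-biased law of $\mathtt U(\tau)$, since the size-biased GW tree is the infinite one along the spine. Then $\bE[U_\boo^{\ttb-1}]=\bE[\mathtt U(\tau)^\ttb]<\infty$. This size-biasing trick is the step I expect to be the main obstacle: it requires matching the exponents stated in $(ii)$ exactly, and in particular adjusting the claim for the infinite tree to the appropriate moment. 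For the invariant tree, the density $(\mnu+k_\boo)/(2\mnu)$ together with Hölder's inequality and the $(\ttb-1)$-moment gives the stated bound when $\ttb>2$.

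For $(iii)$, the harmonic coordinates are well-defined because $U_x$ is finite and only finitely many increments separate $x$ from $\boo$ along the path through $x\wedge\boo$. The martingale property reduces to a one-step computation at $x=X_n$:
\[\tfrac{1}{\mnu+k_x}\sum_{\overleftarrow{y}=x}(S_y-S_x)+\tfrac{\mnu}{\mnu+k_x}(S_{\overleftarrow{x}}-S_x)=\tfrac{1}{\mnu+k_x}\Big(\sum_{\overleftarrow{y}=x} U_y-\mnu U_x\Big)=0\]
by $(i)$. Integrability follows because $|S_{X_n}|$ is bounded by finitely many $U$'s, or simply by taking conditional expectations of nonnegative parts.

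For $(iv)$, the same one-step computation gives
\[\Phi_\ttb=\tfrac{1}{\mnu+k_x}\Big(\sum_{y}U_y^\ttb+\mnu U_x^\ttb\Big).\]
Since $\sum_y U_y^\ttb\le(\sum_y U_y)^\ttb=\mnu^\ttb U_x^\ttb$, this yields (\ref{sigdeltamg}). The quantity depends only on $\theta_x\bT$, hence is invariant under centering. Stationarity (\ref{invbias}) gives $\bE[\Phi_\ttb(\overline{X}_n)]=\bE[\Phi_\ttb(\overline{X}_0)]$. Its finiteness follows from (\ref{InvGWnu}): the density factor $\mnu+k_\boo$ cancels the denominator, leaving $\bE[\sum_y U_y^\ttb+\mnu U_\boo^\ttb]$ under the infinite GW law, which is finite by $(ii)$. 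Finally, for $\ttb=2$ we compute $\bE[U^2]$ for a GW tree via $U=\mnu^{-1}\sum_{j\le k}U^{(j)}$, which gives $\bE[U^2]=(\ttm_\nu(2)-\mnu)/(\mnu^2-\mnu)$ with $\bE[U]=1$. The size-biased sums then produce (\ref{vutchi}).
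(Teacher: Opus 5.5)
Your arguments for $(i)$, $(iii)$, the GW case of $(ii)$, and the inequality and centering-invariance in $(iv)$ are essentially the paper's. Your caveat on positivity is also correct. When $\nu(0)>0$ one only has $U_x\in[0,\infty)$, with $U_x>0$ exactly when $\theta_x\bT$ is infinite; the paper's proof glosses over this, and nothing in the rest of the lemma needs strict positivity.

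The genuine gap is in $(ii)$ for the infinite GW tree. You take $\boo$ to be a size-biased root, so that $U_\boo$ has the size-biased law of $\mathtt U(\tau)$. That is the Lyons--Pemantle--Peres size-biased tree, whose spine goes upward. In Definition~\ref{GWptdef} the spine goes downward: only the strict ancestors $\boo(p)$, $p\geq 1$, have size-biased offspring, while $\theta_\boo\bT$ is an ordinary GW($\nu$)-tree. This is forced by the many-to-one formula (\ref{sizebiased}), where the subtree above a vertex $v$ of a GW tree is again a GW tree. It is also exactly what the paper uses (in both cases $\theta_\boo\bT$ is a GW($\nu$)-tree). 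Hence $U_\boo\overset{\textrm{(law)}}{=}\mathtt U(\tau)$, and $\bE[U_\boo^\ttb]<\infty$ is immediate from the GW case.

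Your proposal instead ends by adjusting the claim to $\bE[U_\boo^{\ttb-1}]<\infty$, which does not prove $(ii)$ as stated. Under your model the stated bound would in fact fail without $\sum_k k^{1+\ttb}\nu(k)<\infty$. By conditional Jensen, $\bE[U_\boo^\ttb\,|\,k_\boo]\geq \mnu^{-\ttb}k_\boo^\ttb$, so Rosenthal cannot rescue that route.

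The misidentification propagates to the later steps. Write $\bE'$ for expectation under the infinite law.
\begin{itemize}
\item \emph{Invariant tree.} Your H\"older step on $\bE'[(\mnu+k_\boo)U_\boo^{\ttb-1}]$ needs the $\ttb$-moment of $U_\boo$ under the infinite law, which you did not obtain. With the correct law it works: $\bE'[k_\boo U_\boo^{\ttb-1}]\leq \bE[k^\ttb]^{1/\ttb}\,\bE[\mathtt U(\tau)^\ttb]^{1-1/\ttb}$ with $k\sim\nu$. This is a valid alternative to the paper's convexity bound $U_\boo^{\ttb-1}\leq \mnu^{-(\ttb-1)}k_\boo^{\ttb-2}\sum_{\overleftarrow{x}=\boo}U_x^{\ttb-1}$, and it does not need $\ttb>2$.
\item \emph{Finiteness in $(iv)$.} Finiteness of $\frac{1}{2\mnu}\bE'\big[\sum_{\overleftarrow{y}=\boo}U_y^\ttb+\mnu U_\boo^\ttb\big]$ again rests on that missing moment.
\item \emph{The case $\ttb=2$.} Your size-biased sums would not give $\sigma_\nu^2$: with $k_\boo$ size-biased, the expression involves $\sum_k k^3\nu(k)$.
\end{itemize}
With $k_\boo\sim\nu$ and the $U_y$ i.i.d.\ copies of $\mathtt U(\tau)$ independent of $k_\boo$, one gets
$\bE[\Phi_\ttb(\overline{X}_0)]=\frac{1}{2\mnu}\big(\mnu\bE[\mathtt U(\tau)^\ttb]+\mnu\bE[\mathtt U(\tau)^\ttb]\big)=\bE[\mathtt U(\tau)^\ttb]$.
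For $\ttb=2$ this is your GW second-moment computation, and it yields (\ref{vutchi}).
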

\noi
\textbf{Proof.} We prove $(i)$. To that end, let us first consider the case where $\bT$ is GW($\nu$)-tree as in Definition \ref{GWfordef} $(\textbf a)$. By standard arguments about martingales, a $\ttb'$-moments for $\nu$ with $\ttb'\ino (1, \infty)$ implies 
that $\bP$-a.s.~$\mathtt U(\bT)\eqo \lim_{n\to \infty} \mnu^{-n} \# \{ x\ino \bT: |x|\eqo n\} \ino (0, \infty)$. Next observe that for all $x\ino \bbU$ such that $\bP (x\ino \bT) \geko 0$, the subtree $\theta_x \bT$ under $\bP (\, \cdot \, | \, x\ino \bT)$ has the same law as $\bT$. Since $\bbU$ is countable, $\bP$-a.s.~(\ref{welldefcoo}) holds true for all 
$x\ino \bT$. 
 
Suppose next that $(\bT, \boo)$ is an infinite GW($\nu$)-tree or an invariant GW($\nu$)-tree. 
In these cases, conditionally given $\partial \Loo$, the subtrees $\theta_y \bT$, $y\ino \bT \backslash \Loo$, are independent GW($\nu$)-trees. Therefore $\bP$-a.s.~(\ref{welldefcoo}) holds true for all $x\ino \bT \backslash \Loo$. Now  
note that (\ref{welldefcoo}) provides a unambiguous definition of the weight $U_x$ for $x\ino \Loo$, which completes the proof of $(i)$.

Let us prove $(ii)$. We first suppose that $\bT$ is a GW($\nu$)-tree or an infinite GW($\nu$)-tree: in both cases 
$\theta_\boo \bT$ is GW($\nu$)-tree and as proved in Dembo \& Sun \cite[Lemma 6.2]{DeSu12}, 
$\bE [U_\boo^\ttb ] \leko \infty$, which implies the first part of $(ii)$. 

Next, assume that $\bT$ is an invariant GW($\nu$)-tree and that $\ttb \ino (2, \infty)$. By convexity of $x\mapsto x^{\ttb -1}$ and the second equality in (\ref{welldefcoo}), $U_\boo^{\ttb-1}\leqo \mathtt{m}^{_{-(\ttb-1)}}_{^\nu} k_\boo (\bT)^{\ttb-2} \sum_{x\in \bT: \overleftarrow{x}= \boo} U_x^{\ttb-1}$. By (\ref{InvGWnu}) in the definition of invariant GW($\nu$)-trees, we easily get $\bE [U^{\ttb-1}_\boo| k_\boo (\bT)]\leqo \mathtt{m}^{_{-(\ttb-1)}}_{^\nu} \bE \big[ \mathtt U (\bT_0)^{\ttb-1}\big] \,  k_\boo (\bT)^{\ttb-1}$, where $\bT_0$ is a GW($\nu$)-tree. Thus 
$\bE [U^{\ttb -1}_{\! \boo} ] \leqo \tfrac{1}{2}\mnu^{-\ttb} \bE [\mathtt U (\bT_0)^{\ttb-1}] $ $\sum_{k\in \bbN} (\mnu + k) k^{\ttb-1} \nu(k) \leko \infty$, which completes the proof of $(ii)$.

To prove $(iii)$, we observe that 
\begin{eqnarray*}
 \bE \big[ S_{X_{n+1}}\!\!\!   -\!  S_{X_n}  \big|  \ccF_{\! n} \big]\!\!\! \! &=& \!\!\!\!  (S_{\overleftarrow{X}_{n}} \!\! \! -\! S_{X_n}) \bE \big[\un_{\{ X_{n+1}= \overleftarrow{X}_{n} \}} \big|  \ccF_{\! n} \big] + \!\!\!\!\!\!\!\!\!\!\!\!\!\!  \sum_{\qquad y\in \bT : \overleftarrow{y}= X_n}\!\!\!\!\!\!\!\!\!\!\!\!\!
 (S_{y}  \! -\! S_{X_n}) \bE \big[\un_{\{ X_{n+1}=y\}} \big|  \ccF_{\! n} \big] \\
 &= & -\frac{\mnu U_{X_n} }{\mnu + k_{X_n} (\bT)}+ \!\!\!\!\!\!\!\!\!\!\!  \sum_{\qquad y\in \bT : \overleftarrow{y}= X_n}\!\!\!\!\!\!\!\! \!\! \frac{U_y}{\mnu + k_{X_n} (\bT)} = 0
\end{eqnarray*}  
by the second equality in (\ref{welldefcoo}). This completes the proof of $(iii)$.  

Let us prove $(iv)$. We first observe that 
\begin{eqnarray*}
\Phi_\ttb (\overline{X}_n) &= &\frac{1}{\ttm_\nu \! +\!  k_{X_n} \! (\bT)}\Big( \ttm_\nu U_{X_n}^\ttb+ \sum_{y\in \bT} 
U_y^\ttb\un_{\{ \overleftarrow{y}= X_n \}} \Big) \\
&\leq & \frac{1}{\ttm_\nu \! +\!  k_{X_n} \! (\bT)}  \Big( \ttm_\nu U_{X_n}^\ttb + \Big( \sum_{y\in \bT} 
U_y\un_{\{ \overleftarrow{y}= X_n \} } \Big)^\ttb \Big) =  \frac{\ttm_\nu^\ttb +\ttm_\nu}{\ttm_\nu \! +\!  k_{X_n} \! (\bT)} U^\ttb_{X_n}
\end{eqnarray*}  
since $s^\ttb + r^\ttb \leq (s+r)^\ttb$ for all $s, r \ino \bbR_+$ and by the second equality in (\ref{welldefcoo}). This proves the inequality in (\ref{sigdeltamg}). 
Then, recall from (\ref{shiftdef}) the definition of the shifts $\varphi_l$, $l\ino \bbZ$, and $\mathtt{cent} (\cdot)$ from Definition \ref{succdef} and observe that for all $x\ino \bT$, $\mathtt U (\theta_x \bT)\eqo \mathtt U (\theta_{\varphi_l (x)} \varphi_{l} (\bT))$ and that $k_x(\bT)\eqo k_{\varphi_l (x)  } (\varphi_l (\bT))$, which easily implies the equality in (\ref{sigdeltamg}).

Next we suppose that $\bT$ is invariant. By Proposition \ref{rwergo}, the law of $\overline{X}_n$ is then invariant and 
$\bE[\Phi_\ttb (\overline{X}_n) ]\eqo \bE [ \Phi_\ttb (\overline{X}_0) ]$. By (\ref{sigdeltamg}), we then get 
$$\bE \big[  \Phi_\ttb (\overline{X}_0) \big] \leq  \tfrac{1}{2} (\ttm_\nu^{\ttb-1} \! +1)\bE \big[ \mathtt U(\bT_0)^\ttb \big] <\infty$$
where $\bT_0$ a GW($\nu$)-tree, by (\ref{InvGWnu}) in the definition of invariant GW($\nu$)-trees. 
In the special case $\ttb\eqo 2$, we then get 
\begin{eqnarray*} 
\bE\big[ \Phi_2 (\bT,\booo) \big] &\eqo & 
\tfrac{1}{2\ttm_\nu} \bE \Big[ \ttm_\nu \mathtt U (\bT_0)^2 +\!  \sum_{u\in \bT_0} \! \mathtt U (\theta_u \bT_0)^2\un_{\{ \overleftarrow{u} = \varnothing\} }  \Big] \\
&\eqo & \bE \big[ \mathtt U (\bT_0)^2\big] = \frac{\ttm_\nu(2)\! -\! \ttm_\nu}{\ttm_\nu^2\! -\! \ttm_\nu}, 
\end{eqnarray*} 
$$\bE\big[ \Phi_2 (\bT,\booo) \big] \eqo 
\tfrac{1}{2\ttm_\nu} \bE \Big[ \ttm_\nu \mathtt U (\bT_0)^2 +\!  \sum_{u\in \bT_0} \! \mathtt U (\theta_u \bT_0)^2\un_{\{ \overleftarrow{u} = \varnothing\} }  \Big] 
\eqo  \bE \big[ \mathtt U (\bT_0)^2\big] \eqo \frac{\ttm_\nu(2)\! -\! \ttm_\nu}{\ttm_\nu^2\! -\! \ttm_\nu}, $$
the last equality being a consequence of the explicit computation of the variance of GW processes: see e.g.~Athreya \& Ney \cite[Chapter I, (2) p.~4]{AtNe72}.  \cqfd 

\medskip

We next provides comparison bounds of harmonic coordinates and relative heights of vertices.  
\begin{lemma}
\label{harmcoo4} 
Let $\nu$ be a supercritical offspring distribution whose mean is denoted by $\ttm_\nu$. 
We assume that $\ttm_\nu (2)\eqo \sum_{k\in \bbN} k^{2} \nu(k) \leko \infty$. 
Let $(\bT, \boo)$ be an infinite tree GW($\nu$)-tree as in Definition \ref{GWptdef} 
or an invariant GW($\nu$)-tree as in Definition \ref{definvRW}. Let the r.v.s $U_x, S_x$, $x\ino \bT$, be as in (\ref{welldefcoo}) and  (\ref{defSx}). Recall from (\ref{spinedfbis}) the notation $\partial \Loo$ and $\boo (p)$, $p\ino \bbN$. 
Then the following holds true.

\smallskip

\begin{compactenum}
\item[$(i)$] For all $p\ino \bbN^*$ we set $\fbeta_p \eqo \mnu^{-1}\sum_{x\in \partial \Loo: \overleftarrow{x}= \boo (p)} U_x$. 
Then $\fbeta_p \eqo  U_{\boo (p)} \! - \! \mnu^{-1}U_{\boo (p-1)}$. Moreover, the r.v.s $(\fbeta_p)_{p\in \bbN^*}$ are 
i.i.d.~such that $\bE [\fbeta_1]\eqo \mnu^{-2} (\mnu (2) \! -\! \mnu)$. If $\ttb\ino (1, \infty)$ is such that 
$\ttm_{\nu} (\ttb +1)\! :=\! \sum_{k\in \bbN} k^{\ttb+1} \nu(k) \leko \infty$, then $\bE [\fbeta_1^\ttb] \leko \infty$.    

\smallskip

\item[$(ii)$] Recall $\sigma_\nu$ from (\ref{vutchi}) and assume that there exists $\ttb\ino (1, \infty)$ such that 
$\ttm_\nu (2\ttb+1)\eqo $ $\sum_{k\in \bbN} $ $k^{2\ttb+1}\nu(k)$ $\leko \infty$. Then, there is a constant $c\ino (0, \infty)$ that only depends on $\nu$ and $\ttb$ such that for all $p\ino \bbN$, 
$\bE \big[ \big| \tfrac{1}{\sigma^2_\nu} S_{\boo (p)}\!  -\! \bbn \boo (p) \bbn  \big|^{2\ttb}\big] \leqo 
c\, p^\ttb $. 
Moreover, for all $\beta\ino (\tfrac{1}{2} +\tfrac{1}{2\ttb} , 1)$, we set 
\begin{equation}
\label{LoRatiodef}
R_\beta= \sup_{p\in \bbN} \, \frac{\big| \tfrac{1}{\sigma^2_\nu} S_{\boo (p)} \!-\! \bbn \boo (p) \bbn  \big|}{1+ p^\beta} .
\end{equation} 
Then, there exists a constant $c'\ino (0, \infty)$ that only depends on $\nu$, $\ttb$ and $\beta$ such that for all $\lambda \ino (0, \infty)$, we get $\bP \big( R_\beta \geq \lambda \big)\leqo c' \lambda^{-2\ttb}$. 
\end{compactenum}
\end{lemma}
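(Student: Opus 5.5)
The plan is to prove $(i)$ by unfolding the recursion (\ref{welldefcoo}) along the ancestral line, then derive $(ii)$ by writing $S_{\boo(p)}$ as a sum of i.i.d.\ increments and applying a Rosenthal/Burkholder-type moment bound followed by a dyadic maximal estimate. The whole argument uses only the independence structure of Definition \ref{GWptdef} and Lemma \ref{harmcoo}.

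\textbf{Part $(i)$.} Apply the second identity of (\ref{welldefcoo}) at $x=\boo(p)$. Its children are $\boo(p-1)$ together with the vertices of $\partial\Loo$ whose parent is $\boo(p)$. This gives $U_{\boo(p)}=\mnu^{-1}U_{\boo(p-1)}+\fbeta_p$.

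For independence, recall that in an infinite GW tree the pairs $(\mathtt{end}(\boo(p-1)),k_{\boo(p)}(\bT))$ are i.i.d.\ with law $r(j,k)=\nu(k)/\mnu$. Conditionally on these pairs, the subtrees $\theta_u\bT$, $u\in\partial\Loo$, are i.i.d.\ GW($\nu$). Hence $\fbeta_p$ is a function of the $p$-th pair and of $k_{\boo(p)}-1$ independent copies of $\mathtt U(\bT_0)$, so the $\fbeta_p$ are i.i.d. (For the invariant tree, only the law of $k_\boo$ and of $\theta_\boo\bT$ changes, and $\fbeta_p$ for $p\geq1$ is unaffected.)

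The mean follows by conditioning: $\bE[\fbeta_1]=\mnu^{-1}\sum_k\frac{k\nu(k)}{\mnu}(k-1)\bE[\mathtt U(\bT_0)]$, and $\bE[\mathtt U(\bT_0)]=1$. This gives $\mnu^{-2}(\mnu(2)-\mnu)$.

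For the moment bound, use convexity: $\fbeta_1^\ttb\leq\mnu^{-\ttb}(k-1)^{\ttb-1}\sum \mathtt U_i^\ttb$. Then $\bE[\fbeta_1^\ttb]\lesssim \sum_k k^{\ttb+1}\nu(k)\,\bE[\mathtt U(\bT_0)^\ttb]<\infty$, using Lemma \ref{harmcoo} $(ii)$.

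\textbf{Part $(ii)$.} Unfold the recursion from $(i)$: $U_{\boo(q)}=\sum_{j=1}^{q}\mnu^{-(q-j)}\fbeta_j+\mnu^{-q}U_\boo$. Since $S_{\boo(p)}=-\sum_{q=0}^{p-1}U_{\boo(q)}$ and $\bbn\boo(p)\bbn=-p$, we get
\[
\tfrac1{\sigma_\nu^2}S_{\boo(p)}-\bbn\boo(p)\bbn=\sum_{q=0}^{p-1}\big(1-\sigma_\nu^{-2}U_{\boo(q)}\big).
\]
Summing the geometric series expresses $\sum_{q<p}U_{\boo(q)}$ as $\frac{\mnu}{\mnu-1}\sum_{j=1}^{p-1}\fbeta_j$ plus correction terms. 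These corrections are bounded by a geometrically weighted tail sum $\sum_j\mnu^{-(p-j)}\fbeta_j$ and by $cU_\boo$.

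We check $\frac{\mnu}{\mnu-1}\bE[\fbeta_1]=\frac{\mnu(2)-\mnu}{\mnu(\mnu-1)}=\sigma_\nu^2$. The quantity therefore equals a centered i.i.d.\ sum $-\sigma_\nu^{-2}\frac{\mnu}{\mnu-1}\sum_{j<p}(\fbeta_j-\bE\fbeta_j)$ plus a remainder $O\big(U_\boo+\sum_j\mnu^{-(p-j)}\fbeta_j\big)$.

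By $(i)$ applied with exponent $2\ttb$ (needing $\mnu(2\ttb+1)<\infty$), $\fbeta_1\in L^{2\ttb}$. Rosenthal's (or Burkholder's) inequality then gives $\bE|\sum_{j<p}(\fbeta_j-\bE\fbeta_j)|^{2\ttb}\leq cp^\ttb$. The remainder has bounded $L^{2\ttb}$ norm by Minkowski's inequality. Here $U_\boo\in L^{2\ttb}$ for the infinite tree by Lemma \ref{harmcoo} $(ii)$; for the invariant tree we pass through the density $(\mnu+k_\boo)/2\mnu$, which is controlled by $(i)$-type conditioning on $k_\boo$ together with the moment $\mnu(2\ttb+1)$. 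This proves the first bound.

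\textbf{Tail of $R_\beta$.} I expect this maximal estimate to be the main obstacle. I would use a dyadic decomposition: on each block $p\in[2^m,2^{m+1})$, apply Doob's $L^{2\ttb}$ maximal inequality to the martingale part. This gives $\bP(\max_{p<2^{m+1}}|M_p|\geq\lambda 2^{m\beta})\leq c\lambda^{-2\ttb}2^{m\ttb(1-2\beta)}$.

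The remainder terms need more care. The maximum over $p$ of the geometric tail sums is dominated by $\max_{j<2^{m+1}}\fbeta_j$ up to a constant, and a union bound gives probability $\leq 2^{m}c\lambda^{-2\ttb}2^{-2\ttb m\beta}$. Both bounds are summable in $m$ precisely because $\beta>\frac12+\frac1{2\ttb}$, that is, $2\ttb\beta-1>\ttb$. Summing over $m$, and adding the $U_\boo$ term handled by Markov's inequality, yields $\bP(R_\beta\geq\lambda)\leq c'\lambda^{-2\ttb}$.
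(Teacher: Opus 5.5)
Your proof is correct. Part $(i)$ and the $L^{2\ttb}$ bound in $(ii)$ match the paper almost line by line. The paper uses Burkholder--Davis--Gundy plus convexity where you use Rosenthal, and conditional Jensen through a geometric variable where you use Minkowski on the geometric tail sum. Also, $U_\boo\in L^{2\ttb}$ for the invariant tree is already given by Lemma~\ref{harmcoo}~$(ii)$ applied with exponent $2\ttb+1>2$, so the separate density argument is not needed.

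The real difference is the tail of $R_\beta$, which you expected to be the main obstacle. The paper uses no maximal inequality. It takes a union bound over $p$ and applies Markov's inequality with the moment bound $\bE[|\sigma_\nu^{-2}S_{\boo(p)}-\bbn\boo(p)\bbn|^{2\ttb}]\leq cp^{\ttb}$ you just proved. This gives $\bP(R_\beta\geq\lambda)\leq\lambda^{-2\ttb}\sum_{p\geq1}cp^{\ttb}(1+p^\beta)^{-2\ttb}$, which is finite exactly when $(2\beta-1)\ttb>1$, that is $\beta>\frac12+\frac1{2\ttb}$. That is where the hypothesis on $\beta$ comes from.

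Your dyadic argument (Doob on the martingale part, union bound over the individual $\fbeta_j$ for the geometric remainders) is also valid. However, you attribute its summability to the wrong condition. The Doob term $2^{m\ttb(1-2\beta)}$ only needs $\beta>\frac12$, and the union-bound term $2^{m(1-2\ttb\beta)}$ only needs $\beta>\frac1{2\ttb}$. So your route takes more work but yields the tail bound for every $\beta\in(\frac12,1)$. The paper's one-line union bound is enough for the range stated in the lemma.
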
 
\noi
\textbf{Proof.} 
The first point of $(i)$ is a direct consequence of the definition of invariant GW trees and of (\ref{welldefcoo}). 
Then, observe that $\bE [\fbeta_1| k_{\boo (1)} (\bT)] \eqo \ttm_\nu^{-1}\bE [ \mathtt U (\bT') ] (k_{\boo (1)} (\bT)\! -\! 1) \eqo  \ttm_\nu^{-1} (k_{\boo (1)} (\bT)\! -\! 1) $, where $\bT'$ stands for a GW($\nu$)-tree. Thus, $ \bE [\fbeta_1]\eqo    \ttm_\nu^{-2} \sum_{k\in \bbN} k(k\! -\! 1) \nu(k)\eqo \mnu^{-2} (\mnu (2) \! -\! \mnu)$. 
More generally, note that 
$\fbeta_1^\ttb \leqo  \ttm_\nu^{-\ttb}  (k_{\boo (1)} (\bT)\! -\! 1)^{\ttb -1} \sum_{x\in \partial \Loo: \overleftarrow{x}= \boo (1)} U^\ttb_x $ since $z\mapsto z^\ttb$ is convex. Then recall from Lemma \ref{harmcoo} $(ii)$ that $\bE [ \mathtt U^\ttb  (\bT') ]\leko \infty$ if $\ttm_\nu (\ttb) \leko \infty$. 
Thus $\bE [\fbeta_1^\ttb | k_{\boo (1)} (\bT)] \leqo \ttm_\nu^{-\ttb}\bE [ \mathtt U^\ttb (\bT') ] (k_{\boo (1)} (\bT)\! -\! 1)^\ttb$, which implies that $\bE [\fbeta_1^\ttb] \leqo \ttm_\nu^{-\ttb-1}\bE [ \mathtt U^\ttb (\bT') ] \ttm_{\nu} (\ttb+1)\leko \infty$. 

\medskip

Let us prove $(ii)$. As an immediate consequence of $(i)$, for all $p\ino \bbN^*$ we get  
\begin{equation}
\label{geosum}
 U_{\boo (p)} = \mnu^{-p} U_{\boo} +\!\!  \sum_{1\leq k \leq p} \! \mnu^{-(p-k)} \fbeta_k \; =  \mnu^{-p} U_{\boo} +(1\! -\! \mnu^{-1})^{-1} \bE \big[ \fbeta_{p-\mathtt G} \un_{\{ 0\leq \mathtt G < p\}} \big| (\bT, \boo) \big] 
\end{equation}
where $\mathtt G$ stands for a r.v.~that is independent from $(\bT, \boo)$ and such that $\bP (\mathtt G\eqo k)\eqo (1\! -\! \mnu^{-1}) \mnu^{-k}$, $k\ino \bbN$. Since $S_{\boo (p-1)} \! -\! S_{\boo (p)}\eqo U_{\boo (p-1)}$ (as a consequence of  (\ref{defSx})), we get by (\ref{geosum}) 
\begin{eqnarray*}
-S_{\boo (p)} \!\! \!\! &=& \!\! \!\! U_\boo + \!\!  \sum_{1\leq q < p} U_{\boo (q)} \, = \, \frac{1\! -\! \mnu^{-p}}{1\! -\! \mnu^{-1}} U_\boo \, + \!\!  \sum_{1\leq k < p} \!\!  \fbeta_k\!\!   \sum_{k\leq q<p} \mnu^{-(q-k)} \\
\!\! \!\!  & =&\!\! \!\!  \frac{1\! -\! \mnu^{-p}}{1\! -\! \mnu^{-1}} U_\boo\,  +\!\!  \sum_{1\leq k < p} \!\!  \fbeta_k \frac{1\! -\! \mnu^{-(p-k)}}{1\! -\! \mnu^{-1}} 
\, =   \frac{1\! -\! \mnu^{-p}}{1\! -\! \mnu^{-1}} U_\boo  + \!\!  \sum_{1\leq k < p}\!\!   \frac{\fbeta_k}{1\! -\! \mnu^{-1}} -\!\!   \sum_{1\leq k < p}  \!\!  \frac{ \mnu^{-(p-k)}\fbeta_k }{1\! -\! \mnu^{-1}} .
\end{eqnarray*}
To simplify notation, we set $ a\eqo \bE [\fbeta_1] $ and $\overline{\fbeta}_k\eqo \fbeta_k - a$, that is a centered r.v.~and we get 
\begin{eqnarray*} 
-(1\! -\! \mnu^{-1}) S_{\boo (p)} - ap  \!\! \!\! &= & \!\! \!\!  \Sigma_{p-1}+ R_p(1)-R_p(2)  \quad \textrm{where} \\ 
\Sigma_{p-1}  \!\! \!\! &=&  \!\! \!\! \sum_{1\leq k \leq p-1} \!\!  \overline{\fbeta}_k \, , \qquad R_p(1)= (1\! -\! \mnu^{-p})U_\boo \! -\! a \quad \textrm{and}\\
R_p(2)  \!\! \!\!&=& \!\! \!\! \sum_{1\leq k < p}  \!\! \mnu^{-(p-k)}\fbeta_k= (1\! -\! \mnu^{-1})^{-1} \bE \big[ \fbeta_{p-\mathtt G} \un_{\{ 0< \mathtt G < p \}} \big|  (\bT, \boo) \big]  . 
\end{eqnarray*}
Note from $(i)$ and (\ref{vutchi}) that $a^{-1}(1\! -\! \mnu^{-1})\eqo \sigma_\nu^{-2}$. Since $z\mapsto z^{2\ttb}$ is convex, we get 
$$ a^{2\ttb}\bE \Big[  \big| \tfrac{1}{\sigma^2_\nu} S_{\boo (p)}  \! + \!  p \, \big|^{2\ttb}\Big] \leq 3^{2\ttb-1} 
\big( \bE \big[ |R_p(1)|^{2\ttb} \big] +\bE \big[ |R_p(2)|^{2\ttb} \big]   + \bE \big[ |\Sigma_{p-1}|^{2\ttb} \big]  \big) \; .$$
First observe that $\bE [|R_p(1)|^{2\ttb} ] \leq 2^{2\ttb-1}  (\bE [U_\boo^{2\ttb} ] + a^{2\ttb} )$, which finite by Lemma \ref{harmcoo} $(ii)$. 

To bound $\bE \big[ |R_p(2)|^{2\ttb} \big] $, we use conditional Jensen inequality to get 
$$|R_p(2)|^{2\ttb} \leqo (1\! -\! \mnu^{-1})^{-2\ttb} \bE \big[ \fbeta^{2\ttb}_{p-\mathtt G} \un_{\{ 0< \mathtt G < p \}} \big| (\bT, \boo) \big]\; , $$
which entails $\bE \big[ |R_p(2)|^{2\ttb}\big] \leqo (1\! -\! \mnu^{-1})^{-2\ttb}  \bE \big[ \fbeta^{2\ttb}_{1} \big]$, which is finite by $(i)$. 

Let us bound $\bE \big[ |\Sigma_{p-1}|^{2\ttb} \big] $. 
Since the $\overline{\fbeta}_k$ are i.i.d.~centered and such that $\bE \big[ |\overline{\fbeta}_k|^{2\ttb} ] \leko \infty$ by $(i)$, we can apply Burkholder-Davis-Gundy inequality (see see e.g.~Burkholder \cite[Theorem 9 p.~1502]{Bu66}) to get a constant $c_0\ino (0, \infty)$ that only depends on $\ttb$ such that 
$$ \bE \big[ |\Sigma_{p-1}|^{2\ttb} \big] \leq c_0 \, \bE \Big[ \Big(\!\!\!\!\!\!  \sum_{\quad 1\leq k<p} \!\!\!\!\!  \overline{\fbeta}_k^{2} \Big)^{\ttb}\Big] . $$
Since $z\mapsto z^\ttb$ is convex, 
$$ \bE \Big[ \Big(\!\!\!\!\!\!  \sum_{\quad 1\leq k<p} \!\!\!\!\! \overline{\fbeta}_k^{2} \Big)^{\ttb}\Big] \leq (p\! -\! 1)^{\ttb-1} \!\!\! \sum_{1\leq k<p} \!\!\! \bE \big[ \,  |\overline{\fbeta}_k|^{2\ttb}\big]= (p\! -\! 1)^{\ttb} p \bE \big[ \,  |\overline{\fbeta}_1|^{2\ttb}\big], $$
which proves the first inequality in $(ii)$. To complete the proof of $(ii)$ we simply observe that 
$$  \bP \big( R_\beta \geq \lambda \big)\leqo \sum_{p\in \bbN^*} \bP \Big( 
\big|  \tfrac{1}{\sigma^2_\nu} S_{\boo (p)} \!-\! \bbn \boo (p) \bbn\,  \big| \geqo \lambda( 1+ p^\beta) \Big) \leq   \sum_{p\in \bbN^*} 
\frac{cp^{\ttb}}{\lambda^{2\ttb} (1+ p^\beta)^{2\ttb}} < c' \lambda^{-2\ttb}, $$
by a Markov inequality and since $(2\beta -\! 1) \ttb \geko 1$. \cqfd

\medskip

The following lemma shows that BRWs do not visit `bad' vertices, namely vertices $x$ 
such that $\sigma_\nu^{-2} \times $ (the harmonic coordinate at $x$) is large compared to the relative height $\bbn x\bbn$ of $x$. 

\begin{lemma} 
\label{badharm}  Let $\tau$ be a GW($\mu$)-tree, where $\mu$ is a non-trivial critical offspring distribution.  
Let $\nu$ be a supercritical offspring distribution whose mean is denoted by $\ttm_\nu$. 
Let $\ttb\ino (1, \infty)$. We assume that $\sum_{k\in \bbN} k^{2\ttb +1}\nu(k) \leko \infty$. Let $(\bT, \boo)$ be an invariant 
GW($\nu$)-tree. Let $(Y'_u)_{u\in \tau}$ be a $\tau$-indexed, $\bT$-valued $\ttm_\nu$-biased BRW whose conditional law given $(\bT, \boo)$ and $\tau$ is $Q^{_{(\bT, \boo)}}_{^{(\tau, \varnothing)}}$ 
as in Definition \ref{biasedbfRWdef}. 
We recall from (\ref{spinedfbis}) 
the pieces of notation $\boo (p)$ and $L_\boo$. Let 
$ (S_x)_{x\in \bT}$ be the harmonic coordinates as in (\ref{defSx}) and we recall from (\ref{vutchi}) the notation $\sigma_\nu$. For all $q\ino \bbN$, all $\lambda \ino (0, \infty)$ and all $\beta \ino 
(\tfrac{1}{2} +\tfrac{1}{2\ttb} , 1)$, we define the following subset of $\bT$ 
$$ B_{q, \lambda, \beta}= \Big\{ x\ino \bT\backslash L_\boo\,  : \bbn \boo \wedge x\bbn \geqo -q \;\,  \textrm{and}  \;\,  \big| 
\frac{_{_1}}{^{^{\sigma^2_\nu}}} (S_x\! -\! S_{\boo \wedge x}) \! -\!  \big( \bbn x\bbn\! -\!  \bbn \boo \wedge x\bbn\big) \big| \geqo \lambda \big( 1+  \big( \bbn x\bbn\! -\!  \bbn \boo \wedge x\bbn\big)^\beta \big) \Big\}. $$
Then, there exists $c\ino (0,\infty)$, which only depends on $\nu, \ttb, \beta$, such that 
$\bP \big( \exists u\ino \tau: Y'_u \ino B_{q, \lambda, \beta}) \leqo cq \lambda^{-2\ttb}.$ 
\end{lemma}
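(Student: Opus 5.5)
The first step is to reduce to a statement about the single biased random walk. Every vertex of $\bT$ visited by the BRW is entered for the first time along some ancestral path of $\tau$, so we first control the expected number of visits. For $x\in\bT$, set $N(x)=\#\{u\in\tau: Y'_u=x\}$. Conditionally given $(\bT,\boo)$, a union bound and the many-to-one principle on the critical genealogical tree give
\[
\bE\big[N(x)\,\big|\,(\bT,\boo)\big]=\sum_{n\ge0}\bE\big[\#\{u\in\tau:|u|=n\}\big]\,P_{\bT,\boo}(X_n=x)=G_{\bT}(\boo,x),
\]
because $\mu$ is critical and so $\bE[\#\{u:|u|=n\}]=1$. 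Lemma \ref{critGWimpl} guarantees that (\ref{eschyp}) holds, so by (\ref{greenoj}) with $\lambda=\ttm_\nu$ we have $G_\bT(\boo,x)=\frac{\ttm_\nu+k_x(\bT)}{\ttm_\nu-1}\,\ttm_\nu^{-(\bbn x\bbn-\bbn \boo\wedge x\bbn)}$. Hence
\[
\bP(\exists u: Y'_u\in B)\le \bE\Big[\sum_{x\in B}\tfrac{\ttm_\nu+k_x(\bT)}{\ttm_\nu-1}\,\ttm_\nu^{-(\bbn x\bbn-\bbn\boo\wedge x\bbn)}\Big].
\]
To exploit the tree structure, it is probably better to restrict to first entrances. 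It suffices to bound the probability that the BRW enters a bad vertex, and for a vertex $x$ to be visited, the BRW must visit the root $y$ of the off-spine subtree containing $x$; only the subtree structure matters after that.

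Next, decompose according to the branch point. Every $x\in B_{q,\lambda,\beta}$ lies in $\theta_y\bT$ for a unique $y\in\partial L_\boo$ with $\overleftarrow{y}=\boo(p)$ and $p\le q$. Write $x=y\ast v$ with $v\in\theta_y\bT$, and set $h=|v|+1=\bbn x\bbn-\bbn\boo\wedge x\bbn$. Then $S_x-S_{\boo(p)}$ equals the harmonic coordinate of $v$ computed inside the grafted tree, namely $U_y+\sum_{w\in\rrbracket\varnothing,v\rrbracket}\mathtt U(\theta_{w}\theta_y\bT)$. Conditionally on the spine, $\theta_y\bT$ is a GW($\nu$)-tree. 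Under the invariant law, the weight coming from the spine, $k_{\boo(p)}$ together with the factor $(\ttm_\nu+k_\boo)/2\ttm_\nu$ at $p=0$, is handled by H\"older, using $\sum_k k^{2\ttb+1}\nu(k)<\infty$. For fixed $p$, the expected sum over $x$ is then at most a constant times
\[
\bE\Big[\sum_{v\in\bT_0}\ttm_\nu^{-|v|}\,(\ttm_\nu+k_v(\bT_0))\,\un_{\{v \text{ bad}\}}\Big],
\]
where $\bT_0$ is GW($\nu$) and we multiply by $\bE[k_{\boo(p)}]$. By the many-to-one formula (\ref{sizebiased}), $\sum_{v\in\bT_0}\ttm_\nu^{-|v|}F$ turns into $\sum_{h}\bE[F([(\bT,\boo)]^\infty_{-h})]$ over the infinite GW tree. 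Along its spine, the harmonic increment $\sigma_\nu^{-2}(S_\boo-S_{\boo(h)})$ minus $h$ is exactly the quantity controlled in Lemma \ref{harmcoo4} $(ii)$, up to a bounded boundary term at the top of the spine, which involves $U$ of the grafted root and is controlled by Lemma \ref{harmcoo} $(ii)$.

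This leads to the main obstacle. Lemma \ref{harmcoo4} $(ii)$ only gives $\bP(\text{bad at height } h)\le c\lambda^{-2\ttb}$-type bounds, uniformly in the event $\{R_\beta\ge\lambda\}$, and summing over $h$ naively diverges. The fix I would try is to not sum over all bad $x$ but only over \emph{minimal} bad vertices, those with no bad ancestor inside the same off-spine subtree, since visiting a bad vertex requires visiting a first one. In the many-to-one expression, the events "$\boo(h)$ is the first bad spine vertex" are disjoint in $h$, so their probabilities sum to at most $\bP(R_\beta\ge\lambda)\le c'\lambda^{-2\ttb}$. Here I use the Markov-inequality form of $\bP(R_\beta\ge\lambda)$ from Lemma \ref{harmcoo4}, applied with the reversed spine viewpoint. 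The factor $\ttm_\nu+k_v$ and the boundary term would be absorbed by H\"older's inequality, possibly at the cost of slightly shrinking $\ttb$. If that costs too much, an alternative is to treat the $k_v$ factor via the size-biasing built into the infinite GW spine, which is why the moment of order $2\ttb+1$ is assumed. Summing over $0\le p\le q$ finally gives $cq\lambda^{-2\ttb}$ (the $+1$ term being absorbed for $q\ge1$).
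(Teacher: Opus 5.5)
Up to the per-level estimate, your proposal follows the paper's route: union bound, many-to-one on the critical $\tau$, the Green function (\ref{greenoj}), decomposition over $y\in\partial L_\boo$ with the grafted root, then many-to-one on $\theta_y\bT$. The gap is what comes next: the divergence you identify is not real, and the minimal-vertex fix you propose does not work.

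Lemma \ref{harmcoo4}~$(ii)$ gives more than the tail of $R_\beta$. Its first assertion is, for each fixed $p$, $\bE[|\sigma_\nu^{-2}S_{\boo(p)}-\bbn\boo(p)\bbn|^{2\ttb}]\le c\,p^{\ttb}$, and it holds under the invariant law too. After many-to-one, the depth-$h$ term is exactly the event $\{|\sigma_\nu^{-2}S'_{\boo'(p)}-\bbn\boo'(p)\bbn|\ge\lambda(1+p^\beta)\}$ with $p=h+1$. The grafted-root contribution $U_y$ becomes $U'_{\boo'(h)}$, which is measurable with respect to the truncated tree, so there is no boundary term at all. Markov's inequality bounds this probability by $c\,p^{\ttb}\lambda^{-2\ttb}(1+p^\beta)^{-2\ttb}$. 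That is summable in $p$ precisely because $(2\beta-1)\ttb>1$, which is why $\beta>\frac12+\frac1{2\ttb}$ is imposed, and the sum is $c'\lambda^{-2\ttb}$ directly. With this replacement your argument becomes the paper's proof.

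Your disjointness argument, on the other hand, is false as stated. In the centred tree, the badness of $v$ and the badness of each of its ancestors in $\theta_y\bT$ are increments with the same lower endpoint $\boo'(h+1)$, the image of $\boo\wedge x$, and that endpoint moves with $h$. So "$v$ is a minimal bad vertex at depth $h$" is not the event "the spine seen from $\boo'$ first becomes bad at level $h$". These events are not disjoint for different $h$, so the sum of their probabilities is not bounded by $\bP(R_\beta\ge\lambda)$, which concerns increments with the fixed upper endpoint $\boo$. No "reversed spine" identity repairs this.

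The weight $\ttm_\nu+k_v$ should not be handled by H\"older either, since that only yields $\lambda^{-2\ttb/r'}$ for some $r'>1$, a worse exponent than claimed. After many-to-one it becomes $\ttm_\nu+k_{\boo'}(\bT')$, and (\ref{InvGWnu}) converts the expectation exactly into $2\ttm_\nu$ times a probability under the invariant law, to which Lemma \ref{harmcoo4}~$(ii)$ applies.

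H\"older is also unnecessary for the spine weights. Given $\partial L_\boo$, the subtrees $\theta_y\bT$ are i.i.d.\ GW($\nu$), because the invariant reweighting only involves $k_\boo$. The bound therefore factorizes as the per-subtree expectation times $\sum_{0\le j\le q}\bE[k_{\boo(j)}(\bT)]$, which is $O(q)$ by the second moment of $\nu$.
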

\noi
\textbf{Proof.} Let $(\bT_0,\varnothing)$ be a GW($\nu$)-tree (Definition \ref{GWfordef} $(\textbf a)$). Denote by $(S^0_x)_{x\in \bT_0}$ its harmonic coordinates. We first bound the expectation of 
$$ Z(\bT_0)= \sum_{x\in \bT_0} \tfrac{\mnu + k_x (\bT_0) }{\mnu  - 1} \,   \mnu^{-|x|} \un_{ 
\! \big\{  \big|\sigma_\nu^{-2} S^0_x  - |x|  \big| \geq \lambda (1+|x|^\beta) \big\} } .$$
Let $(\bT', \boo')$ be an infinite GW($\nu$)-tree (Definition \ref{GWptdef}) whose harmonic coordinates  are denoted by  
$(S'_x)_{x\in \bT'}$. We recall that $S^0_\varnothing \eqo0$ and that for all $x\ino \bT_0$, $S^0_x\eqo \sum_{y\in \, \rgeo \varnothing , x \rgeo } \mathtt U(\theta_y \bT_0)$. 
Then, the many-to-one formula (\ref{sizebiased}) implies 
\begin{eqnarray}
\label{Singlesti}
 \bE \big[ Z(\bT_0) \big] &=& \sum_{p\in \bbN^*} 
 \bE \Big[ \tfrac{\mnu + k_\boo (\bT') }{\mnu  - 1} \,  \un_{ \! \big\{  \big|\sigma_\nu^{-2} S'_{\boo'(p)}  - \bbn \boo'(p) \bbn  \big| \geq \lambda (1+|p|^\beta) \big\} } \Big] \nonumber \\
\!\! \!\! \!  & = & \!\! \!\! \!  \tfrac{2\mnu}{\mnu-1} \!\!  \sum_{p\in \bbN^*} \bP \big(  \big|\sigma_\nu^{-2} S_{\boo(p)}  - \bbn \boo(p) \bbn  \big| \geq \lambda (1+|p|^\beta)  \big) \leq  
\sum_{p\in \bbN^*} 
\frac{ cp^{\ttb}}{\lambda^{2\ttb} (1+ p^\beta)^{2\ttb}} <  c'\lambda^{-2\ttb},
\end{eqnarray}
by the absolute continuity relation (\ref{InvGWnu}) between the laws of $\bT$ and $\bT'$, by Lemma \ref{harmcoo4} $(ii)$ and since $2\beta (\ttb\! -\! 1) \geko 1$. 

We now complete the proof of the lemma. To that end, we denote by $\overline{X}_n\eqo (\bT, X_n)$, $n\ino \bbN$, a $\ttm_\nu$-biased RW on $\bT$ starting at $X_0\eqo \boo$. By definition of BRW, we first get 
$$ \bP \big( \exists u\ino \tau: Y'_u \ino B_{q, \lambda, \beta} \big| \, (\bT, \boo), \tau \big)\leq \sum_{u\in \tau}\bE \big[ \un_{\{ Y'_u \in B_{q, \lambda, \beta} \}} \big| \,  (\bT, \boo), \tau\big] = \sum_{u\in \tau} P_{\bT, \boo} (X_{|u|} \ino B_{q, \lambda, \beta} \big)\; .$$
The many-to-one formula (\ref{sizebiased}) is applied to $\tau$ whose mean number of children is one and we get 
\begin{eqnarray*}
\bE \big[ \sum_{u\in \tau} P_{\bT, \boo} (X_{|u|} \ino B_{q, \lambda, \beta} \big)\big| (\bT, \boo) \Big] &=&  \sum_{n\in \bbN} P_{\bT, \boo} (X_n  \ino B_{q, \lambda, \beta} \big) =\sum_{x\in B_{q, \lambda, \beta} } G(\boo, x) \\
& =& \tfrac{1}{\mnu}\sum_{y\in  \partial L_\boo } \un_{\{ \bbn\boo \wedge y \bbn \geq -q  \}} Z (\theta_y \bT) .  
\end{eqnarray*}
Here, we used the fact that the Green function $G(\boo, x)$ is equal to $\tfrac{\mnu + k_x (\bT) }{\mnu  - 1} \,   \mnu^{-(\bbn x\bbn-\bbn \boo \wedge x\bbn) }$ by (\ref{greenoj}). Recall from the definition of invariant and infinite GW trees that conditionally on $\partial L_\boo$, the trees $(\theta_y \bT)_{y\in \partial L_\boo}$ are i.i.d.~GW($\nu$)-trees. Therefore, 
$$  \bP \big( \exists u\ino \tau: Y'_u \ino B_{q, \lambda, \beta} \big| \, \partial L_\boo \big)\leq 
\tfrac{1}{\mnu}\sum_{y\in  \partial L_\boo } \un_{\{ -\bbn\boo \wedge y \bbn \leq q  \}} \bE [ Z (\bT_0) ] \, \leq  \, 
\tfrac{1}{\mnu}\bE [ Z (\bT_0) ] \sum_{0\leq j\leq q}  k_{\boo (j)} (\bT) $$
which implies the desired result by (\ref{Singlesti}) and since $\bE[k_{\boo (j)} (\bT)]\eqo \ttm_\nu^{-1} \sum_{k\in \bbN} k^2 \nu(k)\leko \infty$ if $j\geqo 1$ and $\bE[ k_{\boo } (\bT)]\eqo (2\ttm_\nu)^{-1} \sum_{k\in \bbN} k(k+\ttm_\nu)\nu(k) \leko \infty$. \cqfd

\medskip

We next get upper bound of the (quenched moments of the) increments of the martingale $(S_{X_n})_{n\in \bbN}$. 
\begin{lemma}
\label{quoscillW} Let $\nu$ be a supercritical offspring distribution whose mean is denoted by $\ttm_\nu$. 
 Let $\ttb_0\ino (1, \infty)$. We assume that $\sum_{k\in \bbN} k^{2\ttb_0 +1}\nu(k) \leko \infty$. Let $(\bT, \boo)$ be an invariant GW($\nu$)-tree as in Definition \ref{definvRW}. 
Let the r.v.s $U_x$, $x\ino \bT$, be as in (\ref{welldefcoo}). Let $\overline{X}_n \eqo (\bT, X_n)$, $n\ino \bbN$, be a $\mathtt m_\nu$-biased RW on $\bT$ such that $X_0 \eqo \boo$, and recall that $P_{(\bT, \boo)}$ stands for the conditional law of $(X_n)_{n\in \bbN}$ given $(\bT, \boo)$. 
Then, for all $\ttb \ino (1, \ttb_0)$, there exists a $\bP$-a.s.~finite r.v. $K_\ttb (\bT)$ that is a measurable function of $\bT$ and $\ttb$ such that for all $m\ino \bbN$ and all $n\ino \bbN^*$,
\begin{equation}
\label{Burkholder}
\textrm{$\bP$-a.s.~}\qquad E_{\bT, \boo} \big[ |S_{X_{m+n}} \! \! -\! S_{X_m} |^{2\ttb}  \big] \leq K_\ttb (\bT)\,  n^{\ttb-1} (m+n) .
\end{equation} 
\end{lemma}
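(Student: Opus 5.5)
Under $P_{\bT,\boo}$ the process $(S_{X_k})_{k\in\bbN}$ is a martingale by Lemma~\ref{harmcoo} $(iii)$, and its increments are $\Delta_k = S_{X_{k+1}}-S_{X_k}$. The plan is to apply the Burkholder--Davis--Gundy inequality conditionally on the environment and then control the resulting sum of quenched moments with the ergodic maximal bound (\ref{wienerconseq}) of Corollary~\ref{rwergobis}.

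First, Burkholder's inequality applied to the martingale $(S_{X_{m+j}}-S_{X_m})_{0\le j\le n}$ under $P_{\bT,\boo}$ gives a constant $c_\ttb$ depending only on $\ttb$ such that
$$E_{\bT,\boo}\big[|S_{X_{m+n}}-S_{X_m}|^{2\ttb}\big]\le c_\ttb\, E_{\bT,\boo}\Big[\Big(\sum_{m\le k<m+n}\Delta_k^2\Big)^{\ttb}\Big].$$
Since $z\mapsto z^\ttb$ is convex, the right-hand side is at most $c_\ttb\, n^{\ttb-1}\sum_{m\le k<m+n}E_{\bT,\boo}[|\Delta_k|^{2\ttb}]$. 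Conditioning on $\ccF_{\! k}$, each term equals $E_{\bT,\boo}[\Phi_{2\ttb}(\overline X_k)]$. By (\ref{sigdeltamg}) this quantity is a function $F(\bT,X_k)$ with $F$ invariant under centering, namely $F(T,x)=\Phi_{2\ttb}(\mathtt{cent}(T,x))\le (\ttm_\nu^{2\ttb-1}+1)U_x^{2\ttb}$.

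Second, I bound $\sum_{m\le k<m+n}E_{\bT,\boo}[F(\bT,X_k)]$ by the full sum over $0\le k<m+n$. Since $F\ge 0$, (\ref{wienerconseq}) shows this is at most $(m+n)\,K'(\bT)$, where $K'(\bT)=\sup_N \frac1N\sum_{k<N}E_{\bT,\boo}[F(\bT,X_k)]$ is a $\bP$-a.s.\ finite measurable function of $(\bT,\boo)$. Setting $K_\ttb(\bT)=c_\ttb K'(\bT)$ then yields (\ref{Burkholder}).

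The main obstacle is verifying the moment hypothesis of Corollary~\ref{rwergobis}, namely $\bE[F(\bT,\boo)^{\ttb'}]<\infty$ for some $\ttb'>1$. This means showing $\bE[U_\boo^{2\ttb\ttb'}]<\infty$ under the invariant law, where there is a size-biasing factor $(\ttm_\nu+k_\boo)/(2\ttm_\nu)$. I would choose $\ttb'=\ttb_0/\ttb>1$, so that $2\ttb\ttb'=2\ttb_0$. Using the bound $\Phi\le\frac{\ttm_\nu^{2\ttb}+\ttm_\nu}{\ttm_\nu+k_\boo}U_\boo^{2\ttb}$ from (\ref{sigdeltamg}), the factor $1/(\ttm_\nu+k_\boo)$ partially cancels the size bias. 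Via (\ref{InvGWnu}) this reduces the requirement to $\bE[U^{2\ttb_0}]<\infty$ for the infinite GW($\nu$)-tree, up to an extra power of $k_\boo$ when $\ttb'>1$. That quantity is finite by Lemma~\ref{harmcoo} $(ii)$, because $\sum k^{2\ttb_0+1}\nu(k)<\infty$. If the leftover $k_\boo$ factor causes trouble, I would instead take $\ttb'$ close to $1$ and apply Hölder's inequality, which uses the extra moment of order $2\ttb_0+1$.
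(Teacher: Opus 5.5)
Your proposal is correct and follows the paper's proof essentially step by step: conditional BDG, convexity, extending the sum to $0\le k<m+n$, and the Wiener maximal bound (\ref{wienerconseq}) with exponent $\ttb_0/\ttb$. The only cosmetic difference is that you apply Corollary~\ref{rwergobis} to $\Phi_{2\ttb}$ itself rather than to its upper bound $U_x^{2\ttb}/(\ttm_\nu+k_x(\bT))$, and your worry about a leftover power of $k_\boo$ is unfounded: after size-biasing, the factor is $(\ttm_\nu+k_\boo)^{1-\ttb_0/\ttb}\le \ttm_\nu^{1-\ttb_0/\ttb}$, and in any case Lemma~\ref{harmcoo}~$(ii)$ applied directly to the invariant tree with exponent $2\ttb_0+1>2$ already gives $\bE[U_\boo^{2\ttb_0}]<\infty$, so no H\"older fallback is needed.
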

\noi
\textbf{Proof.} Recall the notation $\Delta_{k} \eqo S_{X_{k+1}} \! \! -\! S_{X_{k}}$ from Lemma~\ref{harmcoo} $(iv)$. Since the $\Delta_k$ are increments of martingales, the Burkholder-Davis-Gundy inequality 
(see e.g.~Burkholder \cite[Theorem 9 p.~1502]{Bu66}) implies that there is a constant $c(\ttb)\ino (0, \infty)$ that only depends on $\ttb$ such that 
$$ E_{\bT, \boo} \Big[ \max_{1\leq k\leq n} \big| S_{X_{m+k}} \! \! -\! S_{X_m}\big|^{2\ttb}\Big] \leq 
c(\ttb) \, E_{\bT, \boo} \Big[ \Big(  \!\!\! \sum_{\; \, 0\leq k< n} \!\!\! \Delta_{m+k}^2 \Big)^{\! \ttb}  \Big] .$$
Since $b\ino (1, \infty)$, $z\mapsto z^b$ is convex and we get 
$( \frac{1}{n}\sum_{\; 0\leq k< n} \Delta_{m+k}^2)^{ \ttb} \leq  \frac{1}{n}\sum_{\; 0\leq k< n} |\Delta_{m+k}|^{2\ttb} $.  
Then by (\ref{sigdeltamg}) in Lemma~\ref{harmcoo}, we get
\begin{align*} 
E_{\bT, \boo} \Big[ \max_{1\leq k\leq n} \big| S_{X_{m+k}} \! \! -\! S_{X_m}\big|^{2\ttb}\Big]&\leq 
 c(\ttb)  n^{\ttb -1} 
\!\! \sum_{\; \, 0\leq k< n} E_{\bT, \boo} \big[ |\Delta_{m+k}|^{2\ttb} \Big]\\
& \leq  c(\ttb)  n^{\ttb -1} \!\! 
\sum_{\; \, 0\leq k< n} E_{\bT, \boo }\Big[  \tfrac{\ttm_\nu^{2\ttb} +\ttm_\nu}{\ttm_\nu  +  k_{X_{m+k}} \! (\bT)} U^{2\ttb}_{X_{k+m}} \Big] \\
& \leq  
c(\ttb) (\ttm_\nu^{2\ttb} \! +\! \ttm_\nu) \, n^{\ttb-1}  (m+n)\,  \Big( \tfrac{1}{m+n} \! \! \! \! \! \!  \sum_{\quad 0\leq k < m+n} \! \! \! \! \! \! E_{\bT, \boo} \big[ F(\bT, X_k)  \big]  \Big)
\end{align*}
where we have set $F( T, x)\eqo \mathtt U (\theta_x T)^{2\ttb}  /  (\ttm_\nu  +  k_{x}  (T))$, for all $(T, x)\ino \overline{\bbT}^\bullet\! $ and where we recall the notation $\mathtt U (\cdot)$ from (\ref{Udefin}). We observe that $F(T,x)\eqo F(\mathtt{cent} (T,x))$. Then, we set 
$\ttb_1\eqo \ttb_0/\ttb\ino (1, \infty)$ and we observe that $\bE [F(\bT, \boo)^{\ttb_1} ] \leq \bE [U_\boo^{2\ttb_0}] \! <\! \infty$ since 
$ \sum_{k\in \bbN} k^{2\ttb_0+1} \nu(k) \leko \infty$ by Lemma \ref{harmcoo} $(ii)$. Therefore Corollary \ref{rwergobis} applies 
and asserts that 
$$\textrm{$\bP$-a.s.} \quad K_{\ttb} (\bT):= c(\ttb)\,  (\ttm_\nu^{2\ttb} \! +\! \ttm_\nu) \, \sup_{n\in \bbN^*}\,  \frac{1}{n} \! \sum_{0\leq k<n} \!\!  E_{\bT, \boo} \big[ F(\bT, X_k)  \big] 
\; < \infty, $$ 
which entails the desired result. \cqfd

\medskip

We next use a specific form of a conditional  CLT for martingales that is used to prove the finite-dimensional convergence of the harmonic coordinates of the BRW in Section~\ref{pfharmosec}. 
\begin{proposition}
\label{condiTCL} For all $n\ino \bbN$, let $(\cF_{n, k})_{k\in \bbN}$ be a filtration on $(\Omega, \ccF)$ and let $(D_{n, k})_{k\in \bbN^*}$ be a sequence of real valued r.v.s. Let $(s_n)_{n\in\bbN}$ be a sequence of integers and let $s\in\bbR_+$. We make the following assumptions. 
\begin{compactenum}
\item[$(a)$] For all $n,k$, $D_{n, k}$ is $\cF_{n,k}$-measurable, $\bE [ D_{n,k}^2] \leko \infty$ and $\bP$-a.s.~$\bE [D_{n, k} | \cF_{n, k-1}]\eqo 0$.
\item[$(b)$] 
There exists a filtration $(\mathcal G_j)_{j\in \bbN}$ on $(\Omega, \ccF)$ and a sequence of integers $(r_n)_{n\in \bbN}$ such that 
$\cF_{n, 0}\eqo \mathcal G_{r_n}$, $n\ino \bbN$. 
\item[$(c)$] For all $\epp \ino (0, 1)$, $\bP$-a.s.~$\lim_{n\to \infty} 
\sum_{1\leq k\leq s_n}\bE [D_{n,k}^2\un_{\{| D_{n, k}|>\epp  \}} | \cF_{n, k-1} ]= 0$. 
\item[$(d)$] $\bP$-a.s.~$\lim_{n\to \infty} \sum_{1\leq k\leq s_n}\bE [D_{n,k}^2 | \cF_{n, k-1} ]\eqo s$. 
\end{compactenum}

\smallskip
\noi
Then, we set $M_n\eqo \sum_{1\leq k\leq s_n}D_{n,k}$ and for all $\lambda \ino \bbR$, we get 
\begin{equation}
\label{fourcondi}
\textrm{$\bP$-a.s.~} \quad \lim_{n\to \infty}\bE \big[ e^{i\lambda M_n} \big| \cF_{n, 0} \big] = e^{-\frac{_1}{^2} s\lambda^2}\; .
\end{equation}    
\end{proposition}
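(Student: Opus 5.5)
The plan is to reduce the statement to the classical (unconditional) martingale CLT with conditional variance, applied under the regular conditional probability given $\mathcal G_\infty := \sigma(\bigcup_j \mathcal G_j)$. The difficulty is that $\cF_{n,0}=\mathcal G_{r_n}$ varies with $n$, so we cannot simply condition on a fixed $\sigma$-field. The standard route is the characteristic-function argument of McLeish / Hall--Heyde: we prove directly that
$$\bE \big[ e^{i\lambda M_n} \big| \cF_{n,0}\big] - e^{-\frac12 s\lambda^2} \to 0 \quad \textrm{a.s.}$$

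\textbf{Truncation and stopping.} Fix $\lambda$. Since the convergences in $(c)$ and $(d)$ are only almost sure and random, we first localize. For $C>s$, define the stopping index
$$\kappa_n=\max\Big\{ j\leq s_n : \sum_{1\leq k\leq j}\bE [D_{n,k}^2 | \cF_{n, k-1} ]\leq C\Big\},$$
which is predictable. Set $D'_{n,k}=D_{n,k}\un_{\{k\leq \kappa_n\}}$, so that the total conditional variance is bounded by $C$. By $(d)$, a.s. for large $n$ we have $\kappa_n=s_n$, hence $M'_n=M_n$ eventually a.s. Using this requires care, since we need conditional expectations given $\cF_{n,0}$ of quantities equal only eventually. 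This is handled by showing
$$\bE\big[|e^{i\lambda M_n}-e^{i\lambda M'_n}| \,\big|\, \cF_{n,0}\big]\leq 2\,\bP(\kappa_n<s_n | \cF_{n,0}) \to 0 \quad \textrm{a.s.}$$
For this we use assumption $(b)$: $\bP(\kappa_n<s_n|\mathcal G_{r_n})\leq \bE[\sup_{m\geq n}\un_{\{\kappa_m<s_m\}}|\mathcal G_{r_n}]$. If $r_n\to\infty$, L\'evy's upward/Hunt's lemma for the filtration $(\mathcal G_j)$ gives convergence of this bound to $\bE[\lim_m \sup_{m'\geq m}\un\ldots|\mathcal G_\infty]=0$. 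If $(r_n)$ has bounded subsequences, we treat those subsequences with a fixed $\sigma$-field, which is easier. The same Hunt-lemma device transfers the a.s. statements $(c)$ and $(d)$ into conditional-expectation statements, and this is the reason $(b)$ is assumed.

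\textbf{Core estimate.} For the truncated martingale, we use the product identity
$$e^{i\lambda M'_n}=\prod_k e^{i\lambda D'_{n,k}} .$$
Write $e^{ix}=(1+ix-\tfrac{x^2}{2})+r(x)$ with $|r(x)|\leq \min(|x|^2,|x|^3)$. Compare with the predictable product
$$\Pi_n=\prod_k \exp\!\big(-\tfrac{\lambda^2}{2}\bE[D'^2_{n,k}|\cF_{n,k-1}]\big),$$
which is bounded below by $e^{-\lambda^2 C/2}$. A telescoping argument,
$$\bE[e^{i\lambda M'_n}\Pi_n^{-1}|\cF_{n,0}]-1=\sum_k \bE\big[e^{i\lambda M'_{n,k-1}}\Pi_{n,k}^{-1}\big(\bE[e^{i\lambda D'_{n,k}}|\cF_{n,k-1}]-e^{-\frac{\lambda^2}{2}\bE[D'^2_{n,k}|\cF_{n,k-1}]}\big)\,\big|\,\cF_{n,0}\big],$$
together with the Taylor bounds, bounds this difference by a constant times $\bE[\,\cdot\,|\cF_{n,0}]$ of
$$\sum_k \bE[D'^2_{n,k}\un_{\{|D'_{n,k}|>\epp\}}|\cF_{n,k-1}] \;+\; \epp\, C \;+\; \max_k \bE[D'^2_{n,k}|\cF_{n,k-1}]\cdot C .$$
The last term tends to $0$ by $(c)$, since $\bE[D^2|\cF]\leq \epp^2+\bE[D^2\un_{|D|>\epp}|\cF]$. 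All these terms are bounded (by $C$), so the Hunt-lemma transfer gives a.s. convergence to $0$ of their conditional expectations. Finally, $\Pi_n\to e^{-s\lambda^2/2}$ a.s. by $(d)$, boundedly, so replacing $\Pi_n^{-1}$ by its limit costs a term again controlled by the same transfer.

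\textbf{Main obstacle.} The main obstacle is this transfer of a.s. convergence of bounded random variables to a.s. convergence of their conditional expectations given the moving $\sigma$-fields $\mathcal G_{r_n}$. I would isolate it as a lemma: if $0\leq Z_n\leq C$ and $Z_n\to0$ a.s., then $\bE[Z_n|\mathcal G_{r_n}]\to0$ a.s. The proof dominates $Z_n$ by $\sup_{m\geq N}Z_m$, applies Hunt's lemma (martingale convergence in $N$ and $r_n$), and handles the case of bounded $r_n$ by splitting into subsequences.
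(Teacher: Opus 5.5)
Your proposal is correct and is essentially the paper's proof. The three ingredients coincide with Proposition~\ref{TCLbis} and Lemma~\ref{exozarb} of the appendix: the predictable truncation at $\kappa_n$ (the paper's $T_n$), the telescoping of $e^{i\lambda M_{n,k}+\frac12\lambda^2 V_{n,k}}$ combined with the two Taylor bounds, and the transfer of a.s.\ convergence of bounded variables to their conditional expectations given $\mathcal G_{r_n}$. The one step to tighten is the case split in your transfer lemma: $(r_n)$ may neither tend to $\infty$ nor reduce to finitely many constant subsequences (for instance, it may visit every integer infinitely often). What is really needed, and what the paper proves, is $\sup_{k\in\bbN}\bE[\sup_{m\geq N}Z_m \mid \mathcal G_k]\to 0$ a.s.\ as $N\to\infty$, which follows from monotonicity in $N$ together with a.s.\ martingale convergence in $k$.
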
  
\noi
\textbf{Proof.} See Appendix~\ref{pfCLTmarti}. \cqfd

\begin{remark}
\label{liens}
In Assumption $(b)$, $\sigma$-fields are only partially nested 
(i.e., ~$\cF_{n, k}$ is not necessarily included in 
$\cF_{n+1, k}$ for all $k$). 
However, as mentioned in the remarks following Hall \& Heyde \cite[Corollary 3.1 pp.~58-59]{HaHe80}, 
since the limit in Assumption $(d)$ is deterministic, the conclusion of \cite[Corollary 3.1 pp.~58-59]{HaHe80} 
applies 
and $M_n$ converges in law to a centered Gaussian r.v.~with variance $s$. However, this sole convergence does not imply (\ref{fourcondi}). A related 
type of result can be found in Eagleson \cite[Corollary 2 p.~560]{Ea75}, which entails that $\bP$-a.s.~
$\lim_{n\to \infty}\bE \big[ e^{i\lambda M_n} \big| \mathcal G \big] \eqo e^{-\frac{1}{2} s\lambda^2}$ where $\mathcal G\eqo \eqo \bigcap_{n\in \bbN} \cF_{n, 0}$. However, it does not imply (\ref{fourcondi}) and the methods used in \cite{Ea75} seem difficult to adapt in order to obtain (\ref{fourcondi}). Since we have not found an accurate reference for Proposition \ref{condiTCL}, we provide in Appendix~\ref{pfCLTmarti} a proof which is a (relatively straightforward) adaptation of proof of the standard Lindeberg CLT for martingales. \cq      
\end{remark}

\begin{proposition}
\label{fourcondharmo}  Let $\nu$ be a supercritical offspring distribution whose mean is denoted by $\ttm_\nu$. 
 Let $\ttb\ino (2, \infty)$. We assume that $\sum_{k\in \bbN} k^{\ttb}\nu(k) \leko \infty$.
Let $(\bT, \boo)$ be an invariant GW($\nu$)-tree as in Definition \ref{definvRW}. 
Let the r.v.s $U_x, S_x$, $x\ino \bT$, be as in (\ref{welldefcoo}) and (\ref{defSx}). 
Let $\overline{X}_n \eqo (\bT, X_n)$, $n\ino \bbN$, be a $\mathtt m_\nu$-biased RW on $\bT$ such that $X_0 \eqo \boo$. 
We denote by $\ccF_n$ the $\sigma$-field generated by 
$(\overline{X}_k)_{0\leq k\leq n}$. Let $r_n , s_n\ino \bbN$, $n\ino \bbN$ be such that $\frac{1}{n}r_n \! \to \! r\ino \bbR_+ $ and $ \frac{1}{n}s_n \! \to \! s \ino \bbR_+$. Recall from (\ref{vutchi}) the definition of $\sigma_\nu$. 
Then for all $\lambda\ino \bbR$, 
\begin{equation}
\label{condiharmodiffu}
\textrm{$\bP$-a.s.~} \quad \lim_{n\to \infty}\bE \Big[ \exp \big( i\tfrac{\lambda}{\sqrt{n}} \big( S_{X_{r_n+s_n}} \! \! -\! S_{X_{r_n}}\big) \big) \Big| \ccF_{r_n} \Big]\eqo  e^{-\frac{_1}{^2} s\sigma^2_\nu \lambda^2}\; . 
\end{equation}
\end{proposition}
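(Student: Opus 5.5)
We apply Proposition \ref{condiTCL} with $D_{n,k} \eqo n^{-1/2}\Delta_{r_n+k-1}$, where $\Delta_j\eqo S_{X_{j+1}}\! -\! S_{X_j}$ as in Lemma \ref{harmcoo} $(iv)$. We take $\cF_{n,k}\eqo \ccF_{r_n+k}$ and $\mathcal G_j\eqo \ccF_j$, so that assumption $(b)$ holds with the sequence $(r_n)$, and we take $s_n$ as the number of summands. Then $M_n\eqo n^{-1/2}(S_{X_{r_n+s_n}}\! -\! S_{X_{r_n}})$, and (\ref{fourcondi}) gives (\ref{condiharmodiffu}) with variance $s\sigma_\nu^2$. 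Here $\bP$ is the annealed law, and conditioning on $\ccF_{r_n}$ includes conditioning on $(\bT,\boo)$, which is harmless.

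Assumption $(a)$ follows from the martingale property (\ref{mgcoo}) together with $\bE[\Delta_j^2]\eqo \bE[\Phi_2(\overline{X}_0)]\leko\infty$; the latter holds by stationarity (Proposition \ref{rwergo}) and $\ttb\geko 2$. For $(d)$ we write $\bE[D_{n,k}^2|\cF_{n,k-1}]\eqo n^{-1}\Phi_2(\overline{X}_{r_n+k-1})$, where $\Phi_2$ is a function of $\mathtt{cent}(\overline{X}_j)$ by (\ref{sigdeltamg}) and is integrable. By (\ref{ergoexpli}) in Corollary \ref{rwergobis}, a.s.
$$\tfrac1n\sum_{0\le j< N_n}\Phi_2(\overline{X}_j)\to c\,\bE[\Phi_2(\overline{X}_0)]\quad\text{whenever } N_n/n\to c.$$
Taking the difference of these averages at $N_n\eqo r_n+s_n$ and $N_n\eqo r_n$, the sum in $(d)$ converges a.s. to $(r+s)\sigma_\nu^2-r\sigma_\nu^2\eqo s\sigma_\nu^2$ by (\ref{vutchi}). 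The degenerate cases $r\eqo 0$ or $s\eqo 0$ follow from the same bound, since the contributions are $o(1)$.

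The main obstacle is the Lindeberg condition $(c)$, which has to hold almost surely rather than in probability. We plan a truncation argument. For $\epp\in(0,1)$ and $A\geko 0$, once $\sqrt n\,\epp\geko A$ we have
$$\bE[\Delta_j^2\un_{\{|\Delta_j|>\epp\sqrt n\}}|\ccF_j]\le G_A(\overline{X}_j),\qquad G_A(\overline{x})\eqo \bE[\Delta_0^2\un_{\{|\Delta_0|>A\}}\,|\,\overline{X}_0=\overline{x}].$$
The function $G_A$ is measurable, centering-invariant, and dominated by $\Phi_2$, hence integrable. Applying (\ref{ergoexpli}) to $G_A$ as above gives a.s.
$$\limsup_{n\to\infty}\tfrac1n\sum_{r_n\le j<r_n+s_n}G_A(\overline{X}_j)\le (r+s)\,\bE[G_A(\overline{X}_0)].$$
Letting $A\to\infty$ along integers, dominated convergence gives $\bE[G_A(\overline{X}_0)]\to 0$, and the countable intersection of full-measure events yields $(c)$ almost surely. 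The same scheme would work with the higher moments $\Phi_\ttb$ via a Lyapunov bound, but the truncation avoids needing anything beyond $\Phi_2$. Because the almost sure ergodic averages only enter through differences at the times $r_n$ and $r_n+s_n$, the window starting at $r_n$ causes no trouble.
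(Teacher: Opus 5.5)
Your proof is correct, and its skeleton is the paper's. You make the same choice $D_{n,k}=n^{-1/2}\Delta_{r_n+k-1}$, $\cF_{n,k}=\ccF_{r_n+k}$ in Proposition~\ref{condiTCL}, and you use Corollary~\ref{rwergobis} on $\Phi_2$ for assumption $(d)$, as the paper does. The genuine difference is in the Lindeberg condition $(c)$.

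The paper uses a Lyapunov bound: the Lindeberg sum is at most $(\epp\sqrt{n})^{2-\ttb}\,\frac{1}{n}\sum_{0\le j<r_n+s_n}\Phi_\ttb(\overline{X}_j)$. A single ergodic average of $\Phi_\ttb$ then stays bounded while the prefactor vanishes. That average is integrable by Lemma~\ref{harmcoo}~$(iv)$ exactly because $\sum_k k^\ttb\nu(k)<\infty$ with $\ttb>2$.

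Your truncation instead uses $G_A\le\Phi_2$ and a countable family of ergodic averages indexed by $A\in\bbN$. This is the standard Lindeberg argument for stationary ergodic martingale differences. It costs one extra limiting step, but it shows that the proposition holds under $\sum_k k^2\nu(k)<\infty$ alone, so the hypothesis $\ttb>2$ is not needed here. The paper's shortcut is harmless, since much stronger moments are assumed in Theorem~\ref{QumetlimbrRW} anyway.

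One small point on your averaging claim. The statement that $\frac{1}{n}\sum_{j<N_n}F(\overline{X}_j)\to c\,\bE[F(\overline{X}_0)]$ whenever $N_n/n\to c$ needs, for $c=0$, a comparison with $N_n+\lceil\eta n\rceil$ using $F\ge 0$, as in the paper. You cannot rescale by $N_n$ there, because $N_n$ need not tend to infinity.
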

\noi
\textbf{Proof.} We want to apply Proposition \ref{condiTCL} with $\cF_{n, k}\eqo \ccF_{r_n +k}$ and $D_{n, k}\eqo n^{-1/2}\Delta_{r_n + k-1}$, where as in Lemma \ref{harmcoo} $(iv)$, we have set $\Delta_{m}\eqo S_{X_{m+1}}\! -\! S_{X_m}$. Since $\nu$ is square integrable, (\ref{vutchi}) in Lemma~\ref{harmcoo} $(iv)$ implies that $\bE [D_{n, k}^2] \eqo \tfrac{1}{n}\sigma_\nu^2 \leko \infty$ and Lemma \ref{harmcoo} $(iii)$ implies that Assumption $(a)$ of Proposition \ref{condiTCL} is satisfied. 
Next observe that $\cF_{n, 0}\eqo \ccF_{r_n}$:  Assumption $(b)$ of Proposition \ref{condiTCL} is satisfied.

We next prove that Assumption $(d)$ in Proposition \ref{condiTCL} is satisfied. 
Recall notation 
$\Phi_{\ttb} (\overline{X}_n)$ from Lemma~\ref{harmcoo} $(iv)$ and to simplify notation, we set $C_{\ttb, k} \eqo  \sum_{0\leq l<k} \Phi_{\ttb} (\overline{X}_l) $.  
First observe that 
$\tfrac{1}{n}\Phi_2 (\overline{X}_{r_n + k-1} )\eqo  \eqo \bE [D^2_{n, k}| \cF_{n, k-1}]$. 
Since $\Phi_2 (\overline{X}_{k} )\eqo \Phi_2 (\mathtt{cent} (\overline{X}_{k}))$ by (\ref{sigdeltamg}) and 
thanks to Corollary~\ref{rwergobis}, we get 
$\bP$-a.s.~$\lim_{k\to \infty} \tfrac{1}{k} C_{2, k} \eqo \sigma^2_\nu$. 
Recall that $\frac{1}{n}r_n \! \to \! r\ino \bbR_+ $. If $r\ino (0, \infty)$, then 
$\tfrac{1}{n}  C_{2, r_n} \eqo \tfrac{r_n}{n} \tfrac{1}{r_n} C_{2, r_n}  \to r\sigma^2_\nu$ a.s. 
If $r\eqo 0$, since $\Phi_2\geq 0$, we get for all $\eta\ino (0, 1)$, 
$\tfrac{1}{n} C_{2, r_n} \leqo \tfrac{r_n+\eta n}{n} \tfrac{1}{r_n+\eta n} C_{2, r_n + \eta n}\to \eta\sigma^2_\nu$ a.s.~and thus $\tfrac{1}{n} C_{2, r_n} \to 0$ a.s.~since $\eta$ can be arbitrarily small. Similarly, a.s.~$\tfrac{1}{n}C_{2, r_n+s_n}\to (r+s)\sigma^2_\nu$, which then implies that 
$\sum_{1\leq k\leq s_n} \bE [D^2_{n, k} | \cF_{n, k-1} ] \eqo \tfrac{1}{n} (C_{2, r_n+s_n} \! -\! C_{2, r_n}) \to s\sigma^2_\nu$ and Assumption $(d)$ in Proposition \ref{condiTCL} is satisfied.

It remains to prove Assumption $(c)$. We fix $\epp \ino (0, 1)$ and we first 
observe that 
$$ \sum_{1\leq k\leq s_n}\!\! \bE [D_{n,k}^2\un_{\{| D_{n, k}|>\epp  \}} | \cF_{n, k-1} ]\leq  \tfrac{1}{\epp^{\ttb -2}} \!\! \sum_{1\leq k\leq s_n}\!\! \bE [D_{n,k}^\ttb | \cF_{n, k-1} ] \leq \tfrac{1}{(\epp\sqrt{n})^{\ttb -2}} \frac{1}{n} C_{\ttb, r_n + s_n} $$
which a.s.~tends to $0$ since $\frac{1}{n} C_{\ttb, r_n + s_n}\to (r+s) \bE [\Phi_\ttb (\overline{X}_0)]$ a.s.~by the ergodic theorem (which applies as Corollary~\ref{rwergobis} since $\bE [\Phi_\ttb (\overline{X}_0)]\leko \infty$ by  Lemma \ref{harmcoo} $(iv)$). This proves that Assumption $(c)$ in Proposition \ref{condiTCL} is satisfied and (\ref{condiharmodiffu}) follows from (\ref{fourcondi}). \cqfd

\medskip

To conclude this section, let us recall from Peres \& Zeitouni \cite{PerZei08} annealed uniform bounds for the (relative) height of $\ttm_\nu$-biased RWs on GW trees which is used later in the proof of Proposition \ref{harmoclose} in Section \ref{harmoclosepfsec}.  
\begin{lemma} 
\label{PZCV}  Let $\nu$ be a supercritical offspring distribution whose mean is denoted by $\ttm_\nu$. 
We assume that $ \sum_{k\in \bbN} k^{2} \nu(k) \leko \infty$. Let $(\bT, \boo)$ be 
either a GW($\nu$)-tree as in Definition \ref{GWfordef} $(\textbf a)$ (in this case: $\boo\eqo \varnothing$) or an infinite GW($\nu$)-tree as in Definition \ref{GWptdef} or an invariant GW($\nu$)-tree as in 
Definition~\ref{definvRW}. Let $\overline{X}_n \eqo (\bT, X_n)$, $n\ino \bbN$, be a $\mathtt m_\nu$-biased RW on $\bT$ such that $X_0 \eqo \boo$. Then, for all $n, p\ino \bbN^*$, the following holds true. 
\begin{compactenum}

\smallskip

\item[$(i)$] If $\bT$ is a GW($\nu$)-tree, $\bP \big(\max_{1\leq k\leq n} |X_k| \geq p \big) \leq 4 ne^{-p^2/2n}$. 

\smallskip

\item[$(ii)$] If $\bT$ is an invariant GW($\nu$)-tree, $\bP \big(\max_{1\leq k\leq n} \big|  \bbn X_k \bbn   \big| \geq p \big) \leq 8 n^3e^{-(p-1)^2/2n}$.

\smallskip

\item[$(iii)$] If $\bT$ is an infinite GW($\nu$)-tree, $\bP \big(\max_{1\leq k\leq n} \big|   \bbn X_k \bbn  \big| \geq p \big) \leq 16 n^3e^{-(p-1)^2/2n}$. 
\end{compactenum}
\end{lemma}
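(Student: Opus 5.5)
The strategy is to reduce everything to a one-dimensional birth-and-death structure in which the harmonic-coordinate martingale gives Gaussian tails, and then to handle the spine separately for $(ii)$ and $(iii)$.

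For $(i)$, suppose $\bT$ is a GW($\nu$)-tree. We follow Peres \& Zeitouni and use the martingale $(S_{X_n})_{n\in\bbN}$ of Lemma~\ref{harmcoo}~$(iii)$ only as a guide. The cleaner route is to reduce to a birth-and-death chain via a \emph{reflection/ladder} argument. On the event $\{|X_k|\geq p\}$, there is a first time $k\leq n$ at which the walk reaches generation $p$. We decompose according to the vertex $y$ of generation $p$ first hit, and use Lemma~\ref{dlczidec}~$(ii)$. From $\varnothing$, the probability of hitting a given vertex $y$ at height $p$ before returning to $\varnothing$ is governed by conductances $\ttm_\nu^{-|x|}$. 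However, a pure union bound over vertices at height $p$ only gives a factor $\#\{|y|=p\}\ttm_\nu^{-p}\approx U$, with no Gaussian decay.

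To obtain the Gaussian factor $e^{-p^2/2n}$, I would instead use the annealed fact that $(|X_k|)$ under the averaged law is dominated by a simple symmetric-like walk. Concretely, I would use the \emph{many-to-one / size-biasing} identity (\ref{sizebiased}) together with the explicit form of the chain along the ancestral line, Lemma~\ref{dlczidec}~$(i)$ with transitions (\ref{fgfhfj}). Under the annealed law on an infinite GW tree, the height steps are $+1$ with probability $\ttm_\nu/(\ttm_\nu+k)$ and $-1$ with probability $1/(\ttm_\nu+k)$ along the spine, with i.i.d.\ size-biased $k$. This is exactly the setting where Azuma--Hoeffding applies to the martingale
\[
\ttm_\nu^{-|X_k|}\,\text{-type Doob transform},
\]
equivalently to $|X_k|$ minus its compensator, which is nonpositive in the critical case. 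Azuma gives $\bP(|X_k|\geq p)\leq 2e^{-p^2/2k}$, and a union bound over $k\leq n$ yields $4ne^{-p^2/2n}$.

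For $(ii)$ and $(iii)$, the relative height can also become negative, and the walk can wander down the spine. I would split $\{\max_{k\leq n}|\bbn X_k\bbn|\geq p\}$ into two parts.

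\emph{Descent along the spine.} The event $\{\bbn\boo\wedge X_k\bbn\leq -p\}$ is controlled by the chain $Z$ of Lemma~\ref{dlczidec}~$(i)$. It is a birth-and-death chain with bounded increments whose drift toward $-\infty$ is controlled, and Azuma again gives a Gaussian bound, at the cost of a factor $n$ from the excursion times $\mathbf r_k$.

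\emph{Excursions into subtrees hanging off the spine.} For an excursion into $\theta_y\bT$ with $y\in\partial L_\boo$, the subtree is a GW($\nu$)-tree and $(i)$ applies, after conditioning on the entrance time. A further union bound over entrance times and excursion starting points produces the extra factor $n^2$, and the shift by one produces $(p-1)^2$.

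For the invariant tree we transfer via the density $(\ttm_\nu+k_{\boo'})/(2\ttm_\nu)$ in (\ref{InvGWnu}), or directly use stationarity (\ref{sdkjvc}). The factor $2$ between the constants in $(ii)$ and $(iii)$ comes from comparing the infinite and invariant laws.

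The main obstacle is the Gaussian estimate in $(i)$: producing a genuine martingale with bounded increments that dominates $|X_k|$ under the annealed law. If that fails, I would fall back on Peres--Zeitouni's original argument, citing \cite{PerZei08} directly for $(i)$, and derive $(ii)$ and $(iii)$ from it via the decomposition above.
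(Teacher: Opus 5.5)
The paper gives no argument of its own here: it cites \cite[Lemma 5]{PerZei08} for $(i)$ and \cite[Corollary 2]{PerZei08} for $(ii)$--$(iii)$. Your fallback for $(i)$ is therefore exactly the paper's proof. Your factor $2$ between $(ii)$ and $(iii)$ is also correct, since $\bP_{\mathrm{inf}}(A)=\bE_{\mathrm{inv}}[\tfrac{2\ttm_\nu}{\ttm_\nu+k_\boo}\un_A]\leq 2\bP_{\mathrm{inv}}(A)$ by (\ref{InvGWnu}).

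Your self-contained route, however, breaks exactly at the Gaussian factor. In $(i)$ there is no bounded-increment martingale with nonpositive compensator dominating the height. The compensator of $|X_k|$ is $\sum_{j<k}\delta(X_j)$, with $\delta(\varnothing)=1$ and $\delta(x)=(k_x(\bT)-\ttm_\nu)/(\ttm_\nu+k_x(\bT))$ otherwise; this is positive at the root and at every vertex with more than $\ttm_\nu$ children. Nor is $\ttm_\nu^{\pm|X_k|}$ a martingale: for instance $\bE[\ttm_\nu^{-|X_{k+1}|}\,|\,X_k=x]=\ttm_\nu^{-|x|}(\ttm_\nu^2+k_x/\ttm_\nu)/(\ttm_\nu+k_x)$. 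The only exact martingale available, $S_{X_k}$, has unbounded increments.

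The spine chain $Z$ of Lemma~\ref{dlczidec}~$(i)$ does not rescue this. It runs on the clock of visits to $L_\boo$, and each of its lazy steps is a whole excursion into a side subtree. It is also transient: given a move along the spine, it steps towards the ancestors with probability $\ttm_\nu/(\ttm_\nu+1)$. So Azuma applied to $Z$ gives linear displacement, not a centred Gaussian bound. The diffusive behaviour of $\bbn X_k\bbn$ in real time comes from the infinite-mean excursion durations, which your argument never controls. The same objection rules out your ``descent along the spine'' step in $(ii)$.

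One tool that does give $(i)$ without any martingale is the Carne--Varopoulos bound. The walk is reversible for $\pi(x)=\ttm_\nu^{-|x|-1}(\ttm_\nu+k_x)$, $\pi(\varnothing)=k_\varnothing/\ttm_\nu$. With $B_p=\{|x|=p\}$ this gives $P_{\bT,\varnothing}(|X_k|=p)\leq 2\sqrt{\pi(B_p)/\pi(\varnothing)}\,e^{-p^2/2k}$. Since $\bE[\pi(B_p)/\pi(\varnothing)\,|\,k_\varnothing]=2$, conditional Jensen and a union bound over $k\leq n$ yield $2\sqrt2\,n\,e^{-p^2/2n}\leq 4ne^{-p^2/2n}$.

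For $(ii)$, replace the spine-descent step by stationarity and reversibility (\ref{sdkjvc}). These give $-\bbn X_k\bbn\overset{\textrm{(law)}}{=}\bbn X_k\bbn$, so downward deviations reduce to upward ones at the cost of a union bound over $k\leq n$.

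For the upward part, ``conditioning on the entrance time'' is not legitimate under the annealed law, because the walk may already have explored $\theta_y\bT$. You must argue quenched first, and only then average.
\begin{itemize}
\item Quenched step: by the strong Markov property, the excursions into $\theta_y\bT$ are i.i.d.\ given $\bT$. Each one, before its last step, is a prefix of the walk on $\theta_y\bT$ started from its root. At most $n$ of them start before time $n$.
\item Averaging step: given the spine, the $\theta_y\bT$, $y\in\partial L_\boo$, are i.i.d.\ GW($\nu$)-trees. The children of $\boo(0),\ldots,\boo(n)$ have expected number $O(n)$ because $\sum_k k^2\nu(k)<\infty$.
\end{itemize}

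Carried out this way, the argument yields a bound $C_\nu n^4e^{-(p-1)^2/2n}$ with a $\nu$-dependent constant. That suffices for the only use of $(ii)$ in the paper, namely (\ref{amplispace}), where any polynomial prefactor is harmless. It does not give the universal constants $8$ and $16$ in the statement; for those you still need the citation.
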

\noi
\textbf{Proof.} For $(i)$, see Peres \& Zeitouni \cite[Lemma 5 p.~608]{PerZei08}. For $(ii)$ and $(iii)$, see \cite[Corollary 2 p.~608]{PerZei08}. \cqfd 

\subsection{Statements of the result, overview of its proof}
\label{overviewsec}
Before stating the main results of the section, let us fix some notation and let us specify our assumptions. 
We fix $\alpha \ino (1, 2]$ and we fix an offspring distribution $\mu$ that satisfies (\ref{hypostaintro}). 
We denote by 
$\tau_{\infty}$ a GW($\mu$)-forest corresponding to a sequence $(\tau^{_{(k)}}_{^{\!}})_{k\in \bbN^*}$ of i.i.d.~GW($\mu$)-trees (Definition \ref{GWfordef} $(\textbf{b})$). 
We denote by $V_l (\tau_\infty)$ its  Lukasiewicz path, by $(H_s (\tau_\infty))_{s\in \bbR_+}$ its height process (Definition \ref{Lukadef}) and by $(C_s (\tau_\infty))_{s\in \bbR_+}$ its contour process (Definition \ref{contourdef}). 
We next recall from (\ref{JGDdef}) the definition of $\frak g_n $ and $\frak d_n$ (roughly speaking $(\frak g_n , \frak d_n)$ is the first excursion interval of $H(\tau_\infty)$ above $0$ longer than $n$).   
Then we also 
recall from Theorem \ref{VHCcvstable} that $(a_n)_{n\in \bbN^*}$ and $(b_n)_{n\in \bbN^*}$ are scaling sequences such that $a_n\eqo n/b_n$ and such that the following limit \begin{equation}
\label{remindcv}
\mathscr Q_n := \big(  \tfrac{1}{n} \frak g_n ,\tfrac{1}{n} \frak d_n, \tfrac{1}{b_n} V_{\lfloor n\cdot \rfloor} (\tau_\infty) ,   \tfrac{1}{a_n} H_{ n\cdot } (\tau_\infty)  ,    \tfrac{1}{a_n} C_{2n\cdot} (\tau_\infty)  \big)  \underset{n\to \infty}{-\!\!\!-\!\!\!\longrightarrow} 
\big(  \mathbf g_1, \mathbf d_1, X, H, H \big)
\end{equation}
holds weakly on $\bbR^2\! \times \! \bD (\bbR_+, \bbR)\! \times \! \bC^0 (\bbR_+, \bbR)^2$ .
Here, $X$ is an $\alpha$-stable spectrally positive Lévy process characterized by (\ref{Xlaw}), $H$ is its associated height process as defined in (\ref{Hlimit}) and $(\mathbf g_1, \mathbf d_1)$ is the first excursion interval of $H$ longer than 
$1$ as defined in Remark \ref{gundundef}.  
 
 We next state a limit theorem for snakes associated with $\tau_\infty$-indexed BRWs that take their values in an invariant GW($\nu$)-tree (Definition \ref{definvRW} ). Here $\nu$ is a fixed \emph{supercritical} offspring distribution whose mean is denoted by $\ttm_\nu$ and which shall be subject to various moment assumptions.  

Our main objects of interest in this section are BRWs in invariant environment. More precisely, 
we denote by $(\bT, \boo)$ an invariant GW($\nu$)-tree as in Definition \ref{definvRW}. We assume that $(\bT, \boo)$ is \emph{independent from $\tau_\infty$} and  we introduce the sequence of 
$\bT$-valued BRWs $\fTheta_k\eqo \big( \tau^{_{(k)}}_{^{\,}}\!\! \! , \, \varnothing; \overline{Y}^{_{(k)}}_{u}\eqo (  \bT, Y^{_{(k)}}_{u}), u\ino \tau^{_{(k)}}_{^{\!}}  \big)$, $k\ino \bbN^*$ such that 
\begin{equation}
\label{lawYinfty}
 \textrm{conditionally on $(\bT, \boo)$ and $\tau_\infty$, the $(\fTheta_k)_{k\in \bbN^*}$ are independent and} \; \fTheta_k \overset{\textrm{(law)}}{=} 
 Q_{(\tau^{{(k)}}, \varnothing)}^{(\bT, \boo)} .
 \end{equation}
Here $ Q_{^{(\tau^{{(k)}}, \varnothing)}}^{_{(\bT, \boo)}}$ is as in Definition \ref{biasedbfRWdef} with $\lambda\eqo \ttm_\nu$.  
We recall from (\ref{defSx}) the definition of the harmonic coordinates $(S_x)_{x\in \bT}$. 
Then, for all $k\ino \bbN^*$, we associate with $\fTheta_k$ two $\bbR$-valued $\tau^{_{(k)}}_{^{\!}}$-indexed BRWs:  
$$ M_k := \big( \bbn Y^{_{(k)}}_{u}\bbn \big)_{u\in \tau^{_{(k)}}_{^{\!}}}  \quad \textrm{and} \quad \mathcal M_k := \big( S_{Y^{_{(k)}}_{u}}  \big)_{u\in \tau^{_{(k)}}_{^{\!}}} \; ,$$
and according to Definition \ref{brwlkdef}, we introduce the following four $\bC^0(\bbR_+, \bbR)$-valued continuous processes:  
\begin{compactenum}
\item[$-$] $\big( W_s(\tau_\infty;  \cdot)\big)_{\! s\in \bbR_+}$ is the contour snake of the BRWs  $( M^{_{(k)}}_{^{\! }} )_{k\in \bbN^*}$;
\item[$-$] $\big(W_s^*(\tau_\infty; \cdot)\big)_{\! s\in \bbR_+}$ is the height snake of the BRWs $( M^{_{(k)}}_{^{\! }} )_{k\in \bbN^*}$; 
\item[$-$] $\big(\cW_s(\tau_\infty; \cdot)\big)_{\! s\in \bbR_+}$ is the contour snake of the BRWs $( \mathcal M^{_{(k)}}_{^{\! }} )_{k\in \bbN^*}$; 
\item[$-$] $\big(\cW_s^*(\tau_\infty; \cdot)\big)_{\! s\in \bbR_+}$ is the height snake of the BRWs $( \mathcal M^{_{(k)}}_{^{\! }} )_{k\in \bbN^*}$. 
\end{compactenum}
We also recall from Definition \ref{brwlkdef} the notation $\widehat{W}_s(\tau_\infty)$, $\widehat{W}_s^*(\tau_\infty)$, $\widehat{\cW}_s(\tau_\infty)$ and $\widehat{\cW}_s^*(\tau_\infty)$, $s\ino \bbR_+$, for the corresponding endpoint-processes.

Finally, we denote by $(\widehat{W}_s)_{s\in \bbR_+}$ the endpoint process of the one-dimensional Brownian snake whose lifetime-process is the $\alpha$-stable height process $(H_s)_{s\in \bbR_+}$ as introduced in Proposition \ref{Holdsnake}.

\medskip

The goal of this section is to prove the following theorem that shows that the 
contour  and heigh-snakes in harmonic coordinates and in relative heights jointly converge to $W$ when suitably rescaled.
To simplify the proof we first admit several intermediate results, which are stated below and 
proved in Sections \ref{qtightsec}, \ref{pfharmosec} and \ref{harmoclosepfsec}, before proceeding 
to the proof of Theorem \ref{cvsnakes} at the end of the present section. 
\begin{theorem}
\label{cvsnakes} We fix $\alpha \ino (1, 2]$. Let $\mu$ satisfy (\ref{hypostaintro}). Let $\tau_\infty$ be a GW($\mu$)-forest  (Definition \ref{GWfordef} $(\textbf{b})$). Let $\nu$ be a supercritical offspring distribution whose mean is denoted by $\ttm_\nu$. We assume that there is  $\ttb\ino ( \frac{2\alpha}{\alpha-1}, \infty)$ such that $\sum_{k\in \bbN} k^{1+2\ttb} \nu(k)\leko \infty$. Let $(\bT, \boo)$ be an invariant GW($\nu$)-tree (Definition \ref{definvRW}) that is independent from $\tau_\infty$. 
We recall the definition of $\mathscr Q_n $ from the convergence (\ref{remindcv}). We recall 
from (\ref{vutchi}) the definition of $\sigma_\nu$. Otherwise, we keep the previous notation. 
Then, conditionally given $(\bT, \boo)$, the following joint convergence 
\begin{eqnarray}
\label{snakescv}
\Big(\mathscr Q_n ,  \tfrac{1}{ \sigma_\nu \! \sqrt{a_n}} \widehat{\cW}_{2n\cdot }(\tau_\infty),  \tfrac{1}{ \sigma_\nu \! \sqrt{a_n}}\widehat{\cW}^*_{n\cdot }(\tau_\infty)    \!\! \!\! \!\! \!\! \!\! \!\!  \!\!     &, &\!\! \!\! \!\! \!\! \!\! \!\!  \!\!    \tfrac{\sigma_\nu}{ \sqrt{a_n}} \widehat{W}_{2n\cdot }(\tau_\infty),\!  \tfrac{\sigma_\nu}{ \sqrt{a_n}} \widehat{W}^*_{n\cdot }(\tau_\infty)\Big) \nonumber  \\
&\underset{n\to \infty}{-\!\!\!-\!\!\!-\!\!\!\longrightarrow}& \big(  \mathbf g_1, \mathbf d_1, X, H, H, \widehat{W},  \widehat{W} , \widehat{W},  \widehat{W} \big)
\end{eqnarray}
a.s.~holds weakly on $\bbR^2\! \times \bD(\bbR_+, \bbR) \! \times \! \bC^0 (\bbR_+, \bbR)^6 $ equipped with the product topology. 
\end{theorem}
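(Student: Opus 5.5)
The plan is to work in harmonic coordinates first and transfer the result to relative heights afterwards. Because $(\mathscr Q_n)$ is independent of $(\bT,\boo)$ and converges by (\ref{remindcv}), it suffices to do two things. First, prove quenched tightness of $\frac{1}{\sigma_\nu\sqrt{a_n}}\widehat{\cW}^*_{n\cdot}(\tau_\infty)$. Second, prove quenched convergence of its finite-dimensional marginals jointly with $\mathscr Q_n$. Given these, we deduce the contour-snake version and then the relative-height versions.

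\emph{Step 1: tightness.} Conditionally on $\tau_\infty$ and on $(\bT,\boo)$, for two vertices $u_l,u_{l'}$ of the forest, $S_{Y_{u_{l'}}}-S_{Y_{u_l}}$ is a sum of two martingale increments of the walk $(S_{X_k})$ along the two branches issued from $u_l\wedge u_{l'}$, each of length at most $d_{H}(l,l')$. By the stationarity of the invariant environment and the quenched Burkholder bound (\ref{Burkholder}) of Lemma \ref{quoscillW}, we get
$E[|\widehat{\cW}^*_l-\widehat{\cW}^*_{l'}|^{2\ttb'}\,|\,\tau_\infty,\bT]\le K\,(d_{H(\tau_\infty)}(l,l'))^{\ttb'-1}(\text{height}+d)$, with $\ttb'<\ttb$ close to $\ttb$. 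The issue is that $K$ depends on the starting point $Y_{u_l\wedge u_{l'}}$, which is not $\boo$. To control this, I would use an annealed-then-quenched argument: apply Corollary \ref{rwergobis} (the Wiener maximal bound) along the walk, together with the fact that the BRW visits only environments that are typical in the ergodic sense. In parallel, the height process satisfies Hölder-type estimates of order $\frac{\alpha-1}{\alpha}-\epsilon$ (from (\ref{remindcv}) and standard GW tree estimates as in Janson–Marckert/Marzouk). Combining the two, a Kolmogorov-type chaining then works provided $\ttb'\frac{\alpha-1}{\alpha}>1$ up to slack, roughly. This is exactly where the hypothesis $\ttb>\frac{2\alpha}{\alpha-1}$ enters. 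I expect this step to be the main obstacle: it requires a quenched moment bound that is uniform over the random starting vertices visited by the BRW, and Lemma \ref{quoscillW} is only stated from $\boo$.

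\emph{Step 2: finite-dimensional marginals.} Fix $0<s_1<\dots<s_p$. Explore the ancestral lines of $u_{\lfloor ns_j\rfloor}$ successively. The increment of $\widehat{\cW}^*$ along the new piece of branch (from height $m_H(s_{j-1},s_j)$ to $H_{s_j}$) is a piece of a $\ttm_\nu$-biased walk started at the current environment. Proposition \ref{fourcondharmo}, via Proposition \ref{condiTCL}, gives a quenched Gaussian limit with variance $\sigma_\nu^2\,(H_{s_j}-m_H)$, conditionally on the past $\sigma$-field. The start of a new branch is not $\boo$ but a point reached by a walk of length $r_n\sim a_n\,m_H$; Proposition \ref{fourcondharmo} handles exactly this kind of conditioning on $\ccF_{r_n}$. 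Lemma \ref{branchmass} guarantees that branch points are strict, so the pieces are non-degenerate. Iterating the conditional characteristic functions identifies the limit with $F_p$ of Definition \ref{Brosnadef}, i.e. with $\widehat W$.

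\emph{Step 3: transfer.} From the height snake we pass to the contour snake using the time change $\phi_{\tau_\infty}$ and (\ref{controphit}): since $\sup H/n\to0$, the two rescaled processes are close. To pass from $\sigma_\nu^{-2}S$ to $\bbn\cdot\bbn$, we use Lemma \ref{badharm} with $q$ of order $\sqrt{a_n}$, together with $R_\beta$ from Lemma \ref{harmcoo4}. The resulting error is $O(\lambda a_n^{\beta/2})=o(\sqrt{a_n})$, where $a_n^{1/2}$ bounds the depth via Lemma \ref{PZCV}, with high probability. Hence $\frac{\sigma_\nu}{\sqrt{a_n}}\widehat W$ and $\frac{1}{\sigma_\nu\sqrt{a_n}}\widehat{\cW}$ have the same limit. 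This yields (\ref{snakescv}).
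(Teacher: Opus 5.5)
Your architecture matches the paper's: harmonic coordinates first, Kolmogorov-type tightness via Lemma~\ref{quoscillW}, finite-dimensional marginals via Proposition~\ref{fourcondharmo} along the subtree spanned by the chosen vertices, the time change, and then the comparison of $\sigma_\nu^{-2}S$ with $\bbn\cdot\bbn$. Step~2 is essentially Lemma~\ref{cvmargiform}. You would still need an a.s.\ coupling of the genealogies (the paper uses a Skorokhod representation of $\mathscr Q_n$) and Lemma~\ref{exozarb} to push a.s.\ limits through $\bE[\,\cdot\,|\ccF_{r_n}]$. Two things are off, and the second is a genuine gap.

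\emph{Step~1.} The obstacle you flag there does not exist, so no ``typical environment'' argument is needed. Along the ancestral line of any vertex, the BRW is a $\ttm_\nu$-biased walk started at $\boo$. So for $w=u_l\wedge u_{l'}$, conditionally on $(\bT,\boo)$ and $\tau_\infty$, the increment $S_{Y_{u_l}}-S_{Y_w}$ has the law of $S_{X_{m+d}}-S_{X_m}$ with $X_0=\boo$ and $m=|w|$. Applying (\ref{Burkholder}) with this $m$ gives $K_{\ttb}(\bT)\,d^{\ttb-1}(m+d)$, with the constant attached to $\boo$. The factor $m+n$ in Lemma~\ref{quoscillW}, which comes from the maximal inequality in Corollary~\ref{rwergobis}, is exactly the price of starting at a positive time. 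It is paid by $\max H$, whose rescaled moments are controlled by (\ref{Lbtotheight}). Hölder's inequality and the independence of $\tau_\infty$ from the environment then give the $\sigma(\bT,\boo)$-conditional Kolmogorov bound. The resulting condition is $(\ttb-1)(1-\frac1\alpha)>1$, i.e.\ $\ttb>1+\frac{\alpha}{\alpha-1}$, which is the hypothesis of Proposition~\ref{harmocv}. This is not where $\ttb>\frac{2\alpha}{\alpha-1}$ enters.

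\emph{Step~3 (the gap).} This is where $\ttb>\frac{2\alpha}{\alpha-1}$ is actually used, and your sketch does not close.
\begin{itemize}
\item Lemma~\ref{badharm} gives the bound $cq\lambda^{-2\ttb}$ for a \emph{single} GW($\mu$)-tree, and this grows with $q$. If $\lambda$ is a constant, as your notation $O(\lambda a_n^{\beta/2})$ suggests, the bound does not even tend to $0$.
\item The first $n$ vertices of $\tau_\infty$ are spread over roughly $b_n$ distinct trees. The union bound therefore costs $cqr\lambda^{-2\ttb}$ with $r\gg b_n\approx q^{2/(\alpha-1)}$.
\item $q$ cannot be of order $\sqrt{a_n}$. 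The Peres--Zeitouni bound (\ref{amplispace}) carries a prefactor $np^3$ with $p\geq\max H\approx a_n$, so you need $q=\sqrt{a_n}\,n^{\varepsilon}$.
\item Making $qr\lambda^{-2\ttb}\to0$ forces $\lambda\gg q^{\frac{1}{2\ttb}\frac{\alpha+1}{\alpha-1}}$. The error $\lambda(1+q^{\beta})$, with $\beta>\frac12+\frac{1}{2\ttb}$, is then $o(\sqrt{a_n})$ only if $\frac12+\frac{1}{2\ttb}+\frac{1}{2\ttb}\frac{\alpha+1}{\alpha-1}<1$, i.e.\ $\ttb>\frac{2\alpha}{\alpha-1}$.
\end{itemize}
This exponent bookkeeping, carried out in Proposition~\ref{qharmoclose}, is the core of the transfer and is missing from your proposal.

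Finally, ``with high probability'' is an annealed statement, whereas (\ref{snakescv}) is quenched. To upgrade, you need a polynomial rate $n^{-\varepsilon_0}$ for the annealed probability and Borel--Cantelli along $n=2^j$. You then interpolate between dyadic times using monotonicity of the running maximum and regular variation of $n^{-\varepsilon_1}\sqrt{a_n}$, as in the proof of Proposition~\ref{harmoclose}.
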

\noi
\textbf{Proof.} See the end of the section.  \cqfd

\medskip

\noi
\textbf{Overview of the proof of Theorem \ref{cvsnakes}.} To prove Theorem~\ref{cvsnakes}, we first prove the following proposition in Section \ref{pfharmosec}, which shows that the contour  and heigh-snakes in harmonic coordinates converge to $W$ when suitably rescaled.

\begin{proposition}
\label{harmocv} We fix $\alpha \ino (1, 2]$. Let $\mu$ satisfy (\ref{hypostaintro}). Let $\tau_\infty$ be a GW($\mu$)-forest  (Definition \ref{GWfordef} $(\textbf{b})$). Let $\nu$ be a supercritical offspring distribution whose mean is denoted by $\ttm_\nu$. We assume that there is  $\ttb_0\ino ( 1+\frac{\alpha}{\alpha-1}, \infty)$ such that $\sum_{k\in \bbN} k^{1+2\ttb_0} \nu(k)\leko \infty$. Let $(\bT, \boo)$ be an invariant GW($\nu$)-tree (Definition \ref{definvRW}) that is independent from $\tau_\infty$. 
We recall the definition of $\mathscr Q_n $ from the convergence (\ref{remindcv}). We recall 
from (\ref{vutchi}) the definition of $\sigma_\nu$. Otherwise, we keep the previous notation. 
Then, conditionally given $(\bT, \boo)$, the following convergence 
\begin{equation}
\label{cvharmo}
\Big(\mathscr Q_n ,  \tfrac{1}{ \sigma_\nu\sqrt{a_n}}\,  \widehat{\cW}_{2n\cdot }(\tau_\infty),  \tfrac{1}{ \sigma_\nu \sqrt{a_n}}\, \widehat{\cW}^*_{n\cdot }(\tau_\infty) \Big) 
 \underset{n\to \infty}{-\!\!\!-\!\!\!-\!\!\!\longrightarrow} \big( \mathbf g_1, \mathbf d_1, X,  H, H, \widehat{W},  \widehat{W} \big)
\end{equation}
a.s.~holds weakly on $\bR^2 \! \times \! \bD (\bbR_+, \bbR) \! \times \! \bC^0 (\bbR_+, \bbR)^4$ equipped with the product topology. 
\end{proposition}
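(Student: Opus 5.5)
The strategy is to prove tightness and convergence of finite-dimensional marginals of the rescaled endpoint process $\frac{1}{\sigma_\nu\sqrt{a_n}}\widehat{\cW}^*_{n\cdot}(\tau_\infty)$ jointly with $\mathscr Q_n$, quenched in the environment $(\bT,\boo)$. The statement for the contour version $\widehat{\cW}_{2n\cdot}$ is then transferred via the time change $\phi_{\tau_\infty}$ of Remark \ref{contord} $(\textbf{d})$. By (\ref{controphit}) and the tightness of $\frac{1}{a_n}\sup_{[0,nN]}H(\tau_\infty)$, we have $\sup_{s\le N}|\frac{1}{2n}\phi_{\tau_\infty}(ns)-s|\to 0$ in probability. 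Since between consecutive lexicographic times the contour snake endpoint only moves along an ancestral path, whose increments are controlled by the same modulus, the two endpoint processes are uniformly close. Since $\tau_\infty$ is independent of $(\bT,\boo)$, we can work on a fixed realization of the environment. Then Skorokhod's representation lets us assume that (\ref{remindcv}) holds almost surely, and we reason conditionally on $\tau_\infty$ as well.

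\emph{Finite-dimensional marginals.} Fix $0\le s_1<\dots<s_p$ and set $l_j=\lfloor ns_j\rfloor$, with $u_{l_j}$ the corresponding vertices. Along the ancestral line of $u_{l_j}$, the values $S_{Y_v}$ form the martingale $(S_{X_k})$ of Lemma \ref{harmcoo} $(iii)$ run for $|u_{l_j}|\approx a_nH_{s_j}$ steps. The path to $u_{l_{j+1}}$ branches off the path to $u_{l_j}$ at height $|u_{l_j}\wedge u_{l_{j+1}}|\approx a_n m_H(s_j,s_{j+1})$, and after that branch point it performs fresh, conditionally independent steps of the same biased walk. I would proceed by induction on $j$, using the recursive structure $F_p$ of (\ref{defFpmarg}). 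The conditional characteristic function of the increment $S_{Y_{u_{l_{j+1}}}}-S_{Y_{u_{l_j}\wedge u_{l_{j+1}}}}$, given everything explored before, is handled by Proposition \ref{fourcondharmo}. The key point is that its conclusion holds $\bP$-a.s. conditionally on $\ccF_{r_n}$, with $r_n=|u_{l_j}\wedge u_{l_{j+1}}|$ and $s_n$ the remaining length; the stationarity of the environment seen from the walker makes the starting point irrelevant. This yields Gaussian increments with variance $\sigma_\nu^2(H_{s_{j+1}}-m_H(s_j,s_{j+1}))$ after scaling by $\sqrt{a_n}$. Lemma \ref{branchmass} guarantees that $H_{s_j}\wedge H_{s_{j+1}}>m_H$, so each new increment has positive length and the branch points do not degenerate. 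One technical issue is that $r_n,s_n$ here are random (they are functions of $\tau_\infty$). Since $\tau_\infty$ is independent of the walk, we can condition on $\tau_\infty$ and apply Proposition \ref{fourcondharmo} along deterministic sequences, using a countable dense set of rationals together with the a.s. convergence from Skorokhod. This identifies the limit law as (\ref{margBrosna}).

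\emph{Tightness.} This is the main obstacle. I would follow Janson--Marckert. For vertices $u_l,u_{l'}$ with $|l-l'|\le n\delta$, write $\widehat{\cW}^*_l-\widehat{\cW}^*_{l'}$ as the sum of two martingale increments, one along each branch descending from $u_l\wedge u_{l'}$, with lengths at most $d_{\mathtt{gr}}(u_l,u_{l'})$. Lemma \ref{quoscillW} bounds the $2\ttb$-th quenched moment by $K_\ttb(\bT)\,h^{\ttb-1}(m+h)$. Here $h$ is the increment length and $m$ is the starting depth. Since the starting depth is at most $\sup H\lesssim a_n$, this moment is of order $h^{\ttb-1}a_n$, which is polynomial in $h$. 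A slight subtlety is that Lemma \ref{quoscillW} is stated for the walk started at $\boo$, not at the point $Y_{u\wedge u'}$; but it applies since the branch is the continuation of the walk from the root, with $m$ equal to the depth of the branch point. Combining this with the Hölder-type control $d_H(s,s')\le C|s-s'|^{\gamma}$ for $\gamma<(\alpha-1)/\alpha$, transported to the discrete trees uniformly in $n$ via (\ref{remindcv}), and with a chaining or Kolmogorov argument over the $O(n)$ vertices, gives tightness in $\bC^0$. The constraint $\ttb_0>1+\frac{\alpha}{\alpha-1}$ is what should make the union bound summable: one needs roughly $\gamma(\ttb-1)>1$ after absorbing the extra factor $m+h$ through a bound on $h$. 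I would first try a dyadic chaining on the lexicographic indices, restricted to the event $\{\sup_{[0,nN]}H(\tau_\infty)\le Ka_n\}$.

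Joint convergence with $\mathscr Q_n$ follows because the marginals are computed conditionally on $\tau_\infty$. The limit is then identified, through Proposition \ref{Holdsnake}, as the endpoint process $\widehat W$ of the continuous Brownian snake driven by $H$.
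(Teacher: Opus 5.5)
Your plan follows the same outline as the paper: quenched Kolmogorov tightness from Lemma \ref{quoscillW}, marginals from Proposition \ref{fourcondharmo} and Lemma \ref{branchmass}, and transfer to the contour snake through $\phi_{\tau_\infty}$. Your exponent count $(\ttb-1)(1-\frac{1}{\alpha})>1$ is also the right one. The gap is the input to the tightness step. You say the Hölder control of the discrete trees is ``transported \dots uniformly in $n$ via (\ref{remindcv})''. Weak convergence of $\frac{1}{a_n}H_{n\cdot}(\tau_\infty)$ only controls its modulus of continuity at a fixed macroscopic scale $\delta$ as $n\to\infty$. The Hölder regularity of the limit $H$ says nothing about smaller scales. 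A Kolmogorov or dyadic chaining argument over the lexicographic indices needs bounds on $d_{\mathtt{gr}}(u_l,u_{l'})$ at every scale $1\le |l-l'|\le n\delta$ at once.

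Because the increments of $S$ along the BRW have only polynomial moments, you cannot fall back on a union bound. On a window of $n\delta$ vertices lying within height $\epp a_n$ of their common ancestor, Lemma \ref{quoscillW} gives each vertex a deviation probability of order $K_\ttb(\bT)\,\epp^{\ttb-1}\eta^{-2\ttb}$. That does not sum over $n\delta$ vertices. What is actually required is a uniform-in-$n$, all-scales estimate of the form $\sup_n \bE[\mathtt{Hol}_{\gamma_0,s_0}(\frac{1}{a_n}H_{n\cdot}(\tau_\infty))^{\ttb}]<\infty$ for every $\gamma_0<1-\frac{1}{\alpha}$. Exponents arbitrarily close to $1-\frac{1}{\alpha}$ are needed to reach the threshold $\ttb_0>1+\frac{\alpha}{\alpha-1}$.

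This estimate is Proposition \ref{HolderH}, and proving it is a genuine part of the argument. By (\ref{codheight}), $H_{m+n}-\min_{\lgeo m,m+n\rgeo}H$ and $H_m-\min_{\lgeo m,m+n\rgeo}H$ are controlled by the weak ascending ladder process $(\cL_k,\cV_k)$ of the time-reversed Lukasiewicz walk. This gives the moment bounds of Proposition \ref{momheight} in terms of $\Lambda_0(\frac{1}{n})^{-1}$ and $\Lambda_2(\Lambda_1(\frac{1}{n}))^{-1}$, and Lemma \ref{ascladd} shows both are of order $a_n$. Potter's bounds, together with the Lipschitz estimate from the interpolation (\ref{ctrvshght}) below scale $n_0/n$, then give the Hölder moments. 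The paper integrates over $\tau_\infty$ rather than conditioning on it: (\ref{toutcondi}) and Hölder's inequality give $\sup_n\bE[\Gamma_nZ_n^{\ttb-1}]\le c_1$. That yields (\ref{WWHolder}) with the $(\bT,\boo)$-measurable constant $cK_\ttb(\bT)$, and Lemmas \ref{condKolcont} and \ref{condtight} conclude. Your variant (chaining conditionally on $\tau_\infty$ on a good event) also works once Proposition \ref{HolderH} is available, since Markov's inequality then gives events of uniformly high probability on which the discrete Hölder norm is bounded. It does not work without it.

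A smaller point on your marginals: peeling leaves in lexicographic order is a legitimate alternative to the paper's decomposition at the first branching point (Lemma \ref{cvmargiform}). However, after peeling $u_{l_p}$ you are left with a factor $e^{i\lambda_p S_{Y_w}}$ at $w=u_{l_{p-1}}\wedge u_{l_p}$, which is not a leaf. Your induction therefore has to be stated for leaves together with finitely many marked ancestors. In exchange, all your error terms are quenched expectations for a fixed $\sigma$-field, so ordinary conditional dominated convergence can replace Lemma \ref{exozarb}.
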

\noi
\textbf{Proof.} See Section \ref{pfharmosec}. \cqfd

\medskip

To prove Proposition~\ref{harmocv}, we first rely on the following lemma, which is a conditional version of Kolmogorov continuity theorem and whose proof is a straightforward adaptation of the standard version of the theorem. To state it let us introduce the following notation for the $\gamma$-Hölder factor (here $\gamma \ino (0, 1]$) of a function $\mathtt x(\cdot) \ino \bC^0 (\bbR_+, \bbR)$ on the interval $[0, s_0]$ (here $s_0 \ino (0, \infty))$:  
\begin{equation}
\label{Holdnodef}
\mathtt{Hol}_{\gamma, s_0} (\mathtt x) = \sup \Big\{ \frac{|\mathtt x(s)\! -\! \mathtt x(s')|}{|s\! -\! s'|^\gamma}\, ; \, s, s'\ino [0, s_0], \; s\! \neq \! s' \Big\} 
\end{equation}  
which may be infinite. 
\begin{lemma}
\label{condKolcont} Let $\ccG\! \subset \! \ccF$ be a $\sigma$-field and let $(Z_s)_{s\in \bbR_+}$ be a $\bbR$-valued continuous process. Let $s_0, \ttb \ino (1, \infty)$ and $\ttc \ino (0, \infty)$. We assume that there is 
a $\ccG$-measurable and a.s.~finite r.v. $C$ such that for all $s, s'\ino [0,s_0]$,
\begin{equation}
\label{hypoKol}
\textrm{$\bP$-a.s.~} \quad \bE \Big[ \big| Z_s \! -\! Z_{s'}\big|^\ttb \Big| \, \ccG \Big] \leq C |s\! -\!s'|^{1+ \ttc}. 
\end{equation}
Then, for all $\gamma \ino (0, \frac{\ttc}{\ttb} )$, we get 
\begin{equation}
\label{Holdbound}
\textrm{$\bP$-a.s.~} \quad  \bE \big[  \big( \mathtt{Hol}_{\gamma, s_0} (Z) \big)^\ttb  \big| \ccG\big] \leq c_{\ttb, \ttc, \gamma, s_0} C, 
\end{equation}
where $c_{\ttb, \ttc, \gamma, s_0}\eqo 2^{2\ttb} (2s_0)^{1+c-\gamma b} \big(1 \! -\! 2^{-(\frac{\ttc}{\ttb}) +\gamma} \big)^{-\ttb} $. 
\end{lemma}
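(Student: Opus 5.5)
We follow the standard dyadic chaining proof of Kolmogorov's continuity theorem, carried out under the conditional expectation $\bE[\,\cdot\,|\ccG]$. By scaling time we may work on the dyadic grid of $[0,s_0]$. For $j\in\bbN$, set $D_j=\{ k s_0 2^{-j}; 0\leq k\leq 2^j\}$ and
\[
K_j=\max_{0\leq k<2^j}\big|Z_{(k+1)s_02^{-j}}-Z_{ks_02^{-j}}\big| .
\]
\textbf{Step 1 (conditional moment of $K_j$).} We bound the maximum by the sum and apply (\ref{hypoKol}) to each consecutive pair. This gives, $\bP$-a.s.,
\[
\bE[K_j^\ttb|\ccG]\leq \sum_{k<2^j}\bE\big[|Z_{(k+1)s_02^{-j}}-Z_{ks_02^{-j}}|^\ttb\big|\ccG\big]\leq C\, 2^j (s_02^{-j})^{1+\ttc}= C s_0^{1+\ttc}2^{-j\ttc}.
\]
Only countably many pairs occur, so all these a.s.\ inequalities hold simultaneously on one event of full probability. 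This is the one genuinely conditional point to check: we never need a regular conditional law, only countably many a.s.\ inequalities between conditional expectations of nonnegative variables.

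\textbf{Step 2 (deterministic chaining).} Let $s,s'$ be dyadic points of $[0,s_0]$ with $s_02^{-m-1}<|s-s'|\leq s_02^{-m}$. The usual telescoping along the dyadic expansions gives
\[
|Z_s-Z_{s'}|\leq 2\sum_{j\geq m}K_j .
\]
Hence
\[
\frac{|Z_s-Z_{s'}|}{|s-s'|^\gamma}\leq 2(s_02^{-m-1})^{-\gamma}\sum_{j\geq m}K_j\leq 2^{1+\gamma}s_0^{-\gamma}\sum_{j\geq 0}2^{j\gamma}K_j .
\]
By continuity of $Z$, the supremum over dyadic points equals $\mathtt{Hol}_{\gamma,s_0}(Z)$. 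So, pathwise,
\[
\mathtt{Hol}_{\gamma,s_0}(Z)\leq 4 s_0^{-\gamma}\sum_{j\geq0}2^{j\gamma}K_j .
\]
\textbf{Step 3 (summing in $L^\ttb$ conditionally).} We use the conditional Minkowski inequality $\|\sum_j Y_j\|_{\ttb,\ccG}\leq\sum_j\|Y_j\|_{\ttb,\ccG}$, valid for nonnegative variables, where $\|Y\|_{\ttb,\ccG}=\bE[Y^\ttb|\ccG]^{1/\ttb}$. It can be proved by monotone convergence for finite sums followed by the conditional Hölder inequality, or by taking $\ccG$-measurable conditional versions. Combined with Step 1, it gives
\[
\bE[\mathtt{Hol}_{\gamma,s_0}(Z)^\ttb|\ccG]^{1/\ttb}\leq 4s_0^{-\gamma}C^{1/\ttb}s_0^{(1+\ttc)/\ttb}\sum_{j\geq0}2^{-j(\ttc/\ttb-\gamma)} .
\]
The series converges because $\gamma<\ttc/\ttb$, and it equals $(1-2^{-\ttc/\ttb+\gamma})^{-1}$. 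Raising to the power $\ttb$ yields (\ref{Holdbound}) with a constant of the stated form; small differences in powers of $2$ and $s_0$ are absorbed by the crude choice $c_{\ttb,\ttc,\gamma,s_0}$.

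We do not expect a real obstacle. The only care needed is in justifying the conditional Minkowski step and in making sure every a.s.\ statement involves only countably many indices.
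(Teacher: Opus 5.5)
Your proposal is correct and is the conditional adaptation of the dyadic chaining proof of Revuz \& Yor (Theorem 2.1, p.~26) that the paper invokes; you handle the two genuinely conditional points (countably many null sets, conditional Minkowski) properly. Your constant $2^{2\ttb}s_0^{1+\ttc-\gamma\ttb}(1-2^{-\ttc/\ttb+\gamma})^{-\ttb}$ is at most the stated one because $1+\ttc-\gamma\ttb>0$, and the step $2^{1+\gamma}\leq 4$ uses $\gamma\leq 1$, which is implicit in the paper's definition of $\mathtt{Hol}_{\gamma,s_0}$.
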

\noi
\textbf{Proof:} This is easily adapted from e.g.~Revuz \& Yor \cite[Theorem 2.1 p.26]{ReYo99}. \cqfd 

\medskip

More precisely, in Proposition \ref{HolderH} in Section~\ref{qtightsec}, we first show that the rescaled height process $a_n^{-1} H_{n\cdot }(\tau_\infty)$ satisfies an (unconditioned) assumption of type (\ref{hypoKol}). Then in Proposition \ref{HolderWW} in Section \ref{qtightsec}, we prove that it entails that 
$a_n^{-1/2}\widehat{\cW}^*_{n\cdot } (\tau_\infty)$ also satisfies an assumption of type (\ref{hypoKol}). This implies that conditionally given $(\bT, \boo)$, $a_n^{-1/2}\widehat{\cW}^*_{n\cdot } (\tau_\infty)$ is tight, thanks to the following general result (the proof given below makes precise what we mean here by conditional tightness). 
\begin{lemma}
\label{condtight} Let $\ccG\! \subset \! \ccF$ be a $\sigma$-field and let $(Z^{_{(n)}}_s)_{s\in \bbR_+}$, $n\ino \bbN^*$, be a sequence of $\bbR$-valued continuous processes such that $Z^{_{(n)}}_0\eqo 0$. Let $\ttb\ino (1, \infty)$ and $\gamma \ino (0, 1]$. Let us assume that for all $p\ino \bbN^*$, there exists a $\ccG$-measurable r.v.~$\mathtt C_p$ such that for all $n\ino \bbN^*$, 
\begin{equation}
\label{Holdcontr}
\textrm{$\bP$-a.s.~} \quad \bE \big[ \big( \mathtt{Hol}_{\gamma , \, p} (Z^{_{(n)}}_{^{\,}} )\big)^\ttb  \big| \ccG\big] \leq \mathtt C_p < \infty \; .
\end{equation}  
Then conditionally given $\ccG$, the laws of the processes $(Z^{_{(n)}}_{^{\,}}\! )_{n\in \bbN^*}$ are a.s.~tight on $\bC^{0} (\bbR_+, \bbR)$. 
\end{lemma}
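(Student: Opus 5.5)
We have to make precise the meaning of conditional tightness, and the plan is to derive it from an Arzelà--Ascoli type criterion together with a regular conditional law argument. Let $P^{(n)}_\omega$ denote a regular version of the conditional law of $Z^{(n)}$ given $\ccG$, a kernel from $(\Omega,\ccG)$ to $\bC^0(\bbR_+,\bbR)$. The claim to prove is that $\bP$-a.s.\ the family $(P^{(n)}_\omega)_{n\in\bbN^*}$ is tight. Since $\bbN^*$ and the index $p$ are countable, we can fix a $\bP$-negligible set $N\in\ccG$ outside of which, for all $n$ and $p$, the bound (\ref{Holdcontr}) holds in the form
\[
\int \mathtt{Hol}_{\gamma,p}(\mathtt x)^\ttb \, P^{(n)}_\omega(d\mathtt x)\leq \mathtt C_p(\omega)<\infty .
\]
Here we use that $\mathtt{Hol}_{\gamma,p}$ is a measurable $[0,\infty]$-valued functional on $\bC^0(\bbR_+,\bbR)$, since it is a supremum over rational pairs by continuity, so conditional expectations of it are integrals against the regular kernel a.s. We may also assume that $P^{(n)}_\omega(\mathtt x(0)=0)=1$ outside $N$.

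Next, fix $\omega\notin N$ and $\epp\in(0,1)$. For each $p\in\bbN^*$ choose $A_p=(2^p\mathtt C_p(\omega)/\epp)^{1/\ttb}$, so that Markov's inequality yields $P^{(n)}_\omega(\mathtt{Hol}_{\gamma,p}>A_p)\leq \epp 2^{-p}$ uniformly in $n$. Set
\[
K_\epp=\{\mathtt x:\mathtt x(0)=0,\ \mathtt{Hol}_{\gamma,p}(\mathtt x)\leq A_p\ \text{for all } p\in\bbN^*\}.
\]
Then $P^{(n)}_\omega(K_\epp^c)\leq\epp$ for all $n$. The set $K_\epp$ is closed, because each $\mathtt{Hol}_{\gamma,p}$ is lower semicontinuous for local uniform convergence. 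On each $[0,p]$ its elements are equicontinuous, with modulus $A_p\delta^\gamma$, and uniformly bounded by $A_p p^\gamma$, since $\mathtt x(0)=0$. By the Arzelà--Ascoli theorem for $\bC^0(\bbR_+,\bbR)$ with the topology of uniform convergence on compacts, $K_\epp$ is compact, and this proves the tightness of $(P^{(n)}_\omega)_n$ for every $\omega\notin N$.

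The only delicate point is the measure-theoretic bookkeeping: the existence of the regular conditional laws, which holds because $\bC^0(\bbR_+,\bbR)$ is Polish, and the choice of a single null set that works simultaneously for the countably many $n,p$. Everything else is the standard deterministic compactness criterion.
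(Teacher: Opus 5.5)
Your proof is correct and follows essentially the same route as the paper. Both take a regular version of the conditional law given $\ccG$, fix a single null set handling the countably many $n,p$, and apply Markov's inequality to $\mathtt{Hol}_{\gamma,p}$ under that kernel. The only difference is cosmetic: the paper bounds the modulus of continuity by $w_\delta(\mathtt x,p)\leq \delta^\gamma\,\mathtt{Hol}_{\gamma,p}(\mathtt x)$ and cites Billingsley's tightness criterion, whereas you build the compact set $K_\epp$ explicitly via Arzel\`a--Ascoli, which is what that criterion does internally.
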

\noi
\textbf{Proof.} We denote by $\cM_1 (\bC^{_{0}}_{^1})$ the space of Borel probability measures on $\bC^{_{0}}_{^1}\! :=\! \bC^{0} (\bbR_+, \bbR)$ equipped with the Polish topology of weak convergence. Denote by $\omega\ino \Omega  \! \mapsto\!  Q_n (\omega, \cdot)$ a regular version of the conditional law of $Z^{_{(n)}}_{^{\,}}$ given $\ccG$ (which exists 
since $\bC^{_{0}}_{^1}$ is Polish). Namely, 
\begin{compactenum}
\item[$-$] $Q_n$ is a $\cM_1 (\bC^{_{0}}_{^1} )$-valued r.v., measurable with respect to $\ccG$ and the Borel $\sigma$-field of $\cM_1 (\bC^{_{0}}_{^1} )$. 
\item[$-$] For all Borel subsets $B$ of 
$\bC^{_{0}}_{^1} $ we $\bP$-a.s.~have $Q_n (\cdot, B)\eqo \bE [\un_{\{ Z^{(n)} \in B \}} | \ccG ]$. 
\end{compactenum}
Our assumptions imply that there exists $\Omega_0\ino \ccG$ such that $\bP (\Omega_0)\eqo 1$ and such that for all $\omega\ino \Omega_0$ and for all $n, p\ino \bbN^*$,
$$ \bE \big[ \big( \mathtt{Hol}_{\gamma , \, p} (Z^{_{(n)}}_{^{\,}} )\big)^\ttb  \big| \ccG\big] (\omega) \, = \int_{\bC^0_1}   \!\!\! \big( \mathtt{Hol}_{\gamma , \, p} (\mathtt x  )\big)^\ttb Q_n (\omega , d \mathtt x) \leqo \mathtt C_p (\omega) < \infty \; .$$
For all $\mathtt x (\cdot) \ino \bC^{_{0}}_{^1} $ and $\delta \ino (0, \infty)$, we set $w_\delta (\mathtt x, p) \eqo \max \big\{ |\mathtt x(s)\! -\! \mathtt x (s') | \, ; \, s, s'\ino [0, p] : |s\! -\! s'| \leqo \delta \big\}$, which is the $\delta$-modulus of continuity of $\mathtt x$ on $[0, p]$. Then, we get for all $\omega \ino \Omega_0$, all $p,n\ino \bbN^*$ and all $\delta, \eta \ino (0, \infty)$,
$$ Q_n \big(\omega\, , \, \big\{ \mathtt x\ino \bC^{_{0}}_{^1}: w_{\delta} (\mathtt x, p) \geko \eta \big\} \big) \leq \frac{\delta^{\gamma \ttb}}{\eta^\ttb}   \int_{\bC^0_1}   \!\!\! \big( \mathtt{Hol}_{\gamma , \, p} (\mathtt x  )\big)^\ttb Q_n (\omega , d \mathtt x) \leqo \mathtt C_p (\omega) \frac{\delta^{\gamma \ttb}}{\eta^\ttb} , $$
which implies $\lim_{\delta\to 0^+} \limsup_{n\to \infty} Q_n \big(\omega,  \big\{ \mathtt x\ino \bC^{_{0}}_{^1}: w_{\delta} (\mathtt x, p) \geko \eta \big\} \big) \eqo 0$. Since $Z^{_{(n)}}_0\eqo 0$, a standard result (see e.g.~Billingsley \cite[Theorem 7.3 p.~82]{Bil68}) entails that for all $\omega\ino \Omega_0$, the laws $(Q_n (\omega, \cdot))_{n\in \bbN^*}$ are tight. \cqfd

\medskip

In Section~\ref{pfharmosec}, we prove that $a_n^{-1} H_{n\cdot }(\tau_\infty)$ and $\sigma^{-1}_{\nu} a_n^{-1/2}\widehat{\cW}^*_{n\cdot } (\tau_\infty)$ jointly converge in law conditionally given $(\bT, \boo)$ to $(H, \widehat{W})$. To do so, we compute the conditional Fourier transforms of finite-dimensional marginals of the height snake in harmonic coordinates and we use Proposition~\ref{fourcondharmo}.

\medskip

We next derive the convergence (\ref{cvharmo}) using the fact that $C(\tau_\infty)$ and $\widehat{\cW} (\tau_\infty)$ are derived 
from $H(\tau_\infty)$ and $\widehat{\cW}^* (\tau_\infty)$ by a time-change. 
More precisely, recall from Remark~\ref{contord} $(\textbf{d})$ the definition of the increasing continuous bijection $\phi_{\tau_\infty} \! :\! \bbR_+ \! \to \! \bbR_+$ such that $H_s (\tau_\infty)\eqo C_{\phi_{\tau_\infty} (s)} (\tau_\infty)$, $s\ino \bbR_+$. For all $s_0\ino (0, \infty)$, (\ref{controphit}) implies that 
$\max_{s\in [0, s_0]} |\frac{1}{2n}\phi_{\tau_\infty} (ns) \! -\! s | \to 0$ in probability because 
(\ref{remindcv}) implies that $\frac{1}{n} \max_{s\in [0, s_0]} H_{ns} (\tau_\infty) \to 0$ in $\bP$-probability since $a_n/n\to 0$. Therefore, if for all $s\ino \bbR_+$ we set $\varphi_n (s)\eqo \frac{1}{n} \phi^{-1}_{\tau_\infty} (2ns)$ (here, $\phi^{-1}_{\tau_\infty}$ stands for the inverse of $\phi_{\tau_\infty} $), then 
we have proved 
\begin{equation}
\label{liminvtc}
\forall s_0\ino (0, \infty) , \quad \lim_{n\to \infty}\max_{s\in (0, s_0] } \big| \varphi_n (s) \! -\! s \big| = 0 \quad \textrm{in $\bP$-probability.}
\end{equation}
We  derive the conditional joint convergence (\ref{cvharmo}) 

\smallskip

\noi
$-$ from 
the conditional joint convergence of  $a_n^{-1} H_{n\cdot }(\tau_\infty)$ and $\sigma_{\nu}^{-1} a_n^{-1/2}\widehat{\cW}^*_{n\cdot } (\tau_\infty)$, 

\smallskip

\noi
$-$ from the following equalities that holds for all $s\ino \bbR_+$
\begin{equation}
\label{intcCWW}
\tfrac{1}{a_n}C_{2ns} (\tau_\infty)\eqo \tfrac{1}{a_n}H_{n\varphi_n(s)} (\tau_\infty)\quad \textrm{and} \quad \tfrac{1}{ \sqrt{a_n}}\,  \widehat{\cW}_{2ns}(\tau_\infty)= 
\tfrac{1}{ \sqrt{a_n}}\, \widehat{\cW}^*_{n\varphi_n (s)} (\tau_\infty)\; , 
\end{equation}
$-$ from the following deterministic lemma. 
\begin{lemma}
\label{composko} Suppose that $\mathtt x_n  \! \rightarrow \! \mathtt x$ in $\bC^0 (\bbR_+, \bbR^q)$ and that 
$\varphi_n \! \rightarrow \! \varphi$ in $\bC^0 (\bbR_+, \bbR_+)$. Assume that 
$\varphi_n (0) \! = \! 0$ and that $\varphi_n$ is nondecreasing. Then, 
$\mathtt x_n \circ \varphi_n \! \rightarrow \! \mathtt x \circ \varphi$ in $\bC^0 (\bbR_+, \bbR^q)$. 
\end{lemma}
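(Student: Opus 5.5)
The plan is to reduce the statement to uniform convergence on each compact interval $[0,N]$, $N\in\bbN^*$, and to split the error into a modulus-of-continuity term and a convergence term. Fix $N$.

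First, $\varphi_n\to\varphi$ uniformly on $[0,N]$ and $\varphi$ is continuous, so $K:=\sup_n\sup_{s\in[0,N]}\varphi_n(s)$ is finite. Indeed, $\sup_{[0,N]}\varphi_n\le \sup_{[0,N]}\varphi+1$ for $n$ large, and the finitely many remaining $\varphi_n$ are continuous, hence bounded on $[0,N]$. Here $\varphi_n(0)=0$ and $\varphi_n\ge 0$ (they take values in $\bbR_+$), so every $\varphi_n([0,N])$ lies in $[0,K]$. The same holds for $\varphi$, since $\varphi([0,N])\subset[0,\sup_{[0,N]}\varphi]\subset[0,K]$ after enlarging $K$ if needed. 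The monotonicity of $\varphi_n$ is not actually needed for this boundedness; it only matters for the applications, where it keeps $\mathtt x_n\circ\varphi_n$ meaningful as a time change.

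Next, for $s\in[0,N]$ write
\begin{equation*}
|\mathtt x_n(\varphi_n(s))-\mathtt x(\varphi(s))|\le |\mathtt x_n(\varphi_n(s))-\mathtt x(\varphi_n(s))|+|\mathtt x(\varphi_n(s))-\mathtt x(\varphi(s))| .
\end{equation*}
The first term is at most $\sup_{[0,K]}|\mathtt x_n-\mathtt x|$, which tends to $0$ because convergence in $\bC^0(\bbR_+,\bbR^q)$ is uniform convergence on compacts. For the second term, $\mathtt x$ is uniformly continuous on $[0,K]$, with modulus $\omega_K$. The term is therefore at most $\omega_K\big(\sup_{[0,N]}|\varphi_n-\varphi|\big)$, which tends to $0$.

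Taking the supremum over $s\in[0,N]$ gives $\sup_{[0,N]}|\mathtt x_n\circ\varphi_n-\mathtt x\circ\varphi|\to0$ for every $N$. This is exactly convergence in $\bC^0(\bbR_+,\bbR^q)$; the composition $\mathtt x\circ\varphi$ is continuous, so it is a legitimate limit.

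There is no real obstacle. The only point needing care is the uniform bound $K$ on the ranges, because uniform continuity of $\mathtt x$ and uniform convergence of $\mathtt x_n$ are only available on compact sets.
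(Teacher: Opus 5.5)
Your proof is correct, but it follows a different route from the paper. The paper gives no argument and simply cites Whitt \cite[Theorem 3.1]{Wh80}, a general result on continuity of the composition map $(\mathtt x,\varphi)\mapsto \mathtt x\circ\varphi$ on Skorokhod space at points where the limit $\mathtt x$ is continuous and the time changes are nondecreasing. That càdlàg setting is where the hypotheses $\varphi_n(0)=0$ and monotonicity in the statement come from.

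You instead give a self-contained elementary proof. First you obtain a uniform bound $K$ on $\varphi_n([0,N])$ and $\varphi([0,N])$. Then you split the error into $\sup_{[0,K]}|\mathtt x_n-\mathtt x|$ plus the modulus of continuity of $\mathtt x$ on $[0,K]$, evaluated at $\sup_{[0,N]}|\varphi_n-\varphi|$.

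Your route is transparent, and it correctly shows that in the purely continuous setting monotonicity and $\varphi_n(0)=0$ are superfluous. The citation's only extra value is càdlàg generality, which is never used. Every application in the paper, such as (\ref{intcCWW}) and the time changes in the proof of Theorem \ref{cvsnacondi}, composes continuous processes with continuous time changes.

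A minor remark: the finitely many small $n$ need not enter $K$, since only the tail of the sequence matters for the limit, but including them is harmless.
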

\noi
\textbf{Proof.} See Whitt \cite[Theorem 3.1, p.~75]{Wh80}. \cqfd 

\begin{proposition}
\label{harmoclose} We fix $\alpha \ino (1, 2]$. Let $\mu$ satisfy (\ref{hypostaintro}). Let $\tau_\infty$ be a GW($\mu$)-forest  (Definition \ref{GWfordef} $(\textbf{b})$). Let $\nu$ be a supercritical offspring distribution whose mean is denoted by $\ttm_\nu$. We assume that there is  $\ttb\ino ( \frac{2\alpha}{\alpha-1}, \infty)$ such that $\sum_{k\in \bbN} k^{1+2\ttb} \nu(k)\leko \infty$.
We keep the notation of Proposition \ref{harmocv}. Let $s_0 \in (0, \infty)$. Then there are two constants $c, \epp_1 \ino (0, \infty)$, which only depend on $\mu$, $\nu$, $\ttb$ and $s_0$, such that 
\begin{equation}
\label{closeharmo}
\textrm{$\bP$-a.s.~} \quad \lim_{n\to \infty} \bP \Big( \!\!\!\! \max_{\quad s\in [0, s_0] }\! \big| \tfrac{1}{\sigma^2_{\nu}} \widehat{\cW}^*_{ns} (\tau_\infty) \! -\!   \widehat{W}^*_{ns} (\tau_\infty)\big| \, > \, cn^{-\epp_1} \sqrt{a_n} \,  \Big| \, (\bT, \boo) \Big)= 0 \; .
\end{equation}
\end{proposition}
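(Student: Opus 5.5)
The idea is to compare $\sigma_\nu^{-2}S_x$ with $\bbn x\bbn$ separately along the ancestral line $L_\boo$ and off it, and to show that, with overwhelming probability, the BRW only visits vertices where this comparison is good. For every vertex $x\ino \bT$ we decompose
$$\tfrac{1}{\sigma_\nu^2}S_x-\bbn x\bbn=\Big(\tfrac{1}{\sigma_\nu^2}S_{\boo\wedge x}-\bbn\boo\wedge x\bbn\Big)+\Big(\tfrac{1}{\sigma_\nu^2}(S_x-S_{\boo\wedge x})-(\bbn x\bbn-\bbn\boo\wedge x\bbn)\Big).$$
Since $\boo\wedge x\eqo \boo(p)$ with $p\eqo -\bbn \boo\wedge x\bbn$, the first term is at most $R_\beta(1+p^\beta)$ by (\ref{LoRatiodef}). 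The r.v.~$R_\beta$ is a measurable function of the environment, and it is $\bP$-a.s.~finite by Lemma \ref{harmcoo4} $(ii)$. So the spine part is controlled in a genuinely quenched way.

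If $x\notin B_{q,\lambda,\beta}$ and $-\bbn\boo\wedge x\bbn\leq q$, the second term is at most $\lambda(1+(\bbn x\bbn-\bbn\boo\wedge x\bbn)^\beta)$. Both $\widehat{\cW}^*$ and $\widehat{W}^*$ are linear interpolations of the values $S_{Y_u}$ and $\bbn Y_u\bbn$, so it suffices to control the maxima over vertices $u$ explored before time $ns_0$.

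\textbf{Step 1 (spatial extent).} We fix small $\delta>0$ and set $h_n\eqo n^\delta\sqrt{a_n}$. Let $A_n$ be the event that some vertex $u$ with lexicographic index at most $ns_0$ has $|\bbn Y_u\bbn|\geq h_n$. On the event $\{\max_{[0,s_0]}H_{n\cdot}(\tau_\infty)\leq n^{\delta}a_n\}$, each such $Y_u$ is the position at time $|u|\leq n^\delta a_n$ of a $\ttm_\nu$-biased RW started at $\boo$. Lemma \ref{PZCV} $(ii)$ together with a union bound over the at most $ns_0+1$ vertices gives
$$\bP(A_n)\leq \bP\big(\max_{[0,s_0]}H_{n\cdot}(\tau_\infty)>n^\delta a_n\big)+ 8(ns_0+1)(n^\delta a_n)^3e^{-(h_n-1)^2/(2n^\delta a_n)}.$$
The last term is stretched-exponentially small, because $h_n^2/(n^\delta a_n)=n^\delta$. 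The first term should also be summable: we will use a polynomial-tail bound for $\max H$, e.g.~from the Hölder/moment estimates of Proposition \ref{HolderH}, replacing $n^\delta$ by a suitable power if needed.

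\textbf{Step 2 (bad vertices).} We take $q\eqo h_n$ and $\lambda\eqo\lambda_n\eqo n^{\kappa}$. The vertices explored before time $ns_0$ lie in the first $\kappa'$ trees of the forest, where $\kappa'\eqo 1-\inf_{[0,ns_0]}V(\tau_\infty)$. By (\ref{remindcv}), $\kappa'\leq b_nn^{\delta}$ outside an event of summable probability, using the same caveat as in Step 1. Applying Lemma \ref{badharm} to each of these independent GW($\mu$)-indexed BRWs, all started at $\boo$, yields an annealed bound
$$\bP\big(\exists\, \textrm{explored } u: Y_u\ino B_{h_n,\lambda_n,\beta}\big)\leq c\, b_nn^{\delta}h_n\lambda_n^{-2\ttb}\leq c\, n^{1/\alpha+1/2+2\delta-2\ttb\kappa},$$
up to slowly varying factors. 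We choose $\kappa$ so that this exponent is $<-1$. Then the annealed probabilities are summable, and so are their conditional versions given $(\bT,\boo)$ in expectation. Borel--Cantelli applied to the conditional probabilities then gives the $\bP$-a.s.~statement in (\ref{closeharmo}).

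\textbf{Step 3 (exponent bookkeeping).} Outside the events of Steps 1 and 2, the maximal discrepancy over the explored vertices is at most $(R_\beta+\lambda_n)(1+2h_n^\beta)\leq C(\bT)\,n^{\kappa+\beta\delta}a_n^{\beta/2}$. Since $a_n\eqo n^{(\alpha-1)/\alpha}L(n)^{-1}$, this is $\leq cn^{-\epp_1}\sqrt{a_n}$ for large $n$ as soon as
$$\kappa+\beta\delta<\tfrac{(1-\beta)(\alpha-1)}{2\alpha}.$$
To satisfy this, we take $\beta$ just above $\frac12+\frac1{2\ttb}$ and $\kappa$ just above $\frac{1/\alpha+3/2}{2\ttb}$, the latter being what Step 2 requires. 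The condition then reduces, roughly, to $\ttb(\alpha-1)>\alpha+4+\alpha/\ttb$ up to arbitrarily small slack in $\delta$. The hypothesis $\ttb>\frac{2\alpha}{\alpha-1}$ is of exactly this type, but it is not yet verified to imply this inequality. If the crude counting does not close under that hypothesis, the first refinement to try is replacing the count $b_nn^\delta h_n$ by the expected number of distinct spine vertices actually reached.

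I expect this last step, i.e.~the simultaneous choice of $\delta,\beta,\kappa$ and the summability needed for the quenched Borel--Cantelli argument, to be the main obstacle.
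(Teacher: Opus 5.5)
Your decomposition is the one the paper uses: the spine term via $R_\beta$, the off-spine term via $B_{q,\lambda,\beta}$ and Lemma \ref{badharm}, and the spatial extent via Lemma \ref{PZCV}. Treating $R_\beta$ quenched is also fine. Since $R_\beta<\infty$ a.s.\ and $\lambda_n\to\infty$, you have $R_\beta\le\lambda_n$ for $n\ge n_0(\bT)$, so the final threshold is deterministic.

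As you suspected, however, the argument does not close under $\ttb>\frac{2\alpha}{\alpha-1}$, and the obstruction is structural. You require the annealed probabilities to be summable over \emph{all} $n$, i.e.\ an exponent $<-1$ in Step 2. Your count is also lossy. Since $\sqrt{a_n}\approx n^{\frac{\alpha-1}{2\alpha}}$, one has $b_nn^\delta h_n\approx n^{\frac12+\frac1{2\alpha}+2\delta}$ up to slowly varying factors, not $n^{\frac1\alpha+\frac12+2\delta}$. Even with this corrected count, summability forces $2\ttb\kappa>\frac32+\frac1{2\alpha}$. Combined with $\kappa<(1-\beta)\frac{\alpha-1}{2\alpha}$ and $\beta>\frac12+\frac1{2\ttb}$, this needs $\ttb>\frac{4\alpha}{\alpha-1}$. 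With your cruder count the condition is $\ttb(\alpha-1)>4\alpha+1$, not the inequality you wrote. No refinement of the count can rescue this: even an $O(1)$ count would need $2\ttb\kappa>1$, i.e.\ $\ttb>\frac{3\alpha-1}{\alpha-1}$.

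The missing idea is that a polynomial annealed bound $c_0n^{-\epp_0}$, with any $\epp_0>0$, is enough. Apply your expectation-of-conditional-probabilities argument only along $n=2^j$, where $\sum_j 2^{-j\epp_0}<\infty$. Then interpolate between dyadic times as follows.
\begin{itemize}
\item The map $N\mapsto\max_{m\le N}|\sigma_\nu^{-2}S_{Y_{u_m}}-\bbn Y_{u_m}\bbn|$ is nondecreasing.
\item The interpolated snakes on $[0,s_0]$ are controlled by the first $\lceil ns_0\rceil$ vertices.
\item Since $n^{-\epp_1}\sqrt{a_n}$ is regularly varying, for $2^{j-1}\le\lceil ns_0\rceil\le 2^j$ the threshold at $n$ dominates a constant times the threshold at $2^j$.
\end{itemize}
This is exactly how the paper deduces Proposition \ref{harmoclose} from the annealed Proposition \ref{qharmoclose}. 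Step 2 then only needs $\kappa$ slightly above $\frac{\alpha+1}{4\alpha\ttb}$. The condition $\frac{\alpha+1}{4\alpha\ttb}<\big(\frac12-\frac1{2\ttb}\big)\frac{\alpha-1}{2\alpha}$ is precisely $\ttb>\frac{2\alpha}{\alpha-1}$.

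Two smaller points.
\begin{itemize}
\item (\ref{remindcv}) gives no rate for the number of trees visited. Use instead $\bP(-\inf_{[0,N]}V(\tau_\infty)\ge r)=\bP(T_{-r}\le N)\le e\,\varphi_\mu(e^{-1/N})^r\le 3e^{-cr/b_N}$, which follows from Lemma \ref{numbtau}.
\item For the height, (\ref{Lbtotheight}) with a large enough moment gives the polynomial tail you need.
\end{itemize}
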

\noi
\textbf{Proof.} See Section \ref{harmoclosepfsec}. \cqfd 

\medskip

\noi
\textbf{Proof of Theorem \ref{cvsnakes}.} We admit Propositions  \ref{harmocv} and  \ref{harmoclose} and we prove Theorem \ref{cvsnakes}. First note that the moment 
assumption on $\nu$ is stronger in Proposition \ref{harmoclose} (and Theorem \ref{cvsnakes}) than in Proposition \ref{harmocv} since $\frac{2\alpha}{\alpha-1} \geko 1+ \frac{\alpha}{\alpha-1} $. Thus under the assumptions of Theorem~\ref{cvsnakes}, both propositions apply and they 
easily entail that conditionally given $(\bT, \boo)$, the following convergence
\begin{equation}
\label{cvharmo5/2}
\big(\mathscr Q_n ,   \tfrac{1}{a_n} H_{n\cdot } (\tau_\infty)  ,   \tfrac{1}{ \sigma_\nu \sqrt{a_n}}\, \widehat{\cW}^*_{n\cdot }(\tau_\infty) ,  \tfrac{\sigma_\nu}{  \sqrt{a_n}}\, \widehat{W}^*_{n\cdot }(\tau_\infty) \big) \underset{n\to \infty}{-\!\!\!-\!\!\!-\!\!\!\longrightarrow} \big(\mathbf g_1, \mathbf d_1, X, H, H,  H, \widehat{W} , \widehat{W} \big)
\end{equation}
holds weakly on $\bbR^2\! \times\! \bD(\bbR_+, \bbR) \! \times \! \bC^0 (\bbR_+, \bbR)^5$ equipped with the product topology. 
Then recall (\ref{liminvtc}), (\ref{intcCWW}) and recall (by definition of height snake) that for all $s\ino \bbR_+$,
$$\tfrac{1}{a_n} C_{2ns} (\tau_\infty)  = \tfrac{1}{a_n} H_{n\varphi_n (s)} (\tau_\infty)  \quad \textrm{and} \quad  \tfrac{\sigma_\nu}{ \sqrt{a_n}}\, \widehat{W}_{2ns}(\tau_\infty)= 
\tfrac{\sigma_\nu}{ \sqrt{a_n}}\, \widehat{W}^*_{n\varphi_n (s)} (\tau_\infty)\; .$$
This entails (\ref{snakescv}) by (\ref{cvharmo5/2}) and Lemma \ref{composko}, which completes the proof of Theorem \ref{cvsnakes}. \cqfd

\subsection{Quenched tightness of the height snake of $\tau_\infty$-indexed BRWs} 
\label{qtightsec}

We keep the notation and the general assumptions of the previous section. 
We first prove Hölder estimates for $H(\tau_\infty)$ by adapting to the discrete setting the proof of D.~\& Le Gall~\cite[Lemma 1.4.6]{DuLG02} which provides a similar result for the continuous height process. 

To that end, we use the fact that the Lukasiewicz path $V(\tau_{ \infty})$ of $\tau_\infty$ 
is a RW whose jump law is $\widetilde{\mu} (k)\eqo \mu (k+1)$, $k\ino \bbN\cup \{ -1\}$ (Remark \ref{GWrem} ($\mathbf b$)) and we use (\ref{codheight}) 
which shows that $H_n (\tau_\infty)$ counts the number of times the time-reversed RW $(V_n (\tau_\infty)\! -\! V_{n-k}  (\tau_\infty))_{0\leq k\leq n}$ 
reaches its supremum. Namely, the height process is related to the so-called \emph{weak ascending ladder variables} of $V(\tau_\infty)$ whose definition and basic properties are recalled from Spitzer \cite{SpiRW} below. 
Let $(V_n)_{n\in \bbN}$ be a RW whose jump law is $\widetilde{\mu}$. For all $n\ino \bbN$, we set 
\begin{equation}
\label{VbarL}
\overline{V}_{\! n} \eqo \max _{0\leq k\leq n} V_k \quad \textrm{and} \quad \mathtt \mathtt  L_n\eqo \# \{ k\ino \{ 1, \ldots, n\}: V_k \eqo \overline{V}_{\! k}  \}\; , 
\end{equation}
with the convention that $L_0\eqo 0$. 
The weak ascending ladder variables are then given by 
\begin{equation}
\label{laddvar} 
\cL_n = \inf \{ k\ino \bbN: L_k= n \} \quad \textrm{and} \quad \cV_n =  V_{\cL_n}=\overline{V}_{\!\!\! \cL_n} .
\end{equation}
We also recall from (\ref{Tmoinsp}) that $T_{-n}\eqo \inf \{ k\ino \bbN: V_k\eqo  -n \}$. We recall from (\ref{Laplmu}) that 
$g_\mu$ is the generating function of $\mu$ and we recall from Lemma \ref{numbtau} that $\varphi_\mu$ is the generating function of $T_{-1}$. Then $(\cL_n, \cV_n)_{n\in \bbN}$ is a $\bbN^2$-valued RW whose jump law is characterized by 
\begin{equation}
\label{lasacsexpli}
\forall r, s \ino [0, 1], \quad 1-\bE \big[ r^{\cL_1}  s^{\cV_1}\big]= \frac{s-rg_\mu(s)}{s-\varphi_\mu(r)} \; .
\end{equation}
For a proof we refer to Spitzer \cite{SpiRW}, Chapter IV, \textbf{P5} (b) p.~181 and more precisely to (3) p.~187 in the specific case of left-continuous RWs (with Spitzer's notation, $P\eqo g_\mu$ and $r\eqo \varphi_\mu$). We next provide estimates on the Laplace exponents of $\cL_1$, $\cV_1$ and $T_{-1}$.   
\begin{lemma} 
\label{ascladd} We keep the above notation. We fix $\alpha \ino (1, 2]$ and we assume that $\mu$ satisfies (\ref{hypostaintro}). For all $\lambda \ino \bbR_+$, we set $\Lambda_0(\lambda)\eqo -\log \bE [e^{-\lambda \cL_1}]$, $\Lambda_1 (\lambda)\eqo -\log \varphi_\mu (e^{-\lambda})$ and $\Lambda_2 (\lambda)\eqo -\log \bE[ e^{-\lambda \cV_1}] $. Then there are $c_\mu, c^\prime_\mu \ino (0, \infty)$ such that 
\begin{equation}
\label{estiLambs} 
\Lambda_0 (\lambda) \sim_{0^+}c_\mu \,  \lambda^{\frac{\! \alpha-1}{\alpha}}  L\big(\tfrac{1}{\lambda} \big) \quad \textrm{and} \quad \Lambda_2(\Lambda_1 (\lambda)) \sim_{0^+} c^\prime_\mu\,   \lambda^{\frac{\! \alpha-1}{\alpha}}  L\big(\tfrac{1}{\lambda} \big)
\end{equation} 
where $L$ is the slowly varying function appearing in (\ref{bnBiGoTe}). 
\end{lemma}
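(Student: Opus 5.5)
We plan to derive both estimates from the explicit Wiener--Hopf identity (\ref{lasacsexpli}) together with the Tauberian information already available on $\psi_\mu$, $g_\mu$ and $\varphi_\mu$ through (\ref{Laplmu}), (\ref{numtauber}) and (\ref{slowlinks}).

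\emph{First estimate.} We set $s\eqo 1$ in (\ref{lasacsexpli}) and $r\eqo e^{-\lambda}$. Since $g_\mu(1)\eqo 1$, this gives
\[ 1-\bE\big[e^{-\lambda\cL_1}\big]=\frac{1-e^{-\lambda}}{1-\varphi_\mu(e^{-\lambda})}. \]
By the second asymptotic in (\ref{numtauber}), the denominator is equivalent to $c_\alpha\Gamma_{\!\texttt e}(1-\frac1\alpha)\lambda^{1/\alpha}L(1/\lambda)^{-1}$, while the numerator is equivalent to $\lambda$. Hence the right-hand side is equivalent to $C\lambda^{(\alpha-1)/\alpha}L(1/\lambda)$, which tends to $0$. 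Using $\Lambda_0(\lambda)\eqo -\log(1-x)\sim x$ as $x\to 0$, we get the first equivalence with $c_\mu\eqo 1/(c_\alpha\Gamma_{\!\texttt e}(1-\frac1\alpha))$.

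\emph{Second estimate.} We set $r\eqo 1$ and $s\eqo e^{-\theta}$, noting that $\varphi_\mu(1)\eqo 1$ because $T_{-1}<\infty$ a.s. This gives
\[ 1-\bE\big[s^{\cV_1}\big]=\frac{s-g_\mu(s)}{s-1}=\frac{\psi_\mu(1-s)}{1-s}. \]
With $u\eqo 1-s\sim\theta$, the identity (\ref{Laplmu}) yields $1-\bE[e^{-\theta\cV_1}]\sim c'_\alpha\theta^{\alpha-1}L^*(1/\theta)$, and therefore $\Lambda_2(\theta)\sim c'_\alpha\theta^{\alpha-1}L^*(1/\theta)$. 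We then substitute $\theta\eqo\Lambda_1(\lambda)\eqo -\log\varphi_\mu(e^{-\lambda})\sim 1-\varphi_\mu(e^{-\lambda})\sim C_1\lambda^{1/\alpha}/L(1/\lambda)$. Since $\theta^{\alpha-1}L^*(1/\theta)$ is regularly varying, replacing $\theta$ by an asymptotically equivalent quantity preserves the equivalence. We obtain
\[ \Lambda_2(\Lambda_1(\lambda))\sim c'_\alpha C_1^{\alpha-1}\,\lambda^{\frac{\alpha-1}\alpha}L(1/\lambda)^{-(\alpha-1)}L^*\big(C_1^{-1}\lambda^{-1/\alpha}L(1/\lambda)\big). \]

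\emph{Main obstacle: identifying the slowly varying factor.} It remains to show that $L(1/\lambda)^{-(\alpha-1)}L^*(\lambda^{-1/\alpha}L(1/\lambda))\sim L(1/\lambda)$, i.e. that $L^*(b_n)\sim L(n)^\alpha$ with $n\eqo 1/\lambda$. This is exactly (\ref{slowlinks}), since $b_n\eqo n^{1/\alpha}L(n)$. Two points need care. First, $n$ is a continuous parameter here, so we use regular variation of $b$ in continuous form (extending $L$ to reals, as in \cite{BiGoTe}). Second, the constant $C_1^{-1}$ inside $L^*$ is removed by slow variation. Once these are checked, combining the displays gives the second equivalence with an explicit $c'_\mu$.
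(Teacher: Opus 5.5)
Your proposal is correct and follows the paper's proof step for step. It uses the same specializations $s=1$ and $r=1$ of (\ref{lasacsexpli}), the same appeal to (\ref{numtauber}) for $\Lambda_0$ and $\Lambda_1$ and to (\ref{Laplmu}) for $\Lambda_2$, and the same final use of (\ref{slowlinks}) to reduce $L(1/\lambda)^{-(\alpha-1)}L^*\big(c\,\lambda^{-1/\alpha}L(1/\lambda)\big)$ to a constant multiple of $L(1/\lambda)$. The points you flag — passing from integer $n$ to the real parameter $1/\lambda$, absorbing the constant inside $L^*$ by slow variation, and replacing $\theta$ by an equivalent quantity via regular variation — are exactly the details the paper leaves implicit.
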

\noi
\textbf{Proof.} By (\ref{lasacsexpli}), $1\! -\! \bE [e^{-\lambda \cL_1}]\eqo (1\! -\! e^{-\lambda} ) / (1\! -\! \varphi_\mu (e^{-\lambda}))$. Then the first equivalence in (\ref{estiLambs}) follows from (\ref{numtauber}) in Lemma \ref{numbtau}. Next, we deduce from (\ref{lasacsexpli}) that $1\! -\! \bE [e^{-\lambda \cV_1}]\eqo \psi_\mu (s_\lambda) / s_\lambda$. where we recall 
$\psi_\mu$ from (\ref{Laplmu}) and where we have set $s_\lambda\eqo 1\! -\! e^{-\lambda}$. Then, $\Lambda_2 (\lambda)\sim_{0^+} c_\alpha'\lambda^{\alpha-1} L^* (1/\lambda)$ where $L^*$ is the slowly varying function in (\ref{mutail}).  
Then (\ref{numtauber}) in Lemma \ref{numbtau} entails that 
$$\Lambda_2(\Lambda_1 (\lambda)) \sim_{0^+} c_1 \lambda^{\frac{\alpha-1}{\alpha}} L\big( \tfrac{1}{\lambda}\big)^{-(\alpha-1)}L^* \big( c_2 \lambda^{-\frac{1}{\alpha}} L\big( \tfrac{1}{\lambda}\big)\big)\; , $$
where $c_1, c_2 \ino (0, \infty)$ are two constants which only depend on $\alpha$. This implies the second equivalent in (\ref{estiLambs}) by (\ref{slowlinks}), which relates $L$ and $L^*$. \cqfd

\begin{proposition}      
\label{momheight} We keep the above notation. We assume that $\mu$ is non-trivial and critical. 
Let $\ttb\in (1, \infty)$. Then for all $m\ino \bbN$ and $n\ino \bbN^*$,
\begin{eqnarray}
\label{heightmom}
\bE \Big[  \big| H_{m+n} (\tau_\infty) \! -\!\!\!\!\!\!\!\! \!\! \inf_{\; \quad \lgeo m, m+n \rgeo}\!\!\!\!\!\!\!\! H (\tau_\infty) \big|^\ttb  \Big] & \leqo  & e\ttb \Gamma_{\!\! \mathtt e} (\ttb)\,  \big( \Lambda_0 \big(\tfrac{1}{n} \big)\big)^{-\ttb} \qquad  \textrm{and} \nonumber\\ 
   \bE \Big[  \big| H_{m} (\tau_\infty) \! -\!\!\!\!\!\!\!\! \!\! \inf_{\; \quad \lgeo m, m+n \rgeo}\!\!\!\!\!\!\!\! H (\tau_\infty) \big|^\ttb  \Big] &\leqo&  e\ttb \Gamma_{\!\! \mathtt e} (\ttb)\,  \big( \Lambda_2\big( \Lambda_1  \big(\tfrac{1}{n} \big) \big)\big)^{-\ttb}. 
\end{eqnarray}
\end{proposition}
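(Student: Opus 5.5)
We express both quantities through ladder variables of the time-reversed Lukasiewicz path and then apply a Markov-type exponential bound. Write $V\eqo V(\tau_\infty)$, a RW with jump law $\widetilde{\mu}$. By (\ref{codheight}) (in the forest version, with the shift by $1$ from Remark~\ref{bijecfor}, which is harmless for differences), $H_l(\tau_\infty)$ counts the indices $j\ino \lgeo 0, l\lgeo$ with $V_j\eqo \inf_{[j,l]}V$. For integer times the infimum of the interpolated height over $[m,m+n]$ is attained at integer times, up to the interpolation convention in (\ref{ctrvshght}) which can only lower the minimum by at most the same quantity. So it suffices to treat $\inf_{k\in\lgeo m,m+n\rgeo}H_k$.

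For the first bound, fix $l\eqo m+n$ and consider the reversed walk $\widehat V_k\eqo V_l\! -\! V_{l-k}$, $0\leq k\leq l$, which has the same law as $(V_k)_{k\leq l}$. The indices counted in $H_l$ are exactly the weak ascending ladder times of $\widehat V$ up to time $l$. Moreover, for $k\ino\lgeo m,l\rgeo$, every ladder time of $\widehat V$ in $\lgeo 0,l-k\rgeo$ corresponds to an index $j\geq k$ with $V_j\eqo\inf_{[j,l]}V$. Such an index is also counted in $H_k$ only if $V_j\eqo\inf_{[j,k]}V$, which holds only for $j\eqo k$. A careful comparison should give
\[
H_l-\inf_{\lgeo m,l\rgeo}H\;\leq\;\# \{\textrm{ladder times of $\widehat V$ in } \lgeo 1,n\rgeo\}\;=\;\widehat L_n ,
\]
the indices $j<m$ counted in $H_l$ being also counted in $H_{k}$ for the minimizing $k$ (they satisfy $V_j\eqo \inf_{[j,l]} V\leq \inf_{[j,k]}V$). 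Then $\widehat L_n\overset{\textrm{(law)}}{=}L_n$, and $\{L_n\geq p\}\eqo\{\cL_p\leq n\}$. This gives
\[
\bP(L_n\geq p)\leq e\,\bE[e^{-\cL_p/n}]\eqo e\,e^{-p\Lambda_0(1/n)}.
\]
Summing, $\bE[L_n^\ttb]\eqo\sum_p (p^\ttb-(p-1)^\ttb)\bP(L_n\geq p)\leq e\int_0^\infty \ttb x^{\ttb-1}e^{-x\Lambda_0(1/n)}dx\eqo e\ttb\Gamma_{\mathtt e}(\ttb)\Lambda_0(1/n)^{-\ttb}$, which is the first inequality.

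For the second bound, $H_m-\inf_{\lgeo m,m+n\rgeo}H$ counts the $j<m$ with $V_j\eqo\inf_{[j,m]}V$ that are lost before time $m+n$, that is, those with $V_j>\inf_{[m,m+n]}V$. Reversing from time $m$, these are the ladder times of $\widehat V$ whose ladder heights are strictly below $V_m-\inf_{[m,m+n]}V\eqo:D$. Now $D$ is independent of the reversed past and satisfies $\{D\geq q\}\eqo\{T'_{-q}\leq n\}$ for a fresh copy of $(T_{-q})$. The count is therefore at most $\#\{p: \cV_p< D\}$. Hence
\[
\bP(\textrm{count}\geq p)\leq\bP(\cV_{p-1}<D)\leq\bP(T'_{-\cV_{p-1}}\leq n)\leq e\,\bE\big[\varphi_\mu(e^{-1/n})^{\cV_{p-1}}\big],
\]
which equals $e\,e^{-(p-1)\Lambda_2(\Lambda_1(1/n))}$. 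The same summation yields the bound (the index shift $p-1$ must be absorbed; if it costs a factor, a sharper inequality $\cV_p\leq D$ with strict/weak care should remove it).

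The main obstacle is the exact combinatorial identification of the two height differences with ladder counts, including the off-by-one issues from weak versus strict ladder epochs and the continuous interpolation. The probabilistic part is a routine Chernoff-type estimate.
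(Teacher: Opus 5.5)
Your proposal is the paper's proof. You reverse the Lukasiewicz path from time $m+n$ (resp.\ from $m$) and identify the height difference with a weak-ladder count; in the second bound these are the ladder epochs whose height lies below the independent quantity $D=-\min_{[0,n]}V'$. You then conclude with $\bP(\,\cdot\le n)\le e\,\bE[e^{-(\cdot)/n}]$ and $\sum_p (p^\ttb-(p-1)^\ttb)e^{-p\Lambda}\le \ttb\Gamma_{\mathtt e}(\ttb)\Lambda^{-\ttb}$.

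The only thing to fix is the index shift you flagged in the second bound; no factor is lost. The epochs counted are $\cL_1,\cL_2,\dots$, since time $0$ is never counted by $L$. Because $\cL_q$ is increasing and $\cV_q$ is nondecreasing, the set $\{q\geq 1:\cL_q\le m,\ \cV_q<D\}$ is an initial segment $\{1,\dots,N\}$. Hence $\{\mathrm{count}\ge p\}\subset\{\cV_p<D\}\subset\{T'_{-\cV_p}\le n\}$, which gives $e\,e^{-p\Lambda_2(\Lambda_1(1/n))}$ without the extra factor $e^{\Lambda_2(\Lambda_1(1/n))}$. This is exactly the paper's step $\bP(-I'_n>\cV_k)$.

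Your remark on the interpolation is unnecessary: in the paper's notation $\lgeo m,m+n\rgeo$ is an integer interval.
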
      
\noi
\textbf{Proof.} We prove the first inequality in (\ref{heightmom}). Without loss of generality, we assume that $V$ is a RW whose jump law is $\widetilde{\mu}$ such that $V_k\eqo V_{m+n} (\tau_\infty) \! -\!  V_{m+n-k} (\tau_\infty)$, $0\leqo k\leqo m+n$. Recall from (\ref{VbarL}) the notation $L$. We easily deduce from (\ref{codheight}) that 
$L_n \eqo H_{m+n} (\tau_\infty) \! -\! \inf_{ \lgeo m, m+n \rgeo} H (\tau_\infty)$. Thus, 
\begin{eqnarray*}
\bE \Big[  \big| H_{m+n} (\tau_\infty) \! -\!\!\!\!\!\!\!\! \!\! \inf_{\; \quad \lgeo m, m+n \rgeo}\!\!\!\!\!\!\!\! H (\tau_\infty) \big|^\ttb  \Big] \!\!\!\! &=&\!\!\!  \int_{0}^\infty \!\! \ttb s^{\ttb-1} \bP \big( L_n \geko s\big) \, ds \, \leq \int_{0}^\infty\!\! \ttb s^{\ttb-1} \bP \big( \cL_{\lceil s \rceil} \leqo n \big) \, ds  \\
\!\!\! &\leq &\!\!\! 
\int_{0}^\infty\!\! \ttb s^{\ttb-1} e\cdot\bE \big[ \exp\big( \! -\! \tfrac{1}{n} \cL_{\lceil s \rceil}\big) \big] ds \leq \int_{0}^\infty\!\! \ttb s^{\ttb-1} e\cdot e^{ -s \Lambda_0 (\frac{1}{n}) } ds,
 \end{eqnarray*}
which implies the desired result. Here, we use Markov inequality at the second line and the fact that $\cL$ is a nonnegative RW whose Laplace exponent is $\Lambda_0$. 

The proof of the second upper bound in (\ref{heightmom}) is slightly more involved. To simplify notation we set $I_{k,l}\eqo \min_{\lgeo k, l\rgeo} V(\tau_\infty)$, for all integers $l\geqo k\geqo 0$. Denote by $m^\prime$ the smallest integer $k\ino \lgeo m,  m+n\rgeo$ such that $V_k(\tau_\infty)\eqo I_{m, m+n}$.  We deduce from (\ref{codheight}) that 
$$\!\!\!\! \!\!\!\!  \inf_{\; \quad \lgeo m, m+n \rgeo} \!\!\!\!\!\! \!\! H (\tau_\infty) \eqo H_{m^\prime} (\tau_\infty) \eqo \# \big\{ l \ino \lgeo 0, m-1\rgeo: V_l(\tau_\infty) \eqo I_{l,m} \; \textrm{and} \; I_{l,m} \leqo I_{m,m+n} \} .$$
We now assume that $V$ is such that 
$V_k\eqo V_{m} (\tau_\infty) \! -\!  V_{m-k} (\tau_\infty)$, $0\leqo k\leqo m$ and we set $V^\prime\eqo (V_{m+k} (\tau_\infty) \! -\! V_{m} (\tau_\infty))_{k\in \bbN}$ that is an independent copy of $V$. We set $I^\prime_n\eqo \min_{\lgeo 0, n\rgeo} V^\prime$. Then, (\ref{codheight}) implies that 
$$ H_{m} (\tau_\infty) \! -\!\!\!\!\!\!\!\! \!\! \inf_{\; \quad \lgeo m, m+n \rgeo}\!\!\!\!\!\!\!\! H (\tau_\infty) = \# \big\{ k \ino \lgeo 1, m\rgeo: V_k \eqo \overline{V}_{\! k}\leko -I_n^\prime \}= L_{m\wedge (\overline{V}^{_{-1}}_{\!\!\! -I_n^\prime} - 1)} \; ,$$
where we have set $\overline{V}^{_{-1}}_{\! k} \eqo \inf \{ l\ino \bbN: \overline{V}_{\! l}\geqo k \}$. We then set $T^\prime_{-k}\eqo \inf \{  l\ino \bbN: V^\prime_{l}\eqo -k \}$ and we deduce from the previous inequality that for all integer $k\ino \bbN^*$,
\begin{eqnarray*}
\bP \big(H_{m} (\tau_\infty) \! -\!\!\!\!\!\!\!\! \!\! \inf_{\; \quad \lgeo m, m+n \rgeo}\!\!\!\!\!\!\!\! H (\tau_\infty)\geqo k \big) & \leqo &  \bP \big( L_{ \overline{V}^{_{-1}}_{\!\!\! -I_n^\prime} - 1} \geqo k \big)  \eqo \bP \big(\! -\! I_n^\prime \geko \cV_k \big) \leqo  
\bP \big(T^\prime_{\! - \cV_k}\leqo n \big) \\ 
&\leq &e \cdot \bE \big[ \exp \big(\! -\! \tfrac{1}{n}T^\prime_{\! - \cV_k} \big)\big] = e \cdot \exp \big(\!  -\! k \Lambda_2\big( \Lambda_1 \big( \tfrac{1}{n}\big)\big) \big),   
\end{eqnarray*}
by the Markov inequality. It entails the following, 
$$ \bE \Big[  \big| H_{ m} (\tau_\infty) \! -\!\!\!\!\!\!\!\! \!\! \inf_{\; \quad \lgeo m, m+n \rgeo}\!\!\!\!\!\!\!\! H (\tau_\infty) \big|^\ttb  \Big] \leq  e \int_{0}^\infty \!\! \ttb s^{\ttb-1} e^{- \lceil s \rceil  \Lambda_2( \Lambda_1 (\frac{1}{n}))}  \, ds,  $$
which easily implies the second upper bound in (\ref{heightmom}). \cqfd 

\medskip

\begin{proposition}
\label{HolderH} We fix $\alpha \ino (1, 2]$ and we assume that $\mu$ satisfies (\ref{hypostaintro}). Recall from (\ref{aennedef}) the definition of $(a_n)_{n\in \bbN^*}$. 
Let $\ttb \ino \big(1 , \infty\big)$ and $s_0 \ino (1, \infty)$ and let $\epp \ino \big( 0, 1\! -\! \frac{1}{\alpha} \big) $. 
Then there exists $c\ino (1, \infty)$ that only depends on $\mu, \ttb, s_0$ and $\epp$ such that 
\begin{equation}
\label{HHolder}
\forall n \ino \bbN^*, \forall s,s^\prime\ino [0, s_0], \quad 
\bE \Big[ \big| \tfrac{1}{a_n}H_{ns^\prime} (\tau_\infty) \! -\! \tfrac{1}{a_n} H_{ns}  (\tau_\infty) \big|^\ttb \Big] 
\leq c \, \big| s^\prime \! -\! s\big|^{\ttb (1-\frac{1}{\alpha} -\epp)} \; .
\end{equation}
Moreover, for all $\gamma\ino \big( 0, 1\! -\! \tfrac{1}{\alpha}\big)$, there exists $c'\ino (1, \infty)$
that only depends on $\mu, \ttb, s_0$ and $\gamma$ such that 
\begin{equation}
\label{LbHolder}
\sup_{n\in \bbN^*} \bE \Big[ \Big( \mathtt{Hol}_{\gamma, s_0} \big(   \tfrac{1}{a_n}H_{ n \cdot } (\tau_\infty) \big) \Big)^{\! \ttb} \Big] \leq c' , 
\end{equation}
where we recall from (\ref{Holdnodef}) the definition of  $\mathtt{Hol}_{\gamma, s_0} (\mathtt x (\cdot))$, for all $\mathtt x \ino \bC^0 (\bbR_+, \bbR)$. In particular it implies that 
\begin{equation}
\label{Lbtotheight}
\sup_{n\in \bbN^*} \bE\Big[ \Big( \max_{s\in [0, s_0]}  \tfrac{1}{a_n}H_{ ns} (\tau_\infty) \Big)^{\! \ttb} \Big] \leq c' s_0^\gamma .
\end{equation}
\end{proposition}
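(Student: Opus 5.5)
The proof of (\ref{HHolder}) starts from Proposition \ref{momheight} combined with the asymptotics of Lemma \ref{ascladd}. We first treat integer times $m=\lfloor ns\rfloor$, $m+k=\lfloor ns'\rfloor$ with $k\geq 1$. Writing $\mathcal I=\inf_{\lgeo m,m+k\rgeo}H(\tau_\infty)$, the triangle inequality gives
\[
|H_{m+k}-H_m|\leq (H_{m+k}-\mathcal I)+(H_m-\mathcal I),
\]
and convexity gives $|x+y|^\ttb\leq 2^{\ttb-1}(|x|^\ttb+|y|^\ttb)$. Proposition \ref{momheight} then bounds $\bE[|H_{m+k}-H_m|^\ttb]$ by a constant times $\Lambda_0(1/k)^{-\ttb}+\Lambda_2(\Lambda_1(1/k))^{-\ttb}$. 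Lemma \ref{ascladd} shows that both quantities are comparable, for large $k$, to $k^{\ttb(1-1/\alpha)}L(k)^{-\ttb}$. The Laplace exponents are positive and increasing, so they stay bounded below on any range $1\leq k\leq K_0$, and we get
\[
\bE\big[|H_{m+k}-H_m|^\ttb\big]\leq c\,\big(k^{1-\frac1\alpha}/L(k)\big)^\ttb \quad\text{for all } k\geq 1 .
\]

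Next we divide by $a_n^\ttb=(n^{1-1/\alpha}/L(n))^\ttb$. This turns the bound into $c\,(k/n)^{\ttb(1-1/\alpha)}\big(L(n)/L(k)\big)^\ttb$, with $k\leq s_0 n$. The ratio of slowly varying functions is controlled by Potter's bounds (Bingham, Goldie \& Teugels): for $\epp>0$ there is $C$ with $L(n)/L(k)\leq C(n/k)^{\epp}$ for all $1\leq k\leq s_0 n$, once small values are absorbed into the constant. This yields $c\,(k/n)^{\ttb(1-1/\alpha-\epp)}$ at integer times.

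For general real $s,s'$ we use the piecewise linear interpolation (\ref{ctrvshght}), whose increments within a unit interval are bounded by $|H_l-H_{l+1}|+1$. When $|s-s'|\geq 1/n$ we reduce to the integer estimate, at the cost of increments over one unit step. Their $\ttb$-moments are bounded uniformly by the $k=1$ case, and $a_n^{-\ttb}\leq (n|s-s'|)^{\ttb(1-1/\alpha-\epp)}a_n^{-\ttb}$ is small enough because $a_n\geq c\,n^{1-1/\alpha-\epp}$. When $|s-s'|<1/n$ we use linearity inside a unit interval: the increment is at most $n|s-s'|\cdot(|H_l-H_{l+1}|+1)$ with the $1/a_n$ factor. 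Since $n|s-s'|\leq (n|s-s'|)^{\ttb(1-1/\alpha-\epp)/\ttb}$ when $n|s-s'|\leq 1$, and $a_n^{-1}\leq c\,n^{-(1-1/\alpha-\epp)}$, this gives the claim. The main bookkeeping obstacle is this small-scale regime together with the uniformity in the Potter bound; the probabilistic content is entirely in Proposition \ref{momheight}.

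For (\ref{LbHolder}), given $\gamma<1-1/\alpha$ we choose $\ttb'\geq\ttb$ large and $\epp$ small so that $\ttb'(1-1/\alpha-\epp)=1+\ttc$ with $\gamma<\ttc/\ttb'$. This is possible because $\ttc/\ttb'\to 1-1/\alpha-\epp$ as $\ttb'\to\infty$. We then apply Lemma \ref{condKolcont} with the trivial $\sigma$-field and a deterministic constant $C$, which gives a bound on $\bE[\mathtt{Hol}_{\gamma,s_0}^{\ttb'}]$ uniform in $n$; Jensen's inequality passes from $\ttb'$ to $\ttb$. Finally (\ref{Lbtotheight}) follows from $H_0(\tau_\infty)=0$, because then $\max_{[0,s_0]}a_n^{-1}H_{n\cdot}\leq s_0^\gamma\,\mathtt{Hol}_{\gamma,s_0}$.
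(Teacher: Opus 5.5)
Your proposal is correct and follows the paper's proof essentially step for step. The ingredients match: moment bounds from Proposition~\ref{momheight} with Lemma~\ref{ascladd}, Potter's bounds to compare $a_k$ with $a_n$, a Lipschitz-type bound from the interpolation (\ref{ctrvshght}) below the scale $1/n$ (the paper uses $n_0/n$), and Lemma~\ref{condKolcont} after raising the moment exponent (the paper does this via $x^{\ttb}\leq 1+x^{2/(1-\frac{1}{\alpha}-\gamma)}$ rather than Jensen). One cosmetic point: taking the $\ttb$-th power in your last step gives $c' s_0^{\gamma\ttb}$ rather than $c' s_0^{\gamma}$. This is harmless since $c'$ may depend on $s_0$, and the paper's statement has the same slip.
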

\noi
\textbf{Proof.} We set $\tta (x)= x^{1-\frac{1}{\alpha}} / L(x)$ where $L$ is as in (\ref{bnBiGoTe}). By convenience, we set $\tta (0)\eqo 0$. Thus $\tta (n) \eqo  a_n$ by (\ref{aennedef}). To simplify notation, we set $\cH_s\eqo H_s (\tau_\infty)$, $s\ino \bbR_+$. By (\ref{heightmom}) and Lemma \ref{ascladd}, there exists a constant $c_1(\mu, \ttb)\ino (1, \infty)$ (that only depends on $\mu$ and $\ttb$) such that for all integers  $m\ino \bbN$ and $n\ino \bbN^*$, 
\begin{equation}
\label{tradVR} 
\bE \Big[  \big| \cH_{m+n}  -\!\!\!\!\!\!\!\! \!\! \inf_{\; \quad k\in \lgeo m, m+n \rgeo}\!\!\!\!\!\!\!\!\!\!\!   \cH_k  \; \big|^\ttb  \Big] \vee\bE \Big[  \big| \cH_{m}  -\!\!\!\!\!\!\!\! \!\! \inf_{\; \quad k\in \lgeo m, m+n \rgeo}\!\!\!\!\!\!\!\! \!\!\! \cH_k \;  \big|^\ttb  \Big]  \leq  c_1(\mu, \ttb) \tta (n)^\ttb \; .
\end{equation}

We first prove an upper bound for $|\cH_{s^\prime} \! -\! \cH_s|$ that is a simple consequence of the specific interpolation explained by (\ref{ctrvshght}) in Definition~\ref{Lukadef} $(\textbf{a})$ and that is used when $|s-s^\prime|$ is small: there exists 
$c_2(\mu, \ttb)\ino (1, \infty)$ (that only depends on $\mu$ and $\ttb$) such that for all $s,s^\prime \ino \bbR_+$,
\begin{equation}
\label{Liploc}
\bE \Big[\big|\cH_{s^\prime} \! -\! \cH_s \big|^\ttb \Big] \leq c_2(\mu, \ttb) \, |s^\prime \! -\! s|^\ttb \; .
\end{equation} 
\noi
\emph{Proof.} Let us suppose first that there is $m\ino \bbN$ such that 
$\frac{_1}{^2}m\leqo s \leqo s^\prime \leq \frac{_1}{^2}(m+1)$, and set $n\eqo\lfloor \frac{_1}{^2}m\rfloor$. 
Then, (\ref{ctrvshght}) implies that $|\cH_{s^\prime} \! -\! \cH_s | \eqo 2(s^\prime\! -\! s) |\cH_{\frac{m}{2}} - \cH_{\frac{m+1}{2}}| \leqo 2(s^\prime\! -\! s)(1+\cH_{n}\! -\! \inf_{k\in \lgeo n , n+1\rgeo} \cH_k)$. 

For all real valued r.v.s $X$, we use the notation $\lVert X\rVert_{\mathtt b}\! :=\! (\bE [ |X|^{\mathtt b} )^{1/\mathtt b}$, for the 
$L^{\mathtt b}$ norm of $X$.  
By (\ref{tradVR}), we get $\lVert \cH_{s^\prime} \! -\! \cH_s \rVert_\ttb \leq c (s^\prime \! -\! s) $ where 
$c\! := \! 2+ 2 c_1 (\mu, \ttb)^{1/\ttb} \tta(1)$. If $\lfloor 2s\rfloor  \leko \lfloor 2s^\prime \rfloor $, then we get 
\begin{eqnarray*}
\lVert \cH_{s^\prime} \! -\! \cH_s \rVert_\ttb & \leqo&  \lVert \cH_{s} \! -\! \cH_{\frac{1}{2}\lceil 2s \rceil }\rVert_\ttb + 
\lVert \cH_{s^\prime} \! -\! \cH_{\frac{1}{2}\lfloor 2s^\prime \rfloor} \rVert_\ttb+
\!\! \!\!\!\!\!\!\sum_{\quad \lceil  2s\rceil\leq k < \lfloor 2s^\prime \rfloor} \!\!\!\!\!\! \lVert \cH_{\frac{1}{2}(k+1)} \! -\! \cH_{\frac{1}{2}k} \rVert_\ttb \\
&\leq & c \big( \tfrac{1}{2}  \lceil  2s\rceil \! -\! s  +  s^\prime \! -\! \tfrac{1}{2} \lfloor 2s^\prime \rfloor+  \tfrac{1}{2}\lfloor 2s^\prime \rfloor \! -\! \tfrac{1}{2}\lceil  2s\rceil \big)= c(s^\prime \! -\! s),  
\end{eqnarray*}
which entails (\ref{Liploc}).  \cq 

\medskip

Recall the convention $\tta (0)\eqo 0$. We next prove the following bound: there exists 
$c_3(\mu, \ttb)\ino (1, \infty)$ (that only depends on $\mu$ and $\ttb$) such that for all real numbers $s^\prime\geqo s\geq 0$,  
\begin{equation}
\label{hhoolld}
\bE \Big[\big|\cH_{s^\prime} \! -\! \cH_s \big|^\ttb \Big] \leq c_3(\mu, \ttb) 
\big( 1 + \tta (\lfloor s^\prime\rfloor \! -\! \lfloor s \rfloor  ) \big)^\ttb .
\end{equation} 
\noi
\emph{Proof.} By (\ref{tradVR}) and  (\ref{Liploc}), we get 
\begin{eqnarray*} 
\lVert \cH_{s^\prime} \! -\! \cH_s \rVert_\ttb &\leq& \big| \! \big|  \cH_{s} \! -\! \cH_{\lfloor s\rfloor} \big| \! \big|_\ttb  +  \big| \! \big| \cH_{s^\prime} \! -\! \cH_{\lfloor s^\prime \rfloor}  \big| \! \big|_\ttb+  \big| \! \big|  \cH_{\lfloor s\rfloor}  -\!\!\!\!\!\!\!\! \!\! \inf_{\; \quad k\in \lgeo \lfloor s\rfloor, \lfloor s' \rfloor \rgeo}\!\!\!\!\!\!\!\!\! \!\!\! \cH_k  \; \,  \big| \! \big|_\ttb+  \big| \! \big| \cH_{\lfloor s^\prime \rfloor}  -\!\!\!\!\!\!\!\! \!\! \inf_{\; \quad k\in \lgeo \lfloor s\rfloor, \lfloor s^\prime \rfloor \rgeo}\!\!\!\!\!\!\!\!\!\!\! \!   \cH_k  \; \,  \big| \! \big|_\ttb\\
& \leq & 2c_2(\mu, \ttb)^{1/\ttb}+ 2 c_1(\mu, \ttb)^{1/\ttb}   \tta (\lfloor s^\prime\rfloor \! -\! \lfloor s \rfloor  ), 
 \end{eqnarray*} 
which easily implies (\ref{hhoolld}). 
 \cq 

\medskip

We next compare $\tta (\cdot) $ to power functions thanks to Potter's bounds. Let $s_0$ and $\epp$ as in the statement of the proposition. There exist $n_0 \ino \bbN^*$ and $c_4\ino(1,\infty)$ that only depend on $\mu$, $\epp$ and $s_0$ such that for all integer $n\geq n_0$ and all real number $s\ino [\frac{n_0}{n} , s_0]$,  
\begin{equation}
\label{controaa}
\frac{1}{c_4} s^\epp \leq \frac{L(n)}{L(ns)} \leq c_4\,  s^{-\epp} \quad \textrm{and thus} \quad \frac{\tta(ns)}{\tta(n)} \leq c_4 \, s^{1-\frac{1}{\alpha} -\epp} \; .
\end{equation}
\noi
\emph{Proof.} By Potter's bounds (see e.g.~Bingham, Goldies \& Teugel~\cite[Theorem 1.5.6]{BiGoTe}), there exists 
$n_0 \ino \bbN^*$ that depend on $L$, $\epp$ and $s_0$ such that for all real numbers $y\geqo x \geqo n_0 $, $L(x)/L(y) $ and $ L(y)/L(x)$ belong to $[\frac{_1}{^2} (y/x)^{-\epp}, 2 (y/x)^{\epp}]$. We fix $n\geqo n_0$. If $s\ino  [\frac{n_0}{n} ,1]$, then $L(n)/L(ns) \ino [\frac{_1}{^2} s^{\epp}, 2 s^{-\epp}]$. If $s\ino  [1, s_0]$, then $L(n)/L(ns) \ino [\frac{_1}{^2} s^{-\epp}, 2 s^{\epp}]$ and we observe that $2 s^{\epp}\leq 2s_0^{2\epp} s^{-\epp}$. This implies (\ref{controaa}) 
by taking $c_4\eqo 2s_0^{2\epp}$. \cq 

\medskip

We now complete the proof of (\ref{HHolder}): we fix $s, s^\prime \ino [0, s_0]$ such that $s^\prime \geqo s$ and we suppose that $n\geq n_0$ without loss of generality. We first assume that  $s^\prime \! -\! s \leqo \frac{n_0}{n}$. In this case we use (\ref{Liploc}) to get 
$\lVert \tfrac{1}{a_n} \cH_{ns^\prime} \! -\! \tfrac{1}{a_n} \cH_{ns} \rVert_\ttb \leq c_2 (\mu, \! \ttb)^{1/\ttb} n(s^\prime \! -\! s)/a_n $. Now observe that 
\begin{multline*}
 \tfrac{n}{a_n} |s^\prime\!  -\! s| \leqo n^{\frac{1}{\alpha}} \! L(n) \big(\tfrac{n_0}{n} \big)^{\frac{1}{\alpha}+\epp} |s^\prime\!  -\! s|^{1-\frac{1}{\alpha}-\epp}\leqo  n^{\frac{1}{\alpha}} c_4 L(n_0)  \big(\tfrac{n_0}{n} \big)^{-\epp}  \big(\tfrac{n_0}{n} \big)^{\frac{1}{\alpha}+\epp} |s^\prime\!  -\! s|^{1-\frac{1}{\alpha}-\epp}\\
 \eqo  c_4 L(n_0) n_0^{\frac{1}{\alpha}}  |s^\prime\!  -\! s|^{1-\frac{1}{\alpha}-\epp},
 \end{multline*}
which implies (\ref{HHolder}) with $c\eqo c_2 (\mu, \ttb) (c_4 L(n_0) n_0^{1/\alpha})^{\ttb}$. 

  We next assume that $s^\prime - s \geqo  \frac{n_0}{n}$. It easily implies that $\lfloor ns^\prime \rfloor \! -\! \lfloor ns\rfloor \geqo n_0$. Since $\lim_{n\to \infty} \tta (n)\eqo \infty$, 
$\sup_{n\geq n_0} \tta (n)^{-1} \! = : \! c_5 \leko \infty$. Thus by (\ref{controaa}), it is easy to check the following. 
\begin{eqnarray*}
\frac{1+ \tta (\lfloor ns^\prime \rfloor \! -\! \lfloor ns\rfloor)}{\tta (n)}& \leq & (1+c_5) \frac{\tta (\lfloor ns^\prime \rfloor \! -\! \lfloor ns\rfloor)}{\tta (n)} \leq (1+c_5) c_4  \big( s^\prime \! -\! s + \tfrac{1}{n} \big)^{1-\frac{1}{\alpha} -\epp} \\
& \leq & 2^{1-\frac{1}{\alpha} -\epp}\! (1+c_5) c_4 ( s^\prime \! -\! s )^{1-\frac{1}{\alpha} -\epp}. 
\end{eqnarray*}
Thanks to (\ref{hhoolld}), we get (\ref{HHolder}) with $c\eqo c_3 (\mu, \ttb) \big( 2^{1-\frac{1}{\alpha} -\epp}(1\! +\! c_5) c_4\big)^{\ttb} $. This easily completes the proof of (\ref{HHolder}) by taking $c$ equal to the largest of the two constants.  
Then (\ref{LbHolder}) is a direct consequence of Lemma~\ref{condKolcont} when $\gamma\leko 1\! -\! \frac{1}{\alpha} \! -\! \frac{1}{\ttb}$. When $\gamma\ino [1\! -\! \frac{1}{\alpha} \! -\! \frac{1}{\ttb},1 \! -\! \frac{1}{\alpha})$, we use the elementary inequality $x^\ttb\leqo 1\! +\! x^{2/(1-\frac{1}{\alpha}-\gamma)}$, $x\geq 0$, to get (\ref{LbHolder}) anyway. Then, (\ref{Lbtotheight}) is an easy consequence from (\ref{LbHolder}).  \cqfd

\begin{proposition}
\label{HolderWW} We fix $\alpha \ino (1, 2]$ and we assume that $\mu$ satisfies (\ref{hypostaintro}). Let $\nu$ be a supercritical offspring distribution whose mean is denoted by $\ttm_\nu$. 
Let $\ttb_0\ino \big( 1\! +\!  \frac{\alpha}{{\alpha-1}} , \infty)$. We assume that  $\sum_{k\in \bbN} k^{1+2\ttb_0} \nu(k)\leko \infty$. Let $\ttb \ino \big( 1 \! +\!  \tfrac{\alpha }{\alpha -1} , \ttb_0)$. 
Recall from (\ref{aennedef}) the definition of $(a_n)_{n\in \bbN^*}$. 
Let $s_0 \ino (1, \infty)$ and let $\gamma$ be a positive real number such that $\gamma \leko \tfrac{1}{2}\big(1 \! -\! \frac{1}{\alpha})\big(1 \! -\! \frac{1}{\ttb}) \! -\! \tfrac{1}{2\ttb} $ (there are such ones since $\ttb\geko 1\! +\! \tfrac{\alpha }{\alpha -1}\, $).  
Then there exists $c\ino (1, \infty)$ that only depends on $\mu, \nu , \ttb, s_0$ and $\gamma$ such that for all $n\ino \bbN^*$ and for $s,s^\prime\ino [0, s_0]$, 
\begin{equation}
\label{WWHolder}
 \textrm{$\bP$-a.s.~} \quad 
\bE \Big[ \big| \tfrac{1}{\sqrt{a_n}}\widehat{\cW}^*_{ns^\prime} (\tau_\infty) \! -\!\tfrac{1}{\sqrt{a_n}}\widehat{\cW}^*_{ns} (\tau_\infty)  \big|^{2\ttb} \big| (\bT, \boo) \Big] 
\leq c \, K_{\ttb} (\bT) \, \big| s^\prime \! -\! s\big|^{1+2\ttb\gamma} \; .
\end{equation}
where $K_\ttb (\bT)$ is the $\bP$-a.s.~finite and $(\bT, \boo)$-measurable r.v.~defined in Proposition \ref{quoscillW}. Moreover, 
conditionally given $(\bT, \boo)$, the laws of the processes 
$ (\tfrac{1}{\sqrt{a_n}}\widehat{\cW}^*_{ns} (\tau_\infty) )_{s\in \bbR_+}$, $n\ino \bbN$, are tight on $\bC^0 (\bbR_+, \bbR)$.  
\end{proposition}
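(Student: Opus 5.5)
We condition on the genealogy first and then on the environment. Fix $n$ and $s\leq s'$ in $[0,s_0]$, and first take $ns,ns'$ integers, say $ns=l$, $ns'=l'$. Let $u_l,u_{l'}$ be the corresponding vertices of $\tau_\infty$ in lexicographical order and $w=u_l\wedge u_{l'}$. Then $\widehat{\cW}^*_{l'}-\widehat{\cW}^*_{l}=(S_{Y_{u_{l'}}}-S_{Y_w})-(S_{Y_{u_l}}-S_{Y_w})$. Conditionally on $\tau_\infty$ and $(\bT,\boo)$, the path of $Y$ along $\lgeo w,u_{l'}\rgeo$ is a $\ttm_\nu$-biased RW started at $Y_w$, run for $|u_{l'}|-|w|$ steps. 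Moreover $Y_w$ is the position at time $|w|$ of a biased RW started at $\boo$, since the path $\lgeo\varnothing,u_{l'}\rgeo$ is a single RW trajectory; the same holds for $u_l$. Writing $X$ for that trajectory, each term has the form $S_{X_{m+k}}-S_{X_m}$ with $m=|w|$ and $k=|u_{l'}|-|w|$. Lemma~\ref{quoscillW} (stated as Proposition~\ref{quoscillW} in the statement) then gives
\[ E\big[|S_{X_{m+k}}-S_{X_m}|^{2\ttb}\,\big|\,(\bT,\boo),\tau_\infty\big]\leq K_\ttb(\bT)\,k^{\ttb-1}(m+k). \]
Here $m=\min_{[l,l']}H$ up to $\pm1$ (Remark~\ref{bijecfor} $(\textbf{c})$), $k=H_{l'}-m$ (resp.\ $H_l-m$), and $m+k=H_{l'}$ (resp.\ $H_l$).

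Next we take expectation over $\tau_\infty$, which is independent of $(\bT,\boo)$, using Hölder's inequality and Proposition~\ref{HolderH}. We need
\[ \bE\big[(H_{l'}-\textstyle\min_{[l,l']} H)^{\ttb-1}\,H_{l'}\big]\leq \bE\big[(H_{l'}-\min H)^{(\ttb-1)p}\big]^{1/p}\,\bE\big[H_{l'}^{q}\big]^{1/q}. \]
Proposition~\ref{momheight} together with Lemma~\ref{ascladd} and Potter's bounds (as in the proof of (\ref{HHolder})) bounds the first factor by $c\,a_n^{\ttb-1}|s'-s|^{(\ttb-1)(1-\frac1\alpha-\epp)}$. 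The bound (\ref{Lbtotheight}) bounds the second factor by $c\,a_n$. Dividing by $a_n^{\ttb}$, the normalization $a_n^{-\ttb}$ cancels exactly, leaving $c\,K_\ttb(\bT)|s'-s|^{(\ttb-1)(1-\frac1\alpha-\epp)(1-\delta)}$, where $\delta>0$ can be made small by taking $p$ close to $1$ and $q$ large.

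We need this exponent to exceed $1+2\ttb\gamma$, that is $(\ttb-1)(1-\frac1\alpha)>1+2\ttb\gamma$ up to the $\epp,\delta$ slack. This is precisely the constraint $\gamma<\frac12(1-\frac1\alpha)(1-\frac1\ttb)-\frac1{2\ttb}$, and positivity of $\gamma$ uses $\ttb>1+\frac{\alpha}{\alpha-1}$. For non-integer times we treat two cases. If $|s'-s|\geq 1/n$, we reduce to neighbouring integers at extra cost $\leq c$. The increment between an integer time and the interpolation point involves at most one edge step, i.e.\ one $U$-increment, whose $2\ttb$-moment is controlled by $K_\ttb$ via (\ref{sigdeltamg}); the time-interpolation of (\ref{ctrvshght}) is handled as in (\ref{Liploc}). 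If $|s'-s|<1/n$, the increment is linear in $n|s'-s|$ times a single-step difference, giving $\leq cK_\ttb (n|s'-s|)^{2\ttb}a_n^{-\ttb}$. This is $\leq cK_\ttb|s'-s|^{1+2\ttb\gamma}$ provided $n^{2\ttb}|s'-s|^{2\ttb-1-2\ttb\gamma}a_n^{-\ttb}$ is bounded when $|s'-s|<1/n$. That holds because $n^{1+2\ttb\gamma}a_n^{-\ttb}$ is bounded, since $a_n\approx n^{1-1/\alpha}$ and $\ttb(1-\frac1\alpha)>1+2\ttb\gamma$.

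Tightness then follows from Lemma~\ref{condKolcont}, with $\ccG=\sigma(\bT,\boo)$, exponent $2\ttb$ and $\ttc=2\ttb\gamma$, combined with Lemma~\ref{condtight}; here $\widehat{\cW}^*_0=S_\boo=0$. I expect the main obstacle to be the bookkeeping in the second step: making the Hölder split compatible with the exponent condition, and treating the $\pm1$ discrepancy between $|w|$ and $\min H$.
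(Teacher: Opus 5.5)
Your integer-time estimate is correct. In substance it is the first estimate in the paper's proof, read at lexicographic rather than contour times. Conditionally on $(\bT,\boo)$ and $\tau_\infty$, Lemma~\ref{quoscillW} bounds the $2\ttb$-th moment of $\widehat{\cW}^*_{l'}(\tau_\infty)-\widehat{\cW}^*_{l}(\tau_\infty)$ by $2^{2\ttb-1}K_\ttb(\bT)\big((H_{l'}-m)^{\ttb-1}H_{l'}+(H_l-m)^{\ttb-1}H_l\big)$, with $m=\min_{[l,l']}H(\tau_\infty)$. Averaging with Proposition~\ref{momheight}, Lemma~\ref{ascladd}, Potter's bounds and (\ref{Lbtotheight}) then works; no loss $\delta$ occurs, since Proposition~\ref{momheight} holds for every exponent.

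The gap is the passage to non-integer times, where both of your claims fail.

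First, $\widehat{\cW}^*_s(\tau_\infty)=\widehat{\cW}_{\phi_{\tau_\infty}(s)}(\tau_\infty)$ with the time-change (\ref{phitdef}). For $s\in[l,l+\frac12]$ the height snake runs through the values $S_{Y_v}$ for all $v$ on the path from $u_l$ down to $\overleftarrow{u_{l+1}}$. That path has $1+H_l-H_{l+1}$ edges, traversed at speed $2(1+H_l-H_{l+1})$ in the variable $s$. So the increment between $l$ and a point of $(l,l+1)$ is not one $U$-increment. Likewise, for $|s'-s|<1/n$ it is not ``linear in $n|s'-s|$ times a single step''.

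Second, even one step is not controlled by $K_\ttb(\bT)$ alone. $K_\ttb$ only bounds Ces\`aro averages of $E_{\bT,\boo}[\Phi_{2\ttb}(\overline{X}_k)]$. What you actually have (Lemma~\ref{quoscillW} with $n=1$) is $E_{\bT,\boo}[|S_{X_{m+1}}-S_{X_m}|^{2\ttb}]\leq K_\ttb(\bT)(m+1)$, and averaging over the height $m$ costs a factor of order $a_n$. Hence your bound $cK_\ttb(\bT)(n|s'-s|)^{2\ttb}a_n^{-\ttb}$ is not justified. The condition you verify ($n^{1+2\ttb\gamma}a_n^{-\ttb}$ bounded) is also weaker than the one really needed.

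The step can be repaired, and the hypothesis on $\gamma$ is exactly what the repair requires. The paper does the interpolation at the level of the contour snake, where only one edge is crossed between consecutive integer contour times. Using $x^{2\ttb}\leq x^{\ttb-1}$ for $x\leq 1$, it gets for all real $s\leq s'\leq n_0$, with $C=C(\tau_\infty)$:
\[
\bE\big[|\widehat{\cW}_{s'}-\widehat{\cW}_{s}|^{2\ttb}\,\big|\,(\bT,\boo),\tau_\infty\big]\leq 6^{2\ttb}K_\ttb(\bT)\,\big(\max_{[0,n_0]}C\big)\max_{r,r'\in[s,s']}|C_r-C_{r'}|^{\ttb-1}.
\]
Since $H(\tau_\infty)=C\circ\phi_{\tau_\infty}$, this transfers verbatim to the height snake, with $\max_{r,r'\in[s,s']}|H_r(\tau_\infty)-H_{r'}(\tau_\infty)|$ in place of the contour oscillation. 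The random slopes of the interpolation (\ref{ctrvshght}) are then absorbed by the H\"older-norm moment bound (\ref{LbHolder}) at exponent $\gamma_0=\frac{1+2\ttb\gamma}{\ttb-1}$. The requirement $\gamma_0<1-\frac{1}{\alpha}$ is just a restatement of the constraint on $\gamma$.

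If you prefer to stay on your route, you must carry the descent length $D_l=1+H_l-H_{l+1}$ and the factor $H_l+1$. All moments of $D_l$ are bounded, by Proposition~\ref{momheight} with $n=1$. After normalization, the boundary pieces then cost order $K_\ttb(\bT)a_n^{1-\ttb}$, and the short increments cost at most order $K_\ttb(\bT)a_n^{1-\ttb}(n|s'-s|)^{\ttb-1}$. Both are $\leq cK_\ttb(\bT)|s'-s|^{1+2\ttb\gamma}$ in their ranges ($|s'-s|\geq 1/n$, resp.\ $|s'-s|<1/n$). This holds because $n^{1+2\ttb\gamma}a_n^{1-\ttb}$ stays bounded, i.e.\ because $(\ttb-1)(1-\frac{1}{\alpha})>1+2\ttb\gamma$.

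Once (\ref{WWHolder}) holds at all real times, your tightness argument via Lemmas~\ref{condKolcont} and~\ref{condtight} is fine.
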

\noi
\textbf{Proof.} To simplify the notation,
 we denote by $\mathcal G$ the $\sigma$-field generated by $(\bT, \boo)$, and by $\mathcal G'$ the $\sigma$-field generated by $((\bT, \boo), \tau_\infty)$. We denote by $C$ the contour process $C(\tau_\infty)$ and by $\cH$ the height process $H(\tau_\infty)$ and we also drop $\tau_\infty$ in the contour snake $\cW$ and the height snake $\cW^*$.
Although the proposition concerns the height snake, it is easier to consider first 
the endpoint process of contour snake $\widehat{\cW}$ conditionally given $\mathcal G'$.  

Let us fix integers $0\leqo k \leqo l  \leqo n_0$. We denote by $j$ the smallest integer of $\lgeo k , l\rgeo $ such that 
$C_j \eqo \inf_{s\in [k, l]} C_s $. By definition of BRWs and by definition of the endpoint process of the contour snake $\widehat{\cW}$, conditionally given $\mathcal G'$, $\widehat{\cW}_k-\widehat{\cW}_j$ 
(resp.~$\widehat{\cW}_l-\widehat{\cW}_j$) is distributed as $S_{X_{C_k} }\! -\!  S_{X_{C_j} }$ (resp.~$S_{X_{C_l}  }\! -\!  S_{X_{C_j}}$) where $\overline{X}_n\eqo (\bT, X_n)$, $n\ino \bbN$, stands for a $\ttm_\nu$-biased RW on $\bT$ starting at $X_0\eqo \boo$. Lemma \ref{quoscillW} applies and we $\bP$-a.s.~get  
\begin{eqnarray*}
\bE \big[ |\widehat{\cW}_l \!\!\! \!\!\! & - & \!\!\!  \!\!\! \widehat{\cW}_k |^{2\ttb}  \big| \mathcal G'\big] \leq 
2^{2\ttb -1}\big( \bE \big[ |\widehat{\cW}_k-\widehat{\cW}_j |^{2\ttb}  \big| \mathcal G'\big]   + \bE \big[ |\widehat{\cW}_l-\widehat{\cW}_j |^{2\ttb}  \big| \mathcal G'\big]  \big) \\
& \leq &2^{2\ttb -1}K_\ttb (\bT) \big( C_{k}  \big| C_{k}  \! -\! C_{j} \big|^{\ttb -1} \!  +  C_{l}   \big| C_{l}  \! -\! C_{j} \big|^{\ttb -1}\big)  \\
& \leq & 2^{2\ttb} K_\ttb (\bT) \,  \big(  \!\!\!  \!\! \max_{\quad r\in [0, n_0]} \!\!\!  \!\! C_r  \big)  \!\!  \max_{\; r, r'\in [k, l]} \!     | C_{r}  \! -\! C_{r'}|^{\ttb -1}.
\end{eqnarray*}

Let  $s, s'\ino [0, n_0]$ be such that $s'\geqo s$. Suppose first that $s, s'\ino [k, k+1]$. The specific 
interpolation of the contour snake implies that $| \widehat{\cW}_{s'} \! -\!  \widehat{\cW}_{s}| \eqo |s'\! -\! s| \,
 |\widehat{\cW}_{k} \! -\!  \widehat{\cW}_{k+1} | $. Thus,  
 $\bE \big[ |\widehat{\cW}_{s} \! -\!  \widehat{\cW}_{s'} |^{2\ttb}  \big| \mathcal G'\big] \leqo 2^{2\ttb} K_\ttb (\bT) \, (\max_{ r\in [0, n_0]}  C_r ) \,  |s'\! -\! s|^{\ttb-1}$, since $\max_{\; r, r'\in [k, k+1]}   | C_{r} \! -\! C_{r'}|\eqo 1$ and $2\ttb\geqo \ttb\! -\! 1$. 
 Now suppose that $s, s'\ino [0, n_0]$ are such that $\lceil s \rceil \leqo \lfloor s'\rfloor$. Thus, 
\begin{eqnarray} \label{bluered}
\bE \Big[ |\widehat{\cW}_s -\! \!\!\!\!\!\!\!  & &\!\!\!\! \! \! \!  \widehat{\cW}_{s'}    |^{2\ttb}  \Big| \mathcal G'\Big]  \nonumber \\
\!\!\! & \leq & \!\!\!  
3^{2\ttb -1}2^{2\ttb} K_\ttb (\bT)\,  \big(  \!\!\!  \!\! \max_{\quad r\in [0, n_0]} \!\!\!  \!\! C_r  \big)   \big( | \lceil s \rceil  \! -\! s|^{\ttb-1} \!\!  +  |s'\! -\!  \lfloor s'\rfloor|^{\ttb-1}\!\! + \!\!\! \max_{\; r, r'\in [\lceil s \rceil,  \lfloor s'\rfloor]} \!   | C_{r}  \! -\! C_{r'}|^{\ttb -1} \big) \nonumber \\
\!\!\! & \leq & \!\!\! 6^{2\ttb }  K_\ttb (\bT) \,  \big(  \!\!\!  \!\! \max_{\quad r\in [0, n_0]} \!\!\!  \!\! C_r  \big)  \!\!  \max_{\; r, r'\in [s, s']} \!    | C_{r}   \! -\! C_{r'}|^{\ttb -1}.
\end{eqnarray} 
This inequality easily extends to all $s, s^\prime \ino [0, n_0]$ such that $s\leqo s'$. 

Recall that $\cH$ and $\widehat{\cW}^* $ are derived from  $C$ and $\widehat{\cW} $ by the time-change $\phi_{\tau_\infty} \! :\! \bbR_+ \! \to \! \bbR_+$ that is the increasing continuous bijection defined in Remark~\ref{contord} $(\textbf{d})$. Namely, $\cH_s\eqo C_{\phi_{\tau_\infty} (s)} $ and $\widehat{\cW}^*_s \eqo \widehat{\cW}_{\phi_{\tau_\infty} (s)} $, $s\ino \bbR_+$. Then
for all $s_1, s', s\ino \bbR_+$ such that $s_1\geqo s'\geqo s$, by choosing $n_0\eqo \phi_{\tau_\infty} ( \lceil  s_1 \rceil )$ in (\ref{bluered}), we $\bP$-a.s.~get 
\begin{equation}
\label{toutcondi}
\bE \Big[ |\widehat{\cW}^*_s \! -\!  \widehat{\cW}^*_{s'}    |^{2\ttb}  \Big| \mathcal G'\Big]  \leq   6^{2\ttb }  K_\ttb (\bT)  \!\!\!   \max_{\: r\in [0, \lceil s_1\rceil]} \!\!\!  \!\! \cH_r     \max_{\; r, r'\in [s, s']} \!   | \cH_{r} \! -\! \cH_{r'} |^{\ttb -1} .
\end{equation}
We next  fix $s_0\ino (0, \infty)$ and $0\leko\gamma \leko \tfrac{1}{2}\big(1 \! -\! \frac{1}{\alpha})\big(1 \! -\! \frac{1}{\ttb}) \! -\! \tfrac{1}{2\ttb} $. Note that $\gamma_0\! :=\! \frac{1+2\ttb\gamma}{\ttb -1}\leko 1 \! -\! \frac{1}{\alpha}$. We set 
$Z_n\eqo \mathtt{Hol}_{\gamma_0, s_0} (\tfrac{1}{a_n} \cH_{n \, \cdot }  )$ and $\Gamma_{\! n} \eqo \max_{r\in [0,\lceil s_0\rceil]} 
\tfrac{1}{a_n} \cH_{nr}$. Then (\ref{toutcondi}) entails
$$ \bE \Big[ |\tfrac{1}{\sqrt{a_n}}\widehat{\cW}^*_{ns} \! -\!  \tfrac{1}{\sqrt{a_n}}\widehat{\cW}^*_{ns'}    |^{2\ttb}  \Big| \mathcal G'\Big]  \leq   6^{2\ttb }  K_\ttb (\bT) \, \Gamma_{\! n} Z_n^{\ttb-1} |s\! -\! s'|^{1+2\ttb \gamma} .$$
Since $\gamma_0 \leko 1 \! -\! \frac{1}{\alpha}$, Proposition~\ref{HolderH} applies: there is $c_1\ino (0, \infty)$, which 
only depends on $\ttb$, $\mu$, $s_0$ and $\gamma_0$ such that $\sup_{n\in \bbN^*} (\bE [\Gamma_{\! n}^\ttb] + \bE [Z_n^\ttb])\leq c_1$, which implies $\sup_{n\in\bbN^*} \bE [\Gamma_{\! n} Z_n^{\ttb-1} ] \leqo c_1$ by Hölder inequality. So we obtain
$$ \bE \Big[ |\tfrac{1}{\sqrt{a_n}}\widehat{\cW}^*_{ns} \! -\!  \tfrac{1}{\sqrt{a_n}}\widehat{\cW}^*_{ns'}    |^{2\ttb}  \Big| \mathcal G\Big]  \leq   6^{2\ttb }  K_\ttb (\bT) \, \bE \big[\Gamma_{\! n} Z^{\ttb-1}_n\big]  \,  |s\! -\! s'|^{1+2\ttb \gamma} \leq 6^{2\ttb } c_1  K_\ttb (\bT) \,  |s\! -\! s'|^{1+2\ttb \gamma} , $$ 
since $\tau_\infty$ is independent of $(\bT, \boo)$. This proves (\ref{WWHolder}). Conditional tightness is a consequence of Lemmas~\ref{condKolcont} and \ref{condtight}. \cqfd 

\subsection{Proof of Proposition \ref{harmocv} }
\label{pfharmosec}

We keep the notation and the general assumptions discussed in Section \ref{overviewsec}. 
In particular, we recall (\ref{remindcv}): $\mathscr Q_n \! \to \! ( \mathbf g_1, \mathbf d_1, X,H,H)$ weakly on $\bbR^2\! \times \! \bD (\bbR_+ , \bbR)\! \times \! \bC^0 (\bbR_+ , \bbR)^2$.  
By Skorokhod representation theorem, 
there are processes $\mathscr Q_n'$ with the same law as $\mathscr Q_n$  that converge almost surely to $( 
\mathbf g'_1, \mathbf d'_1, X', H', H')$, which has the same law as $(\mathbf g_1, \mathbf d_1 ,X, H, H)$. 
Without loss of generality, we can assume that $(\Omega, \ccF, \bP)$ is large enough to allow the almost sure convergence to take place. 
Then observe that $(\mathbf g'_1, \mathbf d'_1)$ is necessarily the first excursion interval of $H'$ longer than $1$, and 
$\mathbf g'_1$ and $ \mathbf d'_1$ are thus derived from $H'$ as in Remark \ref{gundundef}. 
Since the Lukasiewicz path (or the height process) of a forest completely encodes it, there actually exists a sequence of GW($\mu$)-forests $\tau_{n, \infty}$, $n\ino \bbN^*$, independent from the environment $(\bT, \boo)$ and such that 
\begin{equation}
\label{cQnpri}
\mathscr Q_n'=  \big(  \tfrac{1}{n} \frak g'_n ,\tfrac{1}{n} \frak d'_n, \tfrac{1}{b_n} V_{\lfloor n\cdot \rfloor} (\tau_{n, \infty}) ,   \tfrac{1}{a_n} H_{ n\cdot } (\tau_{n, \infty})  ,    \tfrac{1}{a_n} C_{2n\cdot} (\tau_{n, \infty})  \big)
\end{equation}
where $\frak g'_n$ and $\frak d'_n$ are derived from $\tau_{n, \infty}$ as $\frak g_n$ and $\frak d_n$ are from $\tau_\infty$ in (\ref{JGDdef}) (roughly speaking $\frak g'_n$ and $\frak d'_n$ are the endpoints of the first excursion of $H_{ n\cdot } (\tau_{n, \infty})$ longer than $n$). Namely we assume that 
\begin{equation} 
\label{skoroderie}
\textrm{$\bP$-a.s.}\qquad 
\mathscr Q_n' \underset{n\to \infty}{ -\!\!\! -\!\!\! \longrightarrow } ( \mathbf g'_1, \mathbf d'_1, X', H' , H')
\end{equation}
in $\bbR^2 \! \times \! \bD(\bbR_+, \bbR)\! \times \! \bC^0(\bbR_+,\bbR)^2$.

Then, for all $n\ino \bbN^*$, 
we introduce the process $( \widehat{\cW}^*_s (\tau_{n, \infty}))_{s\in \bbR_+}$ as a $\bbR$-valued process such that 
\begin{equation}
\label{skoroderie2}
\Big( \tau_{n, \infty} , \big(\widehat{\cW}^*_s  (\tau_{n, \infty}) \big)_{s\in \bbR_+} \Big) \overset{\textrm{(law)}}{=} \Big( \tau_{\infty} , \big(\widehat{\cW}^*_s (\tau_{\infty}) \big)_{s\in \bbR_+ } \Big), 
\end{equation} 
where $\widehat{\cW}^* (\tau_{\infty})$ is defined as previously. 
This section is mainly devoted to the proof of Proposition \ref{skomrgFou} below. Before proving it, we first show that it actually implies  
Proposition \ref{harmocv}. 
\begin{proposition}
\label{skomrgFou} 
We fix $\alpha \ino (1, 2]$. Let $\mu$ satisfy (\ref{hypostaintro}). Let $\nu$ be a supercritical offspring distribution whose mean is denoted by $\ttm_\nu$. We assume that there exists $\ttb \ino (2, \infty)$ such that 
$\sum_{k\in \bbN} k^\ttb \nu(k)\leko \infty$. Let $(\bT, \boo)$ be an invariant GW($\nu$)-tree. 
Let $\tau_{n,\infty}$, $n\ino \bbN$, be GW($\mu$)-forests that are independent from $(\bT, \boo)$ and 
such that (\ref{skoroderie}) holds true. Let $( \widehat{\cW}^*_s (\tau_{n, \infty}))_{s\in \bbR_+}$ be as in (\ref{skoroderie2}). We denote by $(W'_s)_{s\in \bbR_+}$ a one-dimensional Brownian snake starting at $0$ and whose lifetime process is $H'$.  
We recall from (\ref{vutchi}) the definition of $\sigma_\nu$. Then for all $p\ino \bbN^*$ and for all real numbers $0\leqo s_1\leko \ldots \leko s_p$ and $\lambda_1, \ldots, \lambda_p$, $\bP$-almost surely
\begin{equation}
\label{ascvmrghrm}
\lim_{n\to \infty}\bE \Big[\prod_{1\leq j\leq p} \exp \Big( i \tfrac{1}{\sigma_\nu \sqrt{a_n}} \lambda_j  \widehat{\cW}^*_{\! \lfloor ns_j\rfloor} (\tau_{n, \infty}) \Big)  
\, \Big| \, (\bT, \boo) , \tau_{n,\infty} \Big] = \bE \Big[\prod_{1\leq j\leq p} e^{i  \lambda_j  \widehat{W}'_{s_j}  }  
\, \Big| H' \Big] .
\end{equation}
\end{proposition}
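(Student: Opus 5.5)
We condition on the genealogy and compute the conditional characteristic function vertex by vertex along the tree spanned by the sampled vertices, using the spatial Markov structure of the BRW. Fix $p$ and times $0\leq s_1<\dots<s_p$. Write $l_j\eqo\lfloor ns_j\rfloor$ and let $u^{n}_{j}$ be the $l_j$-th vertex of $\tau_{n,\infty}$ in lexicographical order. Given $\tau_{n,\infty}$, the subtree spanned by the $u^n_j$ is a discrete reduced tree. Its branch points are the ancestors $u^n_{j}\wedge u^n_{j+1}$, whose heights differ from $\min_{[l_j,l_{j+1}]}H(\tau_{n,\infty})$ by at most one (Remark \ref{bijecfor} $(\textbf{c})$). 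By (\ref{skoroderie}), almost surely $\frac{1}{a_n}H_{l_j}(\tau_{n,\infty})\to H'_{s_j}$ and $\frac1{a_n}\min_{[l_j,l_{j+1}]}H(\tau_{n,\infty})\to m_{H'}(s_j,s_{j+1})$. By Lemma \ref{branchmass}, applied after transferring it to $H'$ (which has the law of $H$), the limit has a.s. $H'_{s_j}\wedge H'_{s_{j+1}}>m_{H'}(s_j,s_{j+1})$. Hence, for $n$ large, every branch of the reduced tree has length of order $a_n$, and its genealogical shape stabilizes. Ties or zero-length branches, where several $s_j$ sit in distinct excursions or at the root, are handled the same way, since the roots of distinct trees of the forest all sit at $\boo$.

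The key step is an induction on $p$ that peels off the last branch. Let $k\eqo|u^n_{p-1}\wedge u^n_p|$ (or $k\eqo 0$ if the two vertices lie in different trees). By the branching Markov property, conditionally on $(\bT,\boo)$, $\tau_{n,\infty}$ and the positions on the reduced tree up to the ancestor $w\eqo(u^n_p)_{|k}$, the increment $S_{Y_{u^n_p}}-S_{Y_w}$ is distributed as $S_{X_{r_n+s_n}}-S_{X_{r_n}}$, with $r_n\eqo k$ and $s_n\eqo|u^n_p|-k$, for a $\ttm_\nu$-biased walk. Here the conditioning is on $\ccF_{r_n}$ for the walk run along the ancestral line of $u^n_p$, and the environment seen from $X_{r_n}$ is the relevant one. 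Proposition \ref{fourcondharmo} then gives, almost surely,
$$\bE\big[e^{i\theta(S_{X_{r_n+s_n}}-S_{X_{r_n}})/(\sigma_\nu\sqrt{a_n})}\,\big|\,\ccF_{r_n}\big]\to e^{-\frac12\theta^2(H'_{s_p}-m_{H'}(s_{p-1},s_p))},$$
after replacing $n$ by $a_n$ as the normalisation, with $r_n/a_n\to m_{H'}$ and $s_n/a_n\to H'_{s_p}-m_{H'}$. The factorisation
$\prod_j e^{i\lambda_j\widehat{\cW}^*_{l_j}}=\prod_{j<p}e^{i\lambda_j\widehat{\cW}^*_{l_j}}\,e^{i\lambda_p\widehat{\cW}^*_{(u_p)_{|k}}}\,e^{i\lambda_p(\text{increment})}$
reduces the problem to $p-1$ points with a modified family of frequencies ($\lambda_{p-1}$ merged with $\lambda_p$ at the branch point). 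Bounded convergence handles the conditional expectation of the already-converging factor. The resulting product of Gaussian factors is exactly the Fourier transform of the marginals given by $F_p$ in Definition \ref{Brosnadef}, i.e. the right side of (\ref{ascvmrghrm}).

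The main obstacle is making Proposition \ref{fourcondharmo} work along a random, $n$-dependent ancestral line with random $r_n,s_n$, and holding for all $n$ at once $\bP$-a.s. The proposition is stated for deterministic $r_n,s_n$ and an a.s. statement per sequence. I would first condition on $\tau_{n,\infty}$: since it is independent of $(\bT,\boo)$ and the convergence (\ref{skoroderie}) is a.s., I can fix a realisation of the genealogies, making $r_n,s_n$ deterministic with the required ratios. I would then apply the proposition for countably many rational targets, and use the ergodic control (Corollary \ref{rwergobis}) to move between them. A second point is that the walk restarted at the branch point must again be driven by a stationary environment. For this I would express every branch as a time segment $[r_n,r_n+s_n]$ of one single biased walk started at $\boo$, namely the walk along the ancestral line of $u^n_p$, so that no restart is needed and the conditioning $\sigma$-field $\ccF_{r_n}$ contains all earlier information. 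This uses that the positions on the other branches are conditionally independent of the final segment given $\ccF_{r_n}$.
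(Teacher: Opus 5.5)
Your route differs from the paper's in where you cut the reduced tree. In the proof of Lemma \ref{cvmargiform}, the paper cuts at the \emph{first} branching point. The trunk, of height $a_n m(\cC^{(n)})$, carries the total frequency $\overline{\lambda}$ and is handled by Proposition \ref{fourcondharmo}. The two pieces above it are again leaf-marked contours in $\ccC_{p'}$ with $p'<p$, so the class is closed under (\ref{brRWdec}). The price is that these pieces are rooted at the random vertex $X_{r_n+a_n m(\cC^{(n)})}$. That is why the lemma is stated for $\widetilde{\ccL}_{\bT,X_{r_n}}$ with arbitrary $r_n/a_n\to r$, and why Lemma \ref{exozarb} is needed to push the a.s.\ convergence of the pieces through $\bE[\,\cdot\,|\,\ccF_{r_n}]$.

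You remove the \emph{last} branch $(w,u^n_p]$ instead, and this does buy something. Everything that remains is still rooted at $\boo$, and the removed branch only contributes the factor $\psi_n(\bT,Y_w)$, where $\psi_n(\bT,x):=E_{\bT,x}[\exp(i\lambda_p(S_{X_{\ell_n}}-S_{X_0})/(\sigma_\nu\sqrt{a_n}))]$ and $\ell_n$ is the branch length. Given $(\bT,\boo)$ and the frozen forests, $Y_w$ has the law of $X_{r_n}$, with $r_n$ the generation of $w$. So the two conditional characteristic functions differ by at most $E_{\bT,\boo}[|\psi_n(\bT,X_{r_n})-c|]$, where $c:=\exp(-\frac12\lambda_p^2(H'_{s_p}-m_{H'}(s_{p-1},s_p)))$. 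This tends to $0$ a.s.\ by Proposition \ref{fourcondharmo} and conditional bounded convergence, and no conditional strengthening of the induction hypothesis is required. This quenched $L^1$ bound is the precise form your ``bounded convergence'' step must take: Proposition \ref{fourcondharmo} gives a.s.\ convergence for one fixed walk, not for the $n$-dependent walk along the ancestral line of $u^n_p$. Also, once the forests are frozen by independence and Fubini, $r_n,\ell_n$ are deterministic with real limits, so the detour through rational targets and Corollary \ref{rwergobis} is unnecessary.

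As written, however, your induction does not close, for two reasons.

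First, after removing $(w,u^n_p]$ you are left with the conditional expectation of $\prod_{j<p}e^{i\lambda_j S_{Y_{u^n_j}}/(\sigma_\nu\sqrt{a_n})}\,e^{i\lambda_p S_{Y_w}/(\sigma_\nu\sqrt{a_n})}$, where $w=u^n_{p-1}\wedge u^n_p$ is a strict ancestor of $u^n_{p-1}$. The frequency $\lambda_p$ cannot be merged into $\lambda_{p-1}$, since $S_{Y_w}\neq S_{Y_{u^n_{p-1}}}$. The result is not of the form covered by your hypothesis: a marked vertex is now an ancestor of another, which never happens for $p-1$ leaves satisfying (\ref{pleaveshypo}). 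If moreover $|u^n_{p-2}\wedge u^n_{p-1}|>|w|$, the mark sits on a trunk shared by several leaves, and later peelings create more such marks. You must instead induct over finite families $v_1,\dots,v_q$ of marked vertices of arbitrary mutual ancestry, assuming $|v_i\wedge v_j|/a_n\to h_{ij}$ and aiming at $\exp(-\frac12\sum_{i,j}\lambda_i\lambda_j h_{ij})$, by induction on the number of edges of the reduced marked tree. For leaves at $s_1<\dots<s_p$ this target is the right side of (\ref{ascvmrghrm}), because $\widehat{W}'$ given $H'$ is centred Gaussian with covariance $m_{H'}$.

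Second, your claim that the shape of the reduced tree stabilises is false for $\alpha<2$. Stable trees have multifurcations, so with positive probability $m_{H'}(s_{j-1},s_j)=m_{H'}(s_j,s_{j+1})$. Then the relative order of $|u^n_{j-1}\wedge u^n_j|$ and $|u^n_j\wedge u^n_{j+1}|$, and hence the sequence of your peelings, may change with $n$, and internal edges may have length $o(a_n)$. In the paper this is exactly what the subsequences $\{k_n=k\}$ of Lemma \ref{cvfixskel} handle. In your setting, split $\bbN$ into finitely many subsequences along which the discrete marked shape is constant, and use continuity of the limit in the $h_{ij}$. Edges of length $o(a_n)$ are harmless, since Proposition \ref{fourcondharmo} allows a vanishing limit $\ell_n/a_n\to 0$.

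With these two repairs your argument is a valid alternative proof. In that generalized form it does not even need the non-degeneracy provided by Lemma \ref{branchmass}.
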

\noi
\textbf{Proof.} The proof of Proposition~\ref{skomrgFou} starts below, after the proof of Proposition~\ref{harmocv}.  \cqfd 

\medskip

\noi
\textbf{Proof of Proposition \ref{harmocv}.} As already explained, the time-change (\ref{liminvtc}) and (\ref{intcCWW}) combined with Lemma~\ref{composko} implies that it sufficient to prove conditionally given $(\bT, \boo)$ that the following convergence 
\begin{equation}
\label{cvharmo3/2}
\Big(  \tfrac{1}{n} \frak g_n ,\tfrac{1}{n} \frak d_n ,\tfrac{1}{b_n} V_{\lfloor n \cdot \rfloor } (\tau_\infty)  ,   \tfrac{1}{a_n} H_{n\cdot } (\tau_\infty) ,   \tfrac{1}{ \sigma_\nu \sqrt{a_n}}\, \widehat{\cW}^*_{n\cdot }(\tau_\infty) \Big)
 \underset{n\to \infty}{-\!\!\!-\!\!\!-\!\!\!\longrightarrow} \big( \mathbf g_1, \mathbf d_1, X,  H , \widehat{W} \big)
\end{equation}
holds weakly on $\bbR^2 \! \times \!  \bD (\bbR_+, \bbR) \! \times \! \bC^0 (\bbR_+, \bbR)^2$ equipped with the product topology. 

We recall (\ref{skoroderie}) and we now prove that Proposition \ref{skomrgFou} implies Proposition \ref{harmocv}. 
First recall for all $s\geko 0$ that $\bP$-a.s.~$X'_{s-} \eqo X'_s$. By standard results on Skorokhod topology (see Lemma~\ref{Sko3} $(ii)$ below), it $\bP$-a.s.~implies that $\lim_{n\to \infty}  \tfrac{1}{b_n} V_{\lfloor ns\rfloor}(\tau_{n, \infty})\eqo X'_s$. 
By Proposition \ref{skomrgFou} and dominated convergence, we then get 
for all real numbers $0\leqo s_1\leko \ldots \leko s_p$ and 
$\xi , \xi', \lambda_1, \lambda'_1, \lambda''_1, \ldots, \lambda_p, \lambda'_p, \lambda_p''$,  
\begin{eqnarray}
\label{HcW*mrg}
\textrm{$\bP$-a.s.~}\quad \lim_{n\to \infty}\bE \Big[ e^{i\tfrac{1}{n}\xi \frak g_n+ i\tfrac{1}{n} \xi' \frak d_n}\!\! \prod_{1\leq j\leq p}  \! \!   \! \! \!\!   \! \!  & &  \! \!   \! \!   \! \! \!\!  
e^{
i\tfrac{1}{b_n} \lambda''_j V_{\lfloor ns_j\rfloor}(\tau_{ \infty}) + i \tfrac{1}{a_n} \lambda'_j  H_{ns_j} (\tau_\infty)+ i\tfrac{1}{\sigma_\nu \sqrt{a_n}} \lambda_j  \widehat{\cW}^*_{\! ns_j} (\tau_{ \infty}) } 
\, \Big| \, (\bT, \boo)  \Big]  \nonumber \\
&=& \bE \Big[  e^{i\xi \mathbf g_1+ i\xi' \mathbf d_1} \!\! \prod_{1\leq j\leq p} e^{i\lambda''_j X_{s_j} +i \lambda'_j H_{s_j} + i\lambda_j  \widehat{W}_{s_j}  }   \Big] .
\end{eqnarray}

To simplify notation, we denote by $\mathcal G$ the $\sigma$-field generated by $(\bT, \boo)$. We also denote by $\bD_{\! q}$ the space $\bD(\bbR_+, \bbR^q)$ of cadlag functions equipped with the Skorokhod topology, and we also recall the shorthand notation $\bC^0_1$ for $\bC^0(\bbR_+, \bbR)$ which is equipped with the topology of uniform convergence on every compact interval. Below, product spaces are equipped with product topology and if $E$ is a Polish space, we denote by $\cM_1(E)$ the space of its Borel probability measures equipped with the weak convergence. 
The following elementary 
lemma sets the notation and the basic results we need.  
\begin{lemma}
\label{Sko3} The following properties hold true. 
\begin{compactenum}
\item[$(i)$] The function $\jmath: ( \mathtt u,  \ttx, \mathtt h, \mathtt w )\ino \bbR^2 \! \times \! 
\bD_1 \! \times \! (\bC^0_1)^2 \!\! \mapsto \! 
\big(\mathtt u, (\ttx (s), \mathtt h(s), \mathtt w(s))_{s\in \bbR_+} \big)\ino  \bbR^2 \! \times \!  \bD_3 $ is continuous. 
\item[$(ii)$] Let $s_1, \ldots , s_p\ino \bbR_+$. The function $\phi_{s_1, \ldots, s_p} : \mathtt z\ino \bD_3 \mapsto (\ttz(s_1), \ldots, \ttz(s_p) ) \ino (\bbR^3)^p$ is continuous at $\ttz \ino \bD_3$ such that $\ttz (s_j-)\eqo \ttz(s_j)$, $1\leqo j\leqo p$. 
\item[$(iii)$] Let $k\ino \{1, 2, 3\}$. The function $\pi_k: \big( \mathtt u, (\ttz_1(s),  \ttz_2(s),  \ttz_3(s)  )_{s\in \bbR_+} \big) \ino \bbR^2 \! \times \!  \bD_3\mapsto \ttz_k(\cdot) \ino \bD_1$ is continuous. 
\item[$(iv)$] Let $k\ino \{1, 2, 3\}$. For all $\mathtt Q\ino \cM_1 (\bbR^2 \! \times \! \bD_3)$, we set $\mathtt Q^k\eqo \mathtt Q \circ \pi^{-1}_k \ino \cM_1(\bD_1)$. Then $\mathtt Q \mapsto \mathtt Q^k $ is continuous. 
 \end{compactenum}
\end{lemma}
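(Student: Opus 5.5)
The four assertions of Lemma~\ref{Sko3} are elementary facts about the Skorokhod topology, and I will check them in the order $(i)$, $(ii)$, $(iii)$, $(iv)$. Throughout I use the standard characterization of Skorokhod convergence on $\bD(\bbR_+,\bbR^q)$: $\ttz_n\to\ttz$ if and only if there are increasing continuous bijections $\lambda_n$ of $\bbR_+$ such that, for every $N$, $\sup_{[0,N]}|\lambda_n(s)-s|\to 0$ and $\sup_{[0,N]}|\ttz_n(\lambda_n(s))-\ttz(s)|\to 0$.

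For $(i)$, let $(\mathtt u_n,\ttx_n,\mathtt h_n,\mathtt w_n)\to(\mathtt u,\ttx,\mathtt h,\mathtt w)$. Take time-changes $\lambda_n$ associated with $\ttx_n\to\ttx$ and use the same $\lambda_n$ for all three coordinates. It remains to check that $\sup_{[0,N]}|\mathtt h_n(\lambda_n(s))-\mathtt h(s)|\to 0$. This quantity is bounded by
\[
\sup_{[0,N+1]}|\mathtt h_n-\mathtt h|+\sup_{s\le N}|\mathtt h(\lambda_n(s))-\mathtt h(s)|,
\]
where I used $\lambda_n([0,N])\subset[0,N+1]$ for $n$ large. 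The first term vanishes by local uniform convergence, and the second by uniform continuity of $\mathtt h$ on compacts. The coordinate $\mathtt w$ is handled identically. This is the only point requiring care: joint convergence in $\bD_3$ is stronger than coordinatewise convergence in $\bD_1$, so a common time-change is needed. The continuity of the limiting coordinates $\mathtt h$ and $\mathtt w$ is exactly what allows one to use the time-change coming from $\ttx_n$ alone.

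For $(ii)$, I invoke the classical fact (Billingsley, Theorem 16.2 / Section 12) that if $\ttz_n\to\ttz$ in $\bD$ and $\ttz$ is continuous at $s$, then $\ttz_n(s)\to\ttz(s)$. A short proof: $|\ttz_n(s)-\ttz(s)|\le|\ttz_n(s)-\ttz(\lambda_n^{-1}(s))|+|\ttz(\lambda_n^{-1}(s))-\ttz(s)|$. The first term is bounded by the uniform error $\sup_{[0,N]}|\ttz_n\circ\lambda_n-\ttz|$, and the second term tends to $0$ because $\lambda_n^{-1}(s)\to s$ and $\ttz$ is continuous at $s$. Applying this at each $s_j$ gives $(ii)$.

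For $(iii)$, a time-change valid for $\ttz$ in $\bD_3$ is also valid for each coordinate, since the Euclidean norm dominates each coordinate; projecting away the $\bbR^2$ factor is trivially continuous. Finally, $(iv)$ is immediate from $(iii)$ by the continuous mapping theorem: if $\mathtt Q_n\to\mathtt Q$ weakly, then $\mathtt Q_n\circ\pi_k^{-1}\to\mathtt Q\circ\pi_k^{-1}$, because $F\circ\pi_k$ is bounded and continuous whenever $F$ is.
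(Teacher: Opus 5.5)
Your proof is correct and follows the same route as the paper, which cites Jacod \& Shiryaev (Chapter VI, Propositions 2.2(b) and 2.1(b5)) for $(i)$ and $(ii)$ and declares $(iii)$ and $(iv)$ immediate. Your common-time-change argument for $(i)$ (exploiting continuity of $\mathtt h$ and $\mathtt w$) and your $\lambda_n^{-1}(s)\to s$ argument for $(ii)$ are precisely the standard proofs of those cited facts.
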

\noi
\textbf{Proof.} For $(i)$, see e.g.~Jacod \& Shiryaev~\cite[Chapter VI, Proposition 2.2 (b) p.~338]{JaSh02}. For $(ii)$, see e.g.~\cite[Chapter VI, Proposition 2.1 (b5), p.~337]{JaSh02}. The last two points are immediate.  \cqfd 

\medskip

By (\ref{remindcv}), by the independence of $\tau_\infty$ from $(\bT, \boo)$ and by Proposition \ref{HolderWW}, we see that conditionally given $\mathcal G$ the laws of the processes 
$ \big( \tfrac{1}{n} \frak g_n ,\tfrac{1}{n} \frak d_n , \tfrac{1}{b_n} V_{\lfloor n\cdot \rfloor}(\tau_{\infty})  , \tfrac{1}{a_n}  H_{n\cdot } (\tau_\infty), \tfrac{1}{\sigma_\nu \sqrt{a_n}}  \widehat{\cW}^*_{\! n\cdot} (\tau_{ \infty}) )$
are tight on $\bbR^2 \! \times \!\bD_1 \! \times \! (\bC^0_1)^2$. By Lemma \ref{Sko3} $(i)$, it implies that conditionally given $\mathcal G$ the laws of the processes 
$Z^{_{(n)}}_{^{\! }}\! :=\! \big(  \tfrac{1}{n} \frak g_n ,\tfrac{1}{n} \frak d_n , \big( \tfrac{1}{b_n} V_{\lfloor ns\rfloor}(\tau_{\infty})  , \tfrac{1}{a_n}  H_{ns} (\tau_\infty), \tfrac{1}{\sigma_\nu \sqrt{a_n}}  \widehat{\cW}^*_{\! ns} (\tau_{ \infty}) )_{s\in \bbR_+} \big)$ are tight on $\bbR^2 \! \times \bD_3$. 

We next denote by 
$\omega \ino \Omega \mapsto Q_n (\omega, \cdot) \ino \cM_1(\bbR^2 \! \times \! \bD_3)$ a regular version of the law of $Z^{_{(n)}}_{^{\! }}$ conditionally given $\mathcal G$. We denote by $P_n$ the law of $\tfrac{1}{b_n} V_{\lfloor n\cdot \rfloor}(\tau_{\infty})$ under $\bP$, we denote by $P$ the law $X$ under $\bP$ and we denote by $Q_\infty$ the law of $\big( \mathbf g_1, \mathbf d_1, (X_s, H_s, \widehat{W}_s)_{s\in \bbR_+}\big)$ under $\bP$. 
We observe that for $\bP$-almost all $\omega$, $Q^1_n (\omega, \cdot) \eqo P_n (\cdot)$. This, combined with (\ref{HcW*mrg}), entails that there exists $\Omega_0 \ino \mathcal G$ such that $\bP(\Omega_0)\eqo 1$, such that for all 
$\omega \ino \Omega_0$, it holds that the $(Q_n (\omega, \cdot))_{n\in \bbN}$ are tight on $\bbR^2 \! \times \! \bD_3$, that 
$Q_n^1(\omega, \cdot)\eqo P_n (\cdot)$, and that 
 \begin{equation}
\label{reFourconv} \lim_{n \to \infty} \int_{\bbR^2 \! \times \! \bD_3 }  \!\! \!\! \!\! \!\! \!\! \!\!
e^{i \langle \xi, \mathtt u \rangle + i\langle \lambda_1, \mathtt z(s_1) \rangle + \ldots + i \langle \lambda_p, \mathtt z(s_p) \rangle }  Q_n (\omega, d\mathtt u d\ttz ) \!  =\!\! \int_{\bbR^2 \! \times \!  \bD_3} \!\! \!\!   \!\! \!\! \!\! \!\!  e^{ i \langle \xi, \mathtt u \rangle +i\langle \lambda_1, \mathtt z(s_1) \rangle + \ldots + i \langle \lambda_p, \mathtt z(s_p) \rangle } Q_\infty (d\mathtt u d\ttz ) . 
\end{equation}
for all $p\ino \bbN^*$, all $\xi \ino \bbQ^2$, all $s_1, \ldots, s_p\ino \bbQ\cap \bbR_+$ and all $\lambda_1, \ldots , \lambda_p \ino \bbQ^3$. 

We then fix $\omega\ino \Omega_0$ and we denote by $Q(\omega, \cdot) \ino \cM_1 (\bbR^2 \! \times \!  \bD_3)$ 
a weak limit-point of the $ Q_n(\omega, \cdot)$ (there is such one since the $Q_n (\omega, \cdot)$ 
are tight on $\bbR^2 \! \times \! \bD_3$). Namely, there exists an increasing sequence of integers 
$(n_k(\omega))_{k\in \bbN}$ such that $\lim_{k\to \infty}Q_{n_k (\omega)} (\omega, \cdot) \eqo Q(\omega, \cdot)$. 
By Lemma \ref{Sko3} $(iv)$, $Q^1_{n_k (\omega)} (\omega, \cdot)\eqo P_{n_k (\omega)} \to 
Q^1(\omega, \cdot) \eqo P$. Since $X$ has no fixed time discontinuity, it implies for all $s_1, \ldots, s_p\ino \bbR_+$ that $Q(\omega, d\mathtt u d\ttz )$-a.s.~$\ttz (s_j-)\eqo \ttz(s_j)$, $1\leqo j\leqo p$. Then by Lemma \ref{Sko3} $(ii)$, by weak-continuity of the Fourier transform of probability measures and by (\ref{reFourconv}), we get 
 $$ \int_{\bbR^2 \! \times \! \bD_3 }  \!\! \!\! \!\! \!\! \!\! \!\! e^{ i \langle \xi, \mathtt u \rangle +i\langle \lambda_1, \mathtt z(s_1) \rangle + \ldots + i \langle \lambda_p, \mathtt z(s_p) \rangle } Q(\omega, d\mathtt u  d\mathtt z)  = \int_{\bbR^2 \! \times \! \bD_3 }  \!\! \!\! \!\! \!\! \!\! \!\!e^{i \langle \xi, \mathtt u \rangle + i\langle \lambda_1, \mathtt z(s_1) \rangle + \ldots + i \langle \lambda_p, \mathtt z(s_p) \rangle } Q_\infty (d\mathtt u  d\mathtt z) $$ 
for all $p\ino \bbN^*$, all $\xi \ino \bbQ^2$, all $s_1, \ldots, s_p\ino \bbQ\cap \bbR_+$ and all $\lambda_1, \ldots , \lambda_p \ino \bbQ^3$. Thus, it also holds for all $\xi \ino \bbR^2$, all $s_1, \ldots, s_p\ino  \bbR_+$ and all $\lambda_1, \ldots , \lambda_p \ino \bbR^3$, by the continuity of the paths and dominated convergence. Injectivity of the Fourier transform of probability measures then entails that 
$Q (\omega, \cdot  )$ and $Q_\infty (\cdot )$ have the same finite-dimensional marginals, which implies that 
$Q (\omega, \cdot  )\eqo Q_\infty ( \cdot )$. This proves that for all $\omega\ino \Omega_0$, $Q_\infty (\cdot )$ is the only limit point of the $Q_n(\omega, \cdot )$: namely, $Q_n (\omega, \cdot) \! \to \! Q_\infty (\cdot )$, which easily entails (\ref{cvharmo3/2}) by Lemma \ref{Sko3} $(iii)$ and the fact that the topology induced by Skorokhod topology on the set $\bC^0_1$ is that of uniform convergence on every compact interval. This completes the proof of Proposition \ref{harmocv}.  \cqfd

\medskip

Before proceeding to the proof of Proposition \ref{skomrgFou}, we need to introduce some notation and to prove auxiliary results on finite-dimensional marginals of height processes and height snakes. 
 
\medskip

\noi
\textbf{Contour processes of real trees with a finite number of leaves.}  Let us first recall from Example~\ref{rltrcdng} the definition of the real tree $(T_h, d_h, \mathtt r_h)$ coded by a continuous function 
$h: [0, \zeta] \! \to \! \bbR$ and recall that $p_h \! :\!  [0, \zeta]\! \to \! T_h$ stands for the canonical projection. 
We discuss here how to encode 
the subtree $T'\! :=\! \bigcup_{1\leq j\leq p} \lgeo \mathtt r_h, p_h (s_j) \rgeo$ that is spanned by the points 
corresponding to the times $0\leqo s_1\leko \ldots \leko s_p \leko \zeta$ (here, recall from Definition~\ref{errtredef} that for all $\sigma, \sigma'\ino T_h$, $\lgeo \sigma, \sigma'\rgeo$ stands for the geodesic path joining $\sigma$ to $\sigma'$).  
The real tree $T'$ can be viewed as a discrete rooted ordered 
tree (its \emph{skeleton}) equipped with `heights' and if $h(s_j)\wedge h(s_{j+1}) \geko \min_{[s_i, s_{j+1}]} h$, $T'$ has exactly $p$ leaves (i.e., ~points $\sigma \ino T'\backslash \{ \mathtt r_h\}$ such that 
$T'\backslash \{ \sigma \}$ is connected). 
To encode $T'$, we introduce its contour function which represents 
the distance from the root of a particle visiting  $T'$ at unit speed and respecting the contour order of its skeleton. The contour function of $T'$ belongs to the set $\ccC_p$ of functions $\cC\ino \bC^0(\bbR_+, \bbR_+)$ such that 
\begin{compactenum}

\smallskip

\item[$(a)$] $\cC_0\eqo 0$ and $\zeta_\cC\eqo \sup\{ s\ino  \bbR_+: \cC_s \geko 0 \} \leko \infty$,

\smallskip

\item[$(b)$] $\cC$ is a broken line whose slopes are either equal to $1$ or equal to $-1$, that has $p$ local maxima.

\smallskip

\end{compactenum} 

\noi
Namely, there are $r_0\eqo \rho_0\eqo 0 \leko r_1 \leko \rho_1\leko \ldots \leko r_{p-1} \leko \rho_{p-1} \leko r_p \leko \rho_p = \zeta_\cC$ such that for all $s\ino \bbR_+$,
\begin{equation}
\label{rhorjdef}
\cC_s = \int_0^s\!\!\! \un_{J_+} (r) \, dr \, -\!\! \int_0^s \!\!\! \un_{J_-} (r) \, dr  \quad \textrm{where} \;  J_+ \eqo \bigcup_{1\leq j\leq p} [\rho_{j-1}, r_j]   \; \textrm{and} \; J_-\eqo [0, \zeta_\cC] \backslash J_+.
\end{equation}
For all $j\ino\lgeo 1,p\rgeo$, note that $r_j$ (resp.~$\rho_{j-1}$) is the time when the function $\cC$ reaches its $j$-th 
local maximum (resp.~its $j$-th local minimum). Then, observe that $\cC_{\rho_{j-1}}\eqo  \min_{[r_{j-1}, r_j]} \cC$. It shall be convenient to define $\ccC_0$ as the set reducing to the null function.

Let $p\ino\bbN^*$. Note that there is a one-to-one correspondence $F_p$ between the open subset $\mathcal D_p \eqo \big\{ (x_j)_{1\leq j\leq 2p-1}  \ino \bbR_+^{2p-1}: x_{2j-1} \! \wedge  x_{2j+1} \geko x_{2j}, \, 1\leqo j\leqo p\! -\! 1 \big\}$ of $\bbR_+^{2p-1}$ and $\ccC_p$, that is defined as follows: $\cC\eqo F_p(  (x_j)_{1\leq j\leq 2p-1} )$ if 
\begin{equation}
\label{profcC}
\forall j\ino \lgeo 1, p \rgeo, \quad x_{2j-1} = \cC_{r_j}   \quad  \textrm{ and } \quad x_{2j}\eqo \min_{[r_{j} , r_{j+1} ]} \cC\eqo \cC_{\rho_j}\: \textrm{ when $j<p$}.
\end{equation}
We easily check that \emph{$F_p$ and its inverse $F_p^{-1}$ are continuous when $\ccC_p$ is 
equipped with the topology of the uniform convergence on every compact interval.}

\medskip

\noi
\textbf{Decompositions at the first branching time.} Let $\cC \ino \ccC_{p}$ with $p\geqo 2$. Let $\zeta_\cC$, the $r_j$ and the $\rho_j$ be as in (\ref{rhorjdef}). We set 
\begin{equation}
\label{dec1rstbr}
m (\cC)\eqo \min_{[r_1, r_p]} \cC \; \, \,  \textrm{and} \; \, \,  q(\cC)\eqo -2+\#\{s\ino[0,\zeta_\cC] : \cC_s= m(\cC)\},
\end{equation}
and we list the elements of $\{s\ino[0,\zeta_\cC] : \cC_s= m(\cC)\}$ in increasing order as $\gamma_0\leko \gamma_1\leko\ldots\leko \gamma_{q(\cC)+1}$. Note that $\gamma_0\leko r_1\leko \gamma_1\leqo \gamma_{q(\cC)}\leko r_p\leko \gamma_{q(\cC)+1}$, and that the intervals $(\gamma_{k-1}, \gamma_k)$, $1\leqo k \leqo q(\cC)\! +\! 1$, are the connected components of $\{ s\ino \bbR_+ : \cC_s \geko m (\cC) \}$. Since $p\geqo 2$, it holds that $q(\cC)\geq 1$. Then, for all $k\ino\lgeo 1,q(\cC)\rgeo$ and all $s\ino\bbR_+$, we also set
\begin{equation}
\label{dec2suite}
\cC_s [k]^{^{_-}}\! \eqo \cC_{(\gamma_0 +s) \wedge \gamma_k }\!\!  - m (\cC)\quad\textrm{and}\quad \cC_s [k]^{^{_+}}\! \eqo \cC_{(\gamma_k +s) \wedge \gamma_{q(\cC)+1} }\!\!  - m (\cC).
\end{equation}
Now, recall $F_p$ from (\ref{profcC}). If we write $(x_j)_{1\leq j<2p}\! :=\! F_p^{-1}(\cC)$, then we observe that
\begin{equation}
\label{decsuite} 
\cC [k]^{^{_-}}\! \eqo F_{p_k(\cC)}\big((x_j\! -\! m(\cC))_{1\leq j< 2p_k(\cC)}\big) \quad\!\textrm{and}\!\quad \cC[k]^{^{_+}}\! \eqo F_{p-p_k(\cC)}\big((x_{2p_k(\cC)+j}\! -\! m(\cC))_{1\leq j< 2p-2p_k(\cC)}\big) , 
\end{equation}
where $p_k(\cC)=\#\{j\ino\lgeo 1,p\rgeo : r_j\leqo \gamma_k\}$. Here, note that $1\leqo p_k(\cC)\leko p$ and $\rho_{p_k(\cC)}\eqo\gamma_k$.
\smallskip
 
\noi 
In the special case where $p\eqo 1$, we set $m (\cC)\eqo \cC_{r_1}\eqo \max \cC$, $q(\cC)\eqo 1$ and $\cC[1]^{^{_-}}\! \eqo \cC[1]^{^{_+}}\!$ is the null function. For any $k\ino\lgeo 1,q(\cC)\rgeo$, we call the pair of functions $(\cC[k]^{^{_-}}\! ,\cC[k]^{^{_+}}\!)$ the \emph{$k$-th decomposition of $\cC$ at the first branching point}. 

\begin{lemma}
\label{cvfixskel} Let $p\ino \bbN^*$, and let $\cC, \cC^{_{(n)}}_{^{\! }}
\ino \ccC_{p}$, $n\ino \bbN$, be such that $\cC^{_{(n)}}_{^{\! }} \! \to \! \cC$ uniformly on every compact interval.  
Then, $m(\cC^{_{(n)}}_{^{\! }}) \!\!  \to \! m(\cC)$, and there exists a sequence of integers $k_n\ino\lgeo 1,q(\cC)\rgeo$, $n\ino\bbN$, such that:
\begin{compactenum}
\item[$(a)$] $\cC[k_n]^{^{_-}} \!, \cC^{_{(n)}}_{^{\! }}[1]^{^{_-}} \!
\ino \ccC_{p_{k_n}(\cC)}$ and $\cC[k_n]^{^{_+}} \!, \cC^{_{(n)}}_{^{\! }}[1]^{^{_+}} \!
\ino \ccC_{p-p_{k_n}(\cC)}$ when $n$ is large enough,
\smallskip
\item[$(b)$] $\cC^{_{(n)}}_{^{\! }}\! [1]^{^{_-}} \!\! - \cC[k_n]^{^{_-}} \!\! \to \! 0$ and $\cC^{_{(n)}}_{^{\! }}\! [1]^{^{_+}} \!\! - \cC[k_n]^{^{_+}} \!\! \to \! 0$ uniformly on every compact interval.
\end{compactenum}
\end{lemma}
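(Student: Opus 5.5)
We will argue entirely through the coordinates $F_p^{-1}$ of (\ref{profcC}). Write $x=(x_j)_{1\leq j<2p}\eqo F_p^{-1}(\cC)$ and $x^{(n)}\eqo F_p^{-1}(\cC^{_{(n)}}_{^{\!}})$. Since $F_p^{-1}$ is continuous, $x^{(n)}\to x$ in $\mathcal D_p$. The case $p\eqo 1$ is immediate: $m$ is the unique local maximum $x_1$, both decompositions are null functions, and $k_n\eqo 1$. So assume $p\geq 2$.

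First we identify $m$ in coordinates. Since $\cC$ is a broken line whose local minima on $[r_1,r_p]$ are exactly the $\cC_{\rho_j}$, we have $m(\cC)\eqo \min_{1\leq j\leq p-1} x_{2j}$. The set $\{s\ino[0,\zeta_\cC]:\cC_s\eqo m(\cC)\}$ consists of $\gamma_0<r_1$, $\gamma_{q+1}>r_p$, and the $\rho_j$ such that $x_{2j}\eqo m(\cC)$. Hence $q(\cC)$ is the number of indices $j$ achieving the minimum, and $p_k(\cC)$ is the $k$-th such index in increasing order. The minimum of finitely many continuous coordinates is continuous, so $m(\cC^{_{(n)}}_{^{\!}})\to m(\cC)$.

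Next we choose $k_n$. Let $j_n\eqo p_1(\cC^{_{(n)}}_{^{\!}})$ be the first index achieving $\min_j x^{(n)}_{2j}$. Let $J\eqo\{j: x_{2j}\eqo m(\cC)\}$. If $j\notin J$, then $x_{2j}>m(\cC)+\delta$ for some $\delta>0$, while $\min_i x^{(n)}_{2i}\to m(\cC)$. So for $n$ large, $j_n\ino J$, say $j_n\eqo p_{k_n}(\cC)$ with $k_n\ino\lgeo 1,q(\cC)\rgeo$. For the finitely many small $n$ we pick $k_n$ arbitrarily. Then $p_1(\cC^{_{(n)}}_{^{\!}})\eqo p_{k_n}(\cC)$ for $n$ large, and by (\ref{decsuite}) assertion $(a)$ holds: both $\cC^{_{(n)}}_{^{\!}}[1]^{\pm}$ and $\cC[k_n]^{\pm}$ are $F_{p_{k_n}(\cC)}$, resp.\ $F_{p-p_{k_n}(\cC)}$, of the appropriate sub-vectors shifted by $m$.

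For $(b)$, we argue by contradiction and split along the finitely many values of $k_n$. Fix $k\ino\lgeo 1,q(\cC)\rgeo$ taken infinitely often and write $j\eqo p_k(\cC)$. Along that subsequence, (\ref{decsuite}) gives
$$\cC^{_{(n)}}_{^{\!}}[1]^{-}\eqo F_j\big((x^{(n)}_i-m(\cC^{_{(n)}}_{^{\!}}))_{i<2j}\big),$$
and the arguments converge to $(x_i-m(\cC))_{i<2j}$. That limit lies in $\mathcal D_j$, since it is the coordinate vector of $\cC[k]^{-}$. Continuity of $F_j$ then gives $\cC^{_{(n)}}_{^{\!}}[1]^{-}\to\cC[k]^{-}$, and the same argument with $F_{p-j}$ handles the $+$ parts. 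With finitely many classes, this yields uniform convergence of the differences on compacts.

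The step needing the most care is the claimed continuity of $F_j$ on $\mathcal D_j$. We will verify it directly: the breakpoints $r_i,\rho_i$ are explicit sums of differences of the coordinates, so they depend continuously on them, and broken lines with converging breakpoints converge uniformly, being constant $0$ after $\zeta_\cC$. We also need that the shifted sub-vectors stay in $\mathcal D_j$ for large $n$, which holds because $\mathcal D_j$ is open and contains the limit.
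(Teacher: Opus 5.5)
Your proposal is correct and follows essentially the paper's own proof. You pass to the coordinates $F_p^{-1}$ and get $m(\cC^{(n)})\to m(\cC)$ as a minimum of converging coordinates, then choose $k_n$ so that $p_1(\cC^{(n)})=p_{k_n}(\cC)$ for large $n$; your $\delta$-separation argument is the paper's remark that $|x_{2p_1(\cC^{(n)})}-m(\cC)|\to 0$ takes finitely many values. You conclude with (\ref{decsuite}) and the continuity of $F_{p_k(\cC)}$ along the finitely many classes $\{n:k_n=k\}$, and your breakpoint check of the continuity of $F_j$ simply supplies what the paper calls easy to check.
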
 
\noi
\textbf{Proof.} Note that $F^{-1}_{p} \! (\cC^{_{(n)}}_{^{\! }})\! =:\! (x^{_{(n)}}_{j})_{1\leq j\leq 2p-1} \! \to \! (x_{j})_{1\leq j\leq 2p-1}\! :=\! F_p^{-1} (\cC)$. Thus, 
$m(\cC^{_{(n)}}_{^{\! }}) \eqo 
\min_{1\leq j\leq 2p-1} x^{_{(n)}}_{j}\to \min_{1\leq j\leq 2p-1} x_j 
\eqo m(\cC)$. Let us set $p^{_{(n)}}_{^{\! }}\eqo p_1( \cC^{_{(n)}}_{^{\! }})$ to lighten the notation. 
We construct the sequence $(k_n)_{n\in\bbN}$ as follows: if $x_{2p^{(n)}} \! \neq\! m(\cC)$, then 
we set 
$k_n=1$; if $x_{2p^{(n)}} \! =\! m(\cC)$ then there is $k\ino\lgeo 1,q(\cC)\rgeo$ such that 
$p^{_{(n)}}_{^{\! }}\eqo p_k(\cC)$, and we set $k_n=k$. By definition, we have 
$x^{_{(n)}}_{2p^{(n)}}\! =\! m(\cC^{_{(n)}}_{^{\! }})$, so it follows that $|x_{2p^{(n)}} \! -\! m(\cC)|$ 
converges to $0$. But since the latter sequence can only take on a finite number of values, it is 
eventually stationary at $0$. When $n$ is large enough, it thus holds $ p_1( \cC^{_{(n)}}_{^{\! }})\eqo p_{k_n}(\cC)$. 
We complete the proof by (\ref{decsuite}) and by continuity of $F_{\! p_k(\cC)}$. \cqfd 

\medskip

\noi
\textbf{Finite-dimensional marginals of height snakes.} We next use the decompositions at the first branching point to compute the finite-dimensional marginals of Brownian snakes and BRWs. More precisely, we fix $p\ino \bbN^*$ and $\cC\ino \ccC_{p}$, and we denote by $(\mathtt W_s(\cC))_{s\in \bbR_+}$ the one-dimensional Brownian snake starting at $0$ whose lifetime process is $\cC$ (see Definition~\ref{Brosnadef}). 
Then we introduce the following notation for all $\lambda_1, \ldots, \lambda_p \ino \bbR$: 
\begin{equation}
\label{Fousnaky}
\ccL (\cC\, ; \, \lambda_1, \ldots , \lambda_p )= \bE \Big[\prod_{1\leq j\leq p} e^{i\lambda_j \widehat{\mathtt W}_{r_j}  (\cC)} \Big] 
\end{equation}
where $r_j$ stands for the time when $\cC$ reaches its $j$-th local maximum.
As an easy consequence of the snake property, for all $k\ino\lgeo 1,q(\cC)\rgeo$ with $q(\cC)$ as in (\ref{dec1rstbr}), we get 
\begin{equation}
\label{Brsnakdec}
\ccL (\cC\, ; \, \lambda_1, \ldots , \lambda_p )= e^{-\frac{_1}{^2} m(\cC) \overline{\lambda}^2} \ccL \big( \cC[k]^{^{_-}} ; \, (\lambda_j)_{1\leq j\leq p_k(\cC)} \big)\, \ccL \big( \cC[k]^{^{_+}} ; \, (\lambda_j)_{p_k(\cC)+1\leq j\leq p} \big) \; .
\end{equation}
where $(\cC[k]^{^{_-}}\! ,\cC[k]^{^{_+}}\!)$ is the $k$-th decomposition of $\cC$ at its first branching time as in (\ref{dec2suite}) and (\ref{decsuite}), 
where $m(\cC)\eqo \min_{[r_1, r_p]} \cC$,
and where $\overline{\lambda}\eqo \lambda_1+ \ldots+ \lambda_p$.

 Let us introduce similar notation for $\bT$-valued BRWs, where $(\bT, \boo)$ stands here for an invariant GW tree whose offspring distribution $\nu$ is supercritical. We denote by $\ttm_\nu$ its mean and we assume that there is $\ttb \ino (2, \infty)$ such that  $\sum_{k\in \bbN} k^\ttb \nu(k)\leko \infty$. 
To that end, we suppose that there exists $a\ino (0, \infty)$ and a discrete tree $t\ino \bbT$ such that  
\begin{equation} 
\label{hypocodcnt}
C_s (t)\eqo a\, \cC_{s/a} , \quad s\ino \bbR_+ \; 
 \end{equation}
where $C_\cdot(t)$ stands for the contour function of the rooted ordered tree $t$ as in Definition~\ref{contourdef} ($\textbf b$). 

We then fix $x\ino \bT$ and we denote by $(Y_u)_{u\in t}$ a $t$-indexed and $\bT$-valued $\ttm_\nu$-biased BRW starting at $Y_{\varnothing}$. Namely, conditionally given $(\bT, x)$, we $\bP$-a.s.~get 
$\bP\big(  \forall u\ino t, Y_u\eqo  y_u | (\bT, \boo)\big) \eqo \un_{\{y_\varnothing = x\}} \prod_{u\in t\backslash \{ \varnothing \}} p_{\bT, \ttm_\nu} (y_{\overleftarrow{u}}, y_u) $ for all $y_u\ino \bT$, $u\ino t$ (recall from (\ref{deflambRW}) the definition of the transition probabilities $p_{\bT, \ttm_\nu}( \cdot, \cdot) $ of the  $\ttm_\nu$-biased RW on $\bT$).    
Here we recall from (\ref{defSx}) that $(S_x)_{x\in \bT}$ stand for the harmonic coordinates of $(\bT, \boo)$. Then, we introduce the following notation for all $\lambda_1, \ldots, \lambda_p \ino \bbR$: 
\begin{equation}
\label{FoubrRWy}
\widetilde{\ccL}_{\bT, x} (a\, \cC_{\frac{\cdot}{^a}}\, ; \, \lambda_1, \ldots , \lambda_p )= \bE \Big[ \prod_{1\leq j\leq p} e^{i\lambda_j \big( S_{Y_{v(j)}} -S_{Y_{\varnothing}}\big) } \Big| (\bT, x) \Big] 
\end{equation}
where the vertex $v(j)\ino t$ 
is the $j$-th leaf of $t$ in increasing contour order.  As a consequence of the definition of BRWs, for all $k\ino\lgeo 1,q(\cC)\rgeo$ with $q(\cC)$ as in (\ref{dec1rstbr}), we
$\bP$-a.s.~get 
\begin{multline}
\label{brRWdec}
\widetilde{\ccL}_{\bT,x} (a\, \cC_{\frac{\cdot}{^a}}\, ; \, \lambda_1, \ldots , \lambda_p)= E_{\bT, x} \Big[ \, 
e^{i\overline{\lambda} \big( S_{X_{am(\cC)} }  -S_{X_0}\big)}\\
\times\widetilde{\ccL}_{\bT, X_{am(\cC)}}  \big( a\, \cC_{\frac{\cdot}{^a}} [k]^{^{_-}} ; \, (\lambda_j)_{1\leq j\leq p_k(\cC)} \big) \, \widetilde{\ccL}_{\bT, X_{am(\cC)}}  \big( a\, \cC_{\frac{\cdot}{^a}} [k]^{^{_+}} ; \, (\lambda_j)_{p_k(\cC)+1\leq j\leq p} \big) \, \Big] \; ,
\end{multline}
where $\overline{X}_{n}\eqo (\bT, X_n)$, $n\ino \bbN$, stands for a $\ttm_\nu$-biased RW starting at $X_0\eqo x$ (and recall that $P_{\bT, x} $ is the conditional law of $(\overline{X}_n)_{n\in \bbN}$ given $(\bT, x)$).  
Here we have set $\overline{\lambda}\eqo \lambda_1+ \ldots+ \lambda_p$ and we recall that $(\cC[k]^{^{_-}}\! ,\cC[k]^{^{_+}}\!)$ is the $k$-th decomposition of $\cC$ at its first branching time: $m(\cC)\eqo \min_{[r_1, r_p]} \cC$. 
Note that $am (\cC)$ is the height of the $\leq_{\mathtt{lex}}$-smallest vertex $u\ino t$ such that $k_u(t) \! \neq \! 1$ (which is also the first time that this vertex is visited in increasing contour order).  

One of the main steps of the proof of Proposition~\ref{skomrgFou} is the following lemma, which relies on decompositions at the first branching point and on 
the specific CLT in Proposition~\ref{fourcondharmo}. 
\begin{lemma}
\label{cvmargiform}
We keep the above assumptions and notation. We fix $p\ino \bbN^*$. Let $\cC, \cC^{_{(n)}}_{^{\! }}\ino \ccC_{p}$ and $r_n \ino \bbN$, $n\ino \bbN$. Here, $\overline{X}_{n}\eqo (\bT, X_n)$, $n\ino \bbN$, stands for a $\ttm_\nu$-biased RW starting at $X_0\eqo \boo$. We assume the following. 
\begin{compactenum}

\smallskip

\item[$(a)$] $\cC^{_{(n)}}_{^{\! }}\!\!  \to \cC$ uniformly on every compact interval. 

\smallskip
 
\item[$(b)$] For all $n\ino \bbN$, there exist $a_n \ino (0, \infty)$ and $t_n\ino \bbT$ such that 
$C_s (t_n)\eqo a_n\, \cC^{_{(n)}}_{s/a_n}$, $s\ino \bbR_+$.

\smallskip  
 
\item[$(c)$] There exists $r\ino \bbR_+$ such that $r_n/a_n \! \to \! r$. 

\smallskip
 
\end{compactenum}
Then for all $\lambda_1, \ldots, \lambda_p \ino \bbR$, it holds $\bP$-a.s.~that
\begin{equation}
\label{margiformcv}
\widetilde{\ccL}_{\bT,X_{ r_n}} (a_n\,  \cC^{_{(n)}}_{\frac{\cdot}{^{a_n}}}\, ; \, \tfrac{1}{\sigma_\nu\sqrt{a_n}} \lambda_1, \ldots ,  \tfrac{1}{\sigma_\nu\sqrt{a_n}} \lambda_p) \underset{n\to \infty}{-\!\!\! -\!\!\! -\!\!\! -\!\!\! \longrightarrow } \ccL (\cC\, ; \, \lambda_1, \ldots , \lambda_p ),
\end{equation}
where we recall from (\ref{vutchi}) the definition of $\sigma_\nu$. 
\end{lemma}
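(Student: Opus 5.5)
We argue by induction on $p$, the number of leaves, using the decompositions at the first branching point, namely (\ref{Brsnakdec}) for the Brownian side and (\ref{brRWdec}) for the BRW side. The induction hypothesis has to be stated for \emph{every} sequence $(r_n)$ with $r_n/a_n\to r$, and more generally for every $r\in\bbR_+$. The reason is that in (\ref{brRWdec}) the subtrees are rooted at the random position $X_{a_n m(\cC^{(n)})}$. After shifting time this position becomes $X_{r_n + a_n m(\cC^{(n)})}$ along the same walk started at $\boo$, so we need the hypothesis at the new index $r'_n=r_n+\lfloor a_n m(\cC^{(n)})\rfloor$, which satisfies $r'_n/a_n\to r+m(\cC)$. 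The Markov property of the biased walk at time $r_n$ makes this legitimate: the walk after $r_n$ is the walk on $\bT$ started at $X_{r_n}$.

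\textbf{Base case $p=1$.} Here $\cC^{(n)}$ is a single tent of height $h_n\to h=\cC_{r_1}$, so
\[
\widetilde{\ccL}_{\bT,X_{r_n}}=\bE\big[e^{i\lambda (S_{X_{r_n+a_nh_n}}-S_{X_{r_n}})/(\sigma_\nu\sqrt{a_n})}\,\big|\,\ccF_{r_n}\big].
\]
We apply Proposition~\ref{fourcondharmo} with $n$ replaced by $a_n$; more precisely, we reindex along $a_n$, taking $r_n$ and $s_n=a_nh_n$. This needs a version of that proposition along a general sequence tending to infinity, which holds by the same proof, since it only uses the ergodic averages of Corollary~\ref{rwergobis}. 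It gives a.s.\ convergence to $e^{-\frac12 h\lambda^2}=\ccL(\cC;\lambda)$.

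\textbf{Induction step.} Fix $p\ge2$. By Lemma~\ref{cvfixskel}, $m(\cC^{(n)})\to m(\cC)$, and along indices $k_n$ the decomposition pieces $\cC^{(n)}[1]^\pm$ converge to $\cC[k_n]^\pm$. These pieces have fewer than $p$ leaves and $q(\cC)$ is finite, so we may pass to finitely many subsequences on each of which $k_n$ is constant. Write (\ref{brRWdec}) for $\cC^{(n)}$ with $k=1$. The two inner factors $\widetilde{\ccL}$ are evaluated at $X_{r'_n}$ with $r'_n=r_n+a_nm(\cC^{(n)})$; note that $a_nm(\cC^{(n)})$ is an integer by (b). By the induction hypothesis, applied along the walk from $\boo$, these factors converge a.s.\ to $\ccL(\cC[k]^-;\cdot)$ and $\ccL(\cC[k]^+;\cdot)$. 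We can only assert this a.s.\ for countably many deterministic data, which is fine here. The outer factor is the conditional characteristic function of a harmonic increment over $a_nm(\cC^{(n)})$ steps, and it converges by the base case to $e^{-\frac12 m(\cC)\overline\lambda^2}$.

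\textbf{Main obstacle.} We must combine the three factors, because the inner factors are random, being functions of the path up to $r'_n$, and appear inside the conditional expectation. Write the product as
\[
A_n\cdot(B_n-\ell)+A_n\,\ell,
\]
where $A_n$ is the exponential factor, $B_n$ the product of the two inner factors, and $\ell$ its limit. The term $\bE[A_n\ell\,|\,\ccF_{r_n}]$ converges by the base case. For the other term, $|A_n(B_n-\ell)|\le|B_n-\ell|$, which tends to $0$ a.s.\ and is bounded by $2$. We need its conditional expectation given $\ccF_{r_n}$ to vanish a.s. A conditional dominated convergence argument with changing conditioning $\sigma$-fields is not automatic. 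We would handle it with Proposition~\ref{condiTCL}-type structure, or more simply by noting $\sup_{m\ge n}|B_m-\ell|\downarrow0$ and using that $\bE[\sup_{m\ge n}|B_m-\ell|\,|\,\ccF_{r_n}]\to0$ a.s. This last convergence holds by a Hunt-type lemma (martingale convergence with moving index) when $\ccF_{r_n}$ is increasing, i.e.\ when $r_n$ is nondecreasing. Otherwise we reduce to that case via the stationarity (\ref{sdkjvc}) and the Markov property, since the quantity depends on $\ccF_{r_n}$ only through $\overline X_{r_n}$. Comparing the resulting product with (\ref{Brsnakdec}) closes the induction.
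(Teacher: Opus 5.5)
Your proof is essentially the paper's. The shared steps are:
\begin{itemize}
\item induction on $p$ via the decomposition (\ref{brRWdec}) at the first branching point;
\item the Markov property to move the inner factors to the index $r_n+a_nm(\cC^{(n)})$, with $(r_n+a_nm(\cC^{(n)}))/a_n\to r+m(\cC)$;
\item Proposition~\ref{fourcondharmo} for the outer exponential factor;
\item the finite partition of indices according to the value of $k_n$ from Lemma~\ref{cvfixskel};
\item a lemma showing that conditional expectations given $\ccF_{r_n}$ of a bounded sequence tending to $0$ a.s.\ also tend to $0$ a.s.
\end{itemize}
Your remark that Proposition~\ref{fourcondharmo} must be used along the scale $a_n\to\infty$ rather than $n$ is correct; its proof goes through unchanged. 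The paper treats this as immediate.

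The one step to repair is the last. You do not need $r_n$ to be nondecreasing, and the proposed reduction of the general case via the stationarity (\ref{sdkjvc}) does not work. Identities in law for $\overline X_{r_n}$ give at best convergence in probability of $\bE[|B_n-\ell|\,|\,\ccF_{r_n}]$, not almost sure convergence along a fixed non-monotone sequence. Such sequences genuinely occur, for example $r_n+a_nm(\cC^{(n)})$ inside the induction.

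The paper instead uses Lemma~\ref{exozarb}, which holds for an arbitrary integer sequence $(r_n)$. Set $\overline R_p=\sup_{m\ge p}|B_m-\ell|$. Then for every $p$,
\[
\limsup_n\bE\big[|B_n-\ell|\,\big|\,\ccF_{r_n}\big]\le\sup_{k\in\bbN}\bE\big[\overline R_p\,\big|\,\ccF_k\big].
\]
The right-hand side tends to $0$ as $p\to\infty$. To see this, split the supremum into two ranges of $k$, using that $\overline R_p$ is nonincreasing in $p$:
\begin{itemize}
\item for large $k$, use Doob's martingale convergence $\bE[\overline R_p|\ccF_k]\to\bE[\overline R_p|\ccF_\infty]$, and the latter tends to $0$ as $p\to\infty$;
\item for the finitely many remaining $k$, use conditional dominated convergence.
\end{itemize}
With this substitution your argument is complete.
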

\noi
\textbf{Proof.} We argue recursively on $p$. If $p\eqo 1$, then the trees $t_n$ are lines and (\ref{margiformcv}) is a direct consequence of Proposition~\ref{fourcondharmo}. We then assume that $p\geqo 2$ and that the lemma holds true for all $p'\leko p$. Lemma~\ref{cvfixskel} yields that $m(\cC^{_{(n)}}_{^{\! } })\! \to \! m(\cC) $ and gives a specific sequence $k_n\ino\lgeo 1,q(\cC)\rgeo, n\ino\bbN$. We fix $k\ino\lgeo 1,q(\cC)\rgeo$ which is a subsequential limit of $(k_n)$, and denote by $(\cC^{_{(n)}}_{^{\! }}\! [1]^{^{_-}}\!,\cC^{_{(n)}}_{^{\! }}\! [1]^{^{_+}}\!)$ and $(\cC[k]^{^{_-}}\! ,\cC[k]^{^{_+}}\!)$ resp.~the first and the $k$-th decomposition at the first branching point of resp.~$\cC^{_{(n)}}_{^{\! }}\!$, $\cC$. 

Observe that 
$(r_n + a_n m(\cC^{_{(n)}}_{^{\! } }))/a_n \! \to\! r+ m(\cC)$. Also recall from Lemma~\ref{cvfixskel} that along the subsequence of integers $n$ such that $k_n=k$, $\cC^{_{(n)}}_{^{\! }}\! [1]^{^{_\pm}}\!\! \to \! \cC[k]^{^{_\pm}}\!$ 
uniformly on every compact interval. For all such $n$ large enough, the common number of local maxima of $\cC^{_{(n)}}_{^{\! }}\! [1]^{^{_\pm}}\!$ and $\cC[k]^{^{_\pm}}\!$ is smaller than $p$, so our recurrence assumption applies and we $\bP$-a.s.~get
\begin{align*}
\widetilde{\ccL}_{\, \bT, X_{\! r_n +a_n m(\cC^{(n)})}} \!  \Big( a_n\, \cC^{_{(n)}}_{\frac{\cdot}{^{a_n}}} [1]^{^{_-}} ;  \Big( \tfrac{\lambda_j}{\sigma_\nu\sqrt{a_n}}\Big)_{\! 1\leq j\leq p_1(\cC^{(n)})} \Big) &\xrightarrow[\substack{n\to \infty\\ k_n=k}]{}
\ccL \big( \cC[k]^{^{_-}} ; \, (\lambda_j)_{1\leq j\leq p_k(\cC)} \big) , \\
\widetilde{\ccL}_{\, \bT, X_{\! r_n +a_n m(\cC^{(n)})}} \!  \Big( a_n\, \cC^{_{(n)}}_{\frac{\cdot}{^{a_n}}} [1]^{^{_+}} ;  \Big( \tfrac{\lambda_j}{\sigma_\nu\sqrt{a_n}}\Big)_{\! p_1(\cC^{(n)})+1\leq  j\leq p} \Big) &\xrightarrow[\substack{n\to \infty\\k_n=k}]{} 
\ccL \big( \cC[k]^{^{_+}} ; \, (\lambda_j)_{p_k(\cC)+1\leq j\leq p } \big) .
\end{align*}
To simplify, we denote by $L_n$ (resp.~$L[k]$) the product of the left-hand (resp.~right-hand) sides of the two previous convergences. Let us also denote by $\ccF_{n}$ the $\sigma$-field generated by $(\overline{X}_m)_{0\leq m\leq n}$. Then, we $\bP$-a.s.~get $\lim_{n\to \infty, k_n=k}\bE \big[|L[k] \! -\! L_n| \, \big| \, \ccF_{r_n}\big]\eqo 0$ by the technical Lemma~\ref{exozarb} in Appendix.

Note that $L[k]$ is not random. Then, combining the decomposition (\ref{brRWdec}) with the Markov property for $\overline{X}$ at time $r_n$ implies $\bP$-a.s.~that 
\[\Big| \widetilde{\ccL}_{\, \bT,X_{r_n}} (a_n\, \cC^{_{(n)}}_{\frac{\cdot}{^{a_n}}}\,  ; \, \tfrac{1}{\sigma_\nu\sqrt{a_n}} \lambda_1, \ldots ,  \tfrac{1}{\sigma_\nu\sqrt{a_n}} \lambda_p)   - L[k]\, 
E_{\bT, X_{r_n}}\! \Big[ \exp \Big( i\tfrac{\overline{\lambda}}{\sigma_\nu \sqrt{a_n}} \big( S_{X'_{a_n m(\cC^{(n)})} }  \!\! -\! S_{X'_0}\big) \Big) \Big] \Big|\]
is smaller or equal to $\bE \big[|L_n \! -\! L[k]| \, \big| \, \ccF_{r_n}\big]$, where $\overline{X}'_n\eqo(\bT, X_n')$, $n\ino \bbN$, stands here for an auxiliary $\ttm_\nu$-biased RW. We then apply Proposition~\ref{fourcondharmo} to get $\bP$-a.s.~
\begin{eqnarray*}  E_{\bT, X_{r_n}}\! \Big[ \exp \Big( i\tfrac{\overline{\lambda}}{\sigma_\nu \sqrt{a_n}} \big( S_{X'_{a_n m(\cC^{(n)})} }  \!\! -\! S_{X'_0}\big) \Big) \Big] \!\! \!\! \!\! &= & \!\! \!\! \!\!  \bE \Big[\exp \Big( i\tfrac{\overline{\lambda}}{\sigma_\nu \sqrt{a_n}} \big( S_{X_{r_n +a_n m(\cC^{(n)})} }  \!\! -\! S_{X_{r_n}}\Big)  \Big| \ccF_{r_n}\Big]  \\
\!\!  &  \underset{n\to \infty}{ -\!\!\! -\!\!\! \longrightarrow }  &  \!\! \exp \big(\! -\! \tfrac{1}{2} m(\cC) \overline{\lambda}^2 \big).
\end{eqnarray*}
Therefore, we get that $\bP$-a.s.~
$$ \widetilde{\ccL}_{\, \bT,X_{r_n}} (a_n\, \cC^{_{(n)}}_{\frac{\cdot}{^{a_n}}} \, ; \, \tfrac{1}{\sigma_\nu\sqrt{a_n}} \lambda_1, \ldots ,  \tfrac{1}{\sigma_\nu\sqrt{a_n}} \lambda_p) \;   \xrightarrow[n\to \infty,\, k_n=k]{} \;  e^{-\frac{_1}{^2} m(\cC) \overline{\lambda}^2} L[k]= \ccL (\cC\, ; \, \lambda_1, \ldots , \lambda_p )$$
thanks to the decomposition (\ref{Brsnakdec}). Since $\{n\ino\bbN : k_n=k\}$, $1\leqo k\leqo q(\cC)$, is a finite partition of $\bbN$, this completes the proof of (\ref{margiformcv}), and thus of the lemma. \cqfd 

\medskip

\noi
\textbf{Proof of Proposition \ref{skomrgFou}.} To that end, we need to relate the previous results to the finite-dimensional 
marginals of the endpoint process of the height snake in harmonic coordinates and to the finite-dimensional marginals of the endpoint process of one-dimensional Brownian snake $\widehat{W}'$ whose lifetime process is $H'$ as in (\ref{skoroderie}).

Recall from (\ref{profcC}) the definition of $F_p$ for all $p\ino\bbN^*$. For all $h\ino \bC^0 (\bbR_+,\bbR_+)$ and for all $s_1\geko 0$ such that $h(s_1)\geko 0$, we set $\cC (h, s_1) \eqo F_1 (h(s_1))$. More generally, for all $p\ino \bbN^*$ with $p\geqo 2$ and for all real numbers $0\eqo s_0\leko s_1 \leko \ldots \leko s_p$ such that the following condition 
\begin{equation}
\label{pleaveshypo}
\forall j\ino\lgeo 1,p\! -\! 1\rgeo,\quad h(s_j)\wedge h(s_{j+1}) \geko \min_{[s_j, s_{j+1}]} h 
\end{equation}
is satisfied, we set
\begin{equation}
\label{Chsj}
F_p \Big( h(s_1), \!\! \min_{\; [s_1, s_2]} \!\! h,  h(s_{2}), \ldots,    h(s_{p-1}), \!\!\! \!\! \min_{\; \;\,  [s_{p-1}, s_p]} \!\! \!\!\!h\, ,  h(s_{p}) \Big)=: 
\cC (h; s_1, \ldots, s_p) \, \in \ccC_p.
\end{equation}
As explained at the beginning of the section, $\cC (h; s_1, \ldots, s_p)$ is the contour process of the subtree $\bigcup_{1\leq j\leq p} \lgeo \mathtt r_h, p_h (s_j) \rgeo$ in $(T_h, d_h, \mathtt r_h)$.

We know from Lemma~\ref{branchmass} that $(H';s_1,\ldots,s_p)$ $\bP$-a.s.~satisfies (\ref{pleaveshypo}). As an easy consequence of Definition~\ref{Brosnadef} of finite-dimensional marginals of Brownian snakes, we see that for all $\lambda_1, \ldots, \lambda_p \ino \bbR$, $\bP$-a.s.~
\begin{equation}
\label{linkbrosna}
\bE \Big[\prod_{0\leq j\leq p} e^{i\lambda_j \widehat{W}'_{s_j}  } \Big| (H'_s)_{s\in \bbR_+} \Big] =  \ccL \big( 
\cC (H'; s_1, \ldots, s_p)\, ; \, \lambda_1, \ldots , \lambda_p ) 
\end{equation}
where we recall from (\ref{Fousnaky}) the definition of the right member.

Similarly, in the discrete setting: we recall that the GW($\mu$)-forests trees $\tau_{n, \infty}$, $n\ino \bbN^*$, are 
such that the almost sure limit (\ref{skoroderie}) holds true and recall from (\ref{skoroderie2}) 
the definition of the joint law of $(\tau_{n, \infty} , \widehat{\cW}^* (\tau_{n,\infty}))$. The almost sure 
convergence (\ref{skoroderie}) $\bP$-a.s.~implies that 
\begin{equation}
\label{festocv}
\lim_{n\to \infty}  \tfrac{1}{a_n} H_{\lfloor ns_j \rfloor} (\tau_{n,\infty}) \eqo H'_{s_j} \quad \textrm{and} \quad  
\lim_{n\to \infty}  \tfrac{1}{a_n}  \!\! \!\!\!\!\!\!\!\!\!  \min_{_{\qquad [ \lfloor ns_j \rfloor, \lfloor ns_{j+1} \rfloor ]}} \!\!\!\!\!\!\!\!\!\!\!\! \!\!  H(\tau_{n, \infty}) 
=\min_{_{[s_j, s_{j+1}]}} H' . 
\end{equation}
Combined with Lemma~\ref{branchmass}, this $\bP$-a.s.~entails for all sufficiently large $n$, that (\ref{pleaveshypo}) is satisfied by $( a_n^{-1}H (\tau_{n, \infty}); \lfloor ns_1 \rfloor, \ldots , \lfloor ns_p \rfloor)$. This allows us to define 
\begin{equation}
\label{stepdefCn}
\cC^{_{(n)}}_{^{\! }}  = \cC \big( \frac{_{_1}}{^{^{a_n}}}H (\tau_{n, \infty}); \lfloor ns_1 \rfloor, \ldots , \lfloor ns_p \rfloor \big).
\end{equation}
As a consequence of the definition of the height snake $\widehat{\cW}^* (\tau_{n, \infty})$, we see that 
\begin{eqnarray}
\label{linkheightsna}
\bE \Big[\prod_{0\leq j\leq p} \exp \Big( i \tfrac{1}{\sigma_\nu \sqrt{a_n}} \lambda_j  \!\! \!\! \!\! \!\! & &\!\! \!\! \!\! \!\!  \widehat{\cW}^*_{\! \lfloor ns_j\rfloor} (\tau_{n, \infty}) \Big)  
\, \Big| \, (\bT, \boo) , \tau_{n,\infty} \Big]  \nonumber \\
&= & \widetilde{\ccL}_{\, \bT,\boo}  \Big( a_n \cC^{_{(n)}}_{\frac{\cdot}{^{a_n}}} \, ; \,
 \tfrac{1}{\sigma_\nu \sqrt{a_n}} \lambda_1, \ldots ,   \tfrac{1}{\sigma_\nu \sqrt{a_n}} \lambda_p \Big).
\end{eqnarray}
Moreover, since the function $F_p$ is continuous, we get from (\ref{festocv}) that  $\cC^{_{(n)}}_{^{\! }}\!\!\!  \to\! \cC (H'; s_1, \ldots, s_p) $ uniformly on every compact interval. 
Then Proposition~\ref{skomrgFou} follows by applying Lemma~\ref{cvmargiform} to (\ref{linkbrosna}) and (\ref{linkheightsna}).   \cqfd

\subsection{Proof of Proposition \ref{harmoclose}}
\label{harmoclosepfsec}
We keep the notation and the general assumptions discussed in Section \ref{overviewsec}. 
In particular, we recall that $\tau_{\infty}$ is a GW($\mu$)-forest corresponding to the sequence $(\tau^{_{(k)}}_{^{\!}})_{k\in \bbN^*}$ of i.i.d.~GW($\mu$)-trees (Definition \ref{GWfordef} $(\textbf{b})$): namely 
$\theta_{(k)}\tau_\infty \eqo \tau^{_{(k)}}_{^{\!}}$, for all $k\ino \bbN^*$. 
We recall that conditionally given the environment $(\bT, \boo)$ and the genealogy $\tau_\infty$, for all $k\ino \bbN^*$, 
$\fTheta_k\eqo \big( \tau^{_{(k)}}_{^{\,}}\!\! \! , \, \varnothing; \overline{Y}^{_{(k)}}_{u}\eqo (  \bT, Y^{_{(k)}}_{u}), u\ino \tau^{_{(k)}}_{^{\!}}  \big)$ are independent $\bT$-valued $\ttm_\nu$-biased BRWs such that $Y^{_{(k)}}_{\varnothing}\eqo \boo$. It is 
convenient to define a single $\tau_\infty$-indexed BRW $(Y_u)_{u\in \tau_\infty}$ by setting $Y_\varnothing \eqo \boo$ and  
\begin{equation}
\label{singlebrRW}
\forall k\ino \bbN^*, \; \forall u\ino \tau^{_{(k)}}_{^{\!}}\! , \quad Y_{(k)\ast u}:=Y^{_{(k)}}_{u} \; .
\end{equation}
We recall that $(u_n)_{n\in \bbN \cup \{ -1\}}$ are the vertices of $\tau_\infty$ in lexicographical order: namely, $u_{-1}\eqo \varnothing$, $u_0\eqo (1)$, etc. 
This section is devoted to the proof of the following result which implies Proposition \ref{harmoclose}. Before proving Proposition \ref{qharmoclose}, we show that it implies Proposition \ref{harmoclose}. 
\begin{proposition}
\label{qharmoclose} We keep the assumptions and the notation of Proposition \ref{harmoclose}. Then, there are constants 
$c_0,c'_0, \epp_0, \epp_1 \ino (0, \infty)$ which only depend on $\mu$, $\nu$ and $\ttb$, such that for all $n\ino \bbN^*$,
\begin{equation}
\label{quantclose}
\bP \Big( \!\!\!\! \max_{\quad m\in \lgeo 0, n\rgeo } \big| \tfrac{1}{\sigma^2_{\nu}} S_{Y_{u_m}} \! -\! \bbn  Y_{u_m}\bbn   \big| \, > \, c'_0 n^{-\epp_1} \sqrt{a_n} \,  \Big) \leq  c_0n^{-\epp_0} .
\end{equation}
\end{proposition}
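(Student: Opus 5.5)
The estimate (\ref{quantclose}) says that along the first $n$ vertices of the forest, the spatial positions of the BRW never sit at a `bad' vertex of $\bT$, i.e.\ one whose rescaled harmonic coordinate deviates too much from its relative height. I will decompose the error at $x=Y_{u_m}$ along the ancestral line of $\boo$:
\begin{equation*}
\tfrac{1}{\sigma^2_\nu} S_x - \bbn x\bbn = \Big(\tfrac{1}{\sigma^2_\nu} S_{\boo\wedge x} - \bbn \boo\wedge x\bbn\Big) + \Big(\tfrac{1}{\sigma^2_\nu}(S_x - S_{\boo\wedge x}) - (\bbn x\bbn - \bbn\boo\wedge x\bbn)\Big).
\end{equation*}
The first term is controlled by the spine estimate of Lemma~\ref{harmcoo4}~$(ii)$: it is at most $R_\beta(1+q^\beta)$ with $q=-\bbn\boo\wedge x\bbn$. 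The second term is at most $\lambda(1+h^\beta)$ with $h=\bbn x\bbn-\bbn\boo\wedge x\bbn$, unless $x\in B_{q,\lambda,\beta}$ (Lemma~\ref{badharm}).

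The plan is to fix a good event on which both $q$ and $h$ are at most $K_n := n^{\eta}\sqrt{a_n}$ for all visited positions, with $\eta>0$ small. On that event the error is $\le (R_\beta+\lambda)(1+2K_n^\beta)$, and we need this to be $\le c_0' n^{-\epp_1}\sqrt{a_n}$. I will take $\lambda=n^{\delta}$ and use $\bP(R_\beta\ge n^\delta)\le c'n^{-2\ttb\delta}$. It then suffices that
\begin{equation*}
n^{\delta}\,(n^\eta\sqrt{a_n})^{\beta}\le n^{-\epp_1}\sqrt{a_n},
\end{equation*}
i.e.\ $(1-\beta)\log\sqrt{a_n}$ must beat $(\delta+\eta\beta+\epp_1)\log n$. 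Since $a_n$ is $(1-\frac1\alpha)$-regularly varying, this holds for small $\delta,\eta,\epp_1$.

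The bad-vertex probability from Lemma~\ref{badharm} is $\le c\,q\,\lambda^{-2\ttb}\le cK_n n^{-2\ttb\delta}$. For this to decay we need $2\ttb\delta > \frac12(1-\frac1\alpha)+\eta$. Meanwhile $\beta$ must lie in $(\frac12+\frac1{2\ttb},1)$. The condition $\ttb>\frac{2\alpha}{\alpha-1}$ is exactly what should make these constraints compatible, and checking the exponent bookkeeping is the main obstacle. One subtlety is that Lemma~\ref{badharm} is stated for a single tree $\tau$. For the forest I will sum over the trees $\tau^{(k)}$, $k\le \kappa$, where $\kappa=-\min_{\lgeo 0,n\rgeo}V(\tau_\infty)+1$. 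The probability that $\kappa\ge n^{1/\alpha+\eta'}$ decays polynomially by (\ref{cvlawRW}) and tail bounds for the walk. This costs an extra factor $b_n n^{\eta'}$, which must also be absorbed by $n^{-2\ttb\delta}$. This is the tightest constraint, and I will allow $\beta$ near $\frac12+\frac1{2\ttb}$ and $\delta$ near its maximal allowed value to meet it.

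It remains to bound the probability that some visited position has $h$ or $q$ exceeding $K_n$. First, the heights $\max_{m\le n}|u_m|\le n^{\eta/2}a_n$ hold with probability $\ge 1-Cn^{-\epsilon}$ by (\ref{Lbtotheight}) and Markov's inequality. On this event, each $Y_{u_m}$ is the endpoint of a biased-RW path of length at most $n^{\eta/2}a_n$ started at $\boo$. Lemma~\ref{PZCV}~$(ii)$ applied to the $n$ ancestral lines gives $\max|\bbn Y\bbn|\le K_n/2$ except with probability $\le n\cdot 8(n^{\eta/2}a_n)^3 e^{-n^{\eta}/4}$, which is superpolynomially small. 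For $q$, note that $\boo\wedge x$ is reached along the path, so $q\le \max_k|\bbn X_k\bbn|$, and the same bound applies. Then $h\le \bbn x\bbn+q\le K_n$. Collecting the probabilities of all exceptional events yields (\ref{quantclose}). Finally, Proposition~\ref{harmoclose} follows from (\ref{quantclose}) via the height-snake identity $\widehat{\cW}^*_m=S_{Y_{u_m}}$, linear interpolation, and Borel--Cantelli/Markov conditionally on $(\bT,\boo)$ (annealed $\to 0$ implies quenched $\to 0$ in probability along the full sequence, since the conditional probabilities are bounded and have mean tending to $0$; a.s.\ convergence is obtained along $n$ by monotonicity in $s_0$ and the polynomial rate).
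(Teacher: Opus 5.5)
Your proposal is correct and follows essentially the same route as the paper. It uses the same spine/off-spine decomposition via $R_\beta$ and $B_{q,\lambda,\beta}$, the same control of $\max_m|\bbn Y_{u_m}\bbn|$ through the height bound and Lemma~\ref{PZCV}, and the same union bound over the first $-I_n+1$ trees, and your exponent bookkeeping does reduce exactly to $\ttb>\frac{2\alpha}{\alpha-1}$. The one step to tighten is the tail of $\kappa$: convergence in law (\ref{cvlawRW}) gives no rate, so use, as the paper does, $\bP(-I_n\ge r)=\bP(T_{-r}\le n)\le e\,\varphi_\mu(e^{-1/n})^r$ together with Lemma~\ref{numbtau}, which yields a bound $3e^{-c\,r a_n/n}$.
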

\noi
\textbf{Proof.} The proof of Proposition \ref{qharmoclose} is given below, right after the proof of Proposition \ref{qharmoclose}. \cqfd 

\medskip

\noi
\textbf{Proof of Proposition \ref{harmoclose}.} We admit Proposition \ref{qharmoclose} and we prove that 
it implies Proposition \ref{harmoclose}. By (\ref{quantclose}), we first see that 
$$ \bE \Big[ \sum_{j\in \bbN} \bP \Big( \!\!\!\!\!  \max_{\quad m\in \lgeo 0, 2^j\rgeo } \! \big| \tfrac{1}{\sigma^2_{\nu}} S_{Y_{u_m}} \! -\! \bbn  Y_{u_m}\bbn   \big| > c'_0 2^{-\epp_1 j} \sqrt{a_{2^j}} \,\,  \Big| \,  (\bT, \boo)  \Big)\;  \Big] < \infty $$
which entails $\bP$-a.s.~that 
$\lim_{j\to \infty} \bP \big( \max_{m\in \lgeo 0, 2^j\rgeo } \big| \tfrac{1}{\sigma^2_{\nu}} S_{Y_{u_m}} \! -\! \bbn  Y_{u_m}\bbn   \big| \geko c'_0 2^{-\epp_1 j} \sqrt{a_{2^j}}  \big|  (\bT, \boo)  \big) \eqo 0$. 

 We next fix $s_0\ino (1, \infty)$. 
Since $n^{-\epp_1} \sqrt{a_n}$ is a regularly varying sequence, there is $c\ino (0, \infty)$ (which only depends on $\mu$, $\nu$ and $s_0$) such that $cn^{-\epp_1} \sqrt{a_n} \geqo  c'_0 2^{-\epp_1 j} \sqrt{a_{2^j}}$ for all integers $n$ such that 
$2^{j-1}\leqo \lceil ns_0 \rceil  \leq 2^j$. Since 
\begin{eqnarray*}
\bP \big( \!\!\!\!\!  \max_{\quad m\in \lgeo 0, \lceil ns_0 \rceil \rgeo } \! \big| \tfrac{1}{\sigma^2_{\nu}} S_{Y_{u_m}} \! -\! \bbn  Y_{u_m}\bbn   \big| \!\!\!\!\!  &>& \!\!\!\!\!   c n^{-\epp_1} \sqrt{a_n} \, \big| \,  (\bT, \boo)  \big) \\
&\leq &\bP \big( \!\!\!\!\!  \max_{\quad m\in \lgeo 0, 2^j\rgeo } \! \big| \tfrac{1}{\sigma^2_{\nu}} S_{Y_{u_m}} \! -\! \bbn  Y_{u_m}\bbn   \big| > c'_02^{-\epp_1 j} \sqrt{a_{2^j}} \, \big| \,  (\bT, \boo)  \big) ,
\end{eqnarray*}
we $\bP$-a.s.~get 
$\lim_{n\to \infty} \bP \big( \max_{m\in \lgeo 0, \lceil ns_0 \rceil  \rgeo } \big| \tfrac{1}{\sigma^2_{\nu}} S_{Y_{u_m}} \! -\! \bbn  Y_{u_m}\bbn   \big| \geko  cn^{-\epp_1 } \sqrt{a_{n}}  \big|  (\bT, \boo)  \big) \eqo 0$. Thanks to the specific interpolation 
defining the contour  and height snakes, we next get 
$$ \max_{\quad s\in [0, s_0] }\! \big| \tfrac{1}{\sigma^2_{\nu}} \widehat{\cW}^*_{ns} (\tau_\infty) \! -\!   \widehat{W}^*_{ns} (\tau_\infty)\big| \leq \!\!\!\!\!  \max_{\quad m\in \lgeo 0, \lceil ns_0 \rceil \rgeo } \! \big| \tfrac{1}{\sigma^2_{\nu}} S_{Y_{u_m}} \! -\! \bbn  Y_{u_m}\bbn   \big|  $$
which completes the proof of Proposition \ref{harmoclose}. \cqfd 

\medskip

\noi
\textbf{Proof of Proposition \ref{qharmoclose}.} To simplify notation, we set  
$$ \Gamma_{\! n}  \eqo \max_{s\in [0, n]} H_s (\tau_\infty) \eqo \max_{m\in \lgeo 0, n\rgeo} (|u_m|\!  -\! 1) \quad \textrm{and} \quad \widetilde{\Gamma}_{\! n}= \!  \max_{m\in \lgeo 0, n\rgeo}  \big| \bbn Y_{u_m} \bbn  \big| \; .$$
We first prove for all $p, q\ino \bbN^*$ that 
\begin{equation}
\label{amplispace}
\bP \big(  \widetilde{\Gamma}_{\! n} \geq q \big) \leq   \bP \big(  \Gamma_{\! n} \geq p \big) + 8 n p^3 \exp \Big( \! \! -\! \tfrac{(q-1)^2}{2p} \Big) \; .  
\end{equation}
\emph{Proof.} Let $\overline{X}_{\! n} \eqo (\bT, X_n)$, $n\ino \bbN$, be a $\ttm_\nu$-biased RW on $\bT$ such that $X_0\eqo \boo$. By definition of BRWs, we get 
$$ \bP \big(  \widetilde{\Gamma}_{\! n} \geq q \big| (\bT, \boo) , \tau_\infty  \big) \leq \sum_{1\leq m\leq n} P_{\bT, \boo} \big( \big| \bbn X_{|u_m|-1} \bbn \big| \geqo q  \big) \leq n P_{\bT, \boo} \big( \max_{0 \leq k \leq \Gamma_{\! n} } 
\big| \bbn X_{k} \bbn \big| \geqo q \, \big|\, \Gamma_{\! n} \big) .$$
By Lemma \ref{PZCV}, we get $ \un_{\{  \Gamma_{\! n}  \leq p \}}  \bP \big(  \widetilde{\Gamma}_{\! n} \geqo q \big|  \tau_\infty  \big)  \leqo  8 n p^3 \exp \big( \! \! -\! \tfrac{(q-1)^2}{2p} \big) $, which entails (\ref{amplispace}). \cq 

\smallskip

As in Lemma \ref{harmcoo4} $(ii)$, we next fix $\beta \ino (\tfrac{1}{2} \! +\! \tfrac{1}{2\ttb} , 1)$, which is specified later and we defined the r.v.~$R_\beta$ as in (\ref{LoRatiodef}). We then have
\begin{equation}
\label{LoRatioineq}
 \forall p\ino \bbN, \quad \big| \tfrac{1}{ \sigma_\nu^2} S_{\boo(p)} \! -\! \bbn \boo (p)\bbn   \big| \leq R_\beta  (1+ p^{\beta} )\quad \textrm{and} \quad \bP (R_\beta \geqo \lambda) \leq c_1 \lambda^{-2\ttb}, \quad \lambda \ino (0, \infty)
\end{equation}
where $c_1 \ino (0, \infty)$ is a constant that only depends on $\nu$, $\ttb$ and $\beta$. We then prove that if 
$2\widetilde{\Gamma}_{\! n} \leq q $ and if $R_\beta \leqo \lambda $, then 
\begin{equation}
\label{badcase}
\Big( \!\!\!\! \max_{\quad m\in \lgeo 0, n\rgeo } \big| \tfrac{1}{\sigma^2_{\nu}} S_{Y_{u_m}} \! -\! \bbn  Y_{u_m}\bbn   \big|  > 2\lambda (1+ q^\beta) \Big) \Longrightarrow \Big( \exists m\ino \lgeo  0, n\rgeo : u_m \ino B_{q, \lambda , \beta}\Big)
\end{equation}
 where $B_{q,  \lambda, \beta}$ is the set of vertices of $\bT\backslash L_\boo$ defined in Lemma \ref{badharm}.  

\medskip

\noi
\emph{Proof.} Suppose that $m\ino \lgeo  0, n\rgeo$ is such that 
$\alpha_n:= \big| \tfrac{1}{\sigma^2_{\nu}} S_{Y_{u_m}} \! -\! \bbn  Y_{u_m}\bbn   \big| \geko  2\lambda (1+ q^\beta) $. Since $R_\beta \leqo \lambda $, we get $\alpha_n \geqo \lambda (1+ q^\beta)+ R_\beta (1+ q^\beta)$ and thus 
$$  \big| \tfrac{1}{\sigma^2_{\nu}} (S_{Y_{u_m}}\!\! -\! S_{\boo \wedge Y_{u_m} } \big) \! -\! 
\big(\bbn  Y_{u_m}\bbn \! -\! \bbn  \boo \wedge Y_{u_m}\bbn \big)  \big| +  
\big| \tfrac{1}{\sigma^2_{\nu}} S_{\boo \wedge Y_{u_m} }\! -\! 
 \bbn  \boo \wedge Y_{u_m} \bbn \big| \geqo \alpha_n   \geko  \lambda (1+ q^\beta)+ R_\beta (1+ q^\beta) .$$
Note that there exists $u_l\ino \lgeo 0, n\rgeo$ such that $\boo \wedge Y_{u_m}\eqo Y_{u_l}$. Since $\widetilde{\Gamma}_{\! n} \leqo q/2\leqo q$, there exists $j\ino \lgeo 0 , q\rgeo $ such that 
$ \boo \wedge Y_{u_m}\eqo \boo (j)$ and by definition of $R_\beta$,  we get 
$$\big| \tfrac{1}{\sigma^2_{\nu}} S_{\boo \wedge Y_{u_m} }\! -\! 
 \bbn  \boo \wedge Y_{u_m} \bbn \big| \leq R_\beta (1+ j^\beta)\leq R_\beta (1+ q^\beta) $$
and thus  
\begin{equation}
\label{awayspine}
\big| \tfrac{1}{\sigma^2_{\nu}} (S_{Y_{u_m}}\!\! -\! S_{\boo \wedge Y_{u_m} } \big) \! -\! 
\big(\bbn  Y_{u_m}\bbn \! -\! \bbn  \boo \wedge Y_{u_m}\bbn \big)  \big|  \geko \lambda (1+ q^\beta)\; .
\end{equation}
Now observe that $ \bbn  Y_{u_m}\bbn \! -\! \bbn  \boo \wedge Y_{u_m}\bbn = \bbn  Y_{u_m}\bbn \! -\! \bbn  Y_{u_l}\bbn \leq 2\widetilde{\Gamma}_{\! n} \leq q$. Thus (\ref{awayspine}) implies that $u_m\ino B_{q, \lambda, \beta}$, which completes the proof of (\ref{badcase}). \cq

\smallskip

We next set $V_m \eqo V_m(\tau_\infty)$ and $I_m \eqo \min_{l\in \lgeo 0, m\rgeo } V_l $, where $V(\tau_\infty)$ is the Lukasiewicz path of $\tau_\infty$. As an easy consequence of the concatenation (\ref{concate}) in Definition \ref{Lukadef}, the vertex $u_n$ `belongs' to $\tau^{_{(k)}}_{^{\!}}$ where $k\eqo -I_n+1$ (here we mean that $u_n \eqo (k)\ast v$ for some 
$v\ino  \tau^{_{(k)}}_{^{\!}}$). We next prove that there is a constant $c_2 \ino (0, \infty)$, which only depends on 
$\nu$, $\ttb$ and $\beta$ such that for all $p, q, r\ino \bbN^*$ and all $\lambda \ino (0, \infty)$, 
\begin{eqnarray}
\label{summarybad}
\bP \Big( \!\!\!\!\!\!  \max_{\quad m\in \lgeo 0, n\rgeo } \big| \tfrac{1}{\sigma^2_{\nu}} S_{Y_{u_m}} \!\!\!\!\!\! \! & -& \!\!\!\!\!   \bbn  Y_{u_m}\bbn   \big| \geko  2\lambda (1+ q^\beta) \Big) \nonumber \\
&\leq &\bP \big( \Gamma_{\! n} \geq p \big) +  8 n p^3 e^{- \tfrac{(q-1)^2}{ 8p}}+
  \bP (-I_n \geqo r ) + c_2 q r \lambda^{-2\ttb} .
\end{eqnarray}
\noi
\emph{Proof.} Observe that 
\begin{eqnarray*}
\bP \big( \exists m\ino \lgeo  0, n\rgeo : u_m \ino B_{q, \lambda , \beta}\big) & \leq & 
\bP \big(\exists k \leqo \! -I_n\!+\!1, \, \exists 
u\ino  \tau^{_{(k)}}_{^{\!}}: Y_u \ino B_{q, \lambda , \beta}  \big)  \\
& \leq & \bP (-I_n \geq r) + \sum_{1\leq k \leq r} \bP \big( \exists 
u\ino  \tau^{_{(k)}}_{^{\!}}: \! Y_u \ino B_{q, \lambda , \beta}  \big)\\
& \leq &  \bP (-I_n \geq r) + c_2 q r \lambda^{-2\ttb} .
\end{eqnarray*} 
by Lemma \ref{badharm}. Then we get (\ref{summarybad}) thanks to (\ref{amplispace}), (\ref{LoRatioineq}), and (\ref{badcase}). \cq 

\smallskip

We recall from (\ref{bnBiGoTe}) the definition of the slowly varying function $L$ such that $b_n\eqo n^{1/\alpha} L(n)$ and we also recall that $a_n \eqo n/b_n \eqo n^{1-\frac{1}{\alpha}}/ L(n)$. 
Recall from (\ref{Lbtotheight}) in Proposition \ref{HolderH} that for all $\ttb_1\geko 1$, there exists $c_3\ino (0, \infty)$ that only depends on $\mu$ and $\ttb_1$ such that $\bE \big[ \Gamma_{n}^{\ttb_1} ]\leq c_3 (a_n)^{\ttb_1}$. Therefore, 
\begin{equation}
\label{heightboouu}
\forall n, p\ino \bbN^*, \quad \bP \big(  \Gamma_{n}\geq p \big) \leq c_3 (a_n/p)^{\ttb_1}\; .
\end{equation}
We next prove that there exists $c_4 \ino (0, \infty)$ that only depends on $\mu$ such that 
\begin{equation}
\label{infboouu}
\forall n, r\ino \bbN^*, \quad\bP \big( -I_n\geq r \big) \leq 3e^{-c_4 r a_n/n }.
\end{equation}
\noi
\emph{Proof.} Recall the notation $T_{-r}\eqo \inf \{ m\ino \bbN: V_m\eqo -r\}$. Observe that $\bP (-I_n \geq r)\eqo \bP (T_{-r} \leqo n)$. By Markov inequality, $\bP (T_{-r} \leqo n) \leqo e\bE [ \exp (-\tfrac{1}{n} T_{-r} )]$. We next use the fact that $T_{-r}$ 
is a sum of $r$ i.i.d.~r.v.s distributed as $T_{-1}$. Then, $\bE [ \exp (-\tfrac{1}{n} T_{-r} )]\eqo \varphi_\mu (e^{-1/n} )^r$, where we recall from Lemma \ref{numbtau} that $\varphi_\mu$ stands for the generating function of $T_{-1}$. By (\ref{numtauber}) in the same lemma, there is $c_4$ such that $c_4 (n^{1/\alpha} L(n))^{-1} \leqo -\log  \varphi_\mu (e^{-1/n} )$, which entails (\ref{infboouu}). \cq

\medskip

 Since we assume that $\ttb > \frac{2\alpha}{\alpha -1}$, then $\big( \tfrac{1}{2}+ \tfrac{1}{2\ttb}\big) +  \tfrac{1}{2\ttb}  \tfrac{\alpha +1}{\alpha-1} \leko  1$. Thus we can find $\beta \ino \big(\tfrac{1}{2}+ \tfrac{1}{2\ttb} , 1 \big) $ and $\epp \ino (0, \infty)$ such that 
\begin{equation}
\label{powertuning}
\beta + \tfrac{1}{2\ttb}  \tfrac{\alpha +1}{\alpha-1} (1+ \epp) + \epp + \tfrac{3\epp}{1+ 2\epp} \leko 1 . 
\end{equation} 
Then by Potter's bounds (see e.g.~Bingham, Goldies \& Teugel~\cite[Theorem 1.5.6]{BiGoTe}), there exists $c_5 \ino (1, \infty)$ that only depends on $\mu$ and $\epp$ such that 
\begin{equation}
\label{BiGoTeappro}
\forall n \ino \bbN^*, \quad  c^{-1}_5 n^{(1-\frac{_1}{\alpha})(1- \epp)  } \leq a_n \eqo   \tfrac{1}{L(n)}  n^{1-\frac{_1}{^\alpha}}  \leq c_5  n^{(1-\frac{_1}{\alpha})(1+ \epp)  }. 
\end{equation}
We then set 
$$ q= n^{\frac{1}{2}(1-\frac{_1}{\alpha})(1+ 2\epp)  }, \quad p= q^{2 \cdot\frac{{1+ \frac{3}{2}\epp}}{{1+2\epp}}}, \quad  r= q^{\frac{2}{{\alpha-1}}} 
\quad \textrm{and} \quad  \lambda= q^{ \frac{1}{{2\ttb}}  \frac{{\alpha +1}}{{\alpha-1}} (1+ \epp) } \; .
$$
Then we check that $\frac{1}{p}a_n \leqo c_5 n^{-\frac{1}{2}(1-\frac{_1}{\alpha}) \epp}$ 
and that $\frac{1}{n} r a_n \geqo  \frac{1}{c_5 }  n^{\frac{1}{2} \epp}$. We fix $\ttb_1\geko 1$, and we check that there are positive constants $c_{6}, \ldots, c_{11}$ that only depend on $\mu$, $\nu$, $\ttb$, $\beta$, $\ttb_1$ and $\epp$ such that  
\begin{eqnarray*} 
\mathtt{RHS} (\ref{summarybad}) \leq   c_6 n^{-\frac{1}{2}(1-\frac{_1}{\alpha}) \ttb_1 \epp} + c_7 n^{c_8} \exp (-c_9 
n^{\frac{1}{2} \epp(1-\frac{_1}{^\alpha}) } ) + 3 \! \exp (-c_{10} n^{\frac{1}{2} \epp}  ) + c_{11} q^{-\frac{\alpha+1}{\alpha-1} \epp}.
\end{eqnarray*}
Here we used (\ref{heightboouu}) and (\ref{infboouu}). Thus there are $c_0\ino (0, \infty)$ and $\epp_0\ino (0, 1)$, which only depend on $\mu$, $\nu$, $\ttb$, $\beta$, $\ttb_1$ and $\epp$, such that $\mathtt{RHS} (\ref{summarybad}) \leqo c_0n^{-\epp_0}$. 

Next, by (\ref{powertuning}) and (\ref{BiGoTeappro}), 
there are positive constants $c_{12}, c_{13}$, which only depend on $\mu$, $\nu$, $\ttb$, $\beta$, $\ttb_1$ and $\epp$, 
such that  
$$ 2\lambda (1+ q^\beta)  \leqo c_{12} q^{\beta + \frac{1}{{2\ttb}}  \frac{{\alpha +1}}{{\alpha-1}} (1+ \epp) } \! \leqo  c_{12} q^{1-\epp -\tfrac{3\epp}{1+ 2\epp}}\!  \leqo  c_{12} q^{-\epp} q^{\tfrac{1-\epp}{1+ 2\epp}} \! \eqo  c_{12} q^{-\epp} n^{\frac{1}{2}(1-\frac{_1}{\alpha})(1-\epp)} \! \leqo c_{13} q^{-\epp} a_n^{\! \frac{_1}{^2}} .$$
This proves that $ 2\lambda (1+ q^\beta)\leqo c'_0n^{-\epp_1} \sqrt{a_n}$ where we have set 
$\epp_1\eqo \frac{1}{2}(1\! -\! \frac{1}{\alpha})(1+ 2\epp) \epp $ and $c'_0\eqo c_{13}$. This implies that $\mathtt{LHS} (\ref{quantclose}) \leqo \mathtt{LHS} (\ref{summarybad}) \leqo \mathtt{RHS} (\ref{summarybad}) \leqo c_0n^{-\epp_0}$, which completes the proof of (\ref{quantclose}).  
\cqfd

\section{Proof of Theorem \ref{QumetlimbrRW}}
\label{QumetlimbrRWpfsec} 
\subsection{Limit of the $\tau_n$-indexed RW and its snakes.}
\label{limtaunbrRWsec}

Before stating the main result of the section, let us introduce some notation. 
We fix $\alpha \ino (1, 2]$ and we fix an offspring distribution $\mu$ that satisfies (\ref{hypostaintro}). 
Here $\nu$ is a supercritical offspring distribution whose mean is denoted by $\ttm_\nu$. We assume that there is $\ttb\ino ( \frac{2\alpha}{\alpha-1}, \infty)$ such that $\sum_{k\in \bbN} k^{1+2\ttb} \nu(k)\leko \infty$. 
Let $(\bT, \boo)$ be an invariant GW($\nu$)-tree as in Definition \ref{definvRW}. 

Let $\tau': \Omega \to \bbT$ be an (auxiliary) a.s.~finite random ordered rooted tree. Recall that $V (\tau')$ is its  Lukasiewicz path, that 
$H (\tau')$ is its height process (Definition \ref{Lukadef}) and that $C(\tau')$ is its contour process (Definition \ref{contourdef}). 
We then denote by 
$\fTheta (\tau') \eqo \big(\tau' ,\varnothing ;  \overline{Y}_{\! u} (\tau') \eqo (\bT, Y_u (\tau')) , u\ino \tau' \big)$ a $\tau'$-indexed and $\bT$-valued $\ttm_\nu$-biased BRW such that conditionally given $(\bT, \boo)$ and $\tau'$, $\fTheta (\tau')$ has the 
law $Q^{(\bT, \boo)}_{(\tau', \varnothing)}$ as in Definition \ref{biasedbfRWdef} with $\lambda\eqo \ttm_\nu$.  
We associate with $\fTheta (\tau') $ two $\bbR$-valued and $\tau'$-indexed BRWs
$$ M (\tau')  := \big( \bbn Y_{u} (\tau') \bbn \big)_{u\in \tau'}  \quad \textrm{and} \quad \mathcal M(\tau') 
:= \big( S_{Y_{u} (\tau')} \big)_{u\in \tau'} \; , $$
where we recall from (\ref{defSx}) the definition of the harmonic coordinates $(S_x)_{x\in \bT}$. 
According to Definition \ref{brwlkdef}, we introduce the following four $\bC^0(\bbR_+, \bbR)$-valued continuous processes:  
\begin{compactenum}
\item[$-$] $\big( W_s(\tau';  \cdot)\big)_{\! s\in \bbR_+}$ is the contour snake of the BRW  
$M(\tau')$; 
\item[$-$] $\big(W_s^*(\tau'; \cdot)\big)_{\! s\in \bbR_+}$ is the height snake of the BRW $M(\tau')$; 
 \item[$-$] $\big(\cW_s(\tau' ; \cdot)\big)_{\! s\in \bbR_+}$ is the contour snake of the BRW $\mathcal M (\tau') $; 
\item[$-$] $\big(\cW_s^*(\tau' ; \cdot)\big)_{\! s\in \bbR_+}$ is the height snake of the BRW $\mathcal M (\tau') $. 
\end{compactenum}
In this section, the auxiliary tree $\tau'$ shall be distributed as $\tau_n$ and  $\tau_{\geq n}$, which are defined as follows:  
\begin{equation}
\label{defratr}
\tau_{\geq n} \overset{\textrm{(law)}}{=} \tau \quad \textrm{under $\bP (\, \cdot \, |\, \# \tau \geqo n)$} \quad \textrm{and} \quad  \tau_{n} \overset{\textrm{(law)}}{=} \tau \quad \textrm{under $\bP (\, \cdot \, | \, \# \tau = n)$,} 
\end{equation}
where $\tau$ is a GW($\mu$)-tree (Definition \ref{GWfordef} $(\textbf{a})$). We note that the definition of $\tau_n$ only makes sense when $\bP (\# \tau\eqo n) \geko 0$, which is the case for all sufficiently large $n$ since $\mu$ is assumed to be aperiodic.

We finally introduce  the processes $(X'', H'', W'')$ and $(X', H', W')$ defined on $(\Omega, \ccF, \bP)$, whose laws 
are described as follows.  
\begin{compactenum}

\smallskip

\item[$-$]  The process $ (X''\! , H'') $ is distributed as $(X, H(X))$ under $\bN (dX \, | \, \zeta \geko 1)$ and conditionally given $H''\! $, $W''$ is the one-dimensional Brownian snake with lifetime process $H''$ (see Proposition \ref{Holdsnake}).

\smallskip

\item[$-$]The process $ (X'\! , H') $ is distributed as $(X, H(X))$ under $\bN (dX \, | \, \zeta \eqo 1)$ and conditionally given $H'\! $, $W'$ is the one-dimensional Brownian snake with lifetime process $H'$ (see Proposition \ref{Holdsnake}).  

\smallskip

\end{compactenum}

\noi
The goal of this section is to derive from Theorem~\ref{cvsnakes} the following result that is the main argument of the proof of Theorem~\ref{QumetlimbrRW}. 
\begin{theorem}
\label{cvsnacondi} We fix $\alpha \ino (1, 2]$. Let $\mu$ satisfy (\ref{hypostaintro}). 
Let $\tau_n$ be as in (\ref{defratr}) (i.e., ~namely, a GW($\mu$)-tree conditioned to have $n$ vertices). 
Let $\nu$ be a supercritical offspring distribution whose mean is denoted by $\ttm_\nu$. We assume that there is  $\ttb\ino ( \frac{2\alpha}{\alpha-1}, \infty)$ such that $\sum_{k\in \bbN} k^{1+2\ttb} \nu(k)\leko \infty$.
Let $(\bT, \boo)$ be an invariant GW($\nu$)-tree that is independent from $\tau_n$. 
We recall the $a_n$ and $b_n$ from (\ref{remindcv}) and we recall 
from (\ref{vutchi}) the definition of $\sigma_\nu$. We also recall the notation $\mathscr Q_n'$ from (\ref{VHCGDcv}) in Theorem \ref{VHCcvstable} $(iv)$. We keep the above notation. 
Then, conditionally given $(\bT, \boo)$, the following convergence 
\begin{eqnarray}
\label{snakescv_prime}
\Big(\mathscr Q_n'  ,  \tfrac{1}{ \sigma_\nu \! \sqrt{a_n}} \widehat{\cW}_{2n\cdot }(\tau_n),   \tfrac{1}{ \sigma_\nu \! \sqrt{a_n}}\widehat{\cW}^*_{n\cdot }(\tau_n)   \!\!  \!\!  \!\!  \!\!  \!\!   \!\! \! & , &\!   \!\!   \!\!  \!\!  \!\!  \!\!  \!\!  \tfrac{\sigma_\nu}{ \sqrt{a_n}} \widehat{W}_{2n\cdot }(\tau_n),\!  \tfrac{\sigma_\nu}{ \sqrt{a_n}} \widehat{W}^*_{n\cdot }(\tau_n)\Big)  \nonumber  \\
&\underset{n\to \infty}{-\!\!\!-\!\!\!-\!\!\!\longrightarrow}& \big( X', H', H', \widehat{W}',  \widehat{W}' , \widehat{W}',  
\widehat{W} '\big)
\end{eqnarray}
a.s.~holds weakly on $\bD([0, 1], \bbR) \! \times \! \bC ([0, 1], \bbR)^6$ equipped with the product topology. 
\end{theorem}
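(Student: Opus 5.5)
We derive Theorem~\ref{cvsnacondi} from Theorem~\ref{cvsnakes} in two stages: first pass from the forest $\tau_\infty$ to the tree $\tau_{\geq n}$, then from $\tau_{\geq n}$ to $\tau_n$ by the absolute continuity of Lemma~\ref{absClem}, and finally remove the truncation at a time $r<1$ by a tightness argument near the end of the excursion.

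\emph{Step 1 (from $\tau_\infty$ to $\tau_{\geq n}$).} By (\ref{JGDdef}), $\tau^{(\kappa_n)}$ is distributed as $\tau_{\geq n}$, and its coding processes are the restrictions of those of $\tau_\infty$ to $[\frak g_n,\frak d_n]$ (resp.\ $[2\frak g_n,2\frak d_n]$ for the contour). The BRW on $\tau^{(\kappa_n)}$ starts at $\boo$, exactly like the BRW on $\tau_{\geq n}$. The snake processes of $\tau^{(\kappa_n)}$ are obtained from those of $\tau_\infty$ by shifting time by $\frak g_n$ (resp.\ $2\frak g_n$). Theorem~\ref{cvsnakes} gives joint conditional convergence together with $(\frak g_n/n,\frak d_n/n)\to(\mathbf g_1,\mathbf d_1)$. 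Since the shift-and-stop map is continuous at continuous limits (Lemma~\ref{composko}), we obtain, conditionally on $(\bT,\boo)$, convergence of the rescaled coding processes and snakes of $\tau_{\geq n}$ to the excursion of $(X,H,\widehat W)$ on $[\mathbf g_1,\mathbf d_1]$. By (\ref{zeta>1}) this limit is $(X'',H'',\widehat W'')$. One subtlety has to be checked: the snake on the excursion has endpoint value $0$ at $\mathbf g_1$, because all trees start at $\boo$ and $\widehat W$ vanishes when $H=0$ (note that $S_\boo=0$).

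\emph{Step 2 (from $\tau_{\geq n}$ to $\tau_n$).} Fix $r\in(0,1)$ and take $r_n=\lfloor rn\rfloor$. The processes $H_{\cdot\wedge r_n}$ and $\widehat{\cW}^*_{\cdot\wedge r_n}$, and likewise $\widehat W^*$, depend only on $V_{\cdot\wedge r_n}$ and on the spatial randomness of the vertices $u_0,\dots,u_{r_n}$. These are functions $Z_n(V_{\cdot\wedge r_n}(\tau'),U_n)$, where $U_n$ is the family of spatial displacements, independent of the genealogy given $\mathscr G=\sigma(\bT,\boo)$. This requires a coupling in which $U_n$ is fed along the lexicographic order, which is legitimate because the displacements are i.i.d.\ along edges given the environment. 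Lemma~\ref{absClem}, with $\mathscr G$ generated by $(\bT,\boo)$, then transfers Step~1 to $\tau_n$ on $[0,r]$: the limit carries the density $D_r(X''_r)$, which by (\ref{abscont}) is exactly the law of $(X',H',\widehat W')$ stopped at $r$. Lemma~\ref{absClem} asks for the limit of Step~1 to be independent of $\mathscr G$, which holds since the limit is deterministic in law given the environment. The contour versions are then recovered with the time change $\phi_{\tau_n}$ via (\ref{controphit}) and Lemma~\ref{composko}.

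\emph{Step 3 (removing the truncation).} This is the main obstacle: the convergence has to be extended from $[0,r]$ to $[0,1]$. For $H(\tau_n)$ and $V(\tau_n)$ the full convergence is Theorem~\ref{VHCcvstable}$(iv)$. For the snakes I would use time reversal. The tree $\tau_n$, explored in reverse contour order, is again distributed as $\tau_n$, and the BRW is invariant under this relabeling. Hence the contour snake on $[2n(1-r),2n]$ reversed has the same conditional law as on $[0,2nr]$. This gives conditional tightness of the contour snakes on $[0,2n]$: a modulus of continuity bound on $[0,r]$ and on $[1-r,1]$ with $r>1/2$ covers everything. Finite-dimensional marginals at times $s_1<\dots<s_p<1$ are already identified by Step~2, choosing $r>s_p$. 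Continuity of the limit at time $1$, together with tightness, identifies the limit on the whole interval. Height snakes again follow by the time change.

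Finally, Proposition~\ref{harmoclose} is not directly available for $\tau_n$. The joint convergence of the $W$ and $\cW$ versions nevertheless persists through Steps~1--2, because it is part of the statement being transferred. For the reversed portion the same symmetry argument applies.
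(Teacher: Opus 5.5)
Your proposal is correct and follows the paper's proof essentially step for step. Theorem~\ref{cvsnakes}, restricted to the first excursion longer than $n$, gives the convergence for $\tau_{\geq n}$. Lemma~\ref{absClem} together with (\ref{abscont}) transfers it to $\tau_n$ on $[0,s]$ for $s<1$, and the time change $\phi_{\tau_n}$ yields the contour versions. The mirror-image symmetry of $\tau_n$ with $s>\tfrac12$ gives tightness on all of $[0,1]$, the truncated results identify the finite-dimensional marginals, and the inverse time change recovers the height snakes.

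There are two minor slips. First, Lemma~\ref{composko} assumes $\varphi_n(0)=0$, so your shift-and-stop step in Step~1 is better justified directly by uniform continuity of the limit. Second, the displacements are not i.i.d.\ along edges, since each move depends on the current position. The BRW can however be driven by i.i.d.\ uniform variables attached to the vertices in lexicographical order, and this is what makes $U_n$ independent of the genealogy.
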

\noi
\textbf{Proof.} We set $Z''_n \eqo \big( \tfrac{1}{b_n} V_{\lfloor n\cdot \rfloor } (\tau_{\geq n})$, $ \tfrac{1}{a_n} H_{n\cdot }(\tau_{\geq n})$, $\tfrac{1}{\sigma_\nu\sqrt{a_n}} \widehat{\cW}^*_{n\cdot }(\tau_{\geq n})$, $\tfrac{\sigma_\nu}{ \sqrt{a_n}} \widehat{W}^*_{n\cdot }(\tau_{\geq n}) \big)$ for all $n\ino \bbN^*$ and we also set 
$Z''$ $=$  $(X'', H'', \widehat{W}'', \widehat{W}'')$. We first prove that 
conditionally given $(\bT, \boo) $, the following convergence 
\begin{equation}
\label{truecv}
Z''_n \xrightarrow[n\to \infty]{} Z''
\end{equation}
holds weakly on $\bD (\bbR_+, \bbR) \times \bC^0 (\bbR_+, \bbR)^3$. 
\emph{Indeed}, by easy arguments, Theorem~\ref{cvsnakes} entails that conditionally on $(\bT, \boo)$, the following convergence 
\begin{eqnarray}
\label{extractcv}
\Big( \tfrac{1}{b_n} V_{\lfloor ( \frak g_n +n \cdot ) \wedge \frak d_n\rfloor } (\tau_\infty) \!\!\!\! \! &,& \!\!\!\!  \!
 \tfrac{1}{a_n} H_{( \frak g_n + n\cdot ) \wedge \frak d_n } (\tau_\infty), \tfrac{1}{ \sigma_\nu\sqrt{a_n}} \widehat{\cW}^*_{( \frak g_n + n\cdot ) \wedge \frak d_n}(\tau_\infty), \tfrac{\sigma_\nu}{ \sqrt{a_n}} \widehat{W}^*_{( \frak g_n + n\cdot ) \wedge \frak d_n}(\tau_\infty)\Big) \nonumber \\
\!\!  \!\! & & \!\! \!\! \underset{n\to \infty}{-\!\!\!-\!\!\!-\!\!\!\longrightarrow} \Big( X_{(\mathbf g_1+ \cdot )\wedge \mathbf d_1},  H_{(\mathbf g_1+ \cdot )\wedge \mathbf d_1},   \widehat{W}_{(\mathbf g_1+ \cdot )\wedge \mathbf d_1},  \widehat{W}_{(\mathbf g_1+ \cdot )\wedge \mathbf d_1} \Big)
\end{eqnarray} 
holds weakly on $\bD (\bbR_+, \bbR) \! \times \! \bC^0 (\bbR_+, \bbR)^3$. Then, as a consequence of the definition of $\frak g_n$ and $\frak d_n$, the left member of (\ref{extractcv}) has the same joint law with $(\bT, \boo)$ as $(Z''_n, (\bT, \boo))$.  
Similarly, the definition of $\mathbf g_1$ and $\mathbf d_1$ implies that the right member of (\ref{extractcv}) is distributed as $Z''$ (see Remark~\ref{gundundef}). Thus, (\ref{extractcv}) is equivalent to (\ref{truecv}). \cq

\medskip

To complete the proof of the theorem, we next want to use Lemma \ref{absClem}. To that end, we fix $s\ino (0, 1)$ and we set 
$s_n \eqo \lfloor ns \rfloor$. We also set 
\begin{compactenum}

\smallskip

\item[$-$] $ Z''_{n} (s)  \eqo \big( \tfrac{1}{b_n} V_{\lfloor n \cdot \rfloor \wedge s_n } ( \tau_{\geq n} ),  
\tfrac{1}{a_n} H_{(n\cdot ) \wedge s_n} (  \tau_{\geq n}  ) , 
\tfrac{1}{\sigma_\nu \sqrt{a_n}} \widehat{\cW}^*_{ (n\cdot ) \wedge s_n} (  \tau_{\geq n} ) , 
\tfrac{\sigma_\nu}{ \sqrt{a_n}} \widehat{W}^*_{ (n\cdot ) \wedge s_n} (  \tau_{\geq n} ) \big)$,

\smallskip

\item[$-$] $  Z'_{n} (s) \eqo \big( \tfrac{1}{b_n} V_{\lfloor n \cdot \rfloor \wedge s_n } ( \tau_{ n} ),  
\tfrac{1}{a_n} H_{(n\cdot ) \wedge s_n} (  \tau_{n}  ) , 
\tfrac{1}{\sigma_\nu \sqrt{a_n}} \widehat{\cW}^*_{ (n\cdot ) \wedge s_n} (  \tau_{n} ) , 
\tfrac{\sigma_\nu}{ \sqrt{a_n}} \widehat{W}^*_{ (n\cdot ) \wedge s_n} (  \tau_{n} ) \big)$,

\smallskip

\item[$-$]  $Z'' (s)\eqo (X''_{\cdot \wedge s}, H''_{\cdot \wedge s} ,  \widehat{W}''_{\cdot \wedge s}, \widehat{W}''_{\cdot \wedge s})$ and $Z' (s) \eqo (X'_{\cdot \wedge s}, H'_{\cdot \wedge s} , \widehat{W}'_{\cdot \wedge s}, \widehat{W}'_{\cdot \wedge s})$.

\smallskip

\end{compactenum} 
We easily deduce from (\ref{abscont}) that $\bE [F(Z'' (s)) D_s (X''_s) ]\eqo \bE [F(Z'(s)) ]$ for all measurable nonnegative functions $F$. We next deduce from (\ref{truecv}) that conditionally given $(\bT, \boo)$, $Z''_n (s) \! \to \! Z''(s)$ weakly on the appropriate spaces. 
Next denote by $(u_k)_{0\leq l< \# \tau_{\geq n}}$ the vertices of $\tau_{\geq n}$ listed in lexicographical order. 
We observe that the subtree $\{ u_l \, ; 0 \leqo l \leqo s_n \}$ is measurable with respect to $V_{\cdot \wedge s_n}  (\tau_{\geq n} )$. Thus, $Z''_n (s)$ is a measurable function of $V_{\cdot \wedge s_n}  (\tau_{\geq n} )$ and of an independent r.v.~$U_n$. Consequently Lemma \ref{absClem} applies and conditionally given $(\bT, \boo)$ the joint convergence 
\begin{equation}
\label{truecvbis}
Z'_n(s)  \xrightarrow[n\to \infty]{} Z' (s) 
\end{equation}
holds weakly on $\bD (\bbR_+, \bbR) \times \bC^0 (\bbR_+, \bbR)^3$.

We next derive the joint convergence of the related contour process and contour snakes via the time-change discussed in 
Remark~\ref{contord} $(\textbf{d})$. Namely, there is an increasing continuous bijection $\phi_{\tau_n} \! :\! [0, n] \! \to \! [0, 2n]$ such that $H_r (\tau_n)\eqo C_{\phi_{\tau_n} (r)} (\tau_n)$, $r\ino [0, n]$. 
Moreover, deduce from (\ref{controphit}) that 
$\max_{r\in [0, 1]} |\frac{1}{2n}\phi_{\tau_n} (nr) \! -\! r | \leqo 3\tfrac{a_n}{n} \max_{s\in [0, 1]} \tfrac{1}{a_n} H_{nr} (\tau_n) $, which tends to $0$ in $\bP$-probability since $a_n/n \! \to \! 0$. 
Therefore, if for all $s\ino [0, 1]$ we set $\varphi_n (s)\eqo \frac{1}{n} \phi^{-1}_{\tau_n} (2ns)$ (here, $\phi^{-1}_{\tau_n}$ stands for the inverse of $\phi_{\tau_n} $), then we get

\smallskip

\noi
$(i)$  $\lim_{n\to \infty}\max_{s\in [0, 1] } \big| \varphi_n (s) \! -\! s \big| = 0$ in $\bP$-probability, 

\smallskip

\noi
$(ii)$ $C_{2ns} (\tau_n)\eqo H_{n\varphi_n(s)} (\tau_n)$, 
$ \widehat{\cW}_{2ns}(\tau_n)\eqo  
 \widehat{\cW}^*_{n\varphi_n (s)} (\tau_n)$,  $ \widehat{W}_{2ns}(\tau_n)\eqo   \widehat{W}^*_{n\varphi_n (s)} (\tau_n)$ for all $s\ino [0, 1]$. 

\smallskip

\noi
This combined with Lemma \ref{composko} shows that (\ref{truecvbis}) implies 
for all $s\ino (0, 1)$ that conditionally given $(\bT, \boo)$ the joint convergence 
\begin{equation}
\label{tillscv} \Big( Z'_n (s)  , \tfrac{1}{a_n} C_{2n (\cdot \wedge s)  } (\tau_n),  \tfrac{1}{\sigma_\nu \sqrt{a_n}}   \widehat{\cW}_{2n (\cdot \wedge s) }(\tau_n) , 
\tfrac{\sigma_\nu}{\sqrt{ a_n}}  \widehat{W}_{2n (\cdot \wedge s) }(\tau_n) \Big)   \!\!  \underset{n\to \infty}{-\!\!\!-\!\!\!\longrightarrow}  (Z' (s) , H'_{\cdot \wedge s}, \widehat{W}'_{\cdot \wedge s} ,\widehat{W}'_{\cdot \wedge s} ) 
\end{equation}
holds weakly on $\bD (\bbR_+, \bbR) \! \times \! \bC^0 (\bbR_+, \bbR)^6$. 

To complete the proof of the theorem, we next use a symmetry property of contour processes and contour snakes. More precisely, denote by $\widetilde{\tau}_n$ the mirror image of $\tau_n$. Namely there exists a height preserving and $\leq_{\mathtt{lex}}$-increasing bijection $u\ino \tau_n\mapsto \widetilde{u} \ino \widetilde{\tau}_n$ such that $\widetilde{(v_k)}= \widetilde{v}_{2(n-1)-k}$, where $(v_k)_{0\leq k \leq 2(n-1)}$  and $(\widetilde{v}_k)_{0\leq k \leq 2 (n-1)}$ stand for the vertices of resp.~$\tau_n$ and $\widetilde{\tau}_n$ in increasing contour order. Then, for all $s\ino [0, n]$, we get 
$$ C_{2(n -s)} (\tau_{n}) \eqo C_{ 2(s -1)_+ } (\widetilde{\tau}_{n}), \quad \widehat{\cW}_{2(n -s)} (\tau_{n}) \eqo \widehat{\cW}_{ 2(s -1)_+ } (\widetilde{\tau}_{n}), \quad \widehat{W}_{2(n -s)} (\tau_{n}) \eqo \widehat{W}_{ 2(s -1)_+ } (\widetilde{\tau}_{n}) . $$
Since $\widetilde{\tau}_n$ and $\tau_n$ have the same law, (\ref{tillscv}) then implies for all $s\ino (0, 1)$ that conditionally given $(\bT, \boo)$, the joint convergence 
$$  \Big(  \tfrac{1}{\sigma_\nu \sqrt{a_n}}   \widehat{\cW}_{2n ( (1-\cdot )\wedge s)  }(\tau_n) , 
\tfrac{\sigma_\nu}{\sqrt{a_n}}  \widehat{W}_{2n ( (1-\cdot )\wedge s) }(\tau_n) \Big)    \!  \underset{n\to \infty}{-\!\!\!-\!\!\!\longrightarrow}   ( \widehat{W}'_{\cdot \wedge s} ,\widehat{W}'_{\cdot \wedge s} ) \; .$$
holds weakly on $ \bC^0 (\bbR_+, \bbR)^2$. 
If $s\ino (\frac{_1}{^2}, 1)$, the previous limit combined with (\ref{tillscv}) imply that conditionally given $(\bT, \boo)$ the laws of the processes $\tfrac{1}{\sigma_\nu \sqrt{a_n}}   \widehat{\cW}_{2n \cdot   }(\tau_n) $ and 
$\tfrac{\sigma_\nu}{\sqrt{ a_n}}  \widehat{W}_{2n \cdot  }(\tau_n) $ are tight on $\bC ([0, 1], \bbR)$. 
Then recall from Theorem~\ref{VHCcvstable} $(iv)$ that the laws of the processes $\tfrac{1}{b_n}V_{\lfloor n\cdot \rfloor } (\tau_n)$, $\tfrac{1}{a_n} H_{n\cdot} (\tau_n)$ and $\tfrac{1}{a_n} C_{2n\cdot} (\tau_n)$ are tight on resp.~$\bD ([0, 1], \bbR)$ and $\bC ([0, 1], \bbR)$.
Since $s$ can be chosen arbitrarily close to $1$ in (\ref{tillscv}), the processes that are weak limits of the previous ones have 
necessarily the same finite-dimensional marginals as $(X', H', H'  , \widehat{W}', \widehat{W}')$. It therefore 
shows that conditionally given $(\bT, \boo)$ the following convergence 
 \begin{equation}
\label{snakescv3/4}
\Big(\mathscr Q_n', \tfrac{1}{ \sigma_\nu \! \sqrt{a_n}} \widehat{\cW}_{2n\cdot }(\tau_n),   \tfrac{1}{ \sigma_\nu \! \sqrt{a_n}}\widehat{W}_{2n\cdot }(\tau_n)  \Big)\underset{n\to \infty}{-\!\!\!-\!\!\!-\!\!\!\longrightarrow} \big( X', H', H', \widehat{W}',  \widehat{W}' \big)
\end{equation}
holds weakly on $\bD([0, 1], \bbR) \! \times \! \bC ([0, 1], \bbR)^4$. Now observe that for all $s\ino [0, 1]$, 
$$ \widehat{\cW}^*_{ns}(\tau_n)\eqo  
 \widehat{\cW}_{ 2n\varphi^{-1}_n (s)} (\tau_n) \quad \textrm{and} \quad  \widehat{W}^*_{ns}(\tau_n)\eqo   
 \widehat{W}_{ 2n\varphi^{-1}_n (s)} (\tau_n)\; .$$ 
where $\varphi^{-1}_n \! : \! [0, 1] \! \to [0, 1]$ stands for the inverse of $\varphi_n$. 
From the above-mentioned uniform convergence of the $\varphi_n$, we easily prove that 
$\lim_{n\to \infty} $ $\max_{s\in [0, 1] } \big| \varphi^{-1}_n (s) \! -\! s \big|$ $\eqo 0$ in $\bP$-probability and we finally derive the convergence (\ref{snakescv_prime}) from (\ref{snakescv3/4})  and Lemma~\ref{composko}. 
This completes the proof of the theorem.  \cqfd 

\subsection{Proof of Theorem \ref{QumetlimbrRW}}
\label{reQumetlimbrRWpfsec}

We keep the notation and the assumptions of Theorem \ref{cvsnacondi}. In particular $\tau_n $ has the same law as a GW($\mu$)-tree $\tau$ under $\bP (\, \cdot \, | \, \# \tau\eqo n)$ and $\fTheta (\tau_n) \eqo \big(\tau_n ,\varnothing ;  \overline{Y}_{\! u}  \eqo (\bT, Y_u ) , u\ino \tau_n \big)$ is a $\tau_n$-indexed and $\bT$-valued $\ttm_\nu$-biased BRW such that conditionally given $(\bT, \boo)$ and $\tau_n$, $\fTheta (\tau_n)$ has the 
law $Q^{(\bT, \boo)}_{(\tau_n, \varnothing)}$  in Definition \ref{biasedbfRWdef} with $\lambda \eqo \ttm_\nu$. Recall that $\tau_n$ is independent of $(\bT, \boo)$, which is an invariant GW($\nu$)-tree. We set 
$$ \ttm_n \eqo \sum_{v \in \tau_n } \delta_v , \quad \mathcal R_n = \big\{ Y_v \, ; \, v\ino \tau_n \big\} \quad \textrm{and} \quad \ttm_{\mathtt{occ}}^{(n)}= \sum_{v\in \tau_n } \delta_{Y_v} . $$
We want to prove conditionally given $(\bT, \boo)$ 
a joint convergence of the following rescaled pointed measured metric spaces equipped with their graph distances $d_{\mathtt{gr}}$: 
$$ \ftau_n := \big( \tau_n , \tfrac{1}{a_n} d_{\mathtt{gr}} , \varnothing , \tfrac{1}{n}\ttm_n \big)  \quad \textrm{and} \quad  \bcR_n := \big( \mathcal R_n ,  \tfrac{\sigma_\nu}{\sqrt{a_n}} d_{\mathtt{gr}} , \boo,  \tfrac{1}{n}  \ttm_{\mathtt{occ}}^{(n)}\big)$$
weakly with respect to the Gromov--Hausdorff--Prokhorov convergence.

  To that end, we first prove a convergence of the snake-trees associated with the contour snake of the relative heights of $\fTheta(\tau_n)$. Namely, recall that $W (\tau_n; \cdot)$ stands for the contour snake associated with the $\bbR$-valued BRW $\big( \bbn Y_{v} (\tau_n) \bbn \big)_{v\in \tau_n}$ as in Definition \ref{brwlkdef}. To simplify notation, we set for all $s\ino [0,1]$ and $r\ino \bbR_+$, 
$$ C^{(n)}_s \eqo \tfrac{1}{a_n}C_{2ns} (\tau_n) , \quad W^{(n)}_s (r)= \tfrac{\sigma_\nu}{\sqrt{a_n}} W_{2ns}( \tau_n; a_n r) .$$  
Then $(W^{(n)}_{s} (\cdot) )_{s\in [0, 1]}$ is a snake with lifetime process $(C^{(n)}_{s})_{s\in [0, 1]}$ (as in Definition \ref{defC0sna}).
We recall the notation $d_{C^{(n)} , W^{(n)}} $ for the snake-pseudometric associated with $(C^{(n)} , W^{(n)} )$ 
(Definition \ref{defsnkme}). We also recall from (\ref{djhczkk}) that $d_{C^{(n)}}$ stands for the tree-pseudometric  associated with $C^{(n)}$. By combining Theorem \ref{cvsnacondi}, Proposition \ref{endvssna} and Lemma~\ref{Hermagor}, we get 
conditionally given $(\bT, \boo)$ the following joint convergence 
\begin{equation}
\label{limdistn}
\big( d_{C^{(n)}} , d_{C^{(n)} , W^{(n)}} )  \underset{n\to \infty}{-\!\!\!-\!\!\!-\!\!\!\longrightarrow} (d_H , d_{H, W} ) 
\end{equation}
weakly on $\bC ([0, 1]^2, \bbR)^2$. Here $H$ is the $\alpha$-stable height process under $\bN (\, \cdot \, | \, \zeta \eqo 1)$ and $W$ is a one-dimensional Brownian snake with lifetime process $H$ as in Proposition \ref{Holdsnake}. 
Then to simplify, we denote by 
$\widetilde{\ftau}_n \eqo  (T_{C^{(n)}},  d_{C^{(n)}} , \mathtt r_{C^{(n)}} , \ttm_{C^{(n)}})$ the real tree coded by the function $C^{(n)}$ as in Example \ref{rltrcdng}. By Remark~\ref{interpo1}, we get that 
\begin{equation}
\label{treclcl}
\bdelta_{\mathtt{GHP}} \big( \ftau_n,  \widetilde{\ftau}_n  \big)  \underset{n\to \infty}{-\!\!\!-\!\!\!-\!\!\!\longrightarrow} 0\; .
\end{equation}
 To simplify, we also denote the snake-tree coded by the snake-pseudometric $d_{C^{(n)} , W^{(n)}}$ by 
$$ \bcT^{(n)}_{\! \!\mathtt{sna}} = \big( \cT^{(n) }_{ \mathtt{sna}},  d^{(n)}_{ \mathtt{sna}} , \mathtt r^{(n) }_{ \mathtt{sna}}, \ttm^{(n) }_{ \mathtt{sna}}  \big) .$$
(See Definition \ref{notasnkm}). 
By Proposition \ref{groweak}, the limits (\ref{limdistn}) and (\ref{treclcl}) imply that conditionally given $(\bT, \boo)$ the joint convergence 
\begin{equation}
\label{limspcn}
\big( \ftau_n\,  , \, \bcT^{(n)}_{\!\! \mathtt{sna}} \big) \underset{n\to \infty}{-\!\!\!-\!\!\!-\!\!\!\longrightarrow} \big( (T_H, d_H, \mathtt r_H, \ttm_H) , (T_{H, W}, d_{H, W}, \mathtt r_{H, W},  \ttm_{H, W})  \big)  
\end{equation}
holds weakly with respect to Gromov--Hausdorff--Prokhorov convergence. Here $ (T_H, d_H, \mathtt r_H, \ttm_H)$ is the real tree coded by $H$ which is the $\alpha$-stable tree and $ (T_{H, W}, d_{H, W}, \mathtt r_{H, W},  \ttm_{H, W}) $ is the snake-tree coded by the snake-pseudometric $d_{H, W}$ (Definition \ref{notasnkm}) which is the $\alpha$-stable Brownian cactus.

We next prove that $\bcR_n$ is close to $ \bcT^{(n)}_{\! \!\mathtt{sna}}$ thanks to the results discussed in Section \ref{trebrRWsnasec}. 
To that end, we introduce the following. 

\smallskip

\noi
$-$ We denote by $(\widetilde{\tau}_n, d_{\widetilde{\tau}_n})$ 
the real tree obtained by joining adjacent vertices in $\tau_n$ by a unit length segment (recall that we suppose here that $\tau_n \! \subset \! \widetilde{\tau}_n$ so that $d_{\widetilde{\tau}_n}$ extend the graph distance). 

\smallskip

\noi
$-$ Recall from (\ref{explotilde}) that $\widetilde{v} \! : \! [0, 2n] \! \to \! \widetilde{\tau}_n$ stands for the contour exploration of $\widetilde{\tau}_n$ (namely $(\widetilde{v} (k))_{0\leq k\leq 2 (n-1)}$ are the vertices of $\tau_n$ in contour order and  $\widetilde{v}$ is a 
`linear' extension of it). 

\smallskip

\noi
$-$ We denote by $(\widetilde{\bT}, d_{\widetilde{\bT}})$ the real tree obtained by joining adjacent vertices in $\bT$ by a unit length segment. We also assume that $\bT \! \subset \! \widetilde{\bT}$ so that $d_{\widetilde{\bT}}$ extends the graph distance. 

\smallskip

\noi
$-$ Recall from (\ref{Yexten}) that $\widetilde{Y}\! : \! \widetilde{\tau}_n\! \to \! \widetilde{\bT}$ is the `linear' extension of the BRW $(Y_v)_{v\in \tau_n}$.

\smallskip

We set $\widetilde{\bcR}_n \eqo (\widetilde{\cR}_n, \tfrac{\sigma_\nu}{\sqrt{a_n}}d_{\widetilde{\bT}}, \boo, \tfrac{1}{2n}\widetilde{\ttm}_{\mathtt occ} )$ where $\widetilde{\cR}_n \eqo \{  \widetilde{Y}_{\sigma} \, ; \, \sigma\ino \widetilde{\tau}_n \} \eqo \{  \widetilde{Y}_{\widetilde{v} (s)} \, ; \, s\ino [0, 2n] \}$ and $\widetilde{\ttm}_{\mathtt{occ}}$ is the occupation measure of $s\mapsto \widetilde{Y}_{\widetilde{v} (s)}$ as defined by (\ref{moccudef}). We then set for all $s,s'\ino [0, 1]$:
$$ d_n (s,s')\eqo \tfrac{\sigma_\nu}{\sqrt{a_n}}  d_{\widetilde{\bT}} \big( \widetilde{Y}_{\widetilde{v} (2ns)},   \widetilde{Y}_{\widetilde{v} (2ns')}\big)\; , $$
which is a tree-pseudometric. 
Recall from (\ref{ncdngRR}) that $\widetilde{\bcR}_n$ is isometric to the pointed measured compact real tree yielded by the pseudometric $d_n$. Then (\ref{interpospa}) implies that conditionally given $(\bT, \boo)$, we have the weak convergence
\begin{equation}
\label{ranclcl}
\bdelta_{\mathtt{GHP}} \big( \bcR_n,  \widetilde{\bcR}_n  \big)  \underset{n\to \infty}{-\!\!\!-\!\!\!-\!\!\!\longrightarrow} 0\; .
\end{equation}

We next prove that $d_n$ and the snake-pseudometric $d_{C^{(n)}, W^{(n)}}$ are close thanks to 
Lemma \ref{graphvssna}, which applies here thanks to Lemma~\ref{critGWimpl}. We thus get the following bound:
$$\bP \Big(   \max_{s,s'\in [0, 1]} \big| d_n (s,s') \! -\! d_{C^{(n)}, W^{(n)}}(s,s') \big| >2 \sigma_\nu a_n^{\! -\frac{_1}{^4}} \,  \Big|  \, (\bT, \boo) \Big) \leq \ttm_\nu^2  n^3 \! \exp\!  \big(\! -\! a_n^{1/4} \log \ttm_\nu \big).$$
Since $a_n$ is regularly varying with a positive exponent, this thus entails that conditionally given $(\bT, \boo)$ the following joint convergence 
\begin{equation}
\label{limdistn+}
\big( d_{C^{(n)}} , d_{C^{(n)} , W^{(n)}}, d_n )  \underset{n\to \infty}{-\!\!\!-\!\!\!-\!\!\!\longrightarrow} (d_H , d_{H, W},  d_{H, W} ) 
\end{equation}
holds weakly on  $\bC([0, 1]^2, \bbR)^3$. By Proposition \ref{groweak}, it entails that 
conditionally given $(\bT, \boo)$ the following joint convergence 
\begin{equation}
\label{limspcn+}
\big( \ftau_n\,  , \, \widetilde{\bcR}_n \big) \underset{n\to \infty}{-\!\!\!-\!\!\!-\!\!\!\longrightarrow} \big( (T_H, d_H, \mathtt r_H, \ttm_H) , (T_{H, W}, d_{H, W}, \mathtt r_{H, W},  \ttm_{H, W})  \big)  
\end{equation}
holds weakly with respect to Gromov--Hausdorff--Prokhorov convergence. By (\ref{ranclcl}), we thus have proved that 
conditionally given $(\bT, \boo)$, the following joint convergence 
\begin{equation}
\label{resumetr}
\big( \ftau_n\,  ,   \bcR_n \big) \underset{n\to \infty}{-\!\!\!-\!\!\!-\!\!\!\longrightarrow} \big( (T_H, d_H, \mathtt r_H, \ttm_H) , (T_{H, W}, d_{H, W}, \mathtt r_{H, W},  \ttm_{H, W})  \big)  
\end{equation}
holds weakly with respect to Gromov--Hausdorff--Prokhorov convergence, which is the desired result. \cqfd

\appendix 

\section{Proof of Lemma \ref{branchmass}}
\label{branchmasspfsec}

We fix $\alpha \ino (1, 2]$ and we recall that $X$ is an $\alpha$-stable spectrally positive Lévy process whose law is characterized by (\ref{Xlaw}). Recall that $H$ is the corresponding height process as defined in (\ref{Hlimit}). We also introduce the notation $S_s\eqo \sup_{r\in [0, s]} X_r $, $s\ino \bbR_+$, for the supremum process of $X$. As a Lévy process, $X$ enjoys a so-called \emph{duality property}: for all $s\ino(0,\infty)$, the time-reversed process $(\widehat{X}^{(s)}_r)_{r\in[0,s]}$ defined by
\begin{equation}
\label{reverse-time_X}
\widehat{X}_s^{(s)}= X_s\quad\textrm{and}\quad \widehat{X}_r^{(s)}= X_s \! -\! X_{(s-r)-},\quad r\ino[0,s),
\end{equation}
has the same law as $(X_r)_{r\in[0,s]}$. Note that $\widehat{X}^{(s)}$ is independent of the shifted process $(X_r^{(s)})_{s\in\bbR_+}$, which is also distributed as $X$ and defined by 
\begin{equation}
\label{shifted_X}
X_r^{(s)}=X_{s+r}-X_s,\quad r\in\bbR_+.
\end{equation}
Then, $H_s$ is $\bP$-a.s.~the local time at level $0$ at time $s$ of the process $\widehat{S}^{(s)}\! -\! \widehat{X}^{(s)}$, where $\widehat{S}^{(s)}\eqo \sup_{[0,r]}\widehat{X}^{(s)}$ for all $r\ino[0,s]$. Our proof of Lemma~\ref{branchmass} relies on this connection, which we explain in greater detail below.
\medskip

Elementary results of fluctuation theory (see e.g.~Bertoin \cite{Be} Chapter VI.1 and VII.1) yield that $S\! -\! X$ is a strong Markov process in $\bbR_+$, and that $0$ is regular for itself and $(0, \infty)$ with respect to this Markov process. Thus, there exists a Markovian local time at $0$ for $S\! -\! X$, which is a continuous and nondecreasing process $L\eqo (L_s)_{s\in \bbR_+}$. Moreover, as proved by D.~\& Le Gall~\cite[Lemma~1.1.3]{DuLG02}, we can choose $L$ such that the following limit holds in $\bP$-probability for any $s\ino\bbR_+$:
\begin{equation}
\label{Llimit}
L_s:=\lim_{\varepsilon\to 0} \frac{1}{\epp}\int_0^s {\bf 1}_{\{ S_r <  \varep+ X_r \}}\,dr.
\end{equation}

Now, for $s\geqo 0$, let us denote by $\widehat{L}^{(s)}=(\widehat{L}_r^{(s)})_{r\in[0,s]}$ the local time at $0$ of $\widehat{S}^{(s)}\! -\! \widehat{X}^{(s)}$, where $\widehat{X}^{(s)}$ is the time-reversed process as in (\ref{reverse-time_X}). Comparing (\ref{Llimit}) with (\ref{Hlimit}) $\bP$-a.s.~entails that $H_s\eqo \widehat{L}_s^{(s)}$, so $H_s$ and $L_s$ have the same law by the duality property. We also denote by $H^{(s)}\eqo (H_r^{(s)})_{r\in\bbR_+}$ the height process of the shifted process $X^{(s)}$ as in (\ref{shifted_X}). Then, for all $s,r\ino(0,\infty)$, D.~\& Le Gall~\cite[Lemma~1.4.5]{DuLG02} provide the two following identities that we will use to prove Lemma~\ref{branchmass}:
\begin{equation}
\label{variation_H_loctime}
H_{s+r}-\inf_{[s,s+r]}H = H_r^{(s)} \quad\textrm{ and }\quad H_s-\inf_{[s,s+r]}H= \widehat{L}_{s\wedge \widehat{T}}^{(s)}\quad \textrm{$\bP$-a.s.},
\end{equation}
where $\widehat{T}\eqo \inf\{t\ino[0,s] : \widehat{X}_t^{(s)}\geko - \inf_{[0,r]}X^{(s)}\}$ (with the convention $\inf\varnothing\eqo\infty$).
\medskip

 \noi
\textbf{Proof of Lemma~\ref{branchmass}.} Let $s,r\ino(0,\infty)$ and let $J$ be a r.v.~that is distributed as $-\inf_{[0,r]}X$ and independent of $X$. We set $T\eqo \inf\{t\ino[0,s] : X_t\geko J\}$. Thanks to (\ref{variation_H_loctime}), we exactly need to show that $\bP$-a.s.~$L_r\geko 0$ and $L_{s\wedge T}\geko 0$. Since $0$ is regular for itself with respect to $S\! -\! X$, standard results about local times ensure that $\bP$-a.s, $L_t\geko 0$ for all $t\ino(0,\infty)$. Thus, we only have to prove that $\bP$-a.s~$T\geko 0$, which holds if and only if $J\geko 0$. But it is well-known that $\bP$-a.s.~$\inf_{[0,r]}X\leko0$ (see e.g.~Bertoin \cite[Theorem~VII.1 and Corollary~VII.5]{Be}), which concludes the proof. \cqfd

\section{Proof of Lemma \ref{absClem}}
\label{absClempfsec}

We keep the notation and the assumptions of Theorem \ref{VHCcvstable} and of Lemma \ref{absClem}. 
To simplify notation, 
denote by $\bD_{ 1}$ the space $\bD(\bbR_+, \bbR)$. Product spaces are equipped with product topology and if $E''$ is a Polish space, we denote by $\cM_1(E'')$ the space of its Borel probability measures equipped with the topology of weak convergence. 

We denote by $q_n (\cdot, d \ttu)   : \Omega \! \to \! \cM_1 (E)$ a regular version of the conditional law of $U_n$ given $\ccG$, We also denote by $\mathtt Q_{\geq n} (d \ttv) $ and $\mathtt Q_{ n} (d\ttv)$  the laws of resp.~$ \tfrac{1}{b_n}V_{ \lfloor n\cdot \rfloor  \wedge r_n} (\tau_{\geq n})$ and  $ \tfrac{1}{b_n}V_{ \lfloor n\cdot \rfloor  \wedge r_n} (\tau_{n})$. Note that $ \mathtt Q_{ n} (d\ttv)\eqo D^{_{(n)}}_{^{r_n}} ( b_n\ttv (r'_n) ) \, \mathtt Q_{\geq n} (d \ttv) $ by Theorem \ref{VHCcvstable} $(ii)$, where we have set here $r'_n\eqo r_n/n$. For all $\omega\ino \Omega$, we next 
denote by  $Q_{\geq n} (\omega, d \ttv d\ttz )$ and $Q_{n} (\omega, d \ttv d\ttz )$ the law of $(\ttv, Z_n (\ttv, \ttu))$ under 
resp.~$\mathtt Q_{ \geq n} (d\ttv)\otimes q_n (\omega, d \ttu) $ and $\mathtt Q_{ n} (d\ttv)\otimes q_n (\omega, d \ttu) $. 
Since $U_n$ and $\ccG$ are independent from 
$\tau_{\geq n} $ and $\tau_n$, we easily check that 
 $Q_{\geq n} (\cdot, d \ttv d\ttz ) : \Omega \! \to \! \cM_1 (\bD_1 \! \times \! E')$  and  $Q_{n} (\cdot, d \ttv d\ttz ) : \Omega \! \to \! \cM_1 (\bD_1 \! \times \! E')$ are  regular versions of the conditional law of resp.~$M_{\geq n}$ in (\ref{supenncnd}) and $M_{n}$ in (\ref{eqenncnd}) given $\ccG$. 
 We also denote by $Q(d \ttv d\ttz)$ the law of $(X^{_{(r)}}_{^{\!}}, Z)$ and Assumption (\ref{supenncnd}) means that there is $\Omega_0\ino \ccG$ such that $\bP (\Omega_0)\eqo 1$ and such that for all $\omega\ino \Omega_0$, $Q_{\geq n} (\omega, d\ttv d\ttz) \! \to \! Q ( d\ttv d\ttz) $ in $\cM_1 (\bD_1 \! \times \! E')$. 
Let $F$ be a real-valued continuous function on $\bD_1 \! \times \! E'$ that is bounded by $\lVert F\rVert_\infty$. We fix $\omega\ino \Omega_0$. By the construction above, we first get 
$$ u_n := \int_{\bD_1 \times  E'} \!\!\!\!\!\!\!\! \!\! \! Q_n ( \omega, d\ttv d\ttz) \, F(\ttv, \ttz) 
 \eqo   \int_{\bD_1  \times  E'} \!\!\! \!\!\! \!\! \!\! \!  Q_{\geq n} ( \omega, d\ttv d\ttz)\,  F(\ttv, \ttz) D^{{(n)}}_{{r_n}} \big( b_n\ttv (r'_n) \big) \; .$$
Then for all $c\ino (0, \infty)$, 
\begin{eqnarray}
\label{truncc}
 \Big| u_n -   \int_{\bD_1  \times E'}  \!\!\! \!\!\! \!\! \!\! \!  Q_{\geq n} ( \omega, d\ttv d\ttz)  \!\!\! \!\!\! \!\!  & &  \!\!\! \!\!\! \!\! 
F(\ttv, \ttz) \big( c\! \wedge\!  D^{_{(n)}}_{^{r_n}} \big( b_n\ttv (r'_n) \big) \big)   \Big|  \\
& \leq &  \lVert F\rVert_{ \infty} \,  \bE \Big[ D^{_{(n)}}_{ r_n} \! ( V_{r_n} (\tau_{\geq n} )) \, \un_{\{ D^{_{(n)}}_{r_n} ( V_{r_n} (\tau_{\geq n} )) \geq c\}} \Big] =: \eta_n (c) \; .\nonumber
\end{eqnarray}   
We next fix $x,y \ino (0, \infty)$ with $x\leqo y$ and we set $\epp_n (x,y) \! := \! \max_{p \in \lgeo \lfloor b_n x\rfloor, \lfloor b_n y \rfloor \rgeo } \big| D^{_{(n)}}_{^{r_n}} (p) \! -\! D_r (p/b_n)  \big| $. We recall from (\ref{limlocgned}) in Theorem~\ref{VHCcvstable} $(iii)$ that 
$\epp_n (x,y)  \to 0$ as $n\! \to \! \infty$. Then we get 
\begin{eqnarray}
\label{substt}
\Big|   \int_{\bD_1  \times E'}  \!\!\! \!\!\! \!\! \!\! \!  Q_{\geq n} ( \omega, d\ttv d\ttz) 
F(\ttv, \ttz) \big( c\,  \wedge \!   \!\!\! \!   \!\! \! & &  \!\! \!  \!\!\! \!  D^{_{(n)}}_{^{r_n}} \big( b_n\ttv (r'_n) \big) \big)  -
 \int_{\bD_1  \times E'}  \!\!\! \!\!\! \!\! \!\! \!  Q_{\geq n} ( \omega, d\ttv d\ttz)  
F(\ttv, \ttz) \big( c \! \wedge \! D_r \big( \ttv (r'_n) \big) \big) 
\Big| \nonumber \\
& \leq &  \lVert F\rVert_{_{ \infty}}  \epp_n (x,y) + 2c  \lVert F\rVert_{_{ \infty}}   \bP \big( \tfrac{1}{b_n}V_{r_n} (\tau_{\geq n})  \notin \! (x,y)\big) . 
\end{eqnarray}  
Since $c\wedge D_r (\cdot) $ is bounded and since $X^{_{(r)}}_{^{\!}}$ is almost surely continuous at $r$, we get 
 $$ \lim_{n\to \infty}  \int_{\bD_1  \times E'}  \!\!\! \!\!\! \!\! \!\! \!  Q_{\geq n} ( \omega, d\ttv d\ttz)  
F(\ttv, \ttz) \big( c \! \wedge \! D_r \big( \ttv (r'_n) \big) \big) =  \int_{\bD_1  \times E'}  \!\!\! \!\!\! \!\! \!\! \!  Q ( d\ttv d\ttz)  
F(\ttv, \ttz) \big( c \! \wedge \! D_r \big( \ttv (r) \big) \big) . $$
If we set $u_\infty\eqo  \int_{\bD_1  \times E'}   Q ( d\ttv d\ttz)  
F(\ttv, \ttz)  D_r ( \ttv (r) )$, then we have proved for all $c, x,y\ino (0, \infty)$ that 
\begin{multline*}
\limsup_{n\to \infty}|u_n \! -\! u_\infty| \leq \sup_{n\in \bbN^*} \eta_n (c) +2c \lVert F\rVert_{_{\infty}}  
\limsup_{n\to \infty} \bP \big( \tfrac{1}{b_n}V_{r_n} (\tau_{\geq n})  \notin \! (x,y)\big) \\
+ \lVert F\rVert_{_{\infty}} 
\bE \Big[ D_r \big( X^{_{(r)}}_{^{r}}\! \big) \un_{\big\{  D_r \big( X^{_{(r)}}_{^{r}} \! \big)\geq c \big\}}\Big]. 
\end{multline*} 
Note that $\limsup_{n\to \infty} \bP \big( \tfrac{1}{b_n}V_{r_n} (\tau_{\geq n}) \notin \! (x,y) \big) \to 0$ as $x\! \to \! 0$ and $y\! \to \! \infty$ 
since $\tfrac{1}{b_n}V_{r_n} (\tau_{\geq n})  \! \to \! X^{_{(r)}}_{^{r}} \ino(0,\infty)$ by (\ref{supenncnd}) and an elementary argument. 
It then implies that $u_n \! \to \! u_\infty$ because  that $\lim_{c\to \infty}\sup_{n\in \bbN^*} \eta_n (c)\eqo 0 $ by  (\ref{Scheff}) Theorem \ref{VHCcvstable} $(iii)$. \cqfd

\section{Proof of Proposition \ref{condiTCL}}
\label{pfCLTmarti}

We first prove the following lemma that is used in the proof of Proposition \ref{condiTCL} and also 
in Section \ref{pfharmosec} in the proof of Proposition \ref{harmocv}. 
\begin{lemma}
\label{exozarb} Let $(\mathcal G_n)_{n\in \bbN}$ be a filtration on $(\Omega,\ccF)$. Let $c\ino (0, \infty)$ be a constant and let 
$(R_n)_{n\in \bbN}$ be a sequence of nonnegative r.v.s such that for all $n\ino \bbN$, 
$\bP$-a.s.~$R_n \leq c$ and $\bP$-a.s.~$R_n\! \to \! 0$. Let $(r_n)_{n\in \bbN}$ be any sequence of integers. 
Then $\bP$-a.s.~$\bE [ R_n | \mathcal G_{r_n} ] \! \to \! 0$.    
\end{lemma}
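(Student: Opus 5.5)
The key issue is that the filtration index $r_n$ need not be monotone, so martingale convergence cannot be applied to $\bE[R_n|\mathcal G_{r_n}]$ directly. I would dominate $R_n$ by a monotone sequence that does not depend on $n$ in a way tied to $r_n$. Set
\[
\overline{R}_m:=\sup_{k\geq m} R_k .
\]
Then $0\leq \overline{R}_m\leq c$ almost surely, $\overline{R}_m$ is nonincreasing in $m$, and $\overline{R}_m\to 0$ almost surely because $R_n\to 0$ almost surely. For all $n\geq m$ we have $R_n\leq \overline{R}_m$, hence
\[
0\leq \bE[R_n|\mathcal G_{r_n}]\leq \bE[\overline{R}_m|\mathcal G_{r_n}] \quad \textrm{a.s.}
\]
It therefore suffices, for each fixed $m$, to control $\sup_{j\in\bbN}\bE[\overline{R}_m|\mathcal G_j]$, which no longer involves $r_n$.

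For fixed $m$, the process $N^{(m)}_j:=\bE[\overline{R}_m|\mathcal G_j]$, $j\in\bbN$, is a bounded nonnegative martingale, so $\sup_j N^{(m)}_j$ is almost surely finite. We need more, namely that $\lim_m \sup_j N^{(m)}_j=0$ almost surely. Since $m\mapsto \overline{R}_m$ is nonincreasing, so is $m\mapsto \sup_j N^{(m)}_j$, and the almost sure limit $\Lambda:=\lim_m\sup_j N^{(m)}_j$ exists. It is then enough to show $\bE[\Lambda]=0$.

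Doob's $L^2$ maximal inequality gives
\[
\bE\Big[\big(\sup_j N^{(m)}_j\big)^2\Big]\leq 4\,\bE[\overline{R}_m^2],
\]
and the right-hand side tends to $0$ by dominated convergence, since $\overline{R}_m\leq c$ and $\overline{R}_m\to 0$. Hence $\bE[\Lambda^2]\leq \inf_m 4\bE[\overline{R}_m^2]=0$, so $\Lambda=0$ almost surely. Combining this with the domination in the first paragraph, we obtain $\limsup_n \bE[R_n|\mathcal G_{r_n}]\leq \sup_j N^{(m)}_j$ for every $m$, and letting $m\to\infty$ gives the result.

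The only delicate points are fixing versions of the conditional expectations (there are countably many, so this is harmless) and making sure the supremum is taken over all $j$, so that the arbitrary sequence $r_n$ plays no role. The maximal inequality step is where the lack of monotonicity in $r_n$ is handled.
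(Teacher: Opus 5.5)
Your proof is correct. It shares its first step with the paper: both dominate $R_n$ by $\overline{R}_m=\sup_{k\geq m}R_k$ and reduce to showing $\lim_m\sup_{j}\bE[\overline{R}_m|\mathcal G_j]=0$ a.s.

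The routes then diverge. The paper argues pathwise. It uses Doob's a.s.\ convergence $\bE[\overline{R}_p|\mathcal G_k]\to\bE[\overline{R}_p|\mathcal G_\infty]$ as $k\to\infty$, together with conditional dominated convergence $\bE[\overline{R}_p|\mathcal G_k]\to 0$ as $p\to\infty$ for every $k\in\bbN\cup\{\infty\}$. Then, for fixed $\omega$ and $\epp$, it splits the supremum over $k$ into two parts. The finitely many indices $k\leq k_{p,\epp}(\omega)$ are handled as $p'\to\infty$. The tail is bounded by $\epp+\bE[\overline{R}_p|\mathcal G_\infty](\omega)$. This is in the spirit of the classical proof of Hunt's lemma, adapted to a non-monotone index $r_n$.

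You instead control the whole supremum over $j$ at once, in norm, via Doob's $L^2$ maximal inequality. You then use the monotonicity of $m\mapsto\sup_j\bE[\overline{R}_m|\mathcal G_j]$ to upgrade $L^2$ convergence of this maximal function to a.s.\ convergence.

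Your version is shorter and needs neither $\mathcal G_\infty$ nor the martingale convergence theorem. It also makes the arbitrariness of $r_n$ irrelevant from the outset. The bound by $c$ is only a convenience for you: the weak-type inequality $\bP(\sup_j\bE[\overline{R}_m|\mathcal G_j]>\lambda)\leq\lambda^{-1}\bE[\overline{R}_m]$ would do equally well under integrable domination. The paper's argument is closer to the textbook Hunt lemma, at the cost of the $\epp$ and $k_{p,\epp}(\omega)$ bookkeeping.
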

\noi
\textbf{Proof.} For all $p\ino \bbN$, we set $\overline{R}_p \eqo \sup_{n\geq p} R_n$. We observe that 
$\limsup_{n\to \infty}\bE [R_n | \mathcal G_{r_n} ]\leqo \sup_{k\in \bbN} \bE [\overline{R}_p | \mathcal G_k]$ a.s.~for all $p\ino \bbN$. So we only need to prove that a.s.~$\lim_{p\rightarrow \infty} \sup_{k\in \bbN} \bE [\overline{R}_p | \mathcal G_k]\eqo 0$. To that end denote by 
$\mathcal G_{\infty}$ the $\sigma$-field generated by $\bigcup_{k\in \bbN} \mathcal G_k$ and observe, by Doob's $L^1$-martingale convergence, that a.s.~$\lim_{k\to \infty}\bE [\overline{R}_p| \mathcal G_k ] \eqo 
\bE [\overline{R}_p| \mathcal G_\infty ] $. Moreover by conditional dominated convergence we also get a.s.~$\lim_{p\to \infty}\bE [\overline{R}_p| \mathcal G_k ] \eqo 0$ for all $k\ino \bbN\cup \{ \infty\}$. Then there exists $\Omega_0\ino \ccF$ such that $\bP (\Omega_0)\eqo 1$ and such that for all integers 
$p' \geq p$ and $k\ino \bbN\cup \{ \infty\}$ and all $\omega\ino \Omega_0$, 
$\bE [ \overline{R}_{p'}| \mathcal G_k] (\omega)\leqo \bE [ \overline{R}_{p}| \mathcal G_k] (\omega)$, 
$\lim_{k\to \infty}\bE [\overline{R}_p| \mathcal G_k ]  (\omega) \eqo 
\bE [\overline{R}_p| \mathcal G_\infty ] (\omega)$ and $\lim_{p\to \infty}\bE [\overline{R}_p| \mathcal G_k ] (\omega) \eqo 0$.
We fix $\omega\ino \Omega_0$, $p\ino \bbN$ and $\epp\ino (0, 1)$. There is $k_{p, \epp} (\omega) \ino \bbN$ such that 
$\sup_{k\geq k_{p, \epp} (\omega)} \bE [\overline{R}_p| \mathcal G_k ]  (\omega) \leqo \epp +\bE [\overline{R}_p| \mathcal G_\infty ]  (\omega)$. Thus for all $p'\geqo p$, 
$\sup_{k\in \bbN} \bE [\overline{R}_{p'}| \mathcal G_k ]  (\omega)\leq \max_{k\leq k_{p, \epp} (\omega)} \bE [\overline{R}_{p'}| \mathcal G_k ]  (\omega) + \epp +\bE [\overline{R}_p| \mathcal G_\infty ]  (\omega)$. Therefore, 
$\limsup_{p'\to \infty} \sup_{k\in \bbN} \bE [\overline{R}_{p'}| \mathcal G_k ]  (\omega)\leq \epp + \bE [\overline{R}_p| \mathcal G_\infty ]  (\omega)\to \epp$ as $p\!\to \! \infty$. Since $\epp$ can be chosen arbitrarily small, this shows that $\lim_{p'\to \infty} \sup_{k\in \bbN} \bE [\overline{R}_{p'}| \mathcal G_k ]  (\omega)\eqo  0$ for all $\omega\ino \Omega_0$, which entails the desired result. \cqfd

\medskip

We now prove Proposition~\ref{condiTCL}. To that end, we recall the following: for all $n\ino \bbN$, $(\cF_{n, k})_{k\in \bbN}$ is a filtration on $(\Omega, \ccF)$; 
$(D_{n, k})_{k\in \bbN^*}$ is a sequence of real valued r.v.s; $(s_n)_{n\in \bbN}$ is a sequence of integers and $s\in\bbR_+$. We introduce the following notation 
$$ M_{n, k}\eqo D_{n,1}+ \ldots + D_{n,k}, \quad Q_{n,k} \eqo \bE [D_{n,k}^2| \cF_{n, k-1}], \quad V_{n, k}\eqo Q_{n, 1} + \ldots + Q_{n,k} , $$
with $M_{n, 0}\eqo V_{n, 0}\eqo 0$, and we make the following assumptions. 
\begin{compactenum}

\smallskip

\item[$(a)$] For all $n,k$, $D_{n, k}$ is $\cF_{n,k}$-measurable, $\bE [ D_{n,k}^2] \leko \infty$ and $\bP$-a.s.~$\bE [D_{n, k} | \cF_{n, k-1}]\eqo 0$.
\smallskip

\item[$(b)$] There exist a filtration $(\mathcal G_j)_{j\in \bbN}$ on $(\Omega, \ccF)$ and a sequence of integers $(r_n)_{n\in \bbN}$ such that 
\smallskip

$\cF_{n, 0}\eqo \mathcal G_{r_n}$, $n\ino \bbN$. 
\item[$(c)$] For all $\epp \ino (0, 1)$, $\bP$-a.s.~$\lim_{n\to \infty} 
\sum_{1\leq k\leq s_n}\bE [D_{n,k}^2\un_{\{| D_{n, k}|>\epp  \}} | \cF_{n, k-1} ]= 0$. 
\smallskip

\item[$(d')$] For all $\epp \ino (0, 1)$, $\bP$-a.s.~$\lim_{n\to \infty} \bE \big [\un_{\{ |V_{n, s_n} -s| > \epp \}} \big|\cF_{n, 0} \big]\eqo 0$. 
\end{compactenum}
\begin{remark}
\label{ddprime}
First observe that 
\emph{Assumptions~$(a$-$d)$ in Proposition~\ref{condiTCL} imply $(d')$.} 
Indeed, Assumption~$(d)$ in Proposition~\ref{condiTCL} implies  for all $\epp \ino (0, 1)$ that 
$\bP$-a.s.~$R_n\! := \! \un_{\{ |V_{n, s_n} -s| > \epp \}} \to 0$ and Assumption~$(b)$ allows us to apply Lemma~\ref{exozarb}, which entails $(d')$. \cq 
\end{remark}

So we prove Proposition~\ref{condiTCL} if we prove the following. 
\begin{proposition}
\label{TCLbis} We keep the previous notation and we assume $(a$-$c)$ and $(d')$. Then for all $\lambda\ino \bbR$, $\bP$-almost surely, $\lim_{n\to \infty} \bE [ \exp (i\lambda M_{n, s_n}) | \cF_{n, 0} ] \eqo \exp (-\frac{_1}{^2} s\lambda^2)$. 
\end{proposition}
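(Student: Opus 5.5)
The natural route is a conditional version of the classical proof of the Lindeberg CLT for martingales (as in Hall \& Heyde), carried out with a stopping time so that the conditional variance is controlled pathwise. Fix $\lambda$ and $\epp\in(0,1)$. Stop the martingale before the conditional variance overshoots: set
$$\kappa_n=\max\{k\le s_n: V_{n,k}\le s+\epp\}.$$
Since $V_{n,k}$ is $\cF_{n,k-1}$-measurable, $\{k\le \kappa_n\}\in\cF_{n,k-1}$. The increments $\widetilde D_{n,k}=D_{n,k}\un_{\{k\le\kappa_n\}}$ are therefore again martingale differences, and their conditional variances sum to $\widetilde V_n:=V_{n,\kappa_n}\le s+\epp$. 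Moreover, on $\{|V_{n,s_n}-s|\le\epp\}$ we have $\kappa_n=s_n$, so $\widetilde M_n:=\sum_k\widetilde D_{n,k}$ coincides with $M_{n,s_n}$. By $(d')$, $\bE[|e^{i\lambda M_{n,s_n}}-e^{i\lambda\widetilde M_n}|\,|\cF_{n,0}]\le 2\bE[\un_{\{|V_{n,s_n}-s|>\epp\}}|\cF_{n,0}]\to0$ a.s. It therefore suffices to treat $\widetilde M_n$.

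The key identity is the telescoping product with the compensator $e^{\frac12\lambda^2\widetilde V_{n,k}}$, where $\widetilde V_{n,k}$ is the partial sum. Write
$$Z_{n,k}=e^{i\lambda\widetilde M_{n,k}+\frac12\lambda^2\widetilde V_{n,k}},$$
which is bounded by $e^{\frac12\lambda^2(s+1)}$. Then
$$Z_{n,k}-Z_{n,k-1}=Z_{n,k-1}\Big(e^{i\lambda\widetilde D_{n,k}+\frac12\lambda^2\widetilde Q_{n,k}}-1\Big).$$
Conditioning on $\cF_{n,k-1}$ and using $\widetilde Q_{n,k}\in\cF_{n,k-1}$, we expand
$$\bE[e^{i\lambda\widetilde D_{n,k}}|\cF_{n,k-1}]=1-\tfrac12\lambda^2\widetilde Q_{n,k}+\rho_{n,k},\qquad |\rho_{n,k}|\le \bE\big[\min(|\lambda\widetilde D_{n,k}|^3,\lambda^2\widetilde D_{n,k}^2)\,\big|\,\cF_{n,k-1}\big].$$
Splitting at $|\widetilde D_{n,k}|>\eta$ gives
$$\sum_k|\rho_{n,k}|\le |\lambda|^3\eta(s+1)+\lambda^2\sum_k\bE[D_{n,k}^2\un_{\{|D_{n,k}|>\eta\}}|\cF_{n,k-1}].$$
We also have $|e^{x}(1-x+\rho)-1|\le C(x^2+|\rho|)$ for $x=\frac12\lambda^2\widetilde Q_{n,k}$, and $\sum_k\widetilde Q_{n,k}^2\le (s+1)\max_k Q_{n,k}$. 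The Lindeberg condition $(c)$ gives $\max_{k\le s_n}Q_{n,k}\le\eta^2+\sum_k\bE[D^2_{n,k}\un_{\{|D_{n,k}|>\eta\}}|\cF_{n,k-1}]$. Putting these together,
$$|\bE[Z_{n,s_n}|\cF_{n,0}]-1|\le C_\lambda\,\bE[\Xi_{n,\eta}\wedge c_\lambda|\cF_{n,0}]+C_\lambda\eta,$$
where $\Xi_{n,\eta}=\sum_{k\le s_n}\bE[D_{n,k}^2\un_{\{|D_{n,k}|>\eta\}}|\cF_{n,k-1}]$. Boundedness of $Z$ justifies capping at $c_\lambda$, since each increment term is a priori bounded. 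By $(c)$, $\Xi_{n,\eta}\wedge c_\lambda\to0$ a.s. Assumption $(b)$ lets us apply Lemma~\ref{exozarb}, which yields $\bE[\Xi_{n,\eta}\wedge c_\lambda|\cF_{n,0}]\to0$ a.s. Letting $\eta\to0$ along rationals gives $\bE[Z_{n,s_n}|\cF_{n,0}]\to1$ a.s.

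Finally, $e^{i\lambda\widetilde M_n}=Z_{n,s_n}e^{-\frac12\lambda^2\widetilde V_n}$, so
$$|\bE[e^{i\lambda\widetilde M_n}|\cF_{n,0}]-e^{-\frac12\lambda^2 s}|\le |\bE[Z_{n,s_n}|\cF_{n,0}]-1|\,e^{-\frac12\lambda^2s}+e^{\frac12\lambda^2(s+1)}\,\bE\big[|e^{-\frac12\lambda^2\widetilde V_n}-e^{-\frac12\lambda^2 s}|\,\big|\,\cF_{n,0}\big].$$
On $\{|V_{n,s_n}-s|\le\epp\}$ we have $\widetilde V_n=V_{n,s_n}$, and the last expectation is bounded by $C\lambda^2\epp+2\bE[\un_{\{|V_{n,s_n}-s|>\epp\}}|\cF_{n,0}]$, which tends to $C\lambda^2\epp$ by $(d')$. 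Letting $\epp\to0$ along a countable sequence concludes the argument.

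The main obstacle is making the error bounds conditional on the moving $\sigma$-field $\cF_{n,0}=\mathcal G_{r_n}$. The a.s. convergences in $(c)$ do not pass directly to conditional expectations. The fix is to always reduce to a uniformly bounded random quantity converging a.s. to $0$ and invoke Lemma~\ref{exozarb}. This is exactly why the stopping time $\kappa_n$ and the boundedness of $Z$ are needed.
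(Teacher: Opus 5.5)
Your proof is correct and follows essentially the same route as the paper. You telescope $e^{i\lambda \widetilde M_{n,k}+\frac12\lambda^2\widetilde V_{n,k}}$ over martingale increments truncated on the predictable event $\{V_{n,k}\le c\}$, use Lindeberg-type error terms, and invoke Lemma~\ref{exozarb} to pass the a.s.\ convergences to conditional expectations given $\cF_{n,0}=\mathcal G_{r_n}$. The only difference is organizational: you truncate once at the $\varepsilon$-dependent level $c=s+\varepsilon$, so that $\widetilde M_n=M_{n,s_n}$ on $\{|V_{n,s_n}-s|\le\varepsilon\}$, whereas the paper first treats the case $V_{n,s_n}\le c$ via the split $\beta_n+\gamma_n$ and then truncates at a fixed level $c>1+s$.
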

\noi
\textbf{Proof.} We first prove the proposition under the additional assumption that there exists $c\ino (0, \infty)$ such that $\bP (V_{n, s_n} \leqo c) \eqo 1$. We rely on the following standard inequalities that hold for all $a, b\ino \bbR_+$, all $x\ino \bbR$ and all $z\ino \bbC$: 
\begin{equation}
\label{standineq}
\!\! \!\! \!\! \sup_{\quad y\in [-a, a]} \!\! \!\!  \!\!  |e^{by}\! -\! 1| \leqo e^{ba}\! -\! 1, \quad \big|e^{ix}\! -\! 1 \! -\! ix  +\! \frac{_{_1}}{^{^2}}x^2 | \leqo |x^2|\! \wedge \! (\frac{_{_1}}{^{^6}} |x|^3) , \quad |e^z\! -\! 1\! -\! z| \leqo  \frac{_{_1}}{^{^2}}|z|^2e^{(\mathtt{Re} (z))_+} .
\end{equation}
We first observe that $\alpha_n  \eqo \bE [ e^{i\lambda M_{n, s_n}} | \cF_{n, 0} ]\! -\!  e^{-\frac{_1}{^2} s\lambda^2}\eqo \beta_n + \gamma_n$ where 
$$\beta_n \eqo \bE  \big[ e^{i\lambda M_{n, s_n}} \big( 1 \! -\! e^{ \frac{_1}{^2}\lambda^2 (V_{n, s_n}- s)} \big)  \big| \cF_{n, 0} \big] \quad \textrm{and} \quad \gamma_n\eqo  \bE  \big[ e^{-\frac{_1}{^2} s\lambda^2} \big( e^{i\lambda M_{n, s_n} + \frac{_1}{^2}\lambda^2 V_{n, s_n} } \! -\! 1 \big)  \big| \cF_{n, 0} \big] .$$
We fix $\epp \ino (0, 1)$ such that $\frac{_{_1}}{^{^3}} |\lambda| \epp \leko 1$ and we bound $\beta_n$ and $\gamma_n$. 
Since $V_{n, s_n}$ is bounded by the constant $c$, the first inequality in (\ref{standineq}) implies 
\begin{equation}
\label{boundbeta}
|\beta_n| \leq e^{\frac{_1}{^2}\lambda^2\epp } \! -\! 1 + \big( 1+ e^{\frac{_1}{^2}\lambda^2(c+ s)} \big)  \bE \big [\un_{\{ |V_{n, s_n} -s| > \epp \}} \big|\cF_{n, 0} \big]\; .
\end{equation}
Thus, by $(d')$, for all $\epp \ino (0, 1)$ we a.s.~get $\limsup_{n\to \infty} |\beta_n| \leqo 
e^{\frac{_1}{^2}\lambda^2\epp } \! -\! 1$ and thus a.s.~$\beta_n \! \to \! 0$. 
 
We next observe that 
\begin{eqnarray*}
e^{\frac{_1}{^2}\lambda^2 s} \gamma_n &= &\!\!\!\!\!\!\!\! \sum_{\quad 1\leq k\leq s_n} \!\!\!\!\!\! \bE \Big[ e^{i\lambda M_{n, k-1} +\frac{_1}{^2}\lambda^2 V_{n,k} } \bE \big[e^{i\lambda D_{n,k}} \! -\! e^{-\frac{_1}{^2}\lambda^2 Q_{n, k}} \big| \cF_{n, k-1} \big]  \Big| \cF_{n, 0} \Big]  \\
&=&  \!\!\!\!\!\!\!\! \sum_{\quad 1\leq k\leq s_n} \!\!\!\!\!\! \bE \big[ e^{i\lambda M_{n, k-1} +\frac{_1}{^2}\lambda^2 V_{n,k} } \big( \delta_{n,k}^{_{(1)}} + \delta_{n,k}^{_{(2)}} \big) \big| \cF_{n, 0} \big], \quad \textrm{where}
\end{eqnarray*}
\[
\delta_{n,k}^{_{(1)}} = \bE \big[e^{i\lambda D_{n,k}} \! -\! 1 -i\lambda D_{n,k}  + \tfrac{_1}{^2} \lambda^2 D^2_{n, k} \big| \cF_{n, k-1} \big] \quad \textrm{and} \quad   \delta_{n,k}^{_{(2)}}\eqo 1 \! -\!  \tfrac{_1}{^2} \lambda^2 Q_{n, k} - e^{-\frac{_1}{^2}\lambda^2 Q_{n, k}} ,\]
since $\lambda M_{n, k-1} +\frac{_1}{^2}\lambda^2 V_{n,k}$ is $\cF_{n, k-1}$-measurable, since a.s.~$\bE [D_{n, k} \big| \cF_{n, k-1}]\eqo 0$ and by definition of $Q_{n, k}$. We first bound $\delta_{n,k}^{_{(1)}}$ by use of the second inequality in (\ref{standineq}). To that end we set 
$Z\eqo   \lambda^2 D^2_{n, k} (1\wedge  \frac{_{_1}}{^{^6}} |\lambda D_{n, k}|)$. Since 
$\frac{_{_1}}{^{^3}} |\lambda| \epp \leko 1$, we observe that 
$Z\leqo  \lambda^2 D^2_{n, k} \un_{\{  |D_{n, k} |> \epp\}}+ \frac{_1}{^6} |\lambda|^3 \epp D^2_{n, k}  $.  
Therefore $|\delta_{n,k}^{_{(1)}}|\leqo  \lambda^2 \bE \big[ D^2_{n, k} \un_{\{  |D_{n, k} |> \epp\}}\big| \cF_{n, k-1} \big]+ \frac{_1}{^6} |\lambda|^3 \epp Q_{n,k}$. By the third inequality in (\ref{standineq}), we get $ |\delta_{n,k}^{_{(2)}}| \leqo \tfrac{_1}{^8} \lambda^4 Q^2_{n,k}$. Then observe that $Q_{n,k} \leq \epp^2 +\bE \big[ D^2_{n, k} \un_{\{  |D_{n, k} |> \epp\}}\big| \cF_{n, k-1} \big]$. Thus  $ |\delta_{n,k}^{_{(2)}}| \leqo \tfrac{_1}{^8} \lambda^4 Q_{n,k} \epp^2+  \tfrac{_1}{^8} \lambda^4 c \bE \big[ D^2_{n, k} \un_{\{  |D_{n, k} |> \epp\}}\big| \cF_{n, k-1} \big] $, since $Q_{n, k} \leqo V_{n, s_n} \leqo c$. Thus, 
\begin{align} 
\label{gbnd}
 e^{-\frac{_1}{^2}\lambda^2 c } e^{\frac{_1}{^2}\lambda^2 s} |\gamma_n|  &\leq  \!\!\!\!\!\!\!\! \sum_{\quad 1\leq k\leq s_n} \!\!\!\!\!\! \bE \Big[ e^{\frac{_1}{^2}\lambda^2 (V_{n,k}-c) } \big( |\delta_{n,k}^{_{(1)}}| +   |\delta_{n,k}^{_{(2)}}| \big) \Big| \cF_{n, 0} \Big] \nonumber \\
&\leq   \tfrac{1}{24}|\lambda|^3 \epp \big(4+ 3|\lambda| \epp\big) \bE [V_{n, s_n}  | \cF_{n, 0} ]+  \tfrac{1}{8}\lambda^2 (8+ c\lambda^2) \bE [ L_{n,s_n} (\epp) | \cF_{n, 0} ]
\end{align}
where we have set $L_{n,k} (\epp)\eqo \sum_{1\leq j\leq k}\bE [D_{n,j}^2\un_{\{| D_{n, j}|>\epp  \}} | \cF_{n, k-1} ]$. Arguing as in Remark~\ref{ddprime}, Assumptions~$(b)$ and $(c)$ allow us to apply Lemma~\ref{exozarb}, which entails that $\lim_{n\to \infty} \bE [L_{n, s_n} (\epp) | \cF_{n, 0} ] \eqo 0$. Moreover, since $V_{n, s_n} \leqo c$, for all $\epp\ino (0, 1)$ we a.s.~get $\limsup_{n\to \infty} |\gamma_n| \leq 
\frac{1}{{24}} c e^{-\frac{_1}{^2}\lambda^2 (s-c)}|\lambda|^3 \big(4+ 3|\lambda| \epp\big) \epp $ and thus a.s.~$\gamma_n \! \to \! 0$. We have proved that a.s.~$ \lim_{n\to\infty }\bE [ e^{i\lambda M_{n, s_n}} | \cF_{n, 0} ] \eqo e^{-\frac{_1}{^2}\lambda^2 s}$ under the assumption that $\bP (V_{n, s_n} \leqo c)\eqo 1$ for all $n\in \bbN$. 

  Let us remove this assumption. We fix $c\ino (1+ s, \infty)$ and we set $\widetilde{D}_{n, k}\eqo D_{n, k} \un_{\{ V_{n, k} \leq c \}}$, $\widetilde{M}_{n, k}\eqo\sum_{1\leq j \leq k} \widetilde{D}_{n, j}$, $\widetilde{V}_{n, k}\eqo \sum_{1\leq j \leq k} \bE [\widetilde{D}_{n, j}^2|\cF_{n, j-1} ]$, $\widetilde{L}_{n, k} (\epp)\eqo \sum_{1\leq j\leq k}\bE [\widetilde{D}_{n,j}^2\un_{\{| \widetilde{D}_{n, j}|>\epp  \}} | \cF_{n, k-1} ]$ and $T_n\eqo \max \{ k\ino \lgeo 0,  s_n\rgeo : V_{n, k}\leqo c  \}$. We easily check that the $\widetilde{D}_{n, k}$ satisfy $(a)$ since $V_{n,k}$ is $\cF_{n, k-1}$-measurable. Then, observe that 
$\widetilde{V}_{n, k} \eqo V_{n, k\wedge T_n} $ (which implies $\bP \big( \widetilde{V}_{n, s_n}\leqo c\big)\eqo 1$) 
and that $\widetilde{L}_{n, k} (\epp)\eqo L_{n, k\wedge T_n} (\epp)$. 
Therefore the $\widetilde{D}_{n, k}$ satisfy $(c)$ because $\widetilde{L}_{n, s_n} (\epp)\leqo L_{n, s_n} (\epp)\! \to \! 0$ a.s.
Next, note that for all $\epp\ino (0, 1)$, a.s.~
$$\bE \big[ \un_{\{ |\widetilde{V}_{n, s_n} -s | >\epp \}} | \cF_{n, 0}\big] \leq \bE \big[ \un_{\{T_n < s_n  \}} | \cF_{n, 0}\big] +\bE \big[ \un_{\{ |V_{n, s_n} -s | >\epp \}} | \cF_{n, 0}\big] \leq 2\bE \big[ \un_{\{ |V_{n, s_n} -s | >\epp \}} | \cF_{n, 0}\big] $$
 since $T_n \leko s_n$ implies that $V_{n, s_n} \geko c\geko s+1$. This proves that 
 the $\widetilde{D}_{n, k}$ satisfy $(d')$ and thus we get a.s.~$ \lim_{n\to\infty }\bE [ e^{i\lambda \widetilde{M}_{n, s_n}} | \cF_{n, 0} ] \eqo e^{-\frac{_1}{^2}\lambda^2 s}$, by the previous arguments. But observe that 
 \begin{eqnarray*}
 \Big| \bE \big[  e^{i\lambda \widetilde{M}_{n, s_n}} \big| \cF_{n, 0} \big] \! -\! \bE \big[ 
  e^{i\lambda M_{n, s_n}} \big| \cF_{n, 0} \big] \Big|& \leq & 
 \bE \Big[  \big| e^{i\lambda \widetilde{M}_{n, s_n}}\! -\! e^{i\lambda M_{n, s_n}} \big| \un_{\{ T_n < s_n \}} \Big| \cF_{n, 0} \Big] \\
  &\leq&   2\bE \big[ \un_{\{ |V_{n, s_n} -s | >\epp \}} | \cF_{n, 0}\big] \underset{n\to\infty}{-\!\!\! -\!\!\! \longrightarrow} 0 
\end{eqnarray*}
$\bP$-a.s.~which implies the desired result. \cqfd

%
%
%{\small
%\bibliographystyle{acm}
%
%\bibliography{Refsna2}
%
%}
\end{document}